\newcommand{\msc}[2][2000]{%
  \let\@oldtitle\@title%
  \gdef\@title{\@oldtitle\footnotetext{#1 \emph{Mathematics subject
        classification.} #2}}%
}
\theoremstyle{plain}
\newtheorem{theorem}{Theorem}[section]
\newtheorem{definition}[theorem]{Definition}
\newtheorem{lemma}[theorem]{Lemma}
\newtheorem{corollary}[theorem]{Corollary}
\newtheorem{proposition}[theorem]{Proposition}
\theoremstyle{remark}
\newtheorem{remark}[theorem]{Remark}
\newtheorem{example}[theorem]{Example}
\def\R{{\mathbb R}}
\def\N{{\mathbb N}}
\def\({\left(}
\def\){\right)}
\def\<{\left\langle}
\def\>{\right\rangle}
\def\Tend#1#2{\mathop{\longrightarrow}\limits_{#1\rightarrow#2}}
\def\eps{\varepsilon}
\def\op{{\rm op}}
\numberwithin{equation}{section}
\begin{document}
\title{Semiclassical analysis of the Schrödinger equation with conical singularities}
\author{Victor Chabu}
\address{LAMA, UMR CNRS 8050, Université Paris-Est \\ 61 avenue du Général de Gaulle \\ Créteil CEDEX 94010 \\ FRANCE}
\email{victor.chabu@math.cnrs.fr}

\begin{abstract}
In this article we study the propagation of Wigner measures linked to solutions of the Schrödinger equation with potentials presenting conical singularities and show that they are transported by two different Hamiltonian flows, one over the bundle cotangent to the singular set and the other elsewhere in the phase space, up to a transference phenomenon between these two regimes that may arise whenever trajectories in the outsider flow lead in or out the bundle. We describe in detail either the flow and the mass concentration around and on the singular set and illustrate with examples some issues raised by the lack of unicity for the classical trajectories at the singularities despite the unicity of the quantum solutions, dismissing any classical selection principle, but in some cases being able to fully solve the propagation problem. \\
{\bf Keywords}: {Schrödinger equation, Wigner measures, two-microlocal measures, symbolic calculus, quantum-classical correspondence, wave packet approximation.}
\end{abstract}

\maketitle

\section{Introduction} 
\subsection{Initial considerations}\label{sec:initial_considerations}
Classically, a particle with mass $m = 1$ submitted to a time- and momentum-independent smooth potential $V$ in $\R^d$ is constrained to move following the phase space trajectory given by the Hamilton equations
\begin{equation}\left\lbrace
\begin{array}{l}
\dot{x}\left(t\right) = \xi\left(t\right) \\
\dot{\xi}\left(t\right) = -\nabla V \left( x\left(t\right) \right).
\end{array}\right.
\label{eq:hamilton}
\end{equation}
Smoothness in $V$ guarantees that the equations above have a unique solution $(x(t),\xi(t))$ around all initial condition $(x_0,\xi_0)$ (\emph{i.e.}, for $t$ sufficiently small), thus we can define the classical Hamiltonian flow $\Phi$ by setting $\Phi_{t}\left(x_0,\xi_0\right) = (x(t),\xi(t))$. Further conditions on the regularity and growth rate of $V$ imply more good properties, so if $\nabla^2 V$ is bounded, one can extend $\Phi_t(x_0,\xi_0)$ for all $t \in \R$, for any $(x_0,\xi_0)$ in the phase space\cite{ReedSimon}. 

In Quantum Mechanics, the state evolution of a similar system is described by a function $\Psi^\eps \in L^\infty\left(\R,L^2(\R^d)\right)$ obeying to the Schrödinger equation with initial data
\begin{equation}
\left\lbrace
\begin{array}{l}
i\eps \partial_{t} \Psi_{t}^{\eps}\left(x\right) = -\frac{\eps^2}{2}\Delta \Psi^\eps_t (x) + V(x)\Psi^\eps_t (x) \\
\Psi_{t = 0}^\eps\left(x\right) = \Psi_{0}^\eps\left(x\right),
\end{array}
\right.
\label{eq:schrodinger}
\end{equation}
where the initial $L^2(\R^d)$ data satisfy $\| \Psi_0^\eps \|_{L^2(\R^d)} = 1$ and $\eps \ll 1$ is a parameter generally reminiscent from some rescaling procedure, but that can also be seen as the Planck's constant in a system with mass of order $m \approx 1$, in which case the Wigner measures can be viewed as the classical limits of the system's mass distribution, as we will explain next.
 
If $V$ satisfies the Kato-Rellich conditions ($V$ continuous and $V(x) \lesssim \| x \|^2$), then the Hamiltonian operator 
\begin{equation}\label{eq:original_hamiltonian}
\hat{H}^\eps = -\frac{\eps^2}{2}\Delta + V
\end{equation}
with domain in the Sobolev space $H^2(\R^d)$ is essentially self-adjoint (\cite{Kato},\cite{ReedSimon}) and \eqref{eq:schrodinger} has a unique solution for all $t \in \R$, which is given by
$$\Psi_t^\eps = e^{-\frac{i}{\eps}t\hat{H}^\eps}\Psi_0^\eps .$$

\medskip

Now, for $t \in \R$, we define the \emph{Wigner transform} associated to the solution of \eqref{eq:schrodinger},
\begin{equation}
W^{\eps}\Psi_{t}^{\eps}\left(x,\xi\right) = \frac{1}{\left(2\pi\eps\right)^{d}} \int_{\R^d} e^{\frac{i}{\eps} y \centerdot \xi} \, \Psi_{t}^{\eps}\left(x - \frac{y}{2} \right) \overline{\Psi_{t}^{\eps}\left(x + \frac{y}{2}\right)} \, dy,
\label{eq:wigner}
\end{equation} 
which can be interpreted as the quantum version of the mass probability density on the phase space in classical Statistical Physics and provides a link between these two theories\cite{Wigner}.

When $\eps \longrightarrow 0$, the Wigner transform converges to a finite and positive measure $\mu$ on $\R \times \R^{2d}$ (\cite{gerard}, \cite{GMMP}, \cite{Wigner}) in the sense that, given a sequence $(\eps_n)_{n \in \N}$ converging to $0$, we can extract a subsequence\footnote{Of course $\mu$ may depend on the subsequence, this is why we refer to it as \emph{a} semiclassical limit, not necessarily \emph{the} classical one (examples of non-unicity in \cite{Wigner}).} $(\eps_{n_k})_{k \in \N}$ such that, for all $a \in C_0^\infty(\R^{2d}_{x,\xi})$ and $\Xi \in C^\infty_0(\R_t)$, 
$$\int_\R \Xi(t) \left< \, W^{\eps_{n_k}}\Psi_t^{\eps_{n_k}} \, , \, a \, \right>_{\R^{2d}}dt \: \underset{k \rightarrow \infty}{\longrightarrow} \: \int_{\R \times \R^{2d}} \Xi(t) a(x,\xi) \, \mu(dt,dx,d\xi).$$
 
Again, more regularity on $V$ implies more good properties\cite{GMMP}. The Wigner measure is always absolutely continuous with respect to the Lebesgue measure $dt$, as one sees from its very construction, so one has $d\mu(t,x,\xi) = \mu_t(x,\xi)dt$, where $t \longmapsto \mu_t$ is a $L^\infty\left(\R,\mathcal{D}'(\R^{2d})\right)$ function. However, if for instance $\nabla^2 V$ is bounded, one can show by the Ascoli-Arzelà theorem that in this case, given $T > 0$, $[-T,T] \ni t \longmapsto \mu_t \in \mathcal{D}'(\R^{2d})$ is continuous and that there exists another subsequence $\left(\eps_{k_{k'}}\right)_{k' \in \mathbb{N}}$ such that, for each $t \in \left[-T,T\right]$ and $a \in C_0^\infty(\R^{2d})$,
$$\left< \, W^{\eps_{k_{k'}}}\Psi^{\eps_{k_{k'}}}_t \, , \, a \, \right>_{\R^{2d}} \: \underset{k' \rightarrow \infty}{\longrightarrow} \: \int_{\R^{2d}} a \, d\mu_t.$$
Furthermore, for such a regular $V$, the semiclassical measures satisfy a Liouville equation
\begin{equation}
\partial_{t} \mu \left(t,x,\xi\right) + \xi \centerdot \partial_{x} \mu \left(t,x,\xi\right) - \nabla V \left(x\right) \centerdot \partial_{\xi} \mu \left(t,x,\xi\right) = 0 \quad {\rm in} \; \mathcal{D}'(\R \times \R^{2d}),
\label{eq:liouville}
\end{equation}
or equivalently
\begin{equation}
\left\lbrace
\begin{array}{l}
\partial_{t} \mu_t \left(x,\xi\right) + \xi \centerdot \partial_{x} \mu_t \left(x,\xi\right) - \nabla V \left(x\right) \centerdot \partial_{\xi} \mu_t \left(x,\xi\right) = 0 \\
\mu_{t = 0}(x,\xi) = \mu_0(x,\xi)
\end{array}\right. \quad {\rm in} \; \mathcal{D}'(\R^{2d}),
\label{eq:liouville_t_by_t}
\end{equation}
where $\mu_0$ is the correspondent semiclassical limit of $W^\eps\Psi^\eps_0$.

This last equation is interpreted as a transport phenomenon along the classical flow $\Phi$, which can be easily seen by picking up test functions $a \in C_0^\infty(\R^{2d})$ and verifying that $\frac{d}{dt}\int a\circ \Phi_{-t}\, d \mu_{t} =0$ due to \eqref{eq:hamilton} and \eqref{eq:liouville_t_by_t}.

Moreover, the test functions $a \in C_0^\infty(\R^{2d})$ are called \emph{classical observables} and are related with the \emph{quantum observables} $\op_\eps (a) \in \mathcal{L}\left(L^2(\R^d)\right)$ by the Weyl quantization formula,
{\small
\begin{equation}
\op_{\eps}\left(a\right) \Psi \left(x\right) = \frac{1}{\left(2\pi\eps\right)^{d}} \int_{\R^{2d}} e^{\frac{i}{\eps}\xi\centerdot\left(x-y\right)} a\left(\frac{x+y}{2},\xi\right) \Psi\left(y\right) d\xi dy \quad {\rm for} \; \Psi \in L^2(\R^d),
\label{eq:pseudor}
\end{equation}} 
which provides self-adjoint operators for real-valued symbols\cite{dimassi}. These objects, called \emph{semiclassical pseudodifferential operators}, relate to the Wigner transform by means of the formula  
\begin{equation}\label{eq:wigner_pseudor}
\left< \, \op_{\eps}\left(a\right) \Psi \, ,  \,\Psi \, \right>_{L^2(\R^d)} = \left< \, W^{\eps}\Psi \, , \, a \, \right>_{\R^{2d}}.
\end{equation}

The quadratic form $\left< \, \op_{\eps}\left(a\right) \Psi \, ,  \,\Psi \, \right>_{L^2(\R^d)}$ gives in Quantum Mechanics the average value of the observable $\op_\eps(a)$ for a system in the quantum state $\Psi$, exactly as does the integral $\int_{\R^{2d}} a \, d\rho$ in classical Statistical Mechanics for the observable $a$ in a system with mass probability density $\rho$ over the phase space. Besides, last equation carries
$$\left< \, \op_{\eps}\left(a\right) \Psi_t^\eps \, ,  \,\Psi_t^\eps \, \right>_{L^2(\R^d)} \; \underset{\eps \rightarrow 0}{\longrightarrow} \; \int_{\R^{2d}} a \, d\mu_t,$$
which allows us to understand the Wigner measures as mass distributions, linking the quantum evolution of $\Psi^\eps$ given by the Schrödinger equation with that of its semiclassical limits\footnote{From a non-statistical point of view, the classical limit properly speaking would be a particular subsequence $(\eps_{n_k})$ that gives a Dirac mass on a certain point of the phase space, corresponding to a singular particle of mass $1$ (classically localized on that point) whose quantum evolution is described by \eqref{eq:schrodinger}. Although it is always possible to find a sequence of quantum states concentrating to a Dirac mass\cite{gerard}, the general picture is much broader, being possible to have, for instance, $\mu$ continuously spread over the phase space. In any case, the quantum-classical correspondence is better understood statistically (\cite{Wigner}, \cite{tartar}), in which frame the case of the Dirac mass (or a sum of punctual Dirac deltas with total mass $1$) should be seen as a special case of statistical distribution.}.

\subsection{Statement of the problem}\label{sec:statement}
It happens that a very large class of relevant problems do not present potentials with all such regularity. For instance: \emph{conical potentials}, which are of the form
\begin{equation}
V(x) = V_S(x) + \left\| g(x) \right\| F(x),
\label{eq:conical_potential}
\end{equation}
where we make the following technical assumptions:
\begin{itemize}
\item $V$ and $V_S$ satisfy each one the Rellich conditions.
\item $F$ and $V_S$ are $C^\infty(\R^d)$ and there is some non-decreasing positive $K$-sub-additive\footnote{This is: there is $K \geqslant 1$ such that $\mathfrak{p}(x+y) \leqslant K \left( \mathfrak{p}(x) + \mathfrak{p}(y) \right)$.} polynomial $\mathfrak{p}$ that bounds them and also $\nabla F$. 
\item $g : \R^d \longrightarrow \R^p$ with $1 \leqslant p \leqslant d$, $\nabla g$ is full rank and $\Lambda = \lbrace g(x) = 0 \rbrace \neq \emptyset$. 
\end{itemize}
As we shall see, these potentials raise interesting mathematical questions.

Similar problems have been treated in works like \cite{AFFG}, \cite{AP} and \cite{athanassoulis} in a probabilistic way. In other works authors have been analysing the deterministic behaviour of the Wigner measures under the conical potentials defined above, more noticeably in \cite{FGL}, where they found a non-homogeneous version of \eqref{eq:liouville} whose inhomogeneity is an unknown measure supported on
$$\Omega = \left\lbrace (x,\xi) \in \R^{2d} \, : \, g(x) = 0 \; {\rm and} \; \nabla g(x) \, \xi = 0 \right\rbrace,$$
a set onto which it is not generally possible to extend the classical flow in a unique manner, although it is possible everywhere else.

\begin{remark}\label{rem:cotangent}
The set $\Omega$ corresponds exactly to the tangent bundle to $\Lambda$, since any curve $\gamma$ over $\Lambda$ (\emph{i.e.}, such that $g(\gamma(t)) = 0$) passing on $x$ at $t = 0$ must satisfy $\nabla g(x) \dot{\gamma}(0) = 0$. In our work, however, we stay within a structure of phase space, so it will be natural to identify $\Omega$ as the cotangent bundle $T^*\Lambda$.
\end{remark}

This suggests an intriguing possibility involving irregular potentials: what happens to the Wigner measures in a system where the potential allows a complete quantum treatment, but causes the classical flow to be ill-defined? Is there some selection principle from the quantum-classical correspondence that could provide information enough for describing the transport of the measure where the classical flow fails? 

To fix some ideas, forget for a moment about the measures and think of a classical particle submitted to conical potentials like $V\left(x\right) = \pm \left| x \right|$. Naturally, the trajectories are well-defined everywhere away from the line $x = 0$, and it turns out that they can be continuously extended onto $x=0$ in a unique manner provided that $\xi \neq 0$, as in Figure \ref{fig:flow_classical}.

\begin{figure}[htpb!]\center\footnotesize
\begin{multicols}{2}
\includegraphics[width=0.40\textwidth]{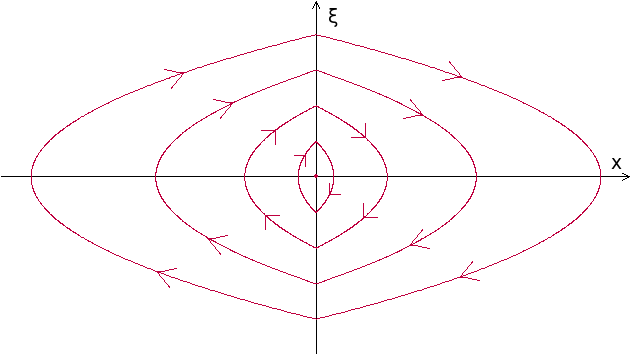} \text{(a) $V\left(x\right) = \left| x \right|$. At the origin the trajectory is constant.} \\
\includegraphics[width=0.26\textwidth]{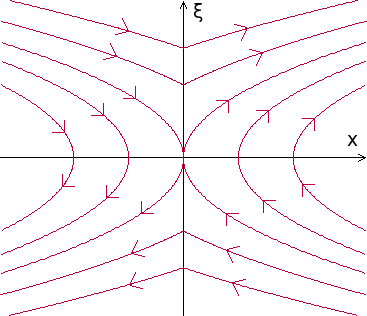} \text{(b) $V\left(x\right) = - \left| x \right|$. At the origin the flow is not well-defined.} \\
\end{multicols}
\caption{\footnotesize A glance on the classical flows for the potentials $V\left(x\right) = \pm \left| x \right|$ near the origin. The arrows indicate their orientation.}
\label{fig:flow_classical}
\end{figure}

In the case $V(x) = +|x|$, the flow can be uniquely defined even over $\xi = 0$ by setting it constant at the origin, as shown in Figure \ref{fig:flow_classical}(a); more generally, in higher dimensional cases, like for $x \in \R^d$, $x = (x',x'')$ with $x' \in \R^p$ and $V(x) = \| x' \|$, there is still room for the particle to move inside the singular set $\Omega = \left\lbrace x' = 0 \; {\rm and} \; \xi' = 0 \right\rbrace$ and it must have some non-ambiguous behaviour therein, induced by the unique quantum evolution of $\Psi_t^\eps$ and their well-defined concentration to $\mu_t$. However, the dynamics in $\Omega$ is classically unknown, for $\nabla V$ makes no sense for $x' = 0$, so we cannot rely on the sole Hamiltonian trajectories to characterize the transport phenomenon that the semiclassical measure undergoes over the singularities. 

Furthermore, there are other kinds of difficulties. In the case $V\left(x\right) = - \left| x \right|$, even in dimension $1$ there is no unique extension for the flow all over the phase space. As we can see in Figure \ref{fig:flow_classical}(b), when coming from the right-hand side below, there are different alternative trajectories after reaching $x = 0$ with zero momentum: going back to the right upwards, crossing to the left downwards, staying at $(0\, ,0)$, or staying there for a moment and then resume moving to one or to the other side.

Let us treat this problem in three different steps. 

\subsection{First question: the dynamics}
In \cite{FGL}, the authors proved that the Hamiltonian flow can always be continuously extended in a unique manner to $\Lambda \setminus \Omega$ and that, whenever the Wigner measure does not charge the singularities in the phase space, \emph{i.e.}, while $\mu_t(\Omega) = 0$, then $\mu$ follows these unique continuous extensions. This result is grounded on the facts that $\mu_t$ does not charge the set $\Lambda \setminus \Omega$ for more than a negligible time, more precisely that $\mu_t\left(\Lambda\setminus \Omega\right) = 0$ almost everywhere in $\R$ with respect to $dt$ (for a matter of completeness, we re-obtain this result in Lemma \ref{lem:m_is_zero_and_etc}), and that the measures obey to the standard Liouville equation with $V$ away from $\Lambda$, where the potential is regular.

In this paper we will obtain in Section \ref{sec:establishing_liouville} a complete description of the dynamics to which the semiclassical measures ought to obey, including near and inside the singularities, by driving an approach similar to that of \cite{FGL}, which makes an extensive use of \emph{symbolic calculus} (Section \ref{sec:symbolic_calculus}) and \emph{two-microlocal measures} (Section \ref{sec:two_microlocal_analysis}):

\begin{theorem} \label{th:main_result_1}
Let $\Psi^\eps$ be the solution to the system \eqref{eq:schrodinger} with a conical potential of the form \eqref{eq:conical_potential}, and denote $\Lambda = \left\lbrace x \in \R^d : g(x) = 0 \right\rbrace$. Then the correspondent Wigner measures $\mu$ obey to the $\mathcal{D}'(\R \times \R^{2d})$ equation:
\begin{eqnarray}\label{eq:complete_dynamics}
\left(\partial_t + \pi_{_{T^*_x \Lambda}} \xi\centerdot \partial_x - \pi_{_{T^*_x \Lambda}} \left. \nabla V_S \right|_{\Lambda} (x) \centerdot \partial_{\xi} \right) 1\!\!1_{_{T^*\Lambda}} \, \mu \left(t,x,\xi\right) && \nonumber \\ + 
\left( \partial_t + \xi \centerdot \partial_x - \nabla V(x) \centerdot \partial_\xi \right) 1\!\!1_{_{(T^*\Lambda)^c}} \, \mu \left(t,x,\xi\right) & = & 0, 
\end{eqnarray}
where $1\!\!1_{_{T^*\Lambda}}$ is the indicatrix of $T^*\Lambda = \Omega$ (and $1\!\!1_{_{(T^*\Lambda)^c}}$ of its complement inside $\R^{2d}$) and for each $x \in \Lambda$, $\pi_{_{T^*_x\Lambda}}$ is the orthogonal projector over $T^*_x \Lambda = \ker \nabla g(x)$ inside $\R^d$.

Furthermore, decomposing $\R^{2d}$ in a neighbourhood of $\Lambda$ as the bundle $E\Lambda$ with fibres $E_\sigma\Lambda = T^*_\sigma \Lambda \oplus N^*_\sigma \Lambda \oplus N_\sigma \Lambda$ (and elements $(\sigma,\zeta,\eta,\rho)$), there exists a measure $\nu$ over $\R \times ES\Lambda$, where $ES_\sigma\Lambda = T^*_\sigma \Lambda \oplus \faktor{N_\sigma \Lambda}{\R_*^+}$, satisfying the asymmetry condition
\begin{equation}\label{eq:asymmetry_formula}
\int_{\faktor{N_\sigma \Lambda}{\R^+_*}} \left( \nabla_\rho V_S (\sigma) + F (\sigma) ^t\nabla g(\sigma) \, \omega \right) \nu \left(t,\sigma,\zeta,d\omega\right) = 0 \quad {\rm in } \; \mathcal{D}' \left(\R\times T^* \Lambda\right)
\end{equation}
and such that
$$1\!\!1_{_{T^*\Lambda}} \, \mu \left(\sigma,\zeta,\eta,\rho\right) \; = \; \delta(\rho) \otimes \delta(\eta) \otimes \int_{\faktor{N_\sigma \Lambda}{\R_*^+}} \nu \left(t,\sigma,\zeta,d\omega\right).$$
\end{theorem}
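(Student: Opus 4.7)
The backbone of the argument is the quantum-classical identity
\[
\frac{d}{dt}\langle \op_\eps(a) \Psi_t^\eps , \Psi_t^\eps \rangle_{L^2(\R^d)} \;=\; \frac{i}{\eps} \langle [\hat{H}^\eps, \op_\eps(a)] \Psi_t^\eps, \Psi_t^\eps \rangle_{L^2(\R^d)} .
\]
I would first treat symbols $a \in C_0^\infty(\R^{2d})$ whose support avoids a neighbourhood of $T^*\Lambda$ inside $\R^{2d}$. On such support $V$ is smooth (modulo the contribution of $\Lambda \setminus \Omega$, negligible in the limit by Lemma \ref{lem:m_is_zero_and_etc}), and classical symbolic calculus gives $\frac{i}{\eps}[\hat{H}^\eps, \op_\eps(a)] = \op_\eps(\lbrace H, a \rbrace) + O(\eps)$. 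Letting $\eps \to 0$ yields the second line of \eqref{eq:complete_dynamics} as a distributional equation on $\R \times (T^*\Lambda)^c$.

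The substantive content of the theorem concerns the behaviour of $\mu$ on $T^*\Lambda$, where $\nabla V$ is undefined. I would follow the two-microlocal strategy of \cite{FGL}: decompose $\R^{2d}$ near $\Lambda$ via the bundle $E\Lambda$ with fibre coordinates $(\sigma, \zeta, \eta, \rho)$, and test $\Psi^\eps_t$ against symbols homogeneous of degree $0$ in the normal momentum $\rho$, of the form $a(\sigma,\zeta,\rho/\|\rho\|)\chi(\eta,\rho)$. Weak-$*$ compactness, based on the Calderón–Vaillancourt bound applied to the relevant two-microlocal Wigner transforms (Section \ref{sec:two_microlocal_analysis}), then produces along a suitable subsequence a positive measure $\nu$ on $\R \times ES\Lambda$ encoding the angular concentration of $\mu$ in the normal cones. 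A concentration argument, exploiting that $\Omega$ corresponds to $\eta = \rho = 0$, yields the structure formula $1\!\!1_{_{T^*\Lambda}} \, \mu = \delta(\rho) \otimes \delta(\eta) \otimes \int \nu(\cdot, d\omega)$.

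For the first line of \eqref{eq:complete_dynamics} I would apply the fundamental identity to symbols supported near $T^*\Lambda$, splitting the commutator according to $V = V_S + \|g\|F$. The smooth part contributes $\xi\centerdot\d_x - \nabla V_S(x)\centerdot\d_\xi$, which, after multiplication by the factors $\delta(\eta)\delta(\rho)$ from the structure formula, becomes a transport field on $T^*\Lambda$ involving only tangential directions: normal derivatives in $\eta$ and $\rho$ are absorbed by the Dirac factors, and the decomposition $\xi = \zeta + \eta$ restricts to $\zeta = \pi_{_{T^*_x\Lambda}} \xi$ on $\Omega$. The singular contribution $[\op_\eps(\|g\|F), \op_\eps(a)]$ is handled via the two-microlocal calculus, and upon testing against symbols $a = \phi(\sigma,\zeta)\psi(\omega)$ with $\psi$ probing the $\omega$-direction, it yields the asymmetry formula \eqref{eq:asymmetry_formula}.

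The principal obstacle is the rigorous handling of the two-microlocal symbolic calculus for the singular term $\|g(x)\|F(x)$: one must expand $[\op_\eps(\|g\|F), \op_\eps(a)]$ to the order needed to extract the $\omega$-dependence while controlling remainders through the polynomial bound $\mathfrak{p}$. The asymmetry relation \eqref{eq:asymmetry_formula} is particularly delicate, since it reflects an intrinsic scattering condition at the singularity, forced upon $\nu$ by the quantum evolution rather than by any classical transport rule; verifying it requires constructing a careful family of test symbols that probe only the normal momentum direction and taking the $\eps \to 0$ limit under uniform polynomial-weight bounds on $\Psi^\eps_t$.
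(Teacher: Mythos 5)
Your proposal correctly identifies the overall strategy — the commutator identity, the two-microlocal blow-up near $\Lambda$, and the extraction of $\nu$ from the angular concentration — but it passes over the parts of the argument where the theorem is actually decided, and a couple of its descriptions are inaccurate.

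First, the decomposition near $\Omega$ is not a single blow-up but a \emph{three-scale} one, reflected in Proposition \ref{prop:two_microlocal_measures}: the symbol is cut into pieces supported at $\|x'\|\gtrsim\delta$, at $\eps R \lesssim \|x'\| \lesssim \delta$, and at $\|x'\|\lesssim \eps R$. These yield, respectively, the outer part $\mu\,1\!\!1_{\{x'\neq0\}}$, the angular measure $\nu_\infty$ on the sphere bundle, and a trace-class \emph{operator-valued} measure $M$. Your proposal makes no mention of $M$, yet one cannot write the structure formula $1\!\!1_{T^*\Lambda}\,\mu = \delta(\rho)\otimes\delta(\eta)\otimes\int\nu$ until one has shown $M=0$ and that $\nu_\infty$ concentrates on $\eta=0$. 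In the paper this is Lemma \ref{lem:m_is_zero_and_etc}: one tests the preliminary transport equation \eqref{eq:liouville_full_inhomogeneous} against symbols of the form $\delta\,\theta(t)a_1(x'/\delta)a_2(x'')b(\xi)$ and lets $\delta\to 0$, deducing that $\mu 1\!\!1_{\{x'=0\}}$ is carried by $\{\xi'=0\}$, hence $\nu_\infty$ is a Dirac mass in $\xi'$ and, crucially, the measure $\mathfrak{m}$ induced by $M$ is also supported on $\{\xi'=0\}$ — which, combined with its absolute continuity with respect to $d\xi'd\rho$ (Lemma \ref{lem:m_absolutely_continuous}), forces $\mathfrak{m}=0$ and hence $M=0$ (Lemma \ref{lem:m_0_gives_m_0}). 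Your ``concentration argument, exploiting that $\Omega$ corresponds to $\eta=\rho=0$'' appeals to the conclusion rather than proving it: the $\delta(\eta)$ factor is the nontrivial part, and showing $M=0$ is a genuine step you would need to supply.

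Second, the derivation of the asymmetry formula is described imprecisely. You propose testing against symbols $a=\phi(\sigma,\zeta)\psi(\omega)$ with $\psi$ probing the $\omega$-direction; but the test symbols live on $\R\times\R^{2d}$, not on $\R\times T^*\Lambda\times\mathcal{S}^{p-1}$, and the $\omega$-dependence of the statement enters only through the two-microlocal measure, not through the test function. The paper instead tests the equation with scaled symbols $a_\delta=\delta\,\theta(t)a(x)b_1(\xi'/\delta)b_2(\xi'')$ (Lemma \ref{lem:symmetry_condition}), after first showing $\nabla_{x'}V(x)1\!\!1_{\{x'\neq0\}}1\!\!1_{\{\xi'=0\}}\mu=0$ (Lemma \ref{lem:technical}); the surviving $\partial_{\xi'}$-contributions produce the balance $\int(\nabla_\rho V_S+F\,{}^t\nabla g\,\omega)\,\nu\,d\omega=0$.

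Finally, note $\rho$ in the statement is the normal \emph{position} variable (in $N_\sigma\Lambda$), not ``normal momentum''; the conormal momentum is $\eta$. Also, the paper first straightens $\Lambda$ to a subspace $\{x'=0\}$ via Proposition \ref{prop:change_variables}, which modifies the kinetic term to $T_\phi\Delta T_\phi^*$ and introduces the matrix $D(x)$; working intrinsically over a general $\Lambda$ as you propose is possible but would require a more careful invariant formulation of the two-microlocal calculus than what is cited.
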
 

\begin{remark}\label{rem:remark_in_proof}
Observe that equation \eqref{eq:complete_dynamics} is the sum of two Liouville terms, one for the potential $V$ outside $\Omega$ and another for $\left. V_S\right|_\Lambda$ over it. Usual transport under $V$ is assured away from $\Omega$ by \cite{FGL}, thus, if no trajectories outside $\Omega$ lead to or from it, these both terms are shown to cancel on their own\footnote{To see this, just test $\mu$ against functions in $C_0^\infty\left(\R^{2d}\setminus\Omega\right)$ and consider the local conservation of mass.} and the equation decouples into two independent transport phenomena, one inside and the other outside $\Omega$, the regularity of the insider flow being guaranteed by the Picard-Lindelöf theorem, as $\left. V_S \right|_{\Omega}$ is smooth in the topological space $\Omega$.
\end{remark}

\begin{remark}\label{rem:static_equilibrium}
The second part of the theorem, the asymmetry formula, will be discussed ahead in Section \ref{sec:second_question} and turns out to indicate that any mass that stays on the singularity will be in static equilibrium over it.
\end{remark}

Remark \ref{rem:remark_in_proof} immediately gives:
\begin{theorem}\label{th:main_result_2}
In the same conditions of Theorem \ref{th:main_result_1}, suppose that no Hamiltonian trajectories lead into $\Omega$. Call $\Phi$ the Hamiltonian flow defined by the trajectories induced by $V$ for $(x,\xi) \notin \Omega$ and by $\left. V_S \right|_\Lambda$ for $(x,\xi) \in \Omega$. Then, writing $\mu(t,x,\xi) = \mu_t(x,\xi)dt$, one has $\mu_t = \Phi_t^* \mu_0$ for all $t \in \R$, where $\mu_0$ is the Wigner measure of the family $(\Psi^\eps_0)_{\eps > 0}$.
\end{theorem}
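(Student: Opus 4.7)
The plan is to reduce Theorem \ref{th:main_result_2} to Theorem \ref{th:main_result_1} by showing that, under the non-entry hypothesis, equation \eqref{eq:complete_dynamics} decouples into two independent standard Liouville equations — one on $\Omega=T^*\Lambda$ driven by $\left.V_S\right|_\Lambda$, and one on $(T^*\Lambda)^c$ driven by $V$ — each with a smooth vector field amenable to the usual characteristic-method argument.

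First I would split $\mu_t=\mu_t^{\mathrm{in}}+\mu_t^{\mathrm{out}}$, where $\mu_t^{\mathrm{in}}=1\!\!1_{T^*\Lambda}\mu_t$ and $\mu_t^{\mathrm{out}}=1\!\!1_{(T^*\Lambda)^c}\mu_t$. Testing \eqref{eq:complete_dynamics} against $a\in C_0^\infty\bigl(\R\times(\R^{2d}\setminus\Omega)\bigr)$ only the outer term contributes, so $\mu^{\mathrm{out}}$ satisfies the classical Liouville equation for the smooth potential $V$ away from $\Omega$ (this is the content of \cite{FGL} recovered in the present setting). Under the hypothesis, the full Hamiltonian flow $\Phi_t$ of $V$ starting in $(T^*\Lambda)^c$ never reaches $\Omega$, and by time-reversal symmetry of \eqref{eq:hamilton} it never leaves $(T^*\Lambda)^c$ in either time direction; thus $(\Phi_t)_*\mu_0^{\mathrm{out}}$ remains entirely in $(T^*\Lambda)^c$ and its mass is conserved. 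Since the total mass $\int d\mu_t=1$ is invariant, the inner mass is also conserved, which forces the two distributional terms in \eqref{eq:complete_dynamics} to vanish separately, as indicated in Remark \ref{rem:remark_in_proof}.

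Next I would treat each piece independently. On $(T^*\Lambda)^c$ the potential $V$ is $C^\infty$, Rellich-bounded, and its flow $\Phi_t$ is complete; the standard computation
\[
\frac{d}{dt}\int a\circ\Phi_{-t}\,d\mu_t^{\mathrm{out}}=0\quad\text{for all } a\in C_0^\infty\bigl((T^*\Lambda)^c\bigr)
\]
follows from combining \eqref{eq:hamilton} with the Liouville equation satisfied by $\mu^{\mathrm{out}}$, and yields $\mu_t^{\mathrm{out}}=\Phi_t^*\mu_0^{\mathrm{out}}$. On $\Omega$, since $\nabla g$ has full rank, $\Lambda$ is a smooth embedded submanifold and $T^*\Lambda$ is a smooth symplectic manifold; the restriction $\left.V_S\right|_\Lambda$ is smooth with bounded second derivatives inherited from the polynomial bound $\mathfrak{p}$, so Picard--Lindelöf provides a unique complete flow $\Phi_t^{\mathrm{in}}$ on $T^*\Lambda$, which by definition coincides with the insider part of $\Phi_t$ in the theorem. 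The first term of \eqref{eq:complete_dynamics} is precisely the intrinsic Liouville equation for this flow, so the same characteristic argument gives $\mu_t^{\mathrm{in}}=\Phi_t^*\mu_0^{\mathrm{in}}$. Summing the two pieces yields $\mu_t=\Phi_t^*\mu_0$ in $\mathcal{D}'(\R^{2d})$ for every $t\in\R$.

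The hard part is the decoupling step: the indicatrices $1\!\!1_{T^*\Lambda}$ and $1\!\!1_{(T^*\Lambda)^c}$ are not smooth, so one must rule out a singular flux of mass across $\partial\Omega$ hidden inside equation \eqref{eq:complete_dynamics}. I would handle this by testing against a family of cut-offs $\chi_\delta$ supported in a $\delta$-neighbourhood of $\Omega$ and letting $\delta\to0$, using (a) the continuity $t\mapsto\mu_t$, (b) the almost-everywhere vanishing $\mu_t(\Lambda\setminus\Omega)=0$ recovered in Lemma \ref{lem:m_is_zero_and_etc}, and (c) the non-entry hypothesis, which ensures that the outer characteristic curves through $\operatorname{supp}\chi_\delta$ at time $t$ stay in $(T^*\Lambda)^c$ and carry no limiting mass onto $\Omega$. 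Everything else reduces to standard smooth Liouville theory.
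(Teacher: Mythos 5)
Your proposal is correct and follows essentially the paper's own route, which is contained entirely in Remark \ref{rem:remark_in_proof} and its footnote: test \eqref{eq:complete_dynamics} against symbols supported away from $\Omega$, decouple into two independent Liouville equations, and transport each piece along its flow, with Picard--Lindel\"of supplying the inner one; your time-reversal observation — closing the gap between the theorem's hypothesis (no trajectories \emph{into} $\Omega$) and the remark's tacit assumption (none \emph{to or from} $\Omega$) — is a correct and worthwhile supplement that the paper leaves implicit. One small caveat worth tightening: conservation of total mass is a single scalar constraint and does not by itself force the two distributional terms in \eqref{eq:complete_dynamics} to vanish separately; the decoupling is more cleanly obtained, as your cut-off sketch and the characteristic method away from $\Omega$ already suggest, by first concluding $\mu_t 1\!\!1_{(T^*\Lambda)^c} = (\Phi_t)_* \bigl(\mu_0 1\!\!1_{(T^*\Lambda)^c}\bigr)$ from the outer Liouville equation on $(T^*\Lambda)^c$ alone (where the non-entry hypothesis and time-reversal guarantee that $a\circ\Phi_{-t}$ remains supported in $(T^*\Lambda)^c$), and only then reading the vanishing of the inner term directly from \eqref{eq:complete_dynamics}.
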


More precisely, suppose that the Hamiltonian flow $\Phi$ can be extended in a unique way everywhere in a region $\Gamma \subset \R^{2d}$. It is sufficient in Theorem \ref{th:main_result_2} that we choose a time interval $I \subset \R$ such that $\Phi_I\left({\rm supp} (\mu_0) \right) \subset \Gamma$ to assure the transport for any $t \in I$; this recovers the result in \cite{FGL} for a particular choice where $\Gamma \cap \Omega = \emptyset$. In words, it is sufficient that our measure transported by the flow does not hit any point where the trajectories split, the fact that it may charge the singularities being irrelevant.  

\subsection{Second question: the regime change}\label{sec:second_question}
Now, what happens if some trajectories hit $\Omega$? First, realize that in this case there is never uniqueness, since there are necessarily the outgoing trajectory (which is the reverse of the incoming one) and the one whose projection on $T^*\Lambda$ evolves freely and, more importantly, whose projection on $N^*\Lambda$ remains static, what is always kinetically admissible (with $\nabla g(x) \xi = 0$, the velocity normal to $\Lambda$ is $0$, for $\ker \nabla g(x)$ is actually $T^*_x\Lambda$, as informs Theorem \ref{th:main_result_1}). The equation in Theorem \ref{th:main_result_1} says that either a transfer of mass between these two regimes, the insider and the outsider, and the continuation of the exterior transport are possible to happen, but gives no more information. 

The second part of the theorem, however, may solve the question. It has got a rich geometric interpretation, saying that the mass distribution on $\Omega$ has some asymmetry around $\Lambda$ due to the ``shape'' $F ^t\nabla g$ of the conical singularity, and that it is deformed by an exterior normal force $-\partial_\rho V_S$, in such a manner that the portion of mass that remains over the singularity will be in static equilibrium.

Indeed, the measure $\nu$ gives the mass distribution in a sphere bundle with fibres $S_\sigma \Lambda = \faktor{N_\sigma \Lambda}{\R^*_+}$ around the singularity, \emph{i.e.}, the directions $\omega$ in the exterior space from where the solutions $\Psi^\eps$ concentrate more or less intensively to the singular points.

If $\nu = 0$, then of course the measure does not stay at the singularity, so it necessarily continues under the exterior regime (regardless of whether it is unambiguously defined or not) and we got all the information $\nu$ may give; let us then suppose $\nu\neq 0$.

Loosely, let us also consider $\nu$ as a function of $(\sigma,\zeta)$ (as if it was absolutely continuous with respect to $d\sigma d\zeta$) and let us say that $\int_{S_\sigma\Lambda} \nu(t,\sigma,\zeta,d\omega)$ gives the total mass $\mathcal{M}$ on the point $(\sigma,\zeta)$ (though it actually gives a mass density over the phase space) at the instant $t$, supposed not to be $0$. Naturally, $\int_{S_\sigma\Lambda} \omega \, \nu(t,\sigma,\zeta,d\omega)$ gives the average vector of concentration to this point, whose normalization by $\mathcal{M}$ we will call $\vec{D}(\sigma)$.    

\begin{remark}
Realize that the speed the mass may have tangentially to the singular space $\Lambda$, that we call $\zeta$, plays no role in dictating how the quantum concentration will happen thereon; with simple hypotheses on the family $(\Psi^\eps_0)_{\eps > 0}$ (like $\eps$-oscillation, see \cite{gerard}), one has $\int\mu(x,d\xi) < \infty$,  and the same for $\nu$ since $\nu \ll \mu$, so we could be working directly with $\int_{ES_\sigma \Lambda}\nu(t,\sigma,d\zeta,d\omega)$.
\end{remark}

Well, the derivative of a potential is a force, so let us call $\vec{F}^\perp (\sigma) = - \partial_{\rho} V_S (\sigma)$ the force normal to the singular manifold $\Lambda$ at the point $\sigma$. Consequently, the asymmetry formula in Theorem \ref{th:main_result_1} is imposing a simple condition on the mass concentration:
\begin{equation}\label{eq:simple_asymmetry}
F(\sigma) ^t\nabla g(\sigma) \vec{D} (\sigma) = \vec{F}^\perp(\sigma),
\end{equation}
which is to say that, in average, the mass should concentrate alongside the exterior normal force $\vec{F}^\perp$. How strong the concentration will be there with respect to the other points, or whether it is going to be attractive or repulsive, will depend on the shape of the conical singularity at the different points of $\Lambda$, described (so as to say) by $F ^t\nabla g$. 

An example is illustrated in Figure \ref{fig:asymetry} below.

\begin{figure}[htpb!]\center\footnotesize
\includegraphics[width=0.9\textwidth]{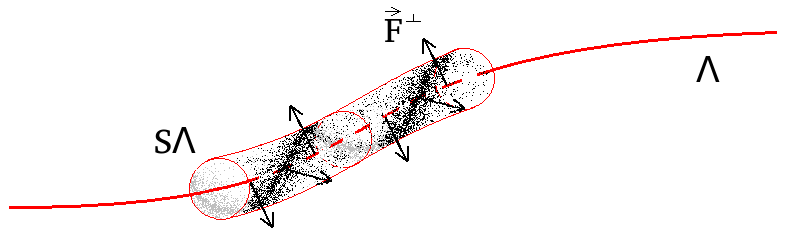}
\caption{\footnotesize Above, we depict $\Lambda$ encircled by its normal bundle in sphere $S\Lambda$ (with fibres $S_\sigma \Lambda = \faktor{N_\sigma \Lambda}{\R^+_*}$). The mass concentration given by $\nu$ is represented by the more or less strongly shadowed regions, and is directed by the normal force $\vec{F}^\perp$ which is spinning around $\Lambda$ in the example. Here, we took $F(\sigma) = 1$ and $\nabla g(\sigma) = 1\!\!1$.}
\label{fig:asymetry}
\end{figure}

The reason why we have said that equation \eqref{eq:asymmetry_formula} is a condition of equilibrium is that the expression inside the integral is similar to what would be the total force normal to $\Lambda$, \emph{i.e.} $-\partial_\rho V$, when calculated on $\Lambda$ (where $g = 0$) and making sense of $\omega$ as some limit of $\frac{1}{\| \nabla g(\sigma)^{-1} g(x) \|} \nabla g(\sigma)^{-1} g(x)$ (a vector in $N_\sigma\Lambda$) when $x$ approaches the singularity from a particular direction. So we are also tempted to interpret \eqref{eq:asymmetry_formula} as saying that the total force normal to $\Lambda$ on some mass staying on the singularity, to be given by the integral, is $0$: this mass is in static equilibrium. 
 
Besides, in formula \eqref{eq:simple_asymmetry}, we have $\left\| \vec{D}(\sigma) \right\| \leqslant 1$, since $\vec{D}(\sigma)$ is an average of norm $1$ vectors in $N_\sigma\Lambda$. Consequently, we must have $\left\|  F(\sigma) ^t\nabla g(\sigma) \right\| \geqslant \left\| \vec{F}^\perp (\sigma) \right\|$ not to be led to an absurd. If this is not the case, then we must have $\nu = 0$ in order to satisfy the asymmetry condition trivially, which means that the Wigner measure will not stay on the singularity. This reasoning will be made rigorous in Section \ref{sec:asymmetry_condition_in_use}, where we will prove:

\begin{theorem}\label{th:main_result_4}
If for some $\sigma \in \Lambda$ one has $\left\| F(\sigma) ^t\nabla g(\sigma) \right\|_{\mathcal{L}(N_\sigma \Lambda)} < \left\| \partial_\rho V_S(\sigma) \right\|_{N_\sigma \Lambda}$, then there exists a neighbourhood $\Gamma \subset \Lambda$ of $\sigma$ such that $\nu = 0$ over $\R \times ES \Gamma$, where $ES_\sigma \Gamma = T^*_\sigma \Gamma \oplus \faktor{N_\sigma \Gamma}{\R_*^+}$.
\end{theorem}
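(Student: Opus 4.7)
The plan is to disintegrate $\nu$ along the sphere fibration $\R \times ES\Lambda \to \R\times T^*\Lambda$, rewrite \eqref{eq:asymmetry_formula} as a pointwise (almost everywhere) linear relation, and then use the strict norm inequality together with continuity to kill the support of $\nu$ in a neighborhood of $\sigma$.

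First, since $\nu$ is a nonnegative Radon measure, disintegrate
\begin{equation*}
\nu(dt,d\sigma,d\zeta,d\omega) \; = \; \nu_{t,\sigma,\zeta}(d\omega)\, M(dt,d\sigma,d\zeta),
\end{equation*}
where $M$ is the projection of $\nu$ to $\R\times T^*\Lambda$ and, for $M$-a.e.~$(t,\sigma,\zeta)$, $\nu_{t,\sigma,\zeta}$ is a probability measure on the sphere $\faktor{N_\sigma \Lambda}{\R_*^+}$. Define the average direction
\begin{equation*}
\vec{D}(t,\sigma,\zeta) \; := \; \int_{\faktor{N_\sigma \Lambda}{\R_*^+}}\omega\,\nu_{t,\sigma,\zeta}(d\omega),
\end{equation*}
a Borel vector field valued in the closed unit ball of $N_\sigma\Lambda$. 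Under this disintegration, the asymmetry formula \eqref{eq:asymmetry_formula} becomes the identity of vector-valued Radon measures on $\R\times T^*\Lambda$
\begin{equation*}
\bigl(\partial_\rho V_S(\sigma) \,+\, F(\sigma)\,^t\nabla g(\sigma)\,\vec{D}(t,\sigma,\zeta)\bigr)\,M(dt,d\sigma,d\zeta) \; = \; 0.
\end{equation*}

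Because $M\geq 0$, uniqueness of the Radon--Nikodym derivative yields the pointwise identity $\partial_\rho V_S(\sigma) = -F(\sigma)\,^t\nabla g(\sigma)\,\vec{D}(t,\sigma,\zeta)$ for $M$-a.e.~$(t,\sigma,\zeta)$. Taking norms in $N_\sigma\Lambda$ and using $\|\vec{D}\|\leq 1$ gives
\begin{equation*}
\|\partial_\rho V_S(\sigma)\|_{N_\sigma\Lambda} \;\leq\; \|F(\sigma)\,^t\nabla g(\sigma)\|_{\mathcal{L}(N_\sigma\Lambda)} \qquad M\text{-a.e.},
\end{equation*}
so the closed support of $M$ is contained in the closed set $Z = \bigl\{\sigma\in\Lambda : \|\partial_\rho V_S(\sigma)\|_{N_\sigma\Lambda}\leq \|F(\sigma)\,^t\nabla g(\sigma)\|_{\mathcal{L}(N_\sigma\Lambda)}\bigr\}$.

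To conclude, the hypothesis of the theorem places $\sigma$ in the complement $\Lambda\setminus Z$, which is open by smoothness of $V_S$, $F$ and $g$; pick any open neighborhood $\Gamma\subset\Lambda\setminus Z$ of $\sigma$. Then $M$ vanishes on $\R\times T^*\Gamma$, and since $\nu=\nu_{t,\sigma,\zeta}\,M$, so does $\nu$ on $\R\times ES\Gamma$. The only delicate step is the passage from the distributional identity \eqref{eq:asymmetry_formula} to the $M$-a.e.~linear relation; it requires interpreting the asymmetry formula as an equality of vector-valued Radon measures, each given as the product of a Borel vector field with the common nonnegative reference $M$, so that uniqueness of the Radon--Nikodym derivative delivers the sought pointwise equation. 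The subsequent norm bound and the neighborhood argument are then essentially routine.
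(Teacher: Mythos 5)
Your proof is correct and arrives at the same conclusion by the same underlying mechanism as the paper — the asymmetry condition, positivity of $\nu$, and the strict norm inequality near $\sigma$ — but the technical route differs in a way worth noting. The paper tests \eqref{eq:asymmetry_formula} directly against scalar $a\in C_0^\infty(\R\times T^*\Gamma)$ and argues that the vector-valued integrand $a(t,\sigma,\zeta)\bigl(\partial_\rho V_S(\sigma)+F(\sigma)\,^t\nabla g(\sigma)\omega\bigr)$ has strictly positive norm on $\mathrm{supp}\,a$, concluding $\nu=0$ there. As stated that inference is slightly abbreviated: a vector-valued integral against a positive measure can vanish even when the integrand never does, unless one also observes that on a small enough $\Gamma$ all integrand vectors lie in a single open half-space (say, pair against the fixed direction $\partial_\rho V_S(\sigma)$), so that the scalar inner product is bounded away from zero and the positivity argument closes. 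Your disintegration $\nu=\nu_{t,\sigma,\zeta}(d\omega)\,M(dt,d\sigma,d\zeta)$ with barycenter $\vec{D}(t,\sigma,\zeta)$ in the closed unit ball sidesteps that subtlety cleanly: the asymmetry formula becomes an identity of vector-valued Radon measures absolutely continuous with respect to $M$, uniqueness of the Radon--Nikodym derivative gives the pointwise relation $\partial_\rho V_S(\sigma)=-F(\sigma)\,^t\nabla g(\sigma)\vec{D}$ for $M$-a.e.\ $(t,\sigma,\zeta)$, and taking norms with $\|\vec{D}\|\le 1$ forces $\mathrm{supp}\,M$ into the closed set $Z$ where $\|\partial_\rho V_S\|\le\|F\,^t\nabla g\|$. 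This makes rigorous the heuristic picture from Section \ref{sec:second_question} (where $\vec{D}$ was introduced informally) and delivers the conclusion without the half-space observation. Each approach buys the same theorem; yours is a bit heavier on measure-theoretic machinery but correspondingly harder to leave a gap in.
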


Once established that in some cases the mass is forbidden to stay over the singularity, it is worth studying more deeply the ways it can get in and out $\Omega$:
\begin{theorem}\label{th:accessory_result_1}
Supposing $\left\| F(\sigma) ^t\nabla g(\sigma) \right\|_{\mathcal{L}(N_\sigma \Lambda)} < \left\| \partial_\rho V_S(\sigma) \right\|_{N_\sigma \Lambda}$, for any trajectory $(x(t),\xi(t))$ leading out from or into $\Omega$ in $\sigma \in \Lambda$ at $t_0 \in \R$, set the $N_\sigma \Lambda$ vector $\rho (t) = \frac{2}{(t-t_0)^2}\nabla g(\sigma)^{-1}g(x(t))$; then, if $\lim_{t \rightarrow t_0^\pm} \frac{\rho(t)}{\|\rho(t)\|}$ is well-defined, $\rho (t)$ also has well-defined lateral limits $\rho_0^\pm$ when $t \longrightarrow t_0^\pm$, which are non-zero roots of
\begin{equation}\label{eq:aiai}
\rho_0 = -\partial_\rho V_S(\sigma) - F(\sigma) ^t\nabla g(\sigma)\frac{\nabla g(\sigma) \rho_0}{\| \nabla g(\sigma) \rho_0 \|}.
\end{equation}
Conversely, for any $\rho_0^+$ and $\rho_0^-$ satisfying \eqref{eq:aiai}, there exists a unique continuous extension of the classical flow which passes by $\sigma$ at $t_0$ without staying on $\sigma$ and whose correspondent limits $\lim_{t \longrightarrow t_0^\pm} \rho(t)$ exist and are equal to $\rho_0^\pm$.

If \eqref{eq:aiai} has no non-zero roots, then no trajectory leads in or out $\Omega$ in $\sigma$.
\end{theorem}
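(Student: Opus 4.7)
The proof naturally splits into three pieces: the forward direction (a trajectory reaching $\Omega$ at $\sigma$ yields a non-zero root of \eqref{eq:aiai}), the converse (every such root is realised by a unique extension of the flow), and the last sentence as their contrapositive corollary. The first is a local analysis of Hamilton's equations near $\sigma$, the second an existence argument for an ODE that becomes singular at $t_0$.

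For the forward direction, I would start from the observation that, since $x(t_0)=\sigma\in\Lambda$ and $\xi(t_0)\in\ker\nabla g(\sigma) = T^*_\sigma\Lambda$, the curve $r(t):=\nabla g(\sigma)^{-1}g(x(t))$ satisfies $r(t_0)=\dot r(t_0)=0$. A double application of L'H\^opital therefore equates the lateral limits of $\rho(t)=2r(t)/(t-t_0)^2$ with those of $\ddot r(t)$ whenever either side exists. Differentiating $r$ twice and substituting Hamilton's equation $\ddot x = -\nabla V_S(x)-F(x)\,{}^t\nabla g(x)\,g(x)/\|g(x)\| - \|g(x)\|\nabla F(x)$, the $\nabla V_S$ term converges to $-\partial_\rho V_S(\sigma)$ after projection onto $N_\sigma\Lambda$ (using $\pi_N{}^t\nabla g(\sigma)={}^t\nabla g(\sigma)$), the $\|g\|\nabla F$ term vanishes, and the singular one is controlled via the identity $g(x(t))=\nabla g(\sigma)r(t) + O((t-t_0)^3)$, which shows that $g(x(t))/\|g(x(t))\|$ equals $\nabla g(\sigma)\rho(t)/\|\nabla g(\sigma)\rho(t)\|$ to leading order; the hypothesis that $\rho(t)/\|\rho(t)\|$ admits a limit is precisely what allows one to pass to the limit, and collapses the identity to \eqref{eq:aiai}. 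The resulting $\rho_0^\pm$ must be non-zero, for otherwise the normal part of $x(t)-\sigma$ would vanish to order higher than $2$ and the trajectory could not actually leave or enter $\Omega$ at $\sigma$. This last remark also gives the final sentence of the theorem, since any trajectory reaching $\sigma$ would otherwise produce a non-zero root of \eqref{eq:aiai}.

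For the converse, I would seek the trajectory with the ansatz $x(t)=\sigma+(t-t_0)\xi(t_0)+\tfrac12(t-t_0)^2\rho_0^\pm + y(t)$ where $y$ lives in the weighted space $X_\alpha^\pm := \{\,y\in C^2:\ y(t_0)=\dot y(t_0)=0,\ |y(t)|\lesssim|t-t_0|^{2+\alpha}\,\}$ for some $\alpha\in(0,1)$, and reformulate Hamilton's equation as a fixed-point problem $y=T(y)$ obtained by integrating twice. By construction, the requirement \eqref{eq:aiai} on $\rho_0^\pm$ is exactly what cancels the leading $O(1)$ contribution of the singular force; what remains is of order $|t-t_0|^\alpha$, so $T$ sends a small ball of $X_\alpha^\pm$ into itself. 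The key estimate is the Lipschitz character of $y\mapsto{}^t\nabla g(x)g(x)/\|g(x)\|$ on this ball, which relies on $\rho_0^\pm$ being bounded away from $0$ to keep the denominator non-degenerate along the ansatz; choosing $\delta$ small enough one makes $T$ a contraction on $[t_0,t_0+\delta]$ and on $[t_0-\delta,t_0]$, producing a unique $y$ on each side of $t_0$ that glue into the announced trajectory.

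I expect the main difficulty to lie in this last step: the ODE develops a singular coefficient at $t_0$ of exactly critical strength, so no off-the-shelf Picard–Lindel\"of statement applies. Designing the weight so that both the singular kernel and the $\|g\|\nabla F$ and $\nabla V_S$ corrections are absorbed, and obtaining Lipschitz bounds independent of the iterate, is delicate; the role of the hypothesis $\rho_0^\pm\neq 0$ is precisely to prevent $\|g(x)\|$ from decaying faster than $(t-t_0)^2$ along the ansatz. A secondary subtlety is that $\rho_0^+$ and $\rho_0^-$ may be prescribed independently, so uniqueness must be phrased in terms of the pair, and the freedom in the choice of tangential velocity $\xi(t_0)\in T^*_\sigma\Lambda$ (together with the curvature correction $\nabla g(\sigma)^{-1}\nabla^2g(\sigma)(\xi(t_0),\xi(t_0))$ that would otherwise appear in the forward direction) must be handled in a chart where $\Lambda$ is locally flat.
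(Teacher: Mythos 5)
Your forward direction is essentially the paper's, differing only in packaging: the paper computes $\theta(t)=\tfrac{2}{(t-t_0)^2}x'(t)$ through a double Taylor integral of $\ddot x'$ in the straightened coordinates of Section~\ref{sec:reducing} (where $\Lambda$ is a subspace, which also disposes of the $\nabla^2 g(\xi,\xi)$ curvature term you flag), rather than through L'H\^opital; the two procedures identify the lateral limit of $\rho(t)$ with that of the second derivative in the same way. Your converse is genuinely different: the paper enlarges the unknown to a triple $(x,\xi,\vartheta)$ where the auxiliary variable $\vartheta$ satisfies $\tfrac{d}{dt}\bigl(\tfrac{t^2}{2}\vartheta\bigr)=\lambda\dot x'$, replaces the singular $x'/\|x'\|$ in Hamilton's equations by the regular $\vartheta/\|\vartheta\|$, and runs Banach's fixed point in a plain $C^0$ ball around $(x_0,\xi_0,\lambda\theta_0)$; you instead subtract the leading parabola and iterate in a weighted space $X_\alpha^\pm$. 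Both attack the same singularity by different bookkeeping, and both rely on $\rho_0^\pm\neq 0$ to keep the direction map Lipschitz along the iteration; the paper's version is shorter since no weight has to be tuned, yours gives more explicit control on the asymptotics of the trajectory. Gluing at $t_0$ when $\rho_0^+\neq\rho_0^-$ is handled identically in both.

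There is, however, one genuine gap, in your justification that $\rho_0^\pm\neq 0$. You argue that $\rho_0^\pm=0$ would force the normal displacement to vanish to order higher than two, ``so the trajectory could not actually leave or enter $\Omega$ at $\sigma$''. That implication is not available at this point: a trajectory could a priori approach $\Omega$ with $\|g(x(t))\|=o\bigl((t-t_0)^2\bigr)$, and deciding whether it actually reaches $\Omega$ in finite time requires a separate argument. Indeed, when the standing hypothesis of the theorem is dropped, Theorem~\ref{th:accessory_result_2} is precisely about such ``zero roots'' $\rho_0=0$, and the paper needs an extra lemma to show that the corresponding trajectories only approach $\Omega$ asymptotically. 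Under the hypothesis of Theorem~\ref{th:accessory_result_1} the paper deduces $\rho_0\neq 0$ directly from the strict inequality: if the lateral limit of $\theta(t)$ were zero, \eqref{eq:condition_geometric} with $\lambda=0$ would force $\partial_\rho V_S(\sigma)=-F(\sigma)\,{}^t\nabla g(\sigma)\,\omega$ for some unit $\omega$, hence $\|\partial_\rho V_S(\sigma)\|_{N_\sigma\Lambda}\leqslant\|F(\sigma)\,{}^t\nabla g(\sigma)\|_{\mathcal{L}(N_\sigma\Lambda)}$, contradicting the assumption. Your proposal never invokes $\|F\,{}^t\nabla g\|<\|\partial_\rho V_S\|$ in the forward direction, and that is exactly where it is needed. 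A secondary imprecision: the quadratic coefficient in your ansatz should be the full limit acceleration $-\nabla V_S(\sigma)-F(\sigma)\,{}^t\nabla g(\sigma)\,\nabla g(\sigma)\rho_0/\|\nabla g(\sigma)\rho_0\|$, whose normal component is $\rho_0^\pm$; writing only $\rho_0^\pm$ leaves an $O\bigl((t-t_0)^2\bigr)$ tangential discrepancy that $y\in X_\alpha^\pm$ cannot absorb.
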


If $\| F(\sigma) ^t \nabla g(\sigma) \|_{\mathcal{L}(N_\sigma\Lambda)} \geqslant \| \partial_\rho V_S(\sigma) \|_{N_\sigma\Lambda}$, then $\rho(t)$ may converge laterally to $0$ even if $\frac{1}{\| \nabla g(\sigma) \rho(t) \|}\nabla g(\sigma) \rho(t)$ has a well-defined lateral limit that we denote $\frac{1}{\| \nabla g(\sigma)\rho_0 \|} \nabla g(\sigma) \rho_0$. We will abusively call $\rho_0$ ``zero roots'' of \eqref{eq:aiai} and say that trajectories reach or leave $\Omega$ in $\sigma$ following the respective directions $\rho_0^\pm$ if $\lim_{t \rightarrow t_0^\pm} \frac{1}{\| \nabla g(\sigma) \rho(t) \|} \nabla g(\sigma) \rho(t)$ exists and is equal to $\frac{1}{\| \nabla g(\sigma )\rho_0^\pm \|}\nabla g(\sigma )\rho_0^\pm$. Sometimes it may be that an incoming trajectory only approaches this limit asymptotically at $t_0^- = \infty$.

\begin{theorem}\label{th:accessory_result_2}
If $\| F(\sigma) ^t \nabla g(\sigma) \|_{\mathcal{L}(N_\sigma\Lambda)} \geqslant \| \partial_\rho V_S(\sigma) \|_{N_\sigma\Lambda}$, at least one of the following affirmations holds:
\begin{itemize}
\item Equation \eqref{eq:aiai} has non-zero roots $\rho_0^\pm$ and there are unique trajectories leaving and arriving on $\Omega$ in $\sigma$ through the directions $\rho_0^\pm$;
\item Equation \eqref{eq:aiai} has ``zero roots'', in the sense that there are $\rho_0 \neq 0$ such that
\begin{equation}\label{eq:zero_roots}
F(\sigma) ^t\nabla g(\sigma)\frac{\nabla g(\sigma) \rho_0}{\| \nabla g(\sigma) \rho_0 \|} + \partial_\rho V_S(\sigma) = 0,
\end{equation}
and either there is no trajectories reaching $\Omega$ in $\sigma$ through $\rho_0$, or they exist but do not arrive onto $\Omega$ within any finite time;
\item The classical flow does not touch $\Omega$ in $\sigma$ through any well-defined direction. 
\end{itemize} 
\end{theorem}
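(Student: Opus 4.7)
The plan is to push the asymptotic analysis behind Theorem \ref{th:accessory_result_1} into the borderline regime, where the strict inequality $\|F{}^t\nabla g\|<\|\partial_\rho V_S\|$ that previously excluded the degenerate case $\rho_0 = 0$ is dropped. Allowing the degeneration is precisely what produces the extra two bullets of the trichotomy.

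First I would re-derive, as in the previous theorem, the fundamental asymptotic identity. Let $(x(t),\xi(t))$ be a trajectory of the regular flow approaching $\sigma \in \Lambda$ as $t \to t_0^\pm$, with $\xi(t) \to \xi_0 \in T^*_\sigma \Lambda$ and such that $\hat\rho(t) := \nabla g(\sigma)\rho(t)/\|\nabla g(\sigma)\rho(t)\|$ admits a limit $\hat\rho_0$. Taylor-expanding $g(x(t)) = \nabla g(\sigma)(x(t)-\sigma) + O(\|x(t)-\sigma\|^2)$ and plugging in the Newton equation $\ddot{x} = -\nabla V$, together with the splitting $\nabla V = \nabla V_S + \frac{{}^t\nabla g \cdot g}{\|g\|}F + \|g\|\nabla F$, one obtains in the limit
\[
\rho(t)\;\underset{t\to t_0^\pm}{\longrightarrow}\;\rho_0 := -\partial_\rho V_S(\sigma) - F(\sigma){}^t\nabla g(\sigma)\,\hat\rho_0 .
\]
Thus any well-directed approach must produce a limit vector $\rho_0$ that is either a non-zero root of \eqref{eq:aiai} or, when $\rho_0 = 0$, a solution of \eqref{eq:zero_roots}.

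The first bullet is then obtained exactly as in Theorem \ref{th:accessory_result_1}: for each non-zero root $\rho_0^\pm$ of \eqref{eq:aiai}, the reparametrised ODE for $\rho(t)$ around the fixed direction $\nabla g(\sigma)\rho_0^\pm/\|\nabla g(\sigma)\rho_0^\pm\|$ is a contraction, producing a unique trajectory on each side of $t_0$ realising that root; nowhere in that argument was the strict inequality $\|F{}^t\nabla g\|<\|\partial_\rho V_S\|$ used beyond ensuring $\rho_0\neq 0$. Conversely, if neither \eqref{eq:aiai} nor \eqref{eq:zero_roots} has a solution, the asymptotic identity cannot hold and no well-directed trajectory can touch $\Omega$ at $\sigma$: the third bullet holds.

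The second bullet deals with the remaining case. If $\rho_0 \neq 0$ satisfies \eqref{eq:zero_roots}, I would write $y(t) = \pi_{N_\sigma\Lambda}(x(t)-\sigma)$; the zero-root relation makes the leading normal force cancel along the ray $y \in \R_+\cdot(\rho_0/\|\rho_0\|)$, so $\ddot y = O(\|y\|)$ on approach and $\|y(t)\|$ decays strictly subquadratically. Combined with energy conservation $\tfrac{1}{2}\|\xi\|^2 + V(x) = E$, which forces the normal speed $\pi_{N_\sigma\Lambda}\dot x(t)$ to vanish in the limit, this is expected to yield divergence of the collision time, so that any trajectory consistent with a zero root, if it exists at all, can only reach $\sigma$ asymptotically as $|t-t_0|\to\infty$. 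The main obstacle lies here: the contraction of Theorem \ref{th:accessory_result_1} no longer closes once $\rho_0 = 0$, and one has to extract a sharp decay rate for $\|y(t)\|$ from a careful next-order expansion of $V_S$ and of $\|g\|F$ around $\sigma$ in the zero-root direction, then integrate it to derive the required divergence.
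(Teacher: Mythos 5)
Your treatment of the first and third bullets is essentially the paper's: Lemmata~\ref{lem:conditions_on_the_flow} and~\ref{lem:part_proof_2} never invoked the strict inequality except to force $\lambda\neq 0$, so a non-zero root of \eqref{eq:aiai} still produces a unique trajectory on each side, while the absence of any root of either \eqref{eq:aiai} or \eqref{eq:zero_roots} forbids a well-directed approach. Where the proposal has a genuine gap is precisely in the second bullet, and you flag it yourself: you state that the zero-root relation gives $\ddot y = O(\|y\|)$ and that energy conservation together with a ``sharp decay rate'' extracted from a next-order expansion is ``expected to yield'' divergence of the collision time. That step is never carried out. Worse, the route you sketch is exactly the sort of hard asymptotic analysis the paper avoids: it would force you to control the remainder $\|g(x)\|\nabla F$ and the variation of the unit normal direction uniformly along the approaching ray, and then integrate a differential inequality to bound the hitting time from below, none of which is done.

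The paper closes the second bullet by a much cheaper observation. From $\theta_0\neq0$ and $\lambda=0$ in \eqref{eq:condition_simple} one gets $\partial_{x'}V(x(t_\eps))\to 0$ along any candidate approaching trajectory, so one may build a \emph{smooth} extension $\tilde V$ of $V$ outside the closure of the trajectory with $\partial_{x'}\tilde V(\sigma)=0$ and $\tilde V = V$ on the trajectory itself. Then $(x'(t),\xi'(t))$ is simultaneously a trajectory of the conical problem and a trajectory of the smooth Hamiltonian system for $\tilde V$, and for the latter the Picard--Lindel\"of uniqueness of the constant solution $(0,0)$ forbids reaching the critical point $\sigma$ with vanishing normal momentum in finite time. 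This sidesteps entirely the quantitative decay estimates you propose. If you want to salvage your approach, you would need to make precise and integrate the estimate on $\|y(t)\|$; but the cleaner move is the smooth-extension argument, which you should adopt instead.
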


\begin{remark}
In any case, if equation \eqref{eq:aiai} has no roots (``zero'' or non-zero), then no classical trajectory passes by $\Omega$ in $\sigma \in \Lambda$.
\end{remark}

\begin{remark}
In \cite{moi}, we will endeavour a more precise study of the link between $\nu$ and the classical flow, generalizing the link between Theorem \ref{th:main_result_4} and Theorems \ref{th:accessory_result_1} and \ref{th:accessory_result_2}.
\end{remark}

In Section \ref{sec:asymmetry_condition_in_use} we will work out the proof of Theorems \ref{th:main_result_4}, \ref{th:accessory_result_1} and \ref{th:accessory_result_2} in coordinates that are more suitable to understand $\rho(t)$ as an approaching direction. Besides, we will see in Section \ref{sec:examples}, by means of a number of examples, that these results allow a full classification of the types of trajectories that may reach or stay on the singularities. In some cases, like for $d = 1$ or $V(x) = \pm \|x\|$ in $\R^d$, they allow a full resolution of the problem when the inequality in Theorem \ref{th:main_result_4} holds, since by solving explicitly \eqref{eq:aiai} one can verify that there is only a unique trajectory leading in and out the singularity without staying thereon, and necessarily the measures will follow it and not charge $\Omega$. 

\medskip

In short, so far we have seen that whenever we have a well-defined flow, we know what the semiclassical measures do: they are transported thereby. If the flow presents trajectory splits, they necessarily happen on $\Omega$, where there is always the possibility of regime change between outsider and insider flows. Then, thanks to the measure $\nu$, we may be able to obtain enough information to decide whether the measures stay or not on the singularity, and in case they stay, we know that they will be carried by the flow generated by $\left. V_S\right|_\Lambda$.

\subsection{Third question: trajectory crossings}
Finally, a last problem is: if a measure does not stay on $\Omega$ and continues in the exterior flow, but even though there are different trajectories to take, can we derive from the well-posed quantum evolution some general criterion for choosing the actual trajectories that the measure will follow? Is there any selection principle for the classical movement of a particle under such conical potentials?

As we will see in Section \ref{sec:splitting}, the answer is negative. The path a Wigner measure (or a particle) takes after its trajectory splits depends crucially on its quantum state concentration, so any selection principle making appeal only to purely classical or semiclassical information is to be dismissed.

This can be justified by:
\begin{theorem}\label{th:main_result_3}
Let be $V(x) = -|x|$ in $\R$ and $\mu^1$ and $\mu^2$ the Wigner measures associated to the solutions of \eqref{eq:schrodinger} with initial data
$$\Psi^{\eps,1}_0(x) = \frac{1}{\eps^{\frac{1}{4}}} \Psi^1\left(\frac{x}{\sqrt{\eps}}\right) \quad {\rm and} \quad \Psi^{\eps,2}_0(x) = \frac{1}{\eps^{\frac{1}{4}}} \Psi^2\left(\frac{x}{\sqrt{\eps}}\right)e^{-i \eps^{\beta -1} x},$$
with $0 < \beta < \frac{1}{10}$, $\Psi^1, \Psi^2 \in C_0^\infty(\R)$ and $\Psi^1$ supported on $x > 0$. Then, for $t \leqslant 0$,
$$\mu_t^1(x,\xi) = \mu_t^2(x,\xi) = \delta\left(x-\frac{t^2}{2}\right) \otimes \delta\left(\xi + t\right);$$
nevertheless, for $t > 0$,  
$$\mu_t^1(x,\xi) = \delta\left(x-\frac{t^2}{2}\right) \otimes \delta\left(\xi - t\right)$$
whereas
$$\mu_t^2(x,\xi) = \delta\left(x+\frac{t^2}{2}\right) \otimes \delta\left(\xi + t\right).$$
\end{theorem}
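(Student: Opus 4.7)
The plan is to compute $\mu^1$ and $\mu^2$ by direct semiclassical analysis of each wave packet, exploiting that on each open half-line $\{x\gtrless 0\}$ the potential $V=-|x|$ reduces to a linear potential $V_\pm(x)=\mp x$ whose Schr\"odinger dynamics is exactly solvable and propagates coherent states along their classical trajectories. As a preliminary step, I would verify that $\mu_0^1=\mu_0^2=\delta(x)\otimes\delta(\xi)$ via the standard rescaling $x\mapsto\sqrt\eps\,y$ in the Wigner transform: both initial data are $\sqrt\eps$-coherent states at the origin, and the phase $e^{-i\eps^{\beta-1}x}$ in $\Psi_0^{\eps,2}$ only produces a semiclassical momentum shift of order $\eps^\beta$, which vanishes because $\beta>0$.

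For Packet 1, the hypothesis $\mathrm{supp}\,\Psi^1\subset(0,+\infty)$ localizes $\Psi_0^{\eps,1}$ in $\{x>0\}$; the classical flow of $V_+$ starting at $(0,0)$ stays in $\{x\ge 0\}$ for all times, and the wave packet propagated by $\hat{H}_+^\eps=-\frac{\eps^2}{2}\Delta+V_+$ remains concentrated around that trajectory at the semiclassical scale $\sqrt\eps$. For any fixed $t\neq 0$, in the limit $\eps\to 0$ the packet sits at $x\approx t^2/2$, many widths away from the origin, so the difference $V-V_+$ (supported in $\{x\le 0\}$) is disjoint from the essential support of the propagated packet, and a Duhamel comparison shows that the two evolutions agree in $L^2$ up to $o(1)$. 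Taking Wigner measures of the limiting packet then yields $\mu_t^1$ as the Dirac mass on the classical trajectory, matching the stated formula for all $t\in\R$.

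For Packet 2, the phase shifts the semiclassical center of $\Psi_0^{\eps,2}$ to $(0,-\eps^\beta)$, whose classical trajectory passes through the origin at $t=0$: it lies in $\{x>0\}$ for $t<0$ (and thus matches Packet 1 in the limit) and in $\{x<0\}$ for $t>0$. I would apply the same Duhamel comparison with $\hat{H}_+^\eps$ for $t<0$ and $\hat{H}_-^\eps$ for $t>0$ to recover $\mu_t^2$ away from $t=0$. The main obstacle is the transient $|t|\lesssim\eps^{1/2-\beta}$ around $t=0$, during which the packet straddles the singularity and neither $V_\pm$ uniformly approximates $V$; there I would bound the error by exploiting the smallness of the transit time together with refined estimates on the action of $\hat{H}^\eps-\hat{H}_\pm^\eps$ on a coherent state localized near the singular point. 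The explicit bound $\beta<\tfrac{1}{10}$ is what would be required to keep the various error terms --- involving the packet's width $\sqrt\eps$, its semiclassical speed $\eps^\beta$, and the transit time $\eps^{1/2-\beta}$ --- all $o(1)$ in the semiclassical limit. Passing to $\eps\to 0$ then yields the measures claimed in the theorem and, as emphasized in the paragraph preceding the statement, demonstrates that the two limiting trajectories are selected by sub-leading features of the initial quantum state rather than by any purely classical or semiclassical principle.
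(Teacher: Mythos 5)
Your overall strategy --- evaluate each datum as a wave packet, propagate with the smooth one-sided potentials $V_\pm(x)=\mp x$, and then compare with the exact $V$-evolution by Duhamel --- is the same as the paper's. The preliminary computation of $\mu_0$, the support argument for Packet 1, and the identification that the phase shifts Packet 2's center to $(0,-\eps^\beta)$ are all correct and match Propositions \ref{prop:psi_follows_the_path_1} and \ref{prop:crossing_psi}.

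However, the technical core of the argument is exactly the step you leave vague, and a naive Duhamel comparison does not close. For Packet 1, saying the $L^2$-agreement holds ``for any fixed $t\neq 0$'' overlooks that the Duhamel/evolution bound (Proposition \ref{prop:evolution_norm}) is an \emph{integral} over $s\in(0,t)$, which runs through the moment when the packet straddles the origin and the profile has already spread beyond $\{y>0\}$. The paper handles this in Lemma \ref{lem:psi_follows_varphi_other} by inserting the finite development $v^{\delta,\pm}_t = (1+\tfrac{i}{2}t\Delta)(a\chi^\delta_\pm) + t^2\tilde v^{\delta,\pm}_t$: the first term has support in $\{\pm y>0\}$, so it is annihilated by $R^{\eps,\pm}_t$ (supported on the wrong side of the singularity), and the surviving $t^2$ factor combined with a split of the time interval at $\tau_\eps=\eps^{1/4-\alpha}$ makes the error integrable and $o(1)$.

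For Packet 2 the gap is more serious. During the transit window the crude Duhamel estimate $\frac{1}{\eps}\int\|(V-V_\pm)\varphi^\eps_s\|\,ds$ accumulates an error of order $\eps^{-\beta}$ (error density $\sim\eps^{-1/2}$ over a window of length $\sim\eps^{1/2-\beta}$), which diverges precisely because $\beta>0$; ``smallness of the transit time'' alone does not save this. The paper's solution is the phase-corrected profile
$$\tilde v^{\eps,\eta}_t(y)=e^{-\frac{i}{\eps}\int_0^t R^{\eps,\eta}_s(y)\,ds}\,a(y),$$
used on $|t|\le\tau_\eps$, which absorbs the entire singular error $R^{\eps,\eta}$ into a phase; one then compares $\nabla u^\eps$ and $\nabla\tilde v^\eps$ rather than the potential terms, using the key fact (from \eqref{eq:key_inequality}--\eqref{eq:derivative_integral}) that $\partial_y I^\eps(t,y)\lesssim|\eta_\eps|^{-1}$ because the $s$-support of $R^{\eps,\eta}_s(y)$ is $\lesssim\sqrt\eps/|\eta_\eps|$. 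Only after this step do the constraints \eqref{eq:constraint_1} and \eqref{eq:constraint_2} produce $\beta<1/10$. You also need a separate argument (Lemma \ref{lem:psi_follows_the_path}) to conclude that the now $\eps$-dependent profile still concentrates to the intended Dirac, since Lemma \ref{lem:path_wave_packet} as stated requires an $\eps$-independent profile. These are not routine ``refined estimates'' but the actual substance of the proof, and without them the proposal does not yet constitute a proof of the $t>0$ statement for $\mu^2$.
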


In pictures, the particle $\mu^1$ follows the path in Figure \ref{fig:paths}(a), and the particle $\mu^2$ moves as in Figure \ref{fig:paths}(b).

\begin{figure}[htpb!]\center\footnotesize
\begin{multicols}{2}
\includegraphics[width=0.3\textwidth]{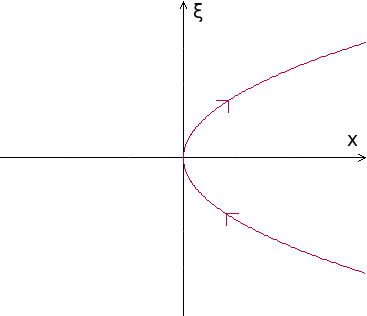} \text{(a) Trajectory of $\mu^1$.} \\
\includegraphics[width=0.3\textwidth]{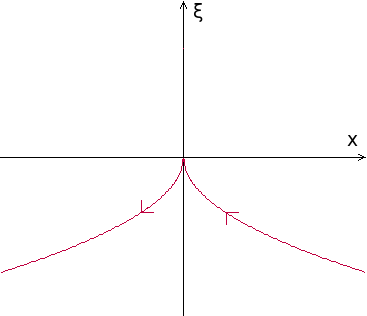} \text{(b) Trajectory of $\mu^2$.} \\
\end{multicols}
\caption{\footnotesize Trajectories followed by two different particles, coinciding for $t \leqslant 0$, but then diverging for $t > 0$, which dismisses any selection principle based only on classical information about the problem.}
\label{fig:paths}
\end{figure}

This result will be obtained with the help of approximative solutions of \eqref{eq:schrodinger} called \emph{wave packets}, which are $L^\infty\left(\R,L^2(\R^d)\right)$ functions generally of the form
$$\varphi^\eps_t(x) = \frac{1}{\eps^{\frac{1}{4}}} v_t \left(\frac{x-x(t)}{\sqrt{\eps}}\right) e^{\frac{i}{\eps}\left[\xi(t)\centerdot \left(x-x(t)\right) + S(t)\right]}$$
($S$ is the classical action), to be properly introduced in Section \ref{sec:the_wave_packets}. The standard methods using wave packet presented in that section, however, only apply for smooth flows, and in both cases the trajectories in Figure \ref{fig:paths} have problems over the axis $x= 0$, not to speak about the lack of regularity of $V$. 

In the case of the returning particle, the problem will be solved by decomposing the initial data into two pieces, for $x > 0$ and $x < 0$, and treating each one with a standard wave packet set to follow a different parabola. We will see in Proposition \ref{prop:psi_follows_the_path_1} that the Wigner measure initially set on the singularity will break out into two pieces $\mu^+$ and $\mu^-$ with weights given by the total mass of the initial data over $x> 0$ and $x< 0$ respectively, $\int_0^\infty |v_0(x) |^2dx$ and $\int_{-\infty}^0 |v_0(x) |^2dx$, each piece gliding to a different side as in Figure \ref{fig:rebounding_particle}.

Yet, this does not give a full example of non-unicity as in Theorem \ref{th:main_result_3}, since, if we evolve the pieces $\mu^+$ and $\mu^-$ to the past, we realize that they do not come from the same side, and this fact could indicate some kind of selection principle.

\begin{figure}[htpb!]\center\footnotesize
\includegraphics[width=0.3\textwidth]{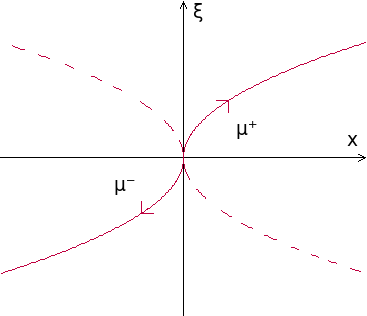}
\caption{\footnotesize Trajectories followed by the measures $\mu^+$ (right) and $\mu^-$ (left). The full line indicates the path for $t\geqslant  0$, whereas the dashed line indicates the past trajectories that the measures ought to have followed in $t < 0$ to reach the singularity.}
\label{fig:rebounding_particle}
\end{figure}

Constructing a quantum solution whose semiclassical measure behaves like in Figure \ref{fig:paths}(b) will be more difficult and will require us to consider a family of wave packets following different trajectories with smaller and smaller initial momenta $\eta$, that in some sense converge to the aimed path with $\eta = 0$, as illustrated in Figure \ref{fig:crossing_particle}. We will then study the concentration of the wave packets with $\eps$ going to $0$ at the same time as the trajectories concentrate, by making $\eta$ go to $0$ with a suitable power of $\eps$.

\begin{figure}[htpb!]\center\footnotesize
\includegraphics[width=0.35\textwidth]{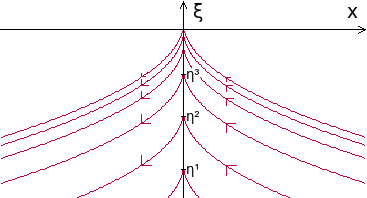}
\caption{\footnotesize The trajectories \eqref{eq:trajectory_wave_packet}} for $|\eta |= |\eta^1 | > | \eta^2 | > | \eta^3 | ...$, approaching the aimed one with $\eta = 0$.
\label{fig:crossing_particle}
\end{figure}

\subsection{Structure of the article}
In Section \ref{sec:preliminaries}, we will introduce the fundamentals of our analysis: wave packet approximations (Sec. \ref{sec:the_wave_packets}), symbolic calculus (Sec. \ref{sec:symbolic_calculus}) and two-microlocal measures (Sec. \ref{sec:two_microlocal_analysis}). In Section \ref{sec:splitting}, we will construct the solutions of the Schrödinger equation that lead to Theorem \ref{th:main_result_3}; the case keeping on the same parabola is treated in Sec. \ref{sec:same_parabola_1}, the other one in Sec. \ref{sec:other_parabola}. Finally, in Section \ref{sec:establishing_liouville} we will prove Theorem \ref{th:main_result_1}, firstly in a particular version for subspaces, what will be done step by step from Sec. \ref{sec:kinetic_term} to \ref{sec:absolute_continuity} (the part where we effectively establish the dynamical equation and the asymmetry condition being Section \ref{sec:establishing_the_equation}). Then this version will be immediately extended to the general case thanks to the coordinate change that we will have set in Section \ref{sec:reducing}. In Section \ref{sec:asymmetry_condition_in_use} we will use the asymmetry condition \eqref{eq:asymmetry_formula} to prove Theorem \ref{th:main_result_4}. Theorems \ref{th:accessory_result_1} and \ref{th:accessory_result_2} are also proven in this section, and in Sec. \ref{sec:examples} we conclude by showing with Examples \ref{ex:1} to \ref{ex:last} how the results in this article allow a full classification of the behaviours that the Wigner measures present and, sometimes, give a full description of the transport phenomenon.

\section{Preliminaries}\label{sec:preliminaries}
In this section, we will present the basics of the main tools that we use in this work. First the wave packet method for approximating solutions of the Schrödinger equation (see for example \cite{comberscure_robert} and \cite{hagedorn}, or \cite{athanas} for a generalized notion of wave packet) that we will adapt later in Section \ref{sec:splitting}, then some simple results in standard symbolic calculus (\cite{dimassi}, \cite{zworski}) which will provide a guideline for proving Theorem \ref{th:main_result_1}, and last some notions about the two-microlocal measures (\cite{CFK1}, \cite{muller}, \cite{nier}), that we will deploy in order to accomplish the necessary refined analysis for obtaining either the dynamical equation for the  Wigner measures and the asymmetry condition on the mass concentration around the singular manifold.

For the sake of simplicity, we will use these measures in a specialized version for $p$-codimensional subspaces of $\R^d$. This does not carry any loss of generality, since in Section \ref{sec:reducing} we will be able to perform a change of coordinates in the problem that will lead us to analyse such a subspace. The geometric nature of these measures, as well as that of the semiclassical ones and of Liouville-like equations, makes it possible to move between different coordinate systems respecting the structures of cotangent, normal and conormal bundles over the singular set.  

\subsection{The wave packets}\label{sec:the_wave_packets}
For a $C^2(\R^d)$ potential $V$ and one of its Hamiltonian trajectories $\left(x\left(t\right),\xi\left(t\right)\right)$, we define the wave packet with initial profile $v_0 \in L^2(\R^d)$ following $\left(x\left(t\right),\xi\left(t\right)\right)$ as
\begin{equation}\label{eq:wave_packet}
\varphi^{\eps}_t \left(x\right) = \frac{1}{\eps^{\frac{d}{4}}} v_t \left( \frac{x-x \left( t \right) }{\sqrt{\eps}} \right) e^{\frac{i}{\eps} \left[ \xi \left(t \right)\centerdot\left( x - x\left(t\right) \right) + S\left(t\right)\right]},
\end{equation}
where $S$ is the classical action $S\left(t\right) = \int_{0}^{t} \left( \frac{1}{2}\xi^{2}\left(s\right) - V\left(x\left(s\right)\right) \right)ds$ and $v$ satisfies the $\eps$-independent differential system
\begin{equation}\label{eq:wave_packet_equation}
\left\lbrace
\begin{array}{l}
i\partial_{t}v_t\left(y\right) = -\frac{1}{2}\Delta v_t\left(y\right) + \left( \frac{1}{2} \nabla^{2}V\left( x\left(t\right) \right) y \centerdot y \right) v_t\left(y\right) \\
v_{t=0}\left(y\right) = v_0 \left(y\right).
\end{array} \right.
\end{equation}

\begin{lemma}\label{lem:path_wave_packet}
Any semiclassical measure associated to the family $(\varphi^\eps)_{\eps >0}$ is
$$\mu_t\left(x,\xi\right) = \left\| v_0 \right\|_{L^2(\R^d)}^2 \delta\left(x-x(t)\right) \otimes \delta\left(\xi-\xi(t)\right).$$
\end{lemma}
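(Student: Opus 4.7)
The plan is to compute the Wigner transform of $\varphi^\eps_t$ directly from the definition and then test it against an arbitrary symbol $a \in C_0^\infty(\R^{2d})$, centering and rescaling the integration variables around the classical trajectory $(x(t),\xi(t))$ to unveil a limit expressible through the unscaled Wigner transform of $v_t$.

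\medskip

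\textbf{Step 1.} Substituting \eqref{eq:wave_packet} into \eqref{eq:wigner}, the modulation phases $e^{\pm\frac{i}{\eps}S(t)}$ cancel and the $\xi(t)$-phases combine into $e^{-\frac{i}{\eps}\xi(t)\cdot y}$. Setting $y=\sqrt{\eps}\,z$ inside the $y$-integral, the prefactor $\eps^{-d/2}$ from the two profiles combines with the Jacobian $\eps^{d/2}$ to yield
\begin{equation*}
W^\eps\varphi^\eps_t(x,\xi)=\frac{1}{(2\pi\eps)^d}\int_{\R^d} e^{\frac{i}{\sqrt\eps}\,z\cdot(\xi-\xi(t))}\,v_t\!\left(\tfrac{x-x(t)}{\sqrt\eps}-\tfrac{z}{2}\right)\overline{v_t\!\left(\tfrac{x-x(t)}{\sqrt\eps}+\tfrac{z}{2}\right)}\,dz.
\end{equation*}

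\textbf{Step 2.} Fix $a\in C_0^\infty(\R^{2d})$ and $\Xi\in C_0^\infty(\R)$. Rescale $x=x(t)+\sqrt\eps\,\alpha$ and $\xi=\xi(t)+\sqrt\eps\,\beta$; the Jacobian $\eps^d$ cancels $(2\pi\eps)^{-d}$ up to a factor $(2\pi)^{-d}$, and the oscillating exponent becomes $e^{i z\cdot\beta}$. Recognising
\begin{equation*}
\frac{1}{(2\pi)^d}\int_{\R^d} e^{i z\cdot\beta}\,v_t(\alpha-z/2)\overline{v_t(\alpha+z/2)}\,dz \;=\; W^1 v_t(\alpha,\beta)
\end{equation*}
(the Wigner transform at scale $\eps=1$), we obtain
\begin{equation*}
\langle W^\eps\varphi^\eps_t,a\rangle_{\R^{2d}}=\int_{\R^{2d}}a\bigl(x(t)+\sqrt\eps\,\alpha,\xi(t)+\sqrt\eps\,\beta\bigr)\,W^1 v_t(\alpha,\beta)\,d\alpha\,d\beta.
\end{equation*}

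\textbf{Step 3.} By continuity of $a$ and the marginal identity $\int_{\R^{2d}}W^1 v_t\,d\alpha\,d\beta=\|v_t\|_{L^2}^2$, dominated convergence (using that $W^1 v_t\in L^1(\R^{2d})$ uniformly in $t\in\mathrm{supp}\,\Xi$, for instance because $v_t\in\mathcal{S}(\R^d)$ in the standard framework, or via a mollification argument if only $v_t\in L^2$) gives, for each $t$,
\begin{equation*}
\langle W^\eps\varphi^\eps_t,a\rangle_{\R^{2d}}\;\Tend{\eps}{0}\;\|v_t\|_{L^2}^2\,a(x(t),\xi(t)).
\end{equation*}
A second application of dominated convergence against $\Xi(t)\,dt$ then yields
\begin{equation*}
\int_\R\Xi(t)\,\langle W^\eps\varphi^\eps_t,a\rangle\,dt\;\longrightarrow\;\int_\R\Xi(t)\,\|v_t\|_{L^2}^2\,a(x(t),\xi(t))\,dt.
\end{equation*}

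\textbf{Step 4.} It remains to replace $\|v_t\|_{L^2}^2$ by $\|v_0\|_{L^2}^2$. The generator $H(t)=-\tfrac12\Delta+\tfrac12\nabla^2 V(x(t))\,y\cdot y$ in \eqref{eq:wave_packet_equation} is, at each $t$, essentially self-adjoint on $\mathcal S(\R^d)$; the associated time-dependent evolution is therefore unitary on $L^2(\R^d)$, so $\|v_t\|_{L^2}=\|v_0\|_{L^2}$ for all $t$. This identifies the limit with the distribution $\|v_0\|_{L^2}^2\,\delta(x-x(t))\otimes\delta(\xi-\xi(t))$, which is the claimed measure.

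\medskip

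The whole argument is routine once the rescaling is done; the only genuine point is the unitarity of the flow generated by $H(t)$ (needed for the constancy of $\|v_t\|_{L^2}$) and the integrability of $W^1 v_t$ uniformly in $t$ on compact sets, which is automatic under the mild regularity/decay usually imposed on the initial profile $v_0$.
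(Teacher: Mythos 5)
Your proof is correct in substance, but you handle the final limit differently than the paper, and your route carries a technical subtlety that the paper's sidesteps. Both proofs substitute the wave packet into the quadratic form $\langle\op_\eps(a)\varphi^\eps_t,\varphi^\eps_t\rangle=\langle W^\eps\varphi^\eps_t,a\rangle$ and rescale around $(x(t),\xi(t))$; the divergence comes after that. You stop at the exact covariance identity
\begin{equation*}
\langle W^\eps\varphi^\eps_t,a\rangle=\int_{\R^{2d}}a\bigl(x(t)+\sqrt\eps\,\alpha,\xi(t)+\sqrt\eps\,\beta\bigr)\,W^1 v_t(\alpha,\beta)\,d\alpha\,d\beta
\end{equation*}
and pass to the limit there. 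The paper instead Taylor-expands $a$ in its $\xi$-argument around $\xi(t)$ \emph{before} doing the $\xi$-integral, so the leading term collapses (via the $\xi$-integration producing a Dirac mass in $x-y$) to $\int a(\sqrt\eps z+x(t),\xi(t))\,|v_t(z)|^2\,dz$ plus an explicitly $\mathcal O(\eps)$ remainder. The payoff of that extra Taylor step is that the convergence argument then runs against $|v_t|^2$, which is always in $L^1(\R^d)$ for $v_0\in L^2$, so dominated convergence applies with no further hypotheses; the paper also gets a rate ($\mathcal O(\eps)$) for free. Your route, by contrast, needs $W^1 v_t\in L^1(\R^{2d})$ to invoke dominated convergence, and this fails in general for $v_t\in L^2$ (the Wigner transform of an $L^2$ function lies in $L^2(\R^{2d})$ but not $L^1$). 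You flag this yourself and propose either restricting to $v_0\in\mathcal S$ (in which case the time-dependent quadratic Hamiltonian preserves $\mathcal S$ and the argument closes directly) or a mollification argument; the latter does work — approximate $v_0$ in $L^2$ by Schwartz profiles and use the uniform-in-$\eps$ Calder\'on–Vaillancourt bound $\|\op_\eps(a)\|_{\mathcal L(L^2)}\leqslant C(a)$ together with the unitarity of the profile evolution to interchange the limits — but that density argument is precisely the work the paper's Taylor expansion lets one avoid. Finally, your Step 4 (unitarity of the flow generated by $H(t)=-\tfrac12\Delta+\tfrac12\nabla^2V(x(t))\,y\cdot y$, hence $\|v_t\|=\|v_0\|$) is a correct and useful addition that the paper uses implicitly without stating.
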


\begin{proof}
A straightforward calculation. Writing down $\left< \op_\eps (a) \, \varphi^\eps_t \, , \varphi^\eps_t \right>$ for some $a \in C^\infty_0 (\R^{2d})$, performing some variable changes and a Taylor expansion: 
{\footnotesize
\begin{eqnarray*}
\left< \op_\eps (a) \, \varphi^\eps_t \, , \varphi^\eps_t \right> & = & \frac{1}{(2\pi\sqrt{\eps})^d} \int_{\R^{3d}} e^{i\xi \centerdot (x-y)} a\left(\frac{x+y}{2}+x(t),\eps \xi + \xi(t)\right) v_t\left(\frac{y}{\sqrt{\eps}}\right) \overline{v_t\left(\frac{x}{\sqrt{\eps}}\right)} d\xi dy dx \\
& = & \frac{1}{(2\pi\sqrt{\eps})^d} \int_{\R^{3d}} e^{i\xi \centerdot (x-y)} a\left(\frac{x+y}{2}+x(t),\xi(t)\right) v_t\left(\frac{y}{\sqrt{\eps}}\right) \overline{v_t\left(\frac{x}{\sqrt{\eps}}\right)} d\xi dy dx + R^\eps, \\
\end{eqnarray*}}
then integrating in $\xi$, which gives a Dirac delta, then in $y$ and changing variables once more:
$$\left< \op_\eps (a) \, \varphi^\eps_t \, , \varphi^\eps_t \right> = \int_{\R^d} a\left(\sqrt{\eps}x+x(t),\xi(t)\right) \left| v_t\left(x\right) \right|^2 dx + R^\eps.$$
The result comes from letting $\eps$ go to $0$, where the dominated convergence theorem intervenes inside the integral, and from evaluating the remainder:
{\scriptsize
\begin{eqnarray*}
R^\eps & = & \frac{\eps}{(2\pi\sqrt{\eps})^d} \int_{\R^{3d}} \int_0^1 e^{i\xi \centerdot (x-y)} \xi \centerdot \partial_\xi a\left(\frac{x+y}{2}+x(t), \eps s \xi + \xi(t)\right) v_t\left(\frac{y}{\sqrt{\eps}}\right) \overline{v_t\left(\frac{x}{\sqrt{\eps}}\right)} ds d\xi dy dx \\
& = & \frac{i\eps}{(2\pi\sqrt{\eps})^d} \int_{\R^{3d}} \int_0^1 e^{i\xi \centerdot (x-y)} \, {\rm tr} \left( \partial_x\partial_\xi a\left(\frac{x+y}{2}+x(t), \eps s \xi + \xi(t)\right)\right) v_t\left(\frac{y}{\sqrt{\eps}}\right) \overline{v_t\left(\frac{x}{\sqrt{\eps}}\right)} ds d\xi dy dx \\ 
& \sim & \mathcal{O}(\eps),
\end{eqnarray*}}
which completes the proof.
\end{proof}

Another virtue of the wave packets is that they provide approximative solutions to the Schrödinger equation with convenient initial data, as stated in:
\begin{proposition}\label{prop:evolution_norm}
For fixed initial $(x_0,\xi_0) \in \R^{2d}$, consider a Hamiltonian trajectory $(x(t),\xi(t))$ for a potential $V$ not necessarily smooth everywhere over the space trajectory. Let be $\left]0,\tau\right[ \subset \R$ and $\Upsilon = \left\lbrace x \in \R^d : x = x(t) \; {\rm for} \, t \in \left] \, 0 ,\tau \right[ \right\rbrace$. If $\nabla^2 V$ exists and is Lebesgue integrable in $\Upsilon$, and if $\Psi^\eps$ is the solution of the Schrödinger equation with potential $V$ and initial data $\Psi^\eps_{0} (x) = \frac{1}{\eps^{\frac{1}{4}}}v_0\left(\frac{x-x_0}{\sqrt{\eps}}\right)e^{\frac{i}{\eps} \xi_0 \centerdot (x - x_0)}$, then letting be $\varphi^\eps$ the wave packet initially centred in $(x_0,\xi_0)$ with profile $v_0$, we have
$$\left\| \Psi_{\tau}^{\eps} - \varphi_{\tau}^{\eps} \right\|_{L^2(\R^d)} \leqslant \int_{\left] 0,\tau \right[} \, \left\| \frac{1}{\eps} R_{s}^{\eps} v_{s} \right\|_{L^2(\R^d)} ds,$$
where 
\begin{equation}\label{eq:error_wave_packet}
R^{\eps}_t\left(y\right) = V\left( x\left(t\right) + \sqrt{\eps}y \right) - V\left(x\left(t\right)\right) - \sqrt{\eps}\nabla V\left(x\left(t\right)\right) y - \frac{\eps}{2}\nabla^{2}V\left(x\left(t\right)\right) y \centerdot y.
\end{equation}
\end{proposition}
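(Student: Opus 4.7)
The plan is to show that the wave packet $\varphi^\eps_t$ is an approximate solution of the Schrödinger equation whose residual in $L^2(\R^d)$ is controlled by $\|R^\eps_t v_t\|_{L^2}$, and then to convert this into an $L^2$ bound on $\Psi^\eps_\tau - \varphi^\eps_\tau$ by a standard Duhamel argument exploiting the unitarity of $e^{-it\hat H^\eps/\eps}$. Since $\Psi^\eps_0=\varphi^\eps_0$ by construction, the whole error will come from the non-quadratic part of $V$ along the trajectory, which is precisely what $R^\eps$ captures.

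The concrete computation I would carry out is: apply the operator $i\eps\partial_t + \frac{\eps^2}{2}\Delta - V$ to $\varphi^\eps_t(x)=\eps^{-d/4} v_t(y)\,e^{i\Phi(t,x)/\eps}$ with $y=(x-x(t))/\sqrt{\eps}$ and $\Phi(t,x)=\xi(t)\centerdot(x-x(t))+S(t)$. The Hamilton equations $\dot x=\xi$, $\dot\xi=-\nabla V(x(t))$ together with $\dot S=\frac{1}{2}\xi^2-V(x(t))$ cancel all contributions of order $\eps^0$ and $\eps^{1/2}$ in $y$, while the wave packet equation \eqref{eq:wave_packet_equation} cancels the order-$\eps$ terms involving $\Delta v_t$ and $\nabla^2V(x(t))y\centerdot y$. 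What remains is
\begin{equation*}
\left(i\eps\partial_t + \tfrac{\eps^2}{2}\Delta - V(x)\right)\varphi^\eps_t(x) \;=\; -\,\eps^{-d/4}\,R^\eps_t(y)\,v_t(y)\,e^{i\Phi(t,x)/\eps},
\end{equation*}
with $R^\eps_t$ as in \eqref{eq:error_wave_packet}. Since multiplication by $e^{i\Phi/\eps}$ is unitary on $L^2$ and the change of variable $x\mapsto y$ cancels the factor $\eps^{-d/4}$, the $L^2$ norm of this residual is exactly $\|R^\eps_t v_t\|_{L^2(\R^d)}$.

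Next I would write Duhamel's formula: letting $r^\eps_s$ denote the residual just computed, $\varphi^\eps$ satisfies $i\eps\partial_t\varphi^\eps_t=\hat H^\eps\varphi^\eps_t+r^\eps_t$, so
\begin{equation*}
\varphi^\eps_\tau - \Psi^\eps_\tau \;=\; \frac{1}{i\eps}\int_0^\tau e^{-i(\tau-s)\hat H^\eps/\eps}\,r^\eps_s\,ds.
\end{equation*}
Taking norms and using that $\hat H^\eps$ is self-adjoint (by Kato–Rellich, as recalled around \eqref{eq:original_hamiltonian}), so the propagator is an $L^2$-isometry, gives the announced bound $\|\Psi^\eps_\tau-\varphi^\eps_\tau\|_{L^2}\leqslant \int_0^\tau \eps^{-1}\|R^\eps_s v_s\|_{L^2}\,ds$.

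The main subtle point, and the reason the statement is phrased with the Lebesgue-integrability hypothesis on $\nabla^2 V$ over $\Upsilon$ rather than with $C^2$ regularity on all of $\R^d$, is that the algebraic identity for the residual only needs the pointwise values $V(x(t))$, $\nabla V(x(t))$, $\nabla^2 V(x(t))$ along the trajectory (these enter via the Hamilton equations and the profile equation); no smoothness of $V$ off $\Upsilon$ is invoked. The time-integrability of $s\mapsto\|R^\eps_s v_s\|_{L^2}$ then follows from integrability of $\nabla^2V$ along $\Upsilon$ together with the decay of $v_s$. The remaining care is the usual one in wave packet analysis, namely ensuring $\varphi^\eps_t$ lies in the domain of $\hat H^\eps$ for a.e. $t$, so that Duhamel's formula is licit; this is routine as soon as $v_0$ is taken Schwartz (or approximated by such), which is the standard framework for \eqref{eq:wave_packet_equation}.
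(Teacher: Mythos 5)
Your proof is correct and follows essentially the same route as the paper: both first perform the algebraic computation showing that $\varphi^\eps$ solves the Schrödinger equation up to the residual $-R^\eps_t\left(\frac{x-x(t)}{\sqrt\eps}\right)\varphi^\eps_t$ (the paper's equation for $\varphi^\eps$), and both then convert this into an $L^2$ bound using the self-adjointness of $\hat H^\eps$ — the paper by differentiating $\|\Psi^\eps_t-\varphi^\eps_t\|_{L^2}^2$ and integrating (taking care to integrate over $]\alpha,\beta[\subset\,]0,\tau[$ and let $\alpha\to0$, $\beta\to\tau$), you by the equivalent Duhamel representation and the unitarity of $e^{-it\hat H^\eps/\eps}$. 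These are two standard, interchangeable formulations of the same stability estimate, and your remark about why only integrability of $\nabla^2V$ along $\Upsilon$ is needed matches the paper's intent.
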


\begin{proof}
After a direct calculation, one obtains the following differential system for $\varphi^\eps$:
\begin{equation}\label{eq:wave_packet_equation_phi}
\left\lbrace
\begin{array}{l}
i\eps\partial_{t}\varphi^{\eps}_t\left(x\right) = \hat{H}^{\eps} \varphi^{\eps}_t\left(x\right) - R^{\eps}_t\left(\frac{x-x\left(t\right)}{\sqrt{\eps}}\right)\varphi^{\eps}_t\left(x\right) \\
\varphi^{\eps}_{t=0}\left(x\right) = \frac{1}{\eps^{\frac{d}{4}}}v_0\left(\frac{x-x_{0}}{\sqrt{\eps}}\right)e^{\frac{i}{\eps}\xi_{0}\centerdot\left(x-x_{0}\right)},
\end{array} \right.
\end{equation}
where $H^\eps$ is the Hamiltonian operator \eqref{eq:original_hamiltonian} with $V$ as stated, $R^{\eps}$ is explicitly given by equation \eqref{eq:error_wave_packet}. Now, we compare $\Psi^{\eps}$ and $\varphi^{\eps}$ by evaluating 
$$\frac{d}{dt}\left\| \Psi_{t}^{\eps} - \varphi_{t}^{\eps} \right\|_{L^2(\R^d)}^{2} = 2 \, \text{Re} \left< \Psi_{t}^{\eps} - \varphi_{t}^\eps , \partial_{t}\left( \Psi_{t}^{\eps} - \varphi_{t}^\eps\right) \right>_{L^2(\R^d)},$$
which gives, in view of the equations for $\Psi^\eps$, $\varphi^\eps$ and the self-adjointness of $H^\eps$, 
\begin{eqnarray*}
\frac{d}{dt}\left\| \Psi_{t}^{\eps} - \varphi_{t}^{\eps} \right\|_{L^2(\R^d)} \leqslant \left\| \frac{1}{\eps} R_{t}^{\eps} \, v_{t} \right\|_{L^2(\R^d)},
\end{eqnarray*}
thus, for any $\alpha,\beta \in \left] 0 , \tau\right[$, we have
$$\left\| \Psi^\eps_\beta - \varphi^\eps_\beta \right\|_{L^2(\R^d)} - \left\| \Psi^\eps_\alpha - \varphi^\eps_\alpha \right\|_{L^2(\R^d)} \leqslant \left| \int_\alpha^\beta \left\| \frac{1}{\eps} R^\eps_s v_s \right\|_{L^2(\R^d)}ds \right|.$$  

Naturally, the function $t \longmapsto \left\| \Psi^\eps_t - \varphi^\eps_t \right\|_{L^2(\R^d)}$ is continuous and, at $t = 0$, $\Psi^{\eps}_0\left(x\right) = \varphi^{\eps}_0\left(x\right) = \frac{1}{\eps^{\frac{d}{4}}}v_0\left(\frac{x-x_0}{\sqrt{\eps}}\right)e^{\frac{i}{\eps}\xi_0\centerdot x_0}$. Hence, by choosing sequences $\alpha_n$ and $\beta_n$ in $\left]0,\tau\right[$ such that $\alpha_n \longrightarrow 0$ and $\beta_n \longrightarrow \tau$, the proposition follows.
\end{proof}
 
\begin{corollary}
Call $\mu$ the semiclassical measure linked to the exact family of solutions $\left(\Psi^\eps\right)_{\eps>0}$ with initial data as in the theorem above. If $\nabla^3 V$ exists and is Lebesgue integrable, then
$$\left\| \Psi_{t}^{\eps} - \varphi_{t}^{\eps} \right\|_{L^2(\R^d)} \lesssim |t|\sqrt{\eps}$$
and, consequently, given $T > 0$, for any $t \in \left[-T,T\right]$,
$$\mu_{t}\left(x,\xi\right) = \delta\left(x-x\left(t\right)\right) \otimes \delta\left(\xi-\xi\left(t\right)\right).$$
\end{corollary}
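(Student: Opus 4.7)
The plan is to derive the $L^2$ estimate $\|\Psi^\eps_t-\varphi^\eps_t\|_{L^2}\lesssim |t|\sqrt{\eps}$ by controlling the remainder $R^\eps_t$ from Proposition \ref{prop:evolution_norm} through the third derivative of $V$, and then to transfer this $L^2$ convergence to the level of Wigner measures using Lemma \ref{lem:path_wave_packet} together with the uniform $L^2$-boundedness of semiclassical pseudodifferential operators.

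Concretely, I would first apply the integral form of Taylor's theorem to order three to rewrite \eqref{eq:error_wave_packet} as
\begin{equation*}
R^\eps_t(y) \;=\; \frac{\eps^{3/2}}{2}\int_0^1 (1-s)^2\, \nabla^3 V\!\left(x(t)+s\sqrt{\eps}\,y\right)(y,y,y)\, ds.
\end{equation*}
Under the hypothesis that $\nabla^3 V$ exists and is (locally) bounded along the trajectory, one immediately obtains $\left\|\tfrac{1}{\eps}R^\eps_t\, v_t\right\|_{L^2(\R^d)} \lesssim \sqrt{\eps}\, \bigl\||y|^3 v_t\bigr\|_{L^2(\R^d)}$. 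The third-order moment of $v_t$ remains uniformly bounded on $[-T,T]$: indeed, \eqref{eq:wave_packet_equation} is a time-dependent harmonic oscillator and so preserves the Schwartz class (e.g.\ through the metaplectic representation), with the consequence that polynomial $L^2$ moments of $v_0 \in \Sch(\R^d)$ propagate continuously in time. Feeding this into Proposition \ref{prop:evolution_norm} and integrating over $[0,t]$ yields the asserted $|t|\sqrt{\eps}$ bound.

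For the Wigner-measure conclusion, pick any $a \in C^\infty_0(\R^{2d})$. Calderón--Vaillancourt provides $\|\op_\eps(a)\|_{\mathcal{L}(L^2)} = O(1)$ uniformly in $\eps$, and therefore
\begin{equation*}
\bigl|\langle \op_\eps(a)\Psi^\eps_t,\Psi^\eps_t\rangle - \langle \op_\eps(a)\varphi^\eps_t,\varphi^\eps_t\rangle\bigr| \;\lesssim\; \|\Psi^\eps_t-\varphi^\eps_t\|_{L^2(\R^d)} \;\lesssim\; |t|\sqrt{\eps}\;\xrightarrow[\eps\to 0]{}\;0.
\end{equation*}
This forces $\Psi^\eps_t$ and $\varphi^\eps_t$ to share the same semiclassical limit, which by Lemma \ref{lem:path_wave_packet} and the normalization $\|v_0\|_{L^2} = \|\Psi^\eps_0\|_{L^2} = 1$ is precisely $\delta(x-x(t))\otimes \delta(\xi-\xi(t))$. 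Uniqueness of this explicit limit removes any subsequence ambiguity on the full family $(\Psi^\eps_t)_{\eps>0}$. I expect the only non-routine point to be the moment estimate on $v_t$: the $\sqrt{\eps}$ rate in the error bound relies entirely on pushing cubic powers of $y$ through the $L^2$ norm, so one needs $v_0$ with enough decay for this — the natural setting being $v_0 \in \Sch(\R^d)$ — and one needs the bound on $\nabla^3 V$ to be uniform on the $\sqrt{\eps}$-tube around the trajectory rather than merely pointwise.
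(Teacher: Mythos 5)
Your proof is correct and follows essentially the same path as the paper: Taylor's formula to order three to write $R^\eps_t$ in terms of $\nabla^3 V$ (with the $(y,y,y)$ contraction you correctly restore — the paper's displayed formula drops it), the resulting $O(\eps\sqrt{\eps})$ bound on $\frac{1}{\eps}R^\eps_t v_t$ fed into Proposition \ref{prop:evolution_norm}, and transfer of the $L^2$-convergence $\Psi^\eps_t = \varphi^\eps_t + O(|t|\sqrt\eps)$ to Wigner measures via the uniform $\mathcal{L}(L^2)$ bound on $\op_\eps(a)$, concluding with Lemma \ref{lem:path_wave_packet}. You merely spell out steps the paper leaves implicit, most usefully the propagation of polynomial moments of $v_t$ under the time-dependent harmonic-oscillator flow \eqref{eq:wave_packet_equation}.
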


\begin{proof}
$V$ being at least of class $C^3(\R^d)$, one can verify from a Taylor formula that
$$R^\eps (y,t) = \frac{\eps\sqrt{\eps}}{2} \int_{0}^{1} \nabla^{3} V \left( x\left(t\right) + s\sqrt{\eps}y \right) \left( 1 - s \right)^{2} ds,$$
and, moreover, that $R^{\eps}$ introduces in the Schrödinger equation a $L^2(\R^d)$ error of order $\mathcal{O}\left(\eps\sqrt{\eps}\right)$. Thus, from Proposition \ref{prop:evolution_norm}, it is clear that $\Psi_{t}^{\eps} = \varphi_{t}^{\eps} + \mathcal{O}\left(|t|\sqrt{\eps}\right)$ in $L^2(\R^d)$; for any $t \in [-T,T]$, this gives, when $\eps\longrightarrow 0$, that the Wigner measure of $\Psi^\eps$ shall coincide with that of the wave packets. The conclusion comes from Lemma \ref{lem:path_wave_packet}.
\end{proof}

\begin{remark}
Actually, the approximation in the corollary remains good for $t$ smaller than the Ehrenfest time $t_{E} = \ln \frac{1}{\eps}$, as $\sqrt{\eps} \ln \frac{1}{\eps} \longrightarrow 0$ when $\eps \longrightarrow 0$; more details in \cite{comberscure_robert}. Estimates beyond the Ehrenfest time are given in \cite{schubert}.
\end{remark}
 
Observe that even if $V$ is not as regular as we required, we can still write $R^{\eps}$ as in \eqref{eq:error_wave_packet} for any $t$ such that $\nabla V\left(x\left(t\right)\right)$ and $\nabla^{2} V\left(x\left(t\right)\right)$ make sense, although in this case it is not clear which is the order of the approximation the wave packet furnishes, nor even whether it is negligible in the semiclassical limit.

Finally, observe that it is also possible to write the actual solution $\Psi^{\eps}$ with initial state $\Psi^\eps_0 = \frac{1}{\eps^{\frac{d}{4}}}v_0\left(\frac{x-x_0}{\sqrt{\eps}}\right)e^{\frac{i}{\eps}\xi_0 \centerdot (x-x_0)}$ under the wave packet form: one defines a $u^{\eps}$ such that
\begin{equation}\label{eq:definition_u}
\Psi^{\eps}\left(x,t\right) = \frac{1}{\eps^{\frac{d}{4}}} u^{\eps}\left( \frac{x-x\left(t\right)}{\sqrt{\eps}} , t \right) e^{\frac{i}{\eps} \left[ \xi\left(t\right) \centerdot \left( x - x\left(t\right) \right) + S\left(t\right) \right] },
\end{equation}
which consequently obeys to
{\small \begin{equation}\label{eq:approximative_wave_packet_equation}
\left\lbrace
\begin{array}{l}
i\partial_{t}u^{\eps}\left(y,t\right) = -\frac{1}{2}\Delta u^{\eps}\left(y,t\right) + \left( \frac{1}{2}\nabla^{2}V\left(x\left(t\right)\right) y \centerdot y \right) u^{\eps}\left(y,t\right) + \frac{1}{\eps}R^{\eps}\left(y,t\right)u^{\eps}\left(y,t\right) \\
u^{\eps}\left(y,0\right) = v_0(y),
\end{array}\right.
\end{equation}}
which is nothing else than the exact Schrödinger equation written in a different form.

\subsection{Symbolic calculus}\label{sec:symbolic_calculus}
Let us consider the $\eps$-pseudodifferential operators $\op_{\eps}\left(a\right) \in \mathcal{L}\left(L^2(\R^d)\right)$ of symbols $a \in C_{0}^{\infty} \left(\mathbb{R}^{2d}\right)$ given in formula \eqref{eq:pseudor}. Of central importance is the fact that they are uniformly bounded in $\mathcal{L}\left(L^2(\R^d)\right)$ with respect to $\eps$ (see, for instance, Corollary 2.4 in \cite{x_ups} and the subsequent discussion): there exist constants $K,\tilde{K} > 0$ such that
\begin{equation}
\| \op_\eps (a) \|_{\mathcal{L}\left(L^2(\R^d)\right)} \leqslant K \sup_{\underset{| \alpha | \leqslant d+1}{\alpha \in \mathbb{N}^d_0}} \; \sup_{\xi \in \R^d} \; \int_{\R^d} \left| \partial_{x}^\alpha a(x,\xi) \right| dx
\label{eq:estimation_x}
\end{equation}
or else
\begin{equation}
\| \op_\eps (a) \|_{\mathcal{L}\left(L^2(\R^d)\right)} \leqslant \tilde{K} \sup_{\underset{| \alpha | \leqslant d+1}{\alpha \in \mathbb{N}^d_0}} \; \sup_{x \in \R^d} \; \int_{\R^d} \left| \partial_{\xi}^\alpha a(x,\xi) \right| d\xi.
\label{eq:estimation_xi}
\end{equation}

\begin{remark}\label{rem:schur_upper_bound}
Inequalities \eqref{eq:estimation_x} and \eqref{eq:estimation_xi} give upper bounds for the Schur estimate of the norm of $\op_\eps(a)$. 
\end{remark}

Nonetheless, formula \eqref{eq:pseudor} can be used for more general symbols, although we may lose boundedness, good properties for symbolic calculation, and be forced to restrict their domains. In particular, for $V$ satisfying the Kato-Rellich conditions,
$$\hat{H}^\eps = \op_\eps(h), \quad {\rm with} \quad h(x,\xi) = \frac{\xi^2}{2} + V(x),$$
doted with domain $H^2(\R^d)$, is unbounded, although it is still self-adjoint. 

Thus, taking a test function $\Xi \in C_0^\infty(\R)$ and $\Psi^{\eps}$ a solution of \eqref{eq:schrodinger}, the semiclassical measure $\mu$ linked to the family $(\Psi^\eps)_{\eps> 0}$ may be given as
\begin{eqnarray}
\left< \, \mu \left(t,x,\xi\right) \, , \Xi(t) \, a\left(x,\xi\right) \, \right>_{\R\times\R^{2d}} = {\rm sc}\lim \int_{\R} \Xi(t) \left< \op_{\eps}\left(a\right) \Psi_{t}^{\eps} \, , \Psi_{t}^{\eps} \right> dt
\label{eq:mu_abs_continuous}
\end{eqnarray}
(here ${\rm sc} \lim$ stands for \emph{semiclassical limit}, abstracting which particular subsequence $(\eps_{n_{k}})_{k \in \N}$ is to be taken). From this expression we can evaluate the distribution $\partial_t \mu$:
\begin{eqnarray}
\left< \, \partial_{t} \mu \left(t,x,\xi\right) \, , \Xi(t) \, a\left(x,\xi\right) \, \right>_{\R\times\R^{2d}}  & = & -\int_{\R\times\R^{2d}} \Xi'(t) \, a\left(x,\xi\right) \, d\mu\left(t,x,\xi\right) \nonumber \\
& = & {\rm sc}\lim \int_{\R} \Xi(t) \frac{d}{dt} \left< \op_{\eps}\left(a\right) \Psi_{t}^{\eps} \, , \Psi_{t}^{\eps} \right> dt;
\label{eq:d_mu}
\end{eqnarray}
moreover, in view of the Schrödinger equation \eqref{eq:schrodinger},
\begin{equation}\frac{d}{dt} \left< \op_{\eps}\left(a\right) \Psi_{t}^{\eps} \, , \Psi_{t}^{\eps} \right> =  \left< \frac{i}{\eps} \left[ \hat{H}^{\eps} , \op_{\eps}\left(a\right) \right] \Psi_{t}^{\eps} \, , \Psi_{t}^{\eps} \right>.
\label{eq:d_op}
\end{equation}
%
By standard symbolic calculus (many of whose formul\ae \, may be found in \cite{zworski}, for instance), in the smooth case we would get 
\begin{equation}\label{eq:commutators_result}
\frac{i}{\eps}\left[ \hat{H}^{\eps} \, , \op_{\eps}\left(a\right) \right] = \op_\eps \left( (\xi \centerdot \partial_x - \nabla V(x) \centerdot \partial_\xi) a \right) + \mathcal{O}(\eps),
\end{equation}
which ultimately induces equation \eqref{eq:liouville} for $\mu$ in the sense of the distributions.

The conical singularities that $V$ presents, however, will require a specific treatment. Roughly, we will have to re-derive ``by hand'' adapted formul\ae \, for a correct symbolic calculus with such potentials, which we will do progressively in Sections \ref{sec:the_inner_part}, \ref{sec:the_outer_part} and \ref{sec:the_middle_part}. 

\subsection{Two-microlocal analysis}\label{sec:two_microlocal_analysis}
Now, let us define a new symbol class $S(p)$ composed by symbols $a \in C^\infty(\R^{2d+p})$ such that
\begin{itemize}
\item For each $\rho \in \R^p$, $(x,\xi) \longmapsto a(x,\xi,\rho)$ is compactly supported on $\R^{2d}_{x,\xi}$.
\item There exists some $R_0 > 0$ and a function $a_\infty \in C^\infty (\R^{2d}\times \mathcal{S}^{p-1})$ such that, for $\| \rho \| > R_0$, one has $a(x,\xi,\rho) = a_\infty \left(x,\xi,\frac{\rho}{\|\rho\|}\right)$.  
\end{itemize}
These symbols will be quantized as
$$\op_\eps^\sharp (a(x,\xi,\rho)) = \op_\eps \left(a\left(x,\xi,\frac{x'}{\eps}\right)\right);$$
observe that the right-hand term above is just the banal quantization of a $\eps$-dependent $\R^{2d}$ function as in \eqref{eq:pseudor}.

\begin{proposition}\label{prop:two_microlocal_measures}
There exists a measure $\nu_\infty$ on $\R\times \R^{2d-p}\times S^{p-1}$ and a trace class operator valued measure $M$ on $\R\times\R^{2(d-p)}$, both positive, such that, for $a \in S(p)$ and $\Xi \in C_0^\infty(\R)$,
\begin{eqnarray}\label{eq:two_microlocal_decomp}
\int_\R \Xi(t) \left< \, \op_\eps^\sharp (a) \Psi^\eps_t \, , \, \Psi^\eps_t \, \right> dt & \underset{\eps \rightarrow 0} {\longrightarrow} & \left< \mu(t,x,\xi) 1\!\!1_{\lbrace x' \neq 0 \rbrace} \, , \Xi(t) \, a_\infty\left(x,\xi,\frac{x'}{\|x'\|}\right) \right>_{\R \times \R^{2d}} \nonumber \\
& + & \left< \, \delta (x') \otimes\nu_\infty(t,x'',\xi,\omega) \, , \Xi(t) \, a_\infty \left(x,\xi,\omega\right) \, \right>_{\R \times \R^{2d} \times \mathcal{S}^{p-1}} \nonumber \\
& + & {\rm tr} \left< \, M(t,x'',\xi'') \, , \, \Xi(t) \, a^w\left(0,x'',\partial_y,\xi'',y\right) \, \right>_{\R \times \R^{2(d-p)}},
\end{eqnarray}
where $a^w(0,x'',\partial_y,\xi'',y)$ is the Weyl quantization of the symbol $(y,\zeta) \longmapsto a(0,x'',\zeta,\xi'',y)$ with $\eps = 1$ and $\mu$ is the usual Wigner measure related to $\Psi^\eps$. 

Furthermore, for a smooth compactly supported function $(x'',\xi'') \longmapsto T_{(x'',\,\xi'')}$ taking values on the set of compact operators on $\R^p$, one has 
{\small
$$\left< M(t,x'',\xi'') \, , \, \Xi(t) \, T_{(x'',\,\xi'')} \right>_{\R \times \R^{2(d-p)}} = {\rm sc} \lim \int_{\R \times \R^{2(d-p)}} \Xi(t) \, T_{(x'',\,\xi'')} U^\eps_{(t,\,x'',\,\xi'')}dx''d\xi''dt,$$}
where $U^\eps_{(t,\,x'',\,\xi'')}$ is the trace class operator with kernel
{\small
\begin{equation}\label{eq:k_u}
kU^\eps_{(t,\,x'',\,\xi'')}(y',x') = \int_{\R^{d-p}} \frac{e^{\frac{i}{\eps} \xi'' \centerdot y''}}{\left(2\pi \eps\right)^{d-p}}  \, \Psi^\eps_t \left( \eps y', x''-\frac{y''}{2} \right) \overline{\Psi^\eps_t \left( \eps x', x''+ \frac{y''}{2} \right)}dy''.
\end{equation}}

Finally, the terms in \eqref{eq:two_microlocal_decomp} are obtained respectively from those in the decomposition 
{\small
$$a(x,\xi,\rho) = a(x,\xi,\rho) \left(1-\chi\left(\frac{x'}{\delta}\right)\right) + a(x,\xi,\rho) \left(1-\chi\left(\frac{\rho}{R}\right)\right)\chi\left(\frac{x'}{\delta}\right) + a(x,\xi,\rho) \chi\left(\frac{\rho}{R}\right)$$}
in the limit when $\eps \longrightarrow 0$, then $R \longrightarrow \infty$, and last $\delta \longrightarrow 0$, where $\chi$ is a cut-off function such that $0 \leqslant \chi \leqslant 1$, $\chi(x) = 1$ for $\| x \| < \frac{1}{2}$ and $\chi(x) = 0$ for $\|x\| \geqslant 1$.
\end{proposition}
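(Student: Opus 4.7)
The plan is to follow the three-piece decomposition given at the end of the statement, handle each term separately, and take the limits in the prescribed order $\eps\to 0$, then $R\to\infty$, then $\delta\to 0$. Write
\[
a = a_{\rm out}^\delta + a_{\rm mid}^{R,\delta} + a_{\rm in}^R,
\]
with $a_{\rm out}^\delta = a(1-\chi(x'/\delta))$, $a_{\rm mid}^{R,\delta}=a(1-\chi(\rho/R))\chi(x'/\delta)$, $a_{\rm in}^R = a\chi(\rho/R)$, and analyse each $\op_\eps^\sharp$-quantization separately.

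For the outer piece, note that on the support of $a_{\rm out}^\delta$ one has $\|x'\|>\delta/2$, hence $\|x'/\eps\|>\delta/(2\eps)>R_0$ for $\eps$ small enough relative to $\delta$; thus $a(x,\xi,x'/\eps)$ coincides with $a_\infty(x,\xi,x'/\|x'\|)$ on this support. The operator reduces to the ordinary Weyl quantization of a smooth compactly supported symbol, and standard Wigner measure theory yields the limit
\[
\bigl\langle\mu,\Xi(t)\,a_\infty(x,\xi,x'/\|x'\|)(1-\chi(x'/\delta))\bigr\rangle.
\]
Sending $\delta\to 0$ by dominated convergence (using that $\mu$ is a finite measure on $\R\times\R^{2d}$) produces the first term on the right-hand side.

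For the middle piece, the symbol is again asymptotic, so it equals $a_\infty(x,\xi,x'/\|x'\|)(1-\chi(x'/(R\eps)))\chi(x'/\delta)$ on its support. Uniform $L^2$-boundedness via \eqref{eq:estimation_x}–\eqref{eq:estimation_xi} applied to this $\eps$-dependent symbol (after a rescaling $x'\mapsto \eps\rho$ in the relevant variables), together with the Gårding inequality for non-negative $a$, produces by extraction a positive limit that depends on $a_\infty$ only through its restriction to directions $\omega = x'/\|x'\|\in\mathcal S^{p-1}$ and on $(x'',\xi)$; this defines $\nu_\infty$ on $\R\times\R^{2d-p}\times\mathcal S^{p-1}$. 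The order of limits is essential: the inner cut-off $1-\chi(\rho/R)$ dies out as $R\to\infty$, and the outer cut-off $\chi(x'/\delta)$ dies out as $\delta\to 0$, but neither interferes with the other because $\eps\ll R\eps\ll\delta$ in the chosen order.

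For the inner piece, $a_{\rm in}^R$ is compactly supported in $\rho$, so $\op_\eps^\sharp(a_{\rm in}^R)$ localizes at scale $\|x'\|\lesssim R\eps$. Performing the rescaling $y'=x'/\eps$ in the definition \eqref{eq:pseudor} of $\op_\eps$, one expresses $\langle\op_\eps^\sharp(a_{\rm in}^R)\Psi^\eps_t,\Psi^\eps_t\rangle$ as an integral over $(x'',\xi'')$ of the trace of $T^R_{(x'',\xi'')}\,U^\eps_{(t,x'',\xi'')}$, where $T^R_{(x'',\xi'')}$ is the Weyl-1 quantization in $(y',\zeta)$ of $(y,\zeta)\mapsto (a\chi(\rho/R))(0,x'',\zeta,\xi'',y)$, and $U^\eps$ has the kernel \eqref{eq:k_u}. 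Since $\|U^\eps_{(t,x'',\xi'')}\|_{\rm tr}$ is uniformly controlled by $\|\Psi^\eps_t\|_{L^2}^2=1$ after integration in $(x'',\xi'')$, one extracts a weak-$*$ convergent subsequence to obtain a positive trace-class operator-valued measure $M$. Sending $R\to\infty$ replaces $a\chi(\rho/R)$ by $a$ in the Weyl symbol, and the $\delta$-limit is trivial here.

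The hard part is the orchestration of the three limits: one must verify that the cross-contributions (the remainders coming from replacing $a$ by $a_\infty$ in the outer and middle regions, and the overlap between the middle and the inner pieces) vanish in the prescribed order, and that the three resulting objects $\mu\,\mathbf 1_{\{x'\neq 0\}}$, $\nu_\infty$, and $M$ are mutually consistent — i.e., no mass is double-counted at the interfaces $\|x'\|\sim\delta$ and $\|x'\|\sim R\eps$. This is precisely what forces the strict ordering $\lim_{\delta\to 0}\lim_{R\to\infty}\lim_{\eps\to 0}$ and uses crucially that $\chi$ is monotone with $0\le\chi\le 1$. Positivity of $\nu_\infty$ and $M$ follows from the sharp Gårding inequality applied to the quantizations of the non-negative symbols $a_{\rm mid}^{R,\delta}$ and $a_{\rm in}^R$ respectively, after noting that the $\mathcal O(\eps)$ Gårding defect disappears in the semiclassical limit.
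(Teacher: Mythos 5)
The paper does not prove this proposition itself; it defers the general argument to \cite{CFK1}, noting only that the operator $U^\eps$ with kernel \eqref{eq:k_u} is a cosmetic addition of the authors'. Your plan follows exactly the three-piece decomposition and the ordered limits $\eps\to 0$, then $R\to\infty$, then $\delta\to 0$ that the proposition's final paragraph prescribes, which is precisely the strategy of \cite{CFK1}, so in that sense it is a faithful reconstruction of the intended route: the outer piece reduces $a$ to $a_\infty$ once $\|x'/\eps\|>R_0$ and gives $\mu$ restricted to $\{x'\neq 0\}$ after $\delta\to 0$; the inner piece, after the rescaling $x'=\eps y'$, turns the quadratic form into the trace against $U^\eps_{(t,x'',\xi'')}$ and a weak-$*$ limit yields $M$; the middle piece captures the directional concentration on $\{x'=0\}$ and yields $\nu_\infty$.

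One technical caveat you should not gloss over: for the middle piece you invoke the uniform Calderón--Vaillancourt bounds \eqref{eq:estimation_x}--\eqref{eq:estimation_xi} directly, but the effective symbol $a_\infty(x,\xi,x'/\|x'\|)\bigl(1-\chi(x'/(R\eps))\bigr)\chi(x'/\delta)$ has $x'$-derivatives of size $(R\eps)^{-1}$, so it does not lie uniformly in the class those estimates require. The boundedness that is actually used is that of the $\op_\eps^\sharp$-calculus on $S(p)$ after the change of scale in $x'$, which is the nontrivial part supplied by \cite{CFK1}. You correctly flag the orchestration of the three limits and the non-interference of the scales $\eps\ll R\eps\ll\delta$ as the hard part but do not carry it out, so the proposal is a sound sketch rather than a complete argument.
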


\begin{remark}\label{rem:p_equals_d}
If $p = d$, then $U^\eps_t = \left| \tilde{\Psi}^\eps_t \right>\left< \tilde{\Psi}^\eps_t \right|^*$ is just the adjoint of the projector over $\tilde{\Psi}^\eps_t(x) = \eps^\frac{d}{2}\Psi^\eps(\eps x)$, with kernel $kU_t^\eps(x,y) = \tilde{\Psi}^\eps_t(y) \overline{\tilde{\Psi}^\eps_t(x)}$. It follows that 
$${\rm tr} \left( T \, U^\eps_t \right) = \left< \, T \, \tilde{\Psi}^\eps_t \, , \, \tilde{\Psi}^\eps_t \, \right>.$$
Then, because $T$ is compact, in the limit $\eps \longrightarrow 0$ one has that it is simply $\left< T \tilde{\Psi}_t , \tilde{\Psi}_t \right>$, where $\Psi_t$ is some weak limit of $\Psi^\eps_t$.
\end{remark}

A very general treatment of this result can be found in \cite{CFK1}. The introduction of $U^\eps$ is a trivial addition of ours in order to enlighten the calculations to come.

\begin{remark}\label{rem:definition_m}
Observe that $M$ induces a measure $\mathfrak{m}$ on $\R \times \R^{{2d}}$ by means of the formula
\begin{eqnarray*}
\left< \, \delta(x') \otimes \mathfrak{m}(t,x'',\xi,\rho) \, , \, \Xi(t) \, a(x,\xi,\rho) \, \right>_{\R\times\R^{2d+p}} \hspace{4cm} \\ = {\rm tr} \left< \, M(t,x'',\xi'') \, , \, \Xi(t) \, a^w\left(0,x'',\partial_y,\xi'',y\right) \, \right>_{\R \times \R^{2(d-p)}}.
\end{eqnarray*}
Above, $a$ has no need to be in $S(p)$; it is sufficient that it be compact supported in all variables.
\end{remark}

\begin{lemma}\label{lem:m_0_gives_m_0}
$M=0$ if and only if $\mathfrak{m} = 0$.
\end{lemma}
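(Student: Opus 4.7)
The forward implication is immediate: the defining identity in Remark \ref{rem:definition_m} expresses $\mathfrak{m}$ as a push-forward of $M$ along the Weyl quantization, so $M = 0$ trivially forces $\mathfrak{m} = 0$. All the content is in the converse.

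For the converse, my plan is to reduce the problem to the classical duality $\mathcal{L}^1(L^2(\R^p)) = \mathcal{K}(L^2(\R^p))^*$ (compact operators paired with trace-class operators via the trace) by feeding the definition of $\mathfrak{m}$ with product-type test symbols. Specifically, I will insert symbols of the separated form
$$a(x,\xi,\rho) = \chi(x')\,\phi(x'',\xi'')\,b(\xi',\rho),$$
with $\chi,\phi\in C_0^\infty$ and $b\in C_0^\infty(\R^{2p})$, which are compactly supported in all variables and hence admissible by Remark \ref{rem:definition_m}. The corresponding Weyl operator factorizes as $a^w(0,x'',\partial_y,\xi'',y)=\chi(0)\,\phi(x'',\xi'')\,\tilde{b}^w$, where $\tilde b(y,\zeta):=b(\zeta,y)$ and $\tilde b^w$ is quantized only in the $(y,\zeta)$ slot (independently of $(t,x'',\xi'')$). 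Choosing $\chi$ with $\chi(0)\ne 0$ and letting $\Xi,\phi$ vary, the hypothesis $\mathfrak{m}=0$ yields
$$\int_{\R\times\R^{2(d-p)}} \Xi(t)\,\phi(x'',\xi'')\,\operatorname{tr}\!\bigl(\tilde b^w M(dt,dx'',d\xi'')\bigr) = 0$$
for every $\Xi,\phi,b$. Hence the scalar signed measure $E\mapsto \operatorname{tr}(\tilde b^w M(E))$ on $\R\times\R^{2(d-p)}$ vanishes for every $\tilde b\in C_0^\infty(\R^{2p})$.

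The next step is to upgrade this to a statement about all compact operators. I will invoke the classical fact that the Weyl quantization map $\tilde b\mapsto \tilde b^w$ sends $C_0^\infty(\R^{2p})$ onto a subspace dense in $\mathcal{K}(L^2(\R^p))$ for the operator-norm topology (one approximates rank-one operators $|f\rangle\langle g|$ with $f,g\in\mathcal{S}(\R^p)$ by their Weyl symbols, which are Schwartz, and then truncates). For each Borel $E$, the operator $M(E)$ is trace class; and if $\operatorname{tr}(\tilde b^w M(E))=0$ for a dense family of compact operators, then by continuity of the trace pairing $\operatorname{tr}(K\,M(E))=0$ for every $K\in\mathcal{K}(L^2(\R^p))$, and the duality $\mathcal{K}^*=\mathcal{L}^1$ forces $M(E)=0$.

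The only non-trivial ingredient is the density of Weyl-quantized $C_0^\infty$ symbols in the compact operators, which is a standard fact from semiclassical analysis; everything else is the bookkeeping needed to separate the $(y,\zeta)$ variables from the parameters $(t,x'',\xi'')$ so that the Weyl quantization really acts in the right slot. I expect no further obstacle.
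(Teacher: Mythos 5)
Your proof is correct and rests on the same pillar as the paper's: the duality between compact and trace-class operators, combined with the observation that the symbols admissible in the definition of $\mathfrak{m}$ give, after Weyl quantization, a family of compact operators rich enough to separate the trace-class values $M(E)$. The tactical route differs slightly. The paper's proof is more direct: given a Hilbert--Schmidt valued test function $T_{(t,x'',\xi'')}$, it explicitly constructs a symbol $a(t,x,\xi,\rho)$ by taking the (inverse) Wigner transform of the kernel of $T$, so that $a^w(t,0,x'',\partial_y,\xi'',y)=T_{(t,x'',\xi'')}$ exactly; density of Hilbert--Schmidt operators in the compacts then closes the argument. You instead restrict to \emph{product} symbols $\chi(x')\,\phi(x'',\xi'')\,b(\xi',\rho)$ so that the Weyl quantization factors as $\chi(0)\phi(x'',\xi'')\tilde b^w$, derive that the scalar signed measure $E\mapsto\operatorname{tr}(\tilde b^w M(E))$ vanishes for all $b\in C_0^\infty(\R^{2p})$, and then invoke the density of $\{\tilde b^w : b\in C_0^\infty\}$ in $\mathcal{K}(L^2(\R^p))$ (via Schwartz rank-one operators and $L^2$-to-HS continuity of the Weyl map) plus the $\mathcal{K}^*=\mathcal{L}^1$ duality. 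The two routes need complementary approximation steps: the paper needs ``HS dense in compacts,'' you need ``Weyl of $C_0^\infty$ dense in compacts'' together with the separation of $(t,x'',\xi'')$-variables by products. A modest advantage of your route is that it never leaves the class of symbols compactly supported in \emph{all} variables, which is precisely the class allowed in Remark \ref{rem:definition_m}; the symbol the paper builds from an $L^2$ kernel is not compactly supported in $(\xi',\rho)$, so strictly speaking the paper's construction implicitly uses one more approximation (smoothing/truncating the HS kernel) to stay within that class. Both arguments are sound.
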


\begin{proof}
That $M = 0$ implies $\mathfrak{m} = 0$, it is obvious. For the converse, it is necessary to show that $\mathfrak{m} = 0$ implies ${\rm tr} \left< M , T \right> = 0$ for all compact supported functions $(t,\,x'',\xi'') \longmapsto T_{(t,\,x'',\,\xi'')}$ taking values in the set of compact operators, since this is the set whose dual are the trace class operators.

Observe that it is sufficient to consider $T_{(t,\,x'',\,\xi'')}$ Hilbert-Schmidt, given that these operators are dense in the set of the compact ones. So, we can consider that it has a kernel $kT_{(t,x'',\,\xi'')} \in L^2\left(\R^{2p}_{x',y'}\right)$ and, defining 
$$a(t,x,\xi,\rho) = \chi(x') \, \mathcal{F}_{y' \rightarrow \xi'} \left(kT_{(t,\,x'',\,\xi'')} \left(\rho+\frac{y'}{2},\rho-\frac{y'}{2}\right)\right)$$
with some $\chi \in C_0^\infty(\R^p)$, it follows that $T_{(t,\,x'',\,\xi'')} = a^w(t,0,x'',\partial_y,\xi'',y)$ and we are done.
\end{proof}

\begin{lemma}\label{lem:m_absolutely_continuous}
The measure $\mathfrak{m}$ is absolutely continuous with respect to $d\xi' d\rho$.
\end{lemma}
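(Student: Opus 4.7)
The plan is to identify the density of $\mathfrak m$ with respect to $d\xi'\, d\rho$ by unfolding the trace pairing $\mathrm{tr}\langle M,\, a^w\rangle$. The central ingredient is the Moyal trace formula: for any trace-class operator $T$ on $L^2(\R^p_{x'})$ with Weyl symbol $\sigma_T$, and any $b\in C_0^\infty(\R^{2p})$,
\[
\mathrm{tr}\,\bigl(T\,b^w\bigr)\;=\;\frac{1}{(2\pi)^p}\int_{\R^{2p}}\sigma_T(y,\zeta)\,b(y,\zeta)\,dy\,d\zeta.
\]

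First I would disintegrate the positive trace-class-operator-valued measure $M$ against its scalar total-variation measure $\lambda(dt,dx'',d\xi''):=\mathrm{tr}\,M(dt,dx'',d\xi'')$ on $\R\times\R^{2(d-p)}$; this yields, for $\lambda$-almost every $(t,x'',\xi'')$, a positive unit-trace operator $M_{(t,x'',\xi'')}$ on $L^2(\R^p)$. Since trace class is contained in Hilbert--Schmidt, each such $M_{(t,x'',\xi'')}$ admits an $L^2$ Weyl symbol $\sigma_M(t,x'',\xi'';\cdot,\cdot)\in L^2(\R^{2p}_{\rho,\xi'})$, and the symbol map being continuous this $\sigma_M$ is jointly measurable. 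Applying the trace formula fiberwise with $b(y,\zeta)=a(0,x'',\zeta,\xi'',y)$ and using the definition of $\mathfrak m$ in Remark~\ref{rem:definition_m}, one arrives at
\[
\bigl\langle \delta(x')\otimes\mathfrak m,\,\Xi\,a\bigr\rangle
\;=\;\int \frac{\Xi(t)}{(2\pi)^p}\,\sigma_M(t,x'',\xi'';\rho,\xi')\,a(0,x'',\xi',\xi'',\rho)\,d\xi'\,d\rho\,d\lambda(t,x'',\xi''),
\]
which exhibits $\mathfrak m$ as the measure $\delta(x')\otimes (2\pi)^{-p}\sigma_M\,d\xi'\,d\rho\,d\lambda$. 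This is manifestly absolutely continuous with respect to $d\xi'\, d\rho$ in the $(\xi',\rho)$ variables, giving the claim.

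The main obstacle I anticipate is the measurable-selection step underlying the disintegration: one must ensure that the fiberwise Weyl symbol $\sigma_M$ can be chosen jointly measurable in $(t,x'',\xi'';\rho,\xi')$, and that the formal identity obtained by applying Moyal pointwise in $(t,x'',\xi'')$ really recovers the integrated pairing against $M$. This should be routine, since trace-class-valued measurability of $M$ passes to Hilbert--Schmidt-valued measurability, and the Weyl symbol map is (up to a factor of $(2\pi)^{p/2}$) an isometry from Hilbert--Schmidt operators onto $L^2(\R^{2p})$, so measurability of the disintegration transfers to $\sigma_M$ and Fubini justifies the swap of integrals.
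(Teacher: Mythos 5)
Your proof is correct but takes a genuinely different and more elaborate route than the paper's. The paper's argument is elementary and two lines long: the Weyl quantization $a^w(0,x'',\partial_y,\xi'',y)$ is defined by the integral $\frac{1}{(2\pi)^p}\int e^{i\xi'\centerdot(y-\rho)}a(0,x'',\xi',\xi'',\frac{y+\rho}{2})f(\rho)\,d\xi'd\rho$, so modifying the symbol on a $d\xi'd\rho$-null set leaves the operator unchanged; since $\mathfrak{m}$ is defined by pairing against exactly this quantization, the pairing depends only on the $d\xi'd\rho$-equivalence class of the symbol, which is precisely the absolute continuity claim. Your approach instead disintegrates $M$ against its scalar trace measure and invokes the Moyal trace formula to exhibit an explicit density $\frac{1}{(2\pi)^p}\sigma_M(t,x'',\xi'';\rho,\xi')$ (the fiberwise Weyl symbol of the unit-trace operators) against $d\xi'd\rho\otimes\lambda$. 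This buys more information --- an explicit density formula, not just the abstract absolute-continuity statement --- at the cost of invoking disintegration of operator-valued measures and a measurable-selection argument that the paper simply does not need. Both proofs are sound; yours is the natural choice if one later wants to manipulate the density, whereas the paper's is the minimal argument for the lemma as stated.
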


\begin{proof}
Indeed, if $a(x,\xi,\rho) = b(x,\xi,\rho)$ almost everywhere with respect to $d\xi'd\rho$, then $a^w(0,x'',\partial_y,\xi'',y) = b^w(0,x'',\partial_y,\xi'',y)$, for taking any $f \in L^2(\R^p)$ and working out the definition: 
\begin{eqnarray*}
a^w(0,x'',\partial_y,\xi'',y) f(y) & = & \frac{1}{(2\pi)^p} \int_{\R^{2p}} e^{i \xi' \centerdot (y - \rho)} a\left(0,x'',\xi',\xi'',\frac{y + \rho}{2}\right) f(\rho) \, d\xi' d\rho \\
& = & \frac{1}{(2\pi)^p} \int_{\R^{2p}} e^{i \xi' \centerdot (y - \rho)} b\left(0,x'',\xi',\xi'',\frac{y + \rho}{2}\right) f(\rho) \, d\xi' d\rho \\
& = & b^w(0,x'',\partial_y,\xi'',y) f(y);
\end{eqnarray*}
then it follows from the definition in Remark \ref{rem:definition_m} that $\left< \mathfrak{m}(t,x'',\xi,\rho) \, , \Xi(t)a(0,x'',\xi,\rho) \right> = \left< \mathfrak{m}(t,x'',\xi,\rho) \, , \Xi(t)b(0,x'',\xi,\rho) \right>$ whenever for each $x'' \in \R^{d-p}$, $a(0,x'', \cdot\,)$ and $b(0,x'',\cdot\,)$ differ only within a subset of $\R^{2p}_{\xi',\rho}$ with null Lebesgue measure, implying the lemma.
\end{proof}

\begin{lemma}\label{lem:two_microlocal_decomposition}
The semiclassical measure $\mu$ decomposes as
\begin{eqnarray*}
\mu(t,x,\xi) = \mu(t,x,\xi) 1\!\!1_{\lbrace x' \neq 0 \rbrace} + \delta(x') \otimes \int_{\mathcal{S}^{p-1}} \nu_\infty (t,x'',\xi,d\omega) \hspace{1.25cm} \\ + \delta(x') \otimes \left( \int_{\R^p} \mathfrak{m}_{(\xi',\rho)}(t,x'',\xi'')\,d\rho \right) d\xi',
\end{eqnarray*}
where $\nu_\infty$ is as in Proposition \ref{prop:two_microlocal_measures} and $\mathfrak{m}_{(\xi',\rho)}(t,x'',\xi'') \, d\rho \, d\xi' = \mathfrak{m}(t,x'',\xi,\rho)$.
\end{lemma}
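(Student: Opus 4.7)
The plan is to apply Proposition \ref{prop:two_microlocal_measures} to the particular subclass of symbols that are independent of the variable $\rho$ and then read off the decomposition of $\mu$ by identifying the three pieces on the right-hand side of \eqref{eq:two_microlocal_decomp}.

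Concretely, I would fix $a \in C_0^\infty(\R^{2d})$ and $\Xi \in C_0^\infty(\R)$, and view $a$ as an element of $S(p)$ independent of $\rho$; in particular $a_\infty(x,\xi,\omega)=a(x,\xi)$ does not depend on $\omega$, and $\op_\eps^\sharp(a)=\op_\eps(a)$. The left-hand side of \eqref{eq:two_microlocal_decomp} is then just $\int_\R \Xi(t)\langle\op_\eps(a)\Psi^\eps_t,\Psi^\eps_t\rangle\,dt$, which converges to $\langle\mu,\Xi\otimes a\rangle_{\R\times\R^{2d}}$ by definition of the Wigner measure. On the right-hand side, the first term is tautologically $\langle \mu\,1\!\!1_{\{x'\neq 0\}},\Xi\otimes a\rangle$; the second collapses to $\langle \delta(x')\otimes\int_{\mathcal{S}^{p-1}}\nu_\infty(t,x'',\xi,d\omega),\,\Xi\otimes a\rangle$, since $a_\infty$ does not see $\omega$.

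The third term is the one requiring a small computation. Since $a$ does not depend on $\rho$, the Weyl symbol $(y,\zeta)\mapsto a(0,x'',\zeta,\xi'')$ depends only on $\zeta$, so its Weyl quantization with $\eps=1$ is the Fourier multiplier $a(0,x'',D_y,\xi'')$. Invoking Remark \ref{rem:definition_m}, the trace term equals $\langle \delta(x')\otimes\mathfrak{m}(t,x'',\xi,\rho),\,\Xi(t)\,a(x,\xi)\rangle_{\R\times\R^{2d+p}}$. Lemma \ref{lem:m_absolutely_continuous} lets me write $\mathfrak{m}(t,x'',\xi,\rho)=\mathfrak{m}_{(\xi',\rho)}(t,x'',\xi'')\,d\xi'\,d\rho$, and because $a$ is constant in $\rho$ the integration against $d\rho$ can be performed immediately, producing $\bigl(\int_{\R^p}\mathfrak{m}_{(\xi',\rho)}(t,x'',\xi'')\,d\rho\bigr)\,d\xi'$ as the density in $(\xi',x'',\xi'')$.

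Collecting the three pieces and noting that the identity holds for every $a\in C_0^\infty(\R^{2d})$ and $\Xi\in C_0^\infty(\R)$ yields the claimed decomposition in $\mathcal{D}'(\R\times\R^{2d})$. The only step that required any real work is the third, where one has to carefully commute the restriction to $\rho$-independent symbols with the Weyl quantization and then invoke Lemma \ref{lem:m_absolutely_continuous}; all the other identifications are direct consequences of Proposition \ref{prop:two_microlocal_measures}.
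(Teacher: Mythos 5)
Your proposal is correct and follows essentially the same approach as the paper's proof: both take $a\in C_0^\infty(\R^{2d})$, extend it to a $\rho$-independent element $\tilde a\in S(p)$, note that $\op_\eps^\sharp(\tilde a)=\op_\eps(a)$, and apply Proposition \ref{prop:two_microlocal_measures}. The paper states this in three sentences and leaves the identification of the three terms (including the use of Remark \ref{rem:definition_m} and Lemma \ref{lem:m_absolutely_continuous} for the trace piece) implicit, which you carry out explicitly and correctly.
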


\begin{proof}
If $a \in C_0^\infty$, define $\tilde{a}(x,\xi,\rho) = a(x,\xi)$ for all $\rho \in \R^p$. Thus, $\tilde{a} \in S(p)$ and $\op_\eps(a) = \op_\eps^\sharp\left(\tilde{a}\right)$, and we can use Proposition \ref{prop:two_microlocal_measures}.
\end{proof}

With the two-microlocal measures, we are equipped to tackle the analysis of the singular term of the commutator in \eqref{eq:d_op}.

\section{Approaching solutions with wave packets}\label{sec:splitting}
In the Introduction we pointed out that the non-uniqueness of the classical flow for the present case only plays a relevant role when the initial data concentrate to a point belonging to a trajectory that leads to the singularity. The behaviour of the measure will depend on the concentration rate and oscillations of the quantum states $\Psi^\eps$ as well as on other characteristics of this family over the crossings, such as the region where these states are supported. 

Below, we will prove some results that altogether are slightly more general than Theorem \ref{th:main_result_3}. We will present concrete cases of solutions to the Schrödinger equation with the conical potential $V(x) = -|x|$, $x \in \R$, that concentrate to a branch of one of the parabol\ae \, leading to the singularity in Figure \ref{fig:flow_classical}(b), and thereafter either swap to the other parabola (Section \ref{sec:other_parabola}, Proposition \ref{prop:crossing_psi}) or keep on the same one (Sections \ref{sec:same_parabola_1}, Proposition \ref{prop:psi_follows_the_path_1}).

These examples refute any possibility of a classical selection principle allowing one to predict the evolution of a particle (\emph{i.e.}, a Wigner measure concentrated to a single point) after it touches the singularity, since they show two particles subjected to the same potential and following the same path for any $t < 0$, but then going each to a different side for $t > 0$.

\subsection{Measures rebounding at the singularity}\label{sec:same_parabola_1}
Let us consider the trajectories
\begin{equation}\label{eq:trajectory_wave_packet_other}
\left\lbrace
\begin{array}{l}
\xi_{\pm}\left(t\right) = \pm t \\
x_{\pm}\left(t\right) = \pm \frac{t^2}{2}
\end{array}
\right. \quad {\rm for} \; t \in \R.
\end{equation}
In this section we will prove:

\begin{proposition}\label{prop:psi_follows_the_path_1}
Let be $\Psi^\eps$ the solution to the Schrödinger equation \eqref{eq:schrodinger} with $V(x) = -|x|$ in $\R$ with initial datum
$$\Psi^\eps_0 \left(x\right) = \frac{1}{\eps^{\frac{1}{4}}} a \left(\frac{x}{\sqrt{\eps}}\right),$$
with $a \in C_0^{\infty} \left(\R\right)$.

For any $t \in \R$ the semiclassical measure associated to the family $\left(\Psi^\eps\right)_{\eps > 0}$ is given by 
$$\mu_t\left(x,\xi\right) = p^{+} \, \delta\left(x-x_{+}(t)\right) \otimes \delta\left(\xi- \xi_{+}(t)\right) + p^{-} \, \delta\left(x-x_{-}(t)\right) \otimes \delta\left(\xi- \xi_{-}(t)\right),$$
where the weights $p^{\pm}$ are given by
$$p^\pm  = \pm \int_0^{\pm \infty} \left| a(x) \right|^2 dx.$$ 
\end{proposition}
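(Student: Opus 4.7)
The initial datum $\Psi_0^\eps$ is a wave packet concentrated at the phase-space point $(0,0)$, which sits exactly on the singularity of $V=-|x|$; the two trajectories $(x_\pm,\xi_\pm)$ of \eqref{eq:trajectory_wave_packet_other} emanate from it, and on each half-line $\pm x\geq 0$ the potential coincides with the affine $V_\pm(x)=\mp x$. The plan is to split the profile as $a=a_n^++a_n^-$ by a smooth cut-off around $\{x=0\}$, propagate each piece by a wave packet following the corresponding parabola, and compare with $\Psi^\eps$ via Proposition~\ref{prop:evolution_norm}.

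Concretely, I would pick $\chi_n\in C^\infty(\R)$ with $\chi_n=1$ on $[1/n,\infty)$ and $\chi_n=0$ on $(-\infty,0]$, set $a_n^+=a\chi_n$, $a_n^-=a(1-\chi_n)$, and let $\Psi^{\eps,\pm}$ be the exact Schrödinger solutions with $V=-|x|$ and initial data $\eps^{-1/4}a_n^\pm(x/\sqrt\eps)$; by linearity $\Psi^\eps=\Psi^{\eps,+}+\Psi^{\eps,-}$. I would then build the wave packets $\varphi_t^{\eps,\pm}$ via \eqref{eq:wave_packet} along $(x_\pm(t),\xi_\pm(t))$ with classical action $S_\pm(t)=t^3/3$ and profiles $v_t^\pm$. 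Since $\nabla^2 V_\pm\equiv 0$, equation \eqref{eq:wave_packet_equation} collapses to free Schrödinger, so $v_t^\pm$ is simply the free evolution of $a_n^\pm$.

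The critical estimate concerns the remainder \eqref{eq:error_wave_packet}: on the half-line $\pm(x_\pm(t)+\sqrt\eps y)\geq 0$ it vanishes identically (because $V=V_\pm$ there), while on the opposite half-line it equals $\pm 2(x_\pm(t)+\sqrt\eps y)$ and is supported in $\{|y|\geq t^2/(2\sqrt\eps)\}$. Writing the free kernel $(2\pi it)^{-1/2}e^{i(y-y')^2/(2t)}$ and factoring out the quadratic phase in $y$, I obtain $|v_t^\pm(y)|\leq C_N|t|^{-1/2}(1+|y|/|t|)^{-N}$ for every $N$, which is essentially $|t|^{-1/2}|\widehat{g_t}(y/t)|$ with $g_t(y')=e^{iy'^2/(2t)}a_n^\pm(y')$ Schwartz. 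For $t$ in a compact $K\subset\R\setminus\{0\}$ and $\eps$ small, $|y|\geq t^2/(2\sqrt\eps)$ lies in the rapid-decay regime, so $\|R_t^\eps v_t^\pm/\eps\|_{L^2}=\mathcal{O}(\eps^N)$ uniformly on $K$. Proposition~\ref{prop:evolution_norm} (and its time-reversed analogue for $t<0$) then yields $\|\Psi_t^{\eps,\pm}-\varphi_t^{\eps,\pm}\|_{L^2}=\mathcal{O}(\eps^N)$, hence $\Psi_t^\eps=\varphi_t^{\eps,+}+\varphi_t^{\eps,-}+\mathcal{O}_{L^2}(\eps^N)$ for $t\in K$.

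To conclude I would expand $W^\eps(\varphi^{\eps,+}+\varphi^{\eps,-})$ into diagonal and cross terms. The diagonals converge by Lemma~\ref{lem:path_wave_packet} to $\|a_n^\pm\|_{L^2}^2\,\delta(x-x_\pm(t))\otimes\delta(\xi-\xi_\pm(t))$. For $t\in K$ the two packets live in essentially disjoint $\sqrt\eps$-neighbourhoods of $\pm t^2/2$, and a standard integration by parts in $\xi$ inside $\langle\op_\eps(a)\varphi_t^{\eps,+},\varphi_t^{\eps,-}\rangle$ makes the cross terms $\mathcal{O}(\eps^\infty)$ for any $a\in C_0^\infty(\R^2)$. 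Taking $\eps\to 0$ first, then $n\to\infty$ (so that $\|a_n^\pm\|_{L^2}^2\to p^\pm$ by monotone convergence) proves the formula for $t\neq 0$; continuity of $t\mapsto\mu_t$, or direct inspection of $\Psi_0^\eps$ whose Wigner measure is $\|a\|_{L^2}^2\,\delta_{(0,0)}=(p^++p^-)\delta_{(0,0)}$, handles $t=0$. The main obstacle is that $R_t^\eps/\eps$ is only $\mathcal{O}(1)$ pointwise (rather than $\mathcal{O}(\eps^{1/2})$ as in the smooth case treated by the corollary of Proposition~\ref{prop:evolution_norm}), so the smallness must come entirely from the dispersive decay of the free-Schrödinger profile outside its original support; it is precisely this decay that makes the wave-packet toolbox applicable despite the conical singularity.
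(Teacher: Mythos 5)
Your overall strategy is the same as the paper's: split the initial profile at the singular point, propagate each half by a wave packet along its parabola, compare to $\Psi^\eps$ via Proposition~\ref{prop:evolution_norm}, and reassemble. But the route is genuinely different in three places. You cut into two pieces $a_n^\pm$ touching $y=0$, whereas the paper cuts into three pieces $\Psi^{\eps,\delta}_{\pm}$, $\Psi^{\eps,\delta}_{\centerdot}$ with the $\delta$-wide middle part discarded by mass-counting. You kill the cross term by integration by parts in $\xi$ exploiting the $O(t^2)$ spatial separation of the two packets for $t\neq 0$, whereas the paper uses the Cauchy--Schwarz estimate \eqref{eq:interference_term} together with absolute continuity of $\gamma^\delta$ in $t$. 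Most importantly, your remainder estimate rests on non-stationary-phase decay of the free-Schr\"odinger profile outside its original support, whereas the paper's Lemma~\ref{lem:psi_follows_varphi_other} splits time at $\tau_\eps=\eps^{1/4-1/17}$, using the Taylor expansion \eqref{eq:finite_development} of $v_t^{\delta,\pm}$ (whose first-order part stays in $\pm y>0$) for $|t|\le\tau_\eps$ and the majoration $|y|^{-5}\le 2^5\eps^{10\alpha}$ on $\{|y|\ge t^2/(2\sqrt\eps)\}$ for $|t|\ge\tau_\eps$.

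There is, however, a real gap in the step where you invoke Proposition~\ref{prop:evolution_norm}. That proposition bounds $\|\Psi^{\eps,\pm}_\tau-\varphi^{\eps,\pm}_\tau\|_{L^2}$ by the \emph{integral} $\int_0^\tau\|R^\eps_s v^\pm_s/\eps\|_{L^2}\,ds$, not by the value of the integrand at $s=\tau$. You establish $\|R^\eps_t v^\pm_t/\eps\|_{L^2}=\mathcal{O}(\eps^N)$ for $t$ in a compact $K\subset\R\setminus\{0\}$ --- correct, since there $t^2/(2\sqrt\eps)\to\infty$ --- but the integral runs through $s$ near $0$, where $s^2/(2\sqrt\eps)$ is small and the integrand is only $\mathcal{O}(s/\sqrt\eps)$, hence $\mathcal{O}(1)$ once $s\sim\sqrt\eps$. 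So ``Proposition~\ref{prop:evolution_norm} then yields $\|\Psi^{\eps,\pm}_t-\varphi^{\eps,\pm}_t\|_{L^2}=\mathcal{O}(\eps^N)$'' does not follow as stated. A second, smaller issue: the dispersive bound $|v^\pm_s(y)|\lesssim C_N|s|^{-1/2}(1+|y|/|s|)^{-N}$ is not a consequence of ``$g_t$ is Schwartz'' with a $t$-independent constant --- the Schwartz seminorms of $g_t(y')=e^{iy'^2/(2t)}a^\pm_n(y')$ blow up like $|t|^{-N}$ as $t\to 0$. The bound nevertheless holds on the half-line $\mp y>0$ by a non-stationary-phase argument using $|\psi'(y')|=|y'-y|\ge|y|$ on $\mathrm{supp}\,a^\pm_n$. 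With that estimate in hand, and substituting $u=|y|/|s|$ in $\bigl\|y\,v^\pm_s\,1\!\!1_{\{\pm y\le -s^2/(2\sqrt\eps)\}}\bigr\|_{L^2}$, one gets
$$\left\|\tfrac{1}{\eps}R^\eps_s v^\pm_s\right\|_{L^2}\lesssim C_N\,\frac{|s|}{\sqrt\eps}\,\min\left(1,\ \left(\tfrac{|s|}{2\sqrt\eps}\right)^{3/2-N}\right),$$
whose integral over $(0,T)$ is $\mathcal{O}(\sqrt\eps)$ for $N>7/2$. This closes the gap and makes the rest of your argument go through; it even improves on the paper's rate $\eps^{1/4-3/17}$, but it is $\mathcal{O}(\sqrt\eps)$, not the $\mathcal{O}(\eps^N)$ you claimed.
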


\begin{proof}
Given an arbitrary $\delta > 0$, let us cut the evolved state $\Psi^\eps_t$ into three parts,  
$$\Psi^{\eps}_t\left(x\right) = \Psi_{+,t}^{\eps,\delta}\left(x\right) + \Psi_{\centerdot,t}^{\eps,\delta}\left(x\right) + \Psi_{-,t}^{\eps,\delta}\left(x\right),$$
where $\Psi_{+}^{\eps,\delta}$, $\Psi_{\centerdot}^{\eps,\delta}$ and $\Psi_{-}^{\eps,\delta}$ solve the Schrödinger equation with initial data
\begin{equation*}
\begin{array}{lllll}
\Psi_{+,0}^{\eps,\delta}\left(x\right) & = & \frac{1}{\eps^{\frac{1}{4}}}a\left(\frac{x}{\sqrt{\eps}}\right)\chi_{+}^{\delta}\left(\frac{x}{\sqrt{\eps}}\right) & & {\rm supp} \; \chi_{+}^{\delta} \subset \left\lbrace x \in \R : x \geqslant \delta \right\rbrace \vspace{0.1cm} \\
\Psi_{\centerdot,0}^{\eps,\delta}\left(x\right) & = & \frac{1}{\eps^{\frac{1}{4}}}a\left(\frac{x}{\sqrt{\eps}}\right)\chi_{\centerdot}^{\delta}\left(\frac{x}{\sqrt{\eps}}\right) & \quad \text{with} \quad & {\rm supp} \; \chi_{\centerdot}^{\delta} \subset \left\lbrace x \in \R : -2 \delta \leqslant x \leqslant 2 \delta \right\rbrace \vspace{0.1cm} \\
\Psi_{-,0}^{\eps,\delta}\left(x\right) & = & \frac{1}{\eps^{\frac{1}{4}}}a\left(\frac{x}{\sqrt{\eps}}\right)\chi_{-}^{\delta}\left(\frac{x}{\sqrt{\eps}}\right) & & {\rm supp} \; \chi_{-}^{\delta} \subset \left\lbrace x \in \R : x \leqslant -\delta \right\rbrace \\
\end{array}
\end{equation*}
chosen in such a way that $\chi_{+}^{\delta} + \chi_{\centerdot}^{\delta} + \chi_{-}^{\delta} = 1$, all these three functions smooth and taking values in $\left[0,1\right]$.

The middle term's semiclassical measure has total mass of order $\delta \max_{x\in \R} |a(x)|^2$; as a consequence, the full Wigner measure of $\Psi^\eps$ will be, for any $\delta > 0$:
\begin{equation}\label{eq:measure_decomp}
\mu = \mu^{\delta,+} + \mu^{\delta,-} + \gamma^{\delta} + \mathcal{O}(\delta),
\end{equation}
where $\mu^{\delta,\pm}$ are the measures associated to $\Psi^{\eps,\delta}_\pm$, $\mathcal{O}(\delta)$ is a measure with total mass of order $\delta$ issued from the middle term and its interferences with the other terms, and $\gamma^\delta$ is the interference measure between $\Psi^{\eps,\delta}_-$ and $\Psi^{\eps,\delta}_+$, which satisfies, for any strictly positive test function $b \in C_0^\infty\left(\R\times\R^2\right)$, the estimate
\begin{equation}\label{eq:interference_term}
\left| \left< \, \gamma^\delta \, , \, b \, \right> \right| \leqslant \sqrt{\left< \, \mu^{\delta,+} \, , \, b \, \right>\left< \, \mu^{\delta,-} \, , \, b \, \right>},
\end{equation}
as widely known in semiclassical calculus\footnote{As a short justification of this estimate, take $b \in S(\R^{2d})$ strictly positive and define $a = \sqrt{b}$, which will also be a smooth function (since $b$ is strictly positive), and this will give $\op_\eps(b) = \op_\eps(a)^2 + \mathcal{O}(\eps)$ in $\mathcal{L}\left(L^2(\R^d)\right)$. Now calculate:
\begin{eqnarray*}
\left| \left< \op_\eps(b) \Psi^\eps_+ \, , \Psi^\eps_- \right> \right| & = & \left| \left< \op_\eps(a)^2 \Psi^\eps_+ \, , \Psi^\eps_- \right> + \mathcal{O}(\eps) \right| \\
& = & \left| \left< \op_\eps(a) \Psi^\eps_+ \, , \op_\eps(a) \Psi^\eps_- \right> + \mathcal{O}(\eps) \right| \\
& \leqslant & \left\| \op_\eps(a) \Psi^\eps_+ \right\| \left\| \op_\eps(a) \Psi^\eps_- \right\| + \mathcal{O}(\eps) \\
& = & \sqrt{\left< \op_\eps(a) \Psi^\eps_+ \, , \op_\eps(a) \Psi^\eps_+ \right> \left< \op_\eps(a) \Psi^\eps_- \, , \op_\eps(a) \Psi^\eps_- \right>} + \mathcal{O}(\eps) \\
& = & \sqrt{\left< \op_\eps(b) \Psi^\eps_+ \, , \Psi^\eps_+ \right> \left< \op_\eps(b) \Psi^\eps_- \, , \Psi^\eps_- \right>} + \mathcal{O}(\eps);
\end{eqnarray*}
the result comes in the limit where $\eps$ goes to $0$.}. Remark that $\gamma^\delta$ is not necessarily positive.

\medskip

For the study of $\mu^{\delta,\pm}$, let us introduce $\varphi^{\eps,\delta}_{\pm}$, the wave packets defined in \eqref{eq:wave_packet} for $t \in \R$, having profiles $v^{\delta,\pm}$ that obey to
\begin{equation}\label{eq:profile_pm}
\left\lbrace
\begin{array}{l}
i\partial_t v^{\delta,\pm}_t(y) = -\frac{1}{2}\Delta v^{\delta,\pm}_t(y) \\
v^{\delta,\pm}_0(y) = a\left(y\right)\chi^\delta_\pm \left(y\right),
\end{array}\right.
\end{equation}
which is nothing more than the profile equation \eqref{eq:wave_packet_equation} \emph{with the smooth potentials} $\tilde{V}^\pm (x) = \mp x$ (which admit the trajectories \eqref{eq:trajectory_wave_packet_other}).

\begin{lemma}\label{lem:psi_follows_varphi_other}
For any $\delta > 0$, $$\lim_{\eps \longrightarrow 0} \left\| \Psi^{\eps,\delta}_\pm - \varphi^{\eps,\delta}_\pm \right\|_{L^\infty\left(\left[-T,T\right],L^2(\R)\right)} = 0.$$
\end{lemma}
\noindent (The proof is postponed.) 

\medskip

So, the Wigner measures for the components $\Psi_{\pm}^{\eps,\delta}$ are the same as those for $\varphi_{\pm}^{\eps,\delta}$, which one computes explicitly:
$$\mu_t^{\delta,\pm}\left(x,\xi\right) = \pm \delta\left( x - x_{\pm}\left(t\right) \right)\otimes \delta\left( \xi - \xi_{\pm}\left(t\right) \right)\int_{0}^{\pm\infty} \left| a\left(y\right) \, \chi_{\pm}^\delta \left(y\right) \right|^{2}dy;$$
observe that Equation \eqref{eq:interference_term} implies that $\gamma^{\delta}$ is supported on the intersection of the supports of $\mu^{\delta,+}$ and $\mu^{\delta,-}$, but from last formula this intersection turns out to be different from the empty set only for $t = 0$, or, more rigorously, it is contained within $\lbrace t = 0 \rbrace \times \R^{2d}$. It happens that $\gamma^\delta$ is absolutely continuous with respect to $\mu^{\delta,\pm}$, and these are absolutely continuous with respect to the Lebesgue measure $dt$, as we saw in Section \ref{sec:initial_considerations}. As a conclusion, $\gamma^\delta = 0$.

Finally, as $\delta$ is arbitrary, we take the limit $\delta \longrightarrow 0$ and it follows from \eqref{eq:measure_decomp} that $\mu = \mu^+ + \mu^-$, where:
$$\mu^{\pm}\left(t,x,\xi\right) = p^\pm \, \delta\left( x - x_{\pm}\left(t\right) \right)\otimes \delta\left( \xi - \xi_{\pm}\left(t\right) \right),$$
as we had in the proposition's statement.
\end{proof}

\begin{proof}[Proof of Lemma \ref{lem:psi_follows_varphi_other}]
To begin with, since for $t \neq 0$ we have $\nabla^j V(x_\pm(t)) = \nabla^j \tilde{V}^\pm (x_\pm(t))$ for any $j \in \left\lbrace 0,1,2 \right\rbrace$, $\varphi^{\eps,\delta}_\pm$ obeys to \eqref{eq:wave_packet_equation_phi} with an error $\frac{1}{\eps}R^{\eps,\pm}$ defined according to \eqref{eq:error_wave_packet}; let us calculate it for $t \neq 0$:
\begin{eqnarray*}
\frac{1}{\eps} R_{t}^{\eps,\pm} \left(y\right) & = & \frac{1}{\eps} \left( -\left| \pm\frac{t^{2}}{2} + \sqrt{\eps}y \right| + \frac{t^{2}}{2} \pm \sqrt{\eps}y \right) \nonumber \\
& = & -\frac{2y}{\sqrt{\eps}} \left(\frac{ \pm \frac{t^{2}}{2} + \sqrt{\eps}y }{\left| \pm \frac{t^{2}}{2} + \sqrt{\eps}y \right| + \frac{t^{2}}{2} }\right) 1\!\!1_{\left\lbrace \pm y \leqslant -\frac{t^{2}}{2\sqrt{\eps}}\right\rbrace}, 
\end{eqnarray*}
which gives
\begin{equation}\label{eq:error_estimate_other}
\left| \frac{1}{\eps} R_{t}^{\eps,\pm} \left(y,t\right) \right| \leqslant 2\frac{\left|y\right|}{\sqrt{\eps}}1\!\!1_{\left\lbrace \pm y \leqslant -\frac{t^{2}}{2\sqrt{\eps}}\right\rbrace}.
\end{equation}

Additionally, one can solve equation \eqref{eq:profile_pm} for the profile $v^{\delta,\pm}$ of $\varphi^{\epsilon,\delta}_\pm$ explicitly; writing down its solution,
$$v^{\delta,\pm}_t \left(y\right) = e^{\frac{i}{2}t\Delta} \left(a\left(y\right) \chi^\delta_\pm \left(y\right)\right),$$
it remains clear that $v^{\delta,\pm}$ admits a finite development like
\begin{equation}\label{eq:finite_development}
v_{t}^{\delta,\pm}\left(y\right) = \left(1 + \frac{i}{2} t \Delta\right) \left( a\left(y\right)\chi_{\pm}^{\delta}\left(y\right) \right) + t^{2} \underbrace{\int_{0}^{1} \left(1-s\right)\partial_{t}^{2}v_{st}^{\delta,\pm}\left(y\right)ds}_{\tilde{v}_{t}^{\delta,\pm}\left(y\right)},
\end{equation}
where the first term in the right-hand side has support on $\pm y > 0$.

\begin{remark}\label{rem:boundedness}
From \eqref{eq:profile_pm} and the fact that its initial datum is $C_0^\infty(\R)$, it follows that, for any $T > 0$ and $j,k \in \mathbb{N}$, one has $y^j\partial_t^k v^{\delta,\pm} \in L^\infty\left([-T,T],L^2(\R)\right)$, which natually implies that $y^j\partial_t^k \tilde{v}^{\delta,\pm} \in L^\infty\left([-T,T],L^2(\R)\right)$.
\end{remark}

Therefore, from expressions \eqref{eq:error_estimate_other} and \eqref{eq:finite_development}:
{\small
\begin{eqnarray*}
\left\| \frac{1}{\eps} R_{t}^{\eps,\pm} \, v_{t}^{\delta,\pm}\right\|_{L^2(\R)} & \leqslant & \frac{2}{\sqrt{\eps}} \left\| y \, v_{t}^{\delta,\pm}\left(y\right) 1\!\!1_{\left\lbrace \pm y \leqslant -\frac{t^{2}}{2\sqrt{\eps}}\right\rbrace} \right\|_{L^2(\R)} \\
& = & \frac{2t^{2}}{\sqrt{\eps}} \left\| y \, \tilde{v}_{t}^{\delta,\pm}\left(y\right) 1\!\!1_{\left\lbrace \pm y \leqslant -\frac{t^{2}}{2\sqrt{\eps}}\right\rbrace} \right\|_{L^2(\R)},
\end{eqnarray*}}
which results in
{\small
\begin{equation}\label{eq:norm_difference_other}
\sup_{\pm t \in \left[0,T\right]} \left\| \Psi_{\pm,t}^{\eps,\delta} - \varphi_{\pm,t}^{\eps,\delta} \right\|_{L^2(\R)} \leqslant \pm \frac{2}{\sqrt{\eps}} \int_{0}^{\pm T} s^{2} \left\| y \, \tilde{v}_{s}^{\delta,\pm}\left(y\right) 1\!\!1_{\pm y \leqslant -\frac{s^{2}}{2\sqrt{\eps}}} \right\|_{L^2(\R)} ds
\end{equation}}
after applying Proposition \ref{prop:evolution_norm}. To evaluate this integral, fix $\alpha = \frac{1}{17}$ and denote $\tau_\eps = \eps^{\frac{1}{4}-\alpha}$:

\begin{enumerate}
\item For $t \in \left( -\tau_\eps , \tau_{\eps} \right)$, \eqref{eq:norm_difference_other} gives
\begin{eqnarray}\label{eq:norm_estimate_1_other}
\sup_{t \in \left(-\tau_\eps,\tau_\eps\right)} \left\| \Psi_{\pm,t}^{\eps,\delta} - \varphi_{\pm,t}^{\eps,\delta} \right\|_{L^2(\R)} & \leqslant & \frac{2}{3} \frac{\tau_{\eps}^{3}}{\sqrt{\eps}} \left\| y \, \tilde{v}_{t}^{\delta,\pm}\left(y\right) \right\|_{L^\infty\left((-\tau_\eps,\tau_\eps),L^2(\R)\right)} \nonumber \\
& = & \eps^{\frac{1}{4}-3\alpha} \, K_{\delta},
\end{eqnarray}
with $K_\delta > 0$ constant. The fact that $\left\| y \, \tilde{v}^{\delta,\pm}\right\|_{L^\infty\left((-\tau_\eps,\tau_\eps),L^2(\R)\right)}$ is bounded comes from Remark \ref{rem:boundedness}.

\item For the estimate for $t \in \Upsilon^\eps = \left[ -T , -\tau_{\eps} \right] \cup \left[ \tau_{\eps} , T \right]$, remark that $\pm y \leqslant -\frac{t^{2}}{2\sqrt{\eps}} \leqslant -\frac{\eps^{-2\alpha}}{2}$ implies $\left| y \right|^{-5} \leqslant 2^{5}\eps^{10\alpha}$, then:
$$\left\| y \, v_{t}^{\delta,\pm}\left(y\right) 1\!\!1_{\left\lbrace \pm y \leqslant -\frac{t^{2}}{2\sqrt{\eps}}\right\rbrace} \right\|_{L^2(\R)} \leqslant 2^{5} \eps^{10\alpha} \left\| y^{6} \, v_{t}^{\delta,\pm}\left(y\right) \right\|_{L^2(\R)}.$$
Boundedness for the norm in last equation's right-hand side comes from the previous Remark \ref{rem:boundedness}.  

Finally, form \eqref{eq:norm_difference_other}, 
\begin{eqnarray}\label{eq:norm_estimate_2_other}
\sup_{t \in \left(-\tau_\eps,\tau_\eps\right)} \left\| \Psi_{\pm,t}^{\eps,\delta} - \varphi_{\pm,t}^{\eps,\delta} \right\|_{L^2(\R)} & \leqslant & \frac{2^{6} \, T^3}{3} \eps^{10\alpha - \frac{1}{2}} \left\| y^{6} \, v^{\delta,\pm} \right\|_{L^\infty\left(\Upsilon^\eps,L^2(\R)\right)} \nonumber \\
& = & \eps^{10\alpha-\frac{1}{2}} \tilde{K}_\delta
\end{eqnarray}
for a constant $\tilde{K}_\delta > 0$.
\end{enumerate}

Because of our choice of $\alpha = \frac{1}{17}$, both estimates \eqref{eq:norm_estimate_1_other} and \eqref{eq:norm_estimate_2_other} go to $0$ with $\eps \longrightarrow 0$, and we prove the proposition.
\end{proof}

So, in this section we saw the example of a case where the initial measure splits in two pieces, each one gliding to its side as in Figure \ref{fig:rebounding_particle}, accordingly to the quantum distribution of mass along the $x$-axis.

Remark that there is no crossings at all. The part $\mu^-$ that goes to the left downwards did not come from the same side as $\mu^+$, but from the very left. The portion that was in the right for $t < 0$ stays in the right all the time.

\subsection{Measures crossing the singularity}\label{sec:other_parabola}
Now, consider for $\eta \leqslant 0$ the Hamiltonian trajectories of $V(x) = -|x|$:
\begin{equation}\label{eq:trajectory_wave_packet}
\left\lbrace
\begin{array}{l}
\xi^\eta\left(t\right) = \eta \pm t \\
x^\eta\left(t\right) = \eta t \pm \frac{t^2}{2}
\end{array}
\right. \quad \rm{for} \quad \pm t \leqslant 0.
\end{equation} 
In this section, we will prove:

\begin{proposition}\label{prop:crossing_psi}
If $\Psi^{\eps,\eta}$ is solution to the Schrödinger equation \eqref{eq:schrodinger} with $V(x) = -|x|$ in $\R$, $\eta < 0$ and initial data
$$\Psi_0^{\eps,\eta}(x) = \frac{1}{\eps^{\frac{1}{4}}} a\left(\frac{x}{\sqrt{\eps}}\right) e^{\frac{i}{\eps}\eta \centerdot x},$$
with $a \in C_0^\infty(\R)$, $\| a \|_{L^2(\R)} = 1$, then the associated semiclassical measures are
$$\mu^\eta_t(x,\xi) = \delta (x-x^\eta(t)) \otimes \delta(\xi-\xi^\eta(t))$$
for any $t \in \R$.

Besides, if we take $\eta = -\eps^\beta$ with $0 < \beta < \frac{1}{10}$, then the corresponding semiclassical measure will be, for $t \in \R$,
$$\mu_t(x,\xi) = \delta (x-x^0(t)) \otimes \delta(\xi-\xi^0(t)).$$
\end{proposition}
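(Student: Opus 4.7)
The plan is to approximate $\Psi^{\eps,\eta}$ by the wave packet $\varphi^{\eps,\eta}_t$ of the form \eqref{eq:wave_packet} centered on the trajectory $(x^\eta(t),\xi^\eta(t))$ with initial profile $v_0=a$. Since $\nabla^2 V(x)=0$ wherever $V$ is smooth, the profile equation \eqref{eq:wave_packet_equation} reduces to the free Schrödinger equation and $v_t=e^{\frac{it}{2}\Delta}a$ is independent of $\eta$. By Lemma \ref{lem:path_wave_packet} the semiclassical measure of $(\varphi^{\eps,\eta}_t)_{\eps>0}$ equals $\|a\|_{L^2}^2\,\delta(x-x^\eta(t))\otimes\delta(\xi-\xi^\eta(t))=\delta(x-x^\eta(t))\otimes\delta(\xi-\xi^\eta(t))$, so the proposition reduces to showing $\|\Psi^{\eps,\eta}_t-\varphi^{\eps,\eta}_t\|_{L^2(\R)}\to 0$ uniformly on $t\in[-T,T]$, which by Proposition \ref{prop:evolution_norm} amounts to bounding $\int_0^t\|R^\eps_s v_s/\eps\|_{L^2}ds$.

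A direct computation with $V(x)=-|x|$ shows that $R^\eps_s(y)=0$ whenever $x^\eta(s)$ and $x^\eta(s)+\sqrt\eps y$ share a sign and $|R^\eps_s(y)|\leq 2\sqrt\eps|y|$ otherwise, giving $\|R^\eps_s v_s/\eps\|_{L^2}\leq \frac{2}{\sqrt\eps}\|y\,v_s\,\mathbf{1}_{\rm bad}\|_{L^2}$ with the bad region at $|y|\geq|x^\eta(s)|/\sqrt\eps$ on the side opposite to $x^\eta(s)$. Since $v_s$ has full support on $\R$, this bound is of order $1/\sqrt\eps$ near $s=0$, yielding a non-vanishing contribution if left unattended. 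I would therefore adapt the cutoff technique from the proof of Lemma \ref{lem:psi_follows_varphi_other}: decompose $a=a\chi_+^\delta+a\chi_\cdot^\delta+a\chi_-^\delta$ with the corresponding splitting $\Psi^{\eps,\eta}=\Psi^{\eps,\eta,\delta}_++\Psi^{\eps,\eta,\delta}_\cdot+\Psi^{\eps,\eta,\delta}_-$. The middle piece carries $\mathcal{O}(\delta)$ Wigner mass and interference between the $\pm$-pieces is bounded via \eqref{eq:interference_term}.

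For each $\pm$-piece I would exploit the finite Taylor expansion $v^{\delta,\pm}_t=(1+\frac{it}{2}\Delta)(a\chi_\pm^\delta)+t^2\tilde v^{\delta,\pm}_t$ whose leading term is supported on $\pm y>0$. Because the trajectory crosses the singularity transversally, $x^\eta(t)>0$ for $t<0$ (so the bad region lies in $y<0$, disjoint from the $+$-piece support) and $x^\eta(t)<0$ for $t>0$ (symmetrically, disjoint from the $-$-piece support); on each half the ``compatible'' piece contributes only through its $t^2$-small remainder $\tilde v^{\delta,\pm}$. Splitting time into middle $|t|\leq\tau_\eps$ and exterior $|t|\geq\tau_\eps$ with $\tau_\eps=\eps^{1/4-\alpha}$, the middle regime yields $\mathcal{O}(\tau_\eps^3/\sqrt\eps)=\mathcal{O}(\eps^{1/4-3\alpha})$ from the $t^2$ factor, while the lower bound $|x^\eta(t)|\geq|\eta||t|$ places the bad region at $|y|\geq|\eta|\tau_\eps/\sqrt\eps$ in the exterior, so that the boundedness of $\|y^{k+1}\tilde v^{\delta,\pm}\|_{L^2}$ gives an exterior contribution of $\mathcal{O}(\eps^{(k-1)/2}/(|\eta|\tau_\eps)^k)$ for any $k$.

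For the first part of the proposition ($\eta$ fixed) both estimates vanish for any $\alpha\in(0,1/12)$ and $k$ sufficiently large, establishing $\mu^\eta_t=\delta(x-x^\eta(t))\otimes\delta(\xi-\xi^\eta(t))$. For the second part, with $|\eta|=\eps^\beta$, the exterior bound reads $\mathcal{O}(\eps^{k(1/4+\alpha-\beta)-1/2})$; balancing this against the middle-regime bound and against the required shrinkage of $\delta=\delta(\eps)$ (which must tend to zero faster than $|\eta|$ so as to control the cross-overlap of the $\pm$-piece supports with the bad region over the narrow crossing window $|t|\sim\sqrt\eps/|\eta|$) produces the quantitative constraint $\beta<\frac{1}{10}$. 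The main obstacle is precisely this delicate $\eps$-power bookkeeping: the interference contribution over the crossing window, where the leading supports of both $\pm$-pieces can simultaneously touch the bad region, must be made $o(1)$ simultaneously with the middle-regime and exterior-regime errors, and it is this compatibility condition that selects the threshold $\beta<1/10$.
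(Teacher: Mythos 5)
Your proposal has a genuine gap in the treatment of the \emph{incompatible} piece over the crossing window, which makes the plain wave-packet approximation fail even for fixed $\eta$. Consider the $+$-piece, whose leading profile term $(1+\tfrac{it}{2}\Delta)(a\chi_+^\delta)$ is supported on $y\in[\delta,M]$. For $t>0$ the trajectory is at $x^\eta(t)<0$ and the bad region is $y>-x^\eta(t)/\sqrt\eps\approx|\eta|t/\sqrt\eps$. For all $t\lesssim\delta\sqrt\eps/|\eta|$ the bad region contains the \emph{entire} support $[\delta,M]$, so the crude bound $\|R^\eps_t v^{\delta,+}_t/\eps\|_{L^2}\leqslant 2\|y\,v^{\delta,+}_t\|_{L^2}/\sqrt\eps=\mathcal{O}(1/\sqrt\eps)$ gives, integrated over that sub-window, a contribution of order $\delta/|\eta|$; in the complementary window $\delta\sqrt\eps/|\eta|<t<M\sqrt\eps/|\eta|$ the change of variable $u=|\eta|t/\sqrt\eps$ shows the contribution is $\frac{2}{|\eta|}\int_\delta^M\|y\,a\,1\!\!1_{\{y>u\}}\|_{L^2}du=\mathcal{O}(1/|\eta|)$, \emph{independently of} $\delta$. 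Shrinking $\delta$ therefore does not remove the obstruction: the total error of the plain wave-packet approximation for the incompatible piece is $\mathcal{O}(1/|\eta|)$. For fixed $\eta$ this is $\mathcal{O}(1)$, so the claim that ``for the first part of the proposition ($\eta$ fixed) both estimates vanish'' does not hold, and the purported derivation of the threshold $\beta<1/10$ is not reached. The underlying reason is that the true profile accumulates during the crossing a real phase $\exp\left(-\tfrac{i}{\eps}\int_0^t R^\eps_s(y)\,ds\right)$ which, as one can compute from \eqref{eq:def_i}--\eqref{eq:key_inequality}, equals roughly $e^{-i((y/\eta)^2\mp|y/\eta|)}$ at the exit of the crossing window; this phase is $\mathcal{O}(1)$ in $L^2$ for fixed $\eta$ and is simply not present in the $\eta$-independent free profile $v_t=e^{it\Delta/2}a$.

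The paper's route around this is qualitatively different and is the point you are missing. It does \emph{not} split $a$ into $\pm$-pieces; instead it builds an $\eps$- and $\eta$-dependent wave-packet profile: on $[-\tau_\eps,\tau_\eps]$ it takes the purely oscillatory profile $\tilde v^{\eps,\eta}_t(y)=e^{-\frac{i}{\eps}\int_0^t R^{\eps,\eta}_s(y)ds}a(y)$ (exact solution of $i\partial_t\tilde v=\tfrac{1}{\eps}R^\eps\tilde v$, i.e.\ keeping the error term and dropping the Laplacian), and outside the window it evolves $\tilde v^{\eps,\eta}_{\pm\tau_\eps}$ by the free Schrödinger flow. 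Lemma \ref{lem:psi_follows_varphi} then controls $\|u^\eps-\tilde v^\eps\|$ inside the window by trading the Laplacian against $\|\nabla\tilde v^\eps\|\lesssim 1/|\eta_\eps|$ and $\|\nabla u^\eps\|\lesssim 1+|t|/\sqrt\eps$, and outside by the $y^k$-decay argument that you correctly anticipate. Because the profile now carries the accumulated phase exactly, the $\mathcal{O}(1/|\eta|)$ obstruction disappears. A further consequence you would also have to address is that $\tilde v^{\eps,\eta}$ is $\eps$-dependent, so the conclusion of Lemma \ref{lem:path_wave_packet} does not apply verbatim; the paper needs the separate Lemma \ref{lem:psi_follows_the_path} to identify the measure of the modified wave packet, by relating it to the free Schrödinger evolution of $\hat v^\eps_\pm$ and showing the residual oscillations do not move the Wigner mass.

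If you wish to salvage your plan, you must replace the free profile by one that incorporates the phase $e^{-\frac{i}{\eps}\int_0^t R^\eps_s\,ds}$ near $t=0$; without that ingredient no amount of $\delta$- or $\tau_\eps$-tuning closes the $\mathcal{O}(1/|\eta|)$ gap.
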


\begin{proof}
For the case with $\eta < 0$ constant, just apply Lemmata \ref{lem:psi_follows_varphi} and \ref{lem:psi_follows_the_path} ahead choosing $\beta = 0$. For the case $\eta = -\eps^\beta$, same thing, but of course taking $0 < \beta < \frac{1}{10}$.
\end{proof}

Before we proceed to the lemmata, let us define $u^{\eps,\eta}$ after \eqref{eq:definition_u} using the trajectories in \eqref{eq:trajectory_wave_packet}, so as $u^{\eps,\eta}$ satisfies the following system:
\begin{equation*}
\left\lbrace
\begin{array}{l}
i\partial_t u^{\eps,\eta}_t\left(y\right) = -\frac{1}{2}\Delta u^{\eps,\eta}_t\left(y\right) + \frac{1}{\eps}R^{\eps,\eta}_t\left(y\right) \, u^{\eps,\eta}_t\left(y\right) \\
u^{\eps,\eta}_0\left(y\right) = a\left(y\right).
\end{array}
\right. 
\end{equation*}
We have:
\begin{lemma}
Above, for $t \neq 0$ we have
\begin{equation}\label{eq:r_nice_form}
R^{\eps,\eta}_t\left(y\right) = 2 \left( x^\eta(t) + \sqrt{\eps} y \right) \left( 1\!\!1_{\left\{t < 0\right\}} 1\!\!1_{\left\{ y < -\frac{x^\eta(t)}{\sqrt{\eps}} \right\}} - 1\!\!1_{\left\{t > 0\right\}} 1\!\!1_{\left\{y > -\frac{x^\eta(t)}{\sqrt{\eps}}\right\}} \right)
\end{equation}
and
\begin{equation}\label{eq:r_nice_estimation}
\left| \frac{1}{\eps} R^{\eps,\eta}_t (y) \right| \leqslant \frac{2 |y|}{\sqrt{\eps}} \left( 1\!\!1_{\left\{t < 0\right\}} 1\!\!1_{\left\{y < -\frac{x^\eta(t)}{\sqrt{\eps}}\right\}} + 1\!\!1_{\left\{t > 0\right\}} 1\!\!1_{\left\{y > -\frac{x^\eta(t)}{\sqrt{\eps}}\right\}} \right).
\end{equation}
\end{lemma}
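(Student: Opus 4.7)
The plan is to substitute the explicit form $V(x) = -|x|$ into the definition \eqref{eq:error_wave_packet} and perform a direct case analysis, exploiting that $V$ is smooth away from the origin.

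First I would observe that $x^\eta(t) \neq 0$ for $t \neq 0$ in the regime considered: factoring $x^\eta(t) = t(\eta \pm t/2)$ shows that (with $\eta \le 0$) the second root $t = \mp 2\eta$ lies on the opposite half-line from the one specified by the $\pm t \le 0$ convention, so on the relevant branch the trajectory avoids the singularity of $V$ except at $t=0$. In particular the sign of $x^\eta(t)$ is constant on each of $\{t<0\}$ and $\{t>0\}$: plugging in, $x^\eta(t) = t(\eta + t/2) > 0$ for $t<0$ and $x^\eta(t) = t(\eta - t/2) < 0$ for $t>0$.

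Next, since $\nabla^2 V = 0$ at every point where $V$ is smooth, only the first two Taylor terms survive, giving
\begin{equation*}
R^{\eps,\eta}_t(y) \;=\; -\bigl|x^\eta(t)+\sqrt{\eps}\,y\bigr| + \bigl|x^\eta(t)\bigr| + \sqrt{\eps}\,\operatorname{sign}\bigl(x^\eta(t)\bigr)\,y.
\end{equation*}
I would then split on the sign of $x^\eta(t)+\sqrt{\eps}\,y$, i.e.\ at the threshold $y = -x^\eta(t)/\sqrt{\eps}$. For $t<0$ (so $\operatorname{sign}(x^\eta)=+1$), the expression vanishes above the threshold and equals $2(x^\eta(t)+\sqrt{\eps}\,y)$ below it. The mirror computation for $t>0$ (so $\operatorname{sign}(x^\eta)=-1$) gives $-2(x^\eta(t)+\sqrt{\eps}\,y)$ above the threshold and $0$ below. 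Combining the two cases is exactly \eqref{eq:r_nice_form}.

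For the estimate \eqref{eq:r_nice_estimation}, I would note that on the support of the first indicator (the case $t<0$, $y < -x^\eta(t)/\sqrt{\eps}$), one has simultaneously $x^\eta(t)>0$ and $x^\eta(t)+\sqrt{\eps}\,y<0$, which forces $-\sqrt{\eps}\,y > x^\eta(t) > 0$ and therefore $|x^\eta(t)+\sqrt{\eps}\,y| = -x^\eta(t) - \sqrt{\eps}\,y \le -\sqrt{\eps}\,y = \sqrt{\eps}\,|y|$; the symmetric argument handles the $t>0$ case. Dividing \eqref{eq:r_nice_form} by $\eps$, taking the absolute value and using this pointwise bound yields \eqref{eq:r_nice_estimation}. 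The computation is entirely elementary; the only point that requires a little care is keeping the $\pm$-branch conventions of \eqref{eq:trajectory_wave_packet} consistent when determining $\operatorname{sign}(x^\eta(t))$, which is why I would begin by pinning this sign down before opening the absolute value.
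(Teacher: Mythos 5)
Your proof is correct and follows essentially the same route as the paper: substitute $V(x)=-|x|$ into \eqref{eq:error_wave_packet}, note that the second-order term vanishes, and pin down $\operatorname{sign}(x^\eta(t)) = -\operatorname{sign}(t)$ before opening the absolute values. The paper compresses the resulting case analysis into the algebraic identity $|a|-|b| = \frac{(a+b)(a-b)}{|a|+|b|}$, which yields the compact formula $\frac{1}{\eps} R^{\eps,\eta}_t (y) = -\frac{y}{\sqrt{\eps}}\left(\frac{2x^\eta(t) + \sqrt{\eps}y}{|x^\eta(t) + \sqrt{\eps}y| + |x^\eta(t)|} - \operatorname{sign}(x^\eta(t))\right)$ and gives the estimate directly from the fact that the fraction has modulus at most $1$, whereas you carry out the case split on $\operatorname{sign}(x^\eta(t)+\sqrt{\eps}y)$ explicitly — a slightly longer but entirely equivalent presentation.
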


\begin{proof}
Write down
\begin{eqnarray*}
\frac{1}{\eps} R^{\eps,\eta}_t (y) & = & -\frac{1}{\eps}\left( \left| x^\eta(t) + \sqrt{\eps} y \right| - | x^\eta(t) | - {\rm sign} \left(x^\eta(t)\right) \sqrt{\eps} y \right) \nonumber \\
& = & -\frac{y}{\sqrt{\eps}} \left( \frac{2 \, x^\eta(t) + \sqrt{\eps} y}{\left| x^\eta(t) + \sqrt{\eps}y \right| + |x^\eta(t)|} - {\rm sign} \left(x^\eta(t)\right) \right)
\end{eqnarray*}
and observe that ${\rm sign} \left(x^\eta(t)\right) = - {\rm sign} (t)$ for the trajectory in \eqref{eq:trajectory_wave_packet}.
\end{proof}

Now, define 
\begin{equation}\label{eq:v_tilde}
\tilde{v}^{\eps,\eta}_t \left(y\right) = e^{-\frac{i}{\eps}\int_0^t R^{\eps,\eta}_s \left(y\right) ds} \, a\left(y\right)
\end{equation}
and, given some small $\tau_\eps \in \left(0,T\right)$, consider the following wave packet profile equation linked to the trajectory \eqref{eq:trajectory_wave_packet}:
\begin{equation}\label{eq:v_afterwards}
\left\lbrace
\begin{array}{l}
i\partial_t v^{\eps,\eta}_t\left(y\right) = -\frac{1}{2}\Delta v^{\eps,\eta}_t\left(y\right) \\
v^{\eps,\eta}_{\pm\tau_\eps}\left(y\right) = \tilde{v}^{\eps,\eta}_{\pm\tau_\eps} \left(y\right)
\end{array} \quad {\rm for} \quad t \in \Upsilon^\eps = \left[-T,-\tau_\eps\right] \cup \left[\tau_\eps,T\right].
\right.
\end{equation}

\begin{definition}
We will call $\varphi^{\eps,\eta}$ the wave packet defined as in \eqref{eq:wave_packet} for trajectory \eqref{eq:trajectory_wave_packet}, having profile $\tilde{v}^{\eps,\eta}$ for $t \in \left[-\tau_\eps,\tau_\eps\right]$ and profile $v^{\eps,\eta}$ otherwise.
\end{definition}

\begin{lemma}\label{lem:psi_follows_varphi}
For $\eta_\eps = \eta \eps^\beta$ with $\eta < 0$ and $0 \leqslant \beta < \frac{1}{10}$, one has 
$$\lim_{\eps \longrightarrow 0} \left\| \Psi^{\eps,\eta_\eps} - \varphi^{\eps,\eta_\eps} \right\|_{L^\infty\left(\left[-T,T\right], L^2\left(\R\right)\right)} = 0.$$
\end{lemma}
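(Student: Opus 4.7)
The plan is to factorise both $\Psi^{\eps,\eta_\eps}$ and $\varphi^{\eps,\eta_\eps}$ via the ansatz \eqref{eq:definition_u}, so that $\|\Psi^{\eps,\eta_\eps}_t - \varphi^{\eps,\eta_\eps}_t\|_{L^2(\R)} = \|u^{\eps,\eta_\eps}_t - w^\eps_t\|_{L^2(\R)}$, where $w^\eps_t = \tilde{v}^{\eps,\eta_\eps}_t$ on the inner interval $[-\tau_\eps,\tau_\eps]$ and $w^\eps_t = v^{\eps,\eta_\eps}_t$ on $[-T,-\tau_\eps]\cup[\tau_\eps,T]$, with $\tau_\eps\to 0$ to be chosen. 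Since the trajectory \eqref{eq:trajectory_wave_packet} meets the singularity only at $t=0$ and $V$ is linear elsewhere on it, I expect the standard wave packet machinery to take care of the outer region while the modified profile $\tilde{v}^{\eps,\eta_\eps}$ absorbs the singularity during the crossing; my task is then to estimate the two contributions separately and tune $\tau_\eps$ so that both vanish.

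On the inner interval $[0,\tau_\eps]$ (the case $[-\tau_\eps,0]$ being symmetric), both $u^{\eps,\eta_\eps}$ and $\tilde{v}^{\eps,\eta_\eps}$ start from $a$, and their difference obeys $i\partial_t(u-\tilde{v}) = -\tfrac{1}{2}\Delta(u-\tilde{v}) - \tfrac{1}{2}\Delta\tilde{v} + \tfrac{1}{\eps}R^{\eps,\eta_\eps}_t(u-\tilde{v})$. The usual energy estimate (self-adjointness of $-\Delta$, reality of $R^{\eps,\eta_\eps}$) yields
\begin{equation*}
\|u^{\eps,\eta_\eps}_{\tau_\eps}-\tilde{v}^{\eps,\eta_\eps}_{\tau_\eps}\|_{L^2(\R)} \leqslant \tfrac{1}{2}\int_0^{\tau_\eps}\|\Delta\tilde{v}^{\eps,\eta_\eps}_s\|_{L^2(\R)}\,ds.
\end{equation*}
Using the closed form $\tilde{v}^{\eps,\eta_\eps}_s(y)=e^{-i\Phi_s(y)/\eps}a(y)$ with $\Phi_s(y)=\int_0^s R^{\eps,\eta_\eps}_r(y)\,dr$, the integrand decomposes into contributions weighted by $\partial_y\Phi_s/\eps$, $(\partial_y\Phi_s/\eps)^2$ and $\partial_y^2\Phi_s/\eps$ acting on $a,a',a''$. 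The crucial observation is that $R^{\eps,\eta_\eps}_r$ in \eqref{eq:r_nice_form} factors through $(x^{\eta_\eps}(r)+\sqrt{\eps}\,y)$, which vanishes exactly on the jump locus of the indicator, so the would-be $\delta$-contribution in $\partial_y R^{\eps,\eta_\eps}_r$ cancels and $|\partial_y R^{\eps,\eta_\eps}_r(y)| \leqslant 2\sqrt{\eps}$ pointwise; a companion change of variables $u=-x^{\eta_\eps}(r)/\sqrt{\eps}$ for $\partial_y^2\Phi_s$ produces a factor $1/|\dot{x}^{\eta_\eps}(s_\ast)|\sim\eps^{-\beta}$ (coming from $\dot{x}^{\eta_\eps}(0)=\eta_\eps=-\eps^\beta$ and $\tau_\eps\ll|\eta_\eps|$). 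Noting also that $R^{\eps,\eta_\eps}_s$ vanishes on $\mathrm{supp}(a)$ once $s\gtrsim\eps^{1/2-\beta}$, one obtains $\|\Delta\tilde{v}^{\eps,\eta_\eps}_s\|_{L^2(\R)}\lesssim\eps^{-2\beta}$ uniformly in $s\in[0,\tau_\eps]$, so the inner contribution is $O(\tau_\eps\,\eps^{-2\beta})$.

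On the outer interval $[\tau_\eps,T]$ the analogous energy estimate, applied to the free equation \eqref{eq:v_afterwards}, gives
\begin{equation*}
\|u^{\eps,\eta_\eps}_t - v^{\eps,\eta_\eps}_t\|_{L^2(\R)} \leqslant \|u^{\eps,\eta_\eps}_{\tau_\eps} - \tilde{v}^{\eps,\eta_\eps}_{\tau_\eps}\|_{L^2(\R)} + \int_{\tau_\eps}^t \|R^{\eps,\eta_\eps}_s v^{\eps,\eta_\eps}_s/\eps\|_{L^2(\R)}\,ds.
\end{equation*}
For $s\geqslant\tau_\eps$ the lower bound $|x^{\eta_\eps}(s)|\geqslant s^2/2$ confines $R^{\eps,\eta_\eps}_s$ to the far zone $\{|y|\gtrsim s^2/\sqrt{\eps}\}$; on the other hand, $\tilde{v}^{\eps,\eta_\eps}_{\tau_\eps}$ carries momentum bounded by $\|\partial_y\Phi_{\tau_\eps}/\eps\|_\infty\lesssim\eps^{-\beta}$, so the spatial spread of the free evolution $v^{\eps,\eta_\eps}_s$ on $[\tau_\eps,T]$ stays of order $\eps^{-\beta}$ with super-polynomial tails beyond (via Fourier-analytic arguments applied to $\tilde{v}^{\eps,\eta_\eps}_{\tau_\eps}$, using $a\in C_0^\infty(\R)$). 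The tail integral is therefore negligible as soon as $s^2/\sqrt{\eps}\gg\eps^{-\beta}$, i.e.\ $\tau_\eps\gg\eps^{1/4-\beta/2}$. Taking $\tau_\eps=\eps^\gamma$ with $2\beta<\gamma<1/4-\beta/2$ kills both the inner bound $\tau_\eps\,\eps^{-2\beta}$ and the outer tail as $\eps\to 0$, and this window is non-empty precisely when $5\beta/2<1/4$, which is the hypothesis $\beta<1/10$. The main obstacle will be the quantitative tail control on the outer region: the highly oscillatory phase of $\tilde{v}^{\eps,\eta_\eps}_{\tau_\eps}$ makes standard a priori estimates for the free evolution delicate, and one must carefully track how the super-polynomial spatial decay of $v^{\eps,\eta_\eps}_s$ competes with its momentum-driven spreading to secure integrability of $\|R^{\eps,\eta_\eps}_s v^{\eps,\eta_\eps}_s/\eps\|_{L^2(\R)}$ on $[\tau_\eps,T]$.
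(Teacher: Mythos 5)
Your proposal follows the same scaffolding as the paper (split $[-T,T]$ into an inner interval $[-\tau_\eps,\tau_\eps]$ where the profile is the explicit oscillatory object $\tilde v^{\eps,\eta_\eps}$ and an outer interval where $v^{\eps,\eta_\eps}$ evolves freely, then tune $\tau_\eps = \eps^\gamma$), but it genuinely diverges from the paper in both halves.

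On the inner interval, your Duhamel bound $\|u_{\tau_\eps} - \tilde v_{\tau_\eps}\|\leqslant \tfrac12\int_0^{\tau_\eps}\|\Delta\tilde v_s\|\,ds$ is a valid and in fact tighter route than the paper's: the paper integrates by parts to get $\tfrac{d}{dt}\|z\|^2\leqslant\|\nabla u_t\|\,\|\nabla\tilde v_t\|$ and pays the factor $\|\nabla u_t\|\lesssim 1+|t|/\sqrt\eps$, which forces the much stronger constraint $\gamma>\tfrac14+\tfrac\beta2$ (their \eqref{eq:constraint_1}). Your estimate $\|\Delta\tilde v_s\|\lesssim\eps^{-2\beta}$ is correct — the dominant term is $(\partial_y I^\eps)^2\sim\eps^{-2\beta}$ and $\partial_y^2 I^\eps$ contributes only $\eps^{-\beta}$ — and only needs $\gamma>2\beta$. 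Notice the consequence: your window $\gamma\in(2\beta,\,\tfrac14-\tfrac\beta2)$ and the paper's $\alpha\in(\tfrac14+\tfrac\beta2,\,\tfrac12-2\beta)$ are \emph{disjoint}; you are choosing $\tau_\eps$ much smaller than they do, and both give exactly $\beta<\tfrac1{10}$. So the final threshold is the same but the work is split differently between the two zones.

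The genuine gap is the outer estimate, which you yourself flag as the ``main obstacle''. The assertion that $v^{\eps,\eta_\eps}_s$ has super-polynomial spatial tails with constants controlled in $\eps$ is exactly the delicate point, and here the paper does real work: Lemma~\ref{lem:norms} proves $\|y^n\nabla^m v^\eps_t\|_{L^2}\lesssim|\eta_\eps|^{-(n+m)}$ by a bootstrapping argument on the transport equation for $y^n\nabla^m v^\eps$, which is precisely the quantitative content your ``Fourier-analytic arguments'' would have to reproduce. There are two specific dangers you do not address. First, the phase $I^\eps(\pm\tau_\eps,\cdot)$ in $\tilde v_{\tau_\eps}$ is only Lipschitz at $y=0$ (it contains $|y/\eta_\eps|$), so the Sobolev regularity of the initial data is limited and the gain from each integration by parts in your non-stationary-phase estimate degrades at high order. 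Second, the constants in the tail estimate involve the Sobolev norms of $\tilde v_{\tau_\eps}$, which blow up like powers of $\eps^{-\beta}$; any hand-waved ``negligible tail'' must be weighed against these factors. Without an argument of the strength of the paper's Lemma~\ref{lem:norms}, the outer bound is not established, and this is where a complete proof would have to do the work you postpone.
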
 

\begin{proof}
Let us treat the problem partitioning it in zones by choosing $\tau_\eps = \eps^\alpha$ with $\alpha > \beta$:

\begin{enumerate}
\item $t \in \left[-\tau_\eps,\tau_\eps\right]$:

Denote\footnote{Since now $\eta$ depends on $\eps$, we will drop down the dependencies on $\eta$ in order not to overcharge the notation. We will also let the dependency of the trajectories on $\eps$ implicit until it be crucial to take it into account.} $z^\eps = u^\eps - \tilde{v}^\eps$; then $z^\eps_0 \left(y\right) = 0$,
$$i\partial_t z^\eps_t(y) + \frac{1}{2}\Delta z^\eps_t(y) - \frac{1}{\eps}R^\eps_t\left(y\right) z^\eps_t(y)= -\frac{1}{2}\Delta \tilde{v}^\eps_t(y)$$
and consequently
\begin{eqnarray*}
\frac{d}{dt} \left\| \, z^\eps_t \, \right\|^2_{L^2\left(\R\right)} & = & -2 \, {\rm Im} \left< \, z^\eps_t  \, , \, \Delta \tilde{v}^\eps_t \, \right>_{L^2\left(\R\right)} \\
& \leqslant & 2 \left| \left<  \, \nabla u^\eps_t \, , \, \nabla \tilde{v}^\eps_t \, \right>_{L^2\left(\R\right)} \right| \\
& \leqslant & \left\| \, \nabla u^\eps_t \, \right\|_{L^2\left(\R\right)} \left\| \, \nabla \tilde{v}^\eps_t \, \right\|_{L^2\left(\R\right)}.
\end{eqnarray*}
Given that $\nabla u^\eps$ satisfies 
$$i\partial_t \left(\nabla u^\eps_t(y)\right) + \frac{1}{2}\Delta \left(\nabla u^\eps_t(y)\right) - \frac{1}{\eps}R^\eps_t\left(y\right) \left(\nabla u^\eps_t(y)\right) = \frac{1}{\eps}\nabla R^\eps_t\left(y\right) u^\eps_t(y),$$
one can estimate, in the same way as in Proposition \ref{prop:evolution_norm} (which enables us to avoid calculating $R^\eps_t$ at t = 0),
\begin{eqnarray*}
\left\| \, \nabla u^\eps_t \,\right\|_{L^2(\R)} & \leqslant & \left\| \, \nabla a \, \right\|_{L^2(\R)} + |t| \left\| \, \frac{1}{\eps}\nabla R^\eps \, u^\eps \, \right\|_{L^\infty\left(\,]0,t[\,,L^2(\R)\right)} \\
& \leqslant & \left\| \, \nabla a \, \right\|_{L^2(\R)} + \frac{2|t|}{\sqrt{\eps}},
\end{eqnarray*}
the last line coming from quantum normalization and the facts that $\left\| u^\eps_t \right\|_{L^2(\R)} = 1$ is constant ($\left\| u^\eps_t \right\|_{L^2(\R)} = \left\| \Psi^\eps_t \right\|_{L^2(\R)}$) and
$$\frac{1}{\eps}\left| \nabla R^\eps_t \left(y\right) \right| = \frac{1}{\sqrt{\eps}} \left| \frac{x(t) + \sqrt{\eps}y}{\left| x(t) + \sqrt{\eps}y \right|} - {\rm sign} \left(x(t)\right) \right| \quad {\rm for} \quad t \neq 0,$$
so $\left\| \frac{1}{\eps} R^\eps_t \right\|_{L^\infty(\R)} \leqslant \frac{2}{\sqrt{\eps}}$ for $t \in \; ]\, 0 \, ,t \,[ \,$.

\medskip

As a step aside, notice the following: 
$$\nabla \tilde{v}^\eps_t \left(y\right) = -i\tilde{v}^\eps_t \left(y\right) \nabla \underbrace{ \left(\frac{1}{\eps}\int_0^t R^\eps_s \left(y\right)ds \right)}_{I^\eps(t,y)} + e^{-\frac{i}{\eps}\int_{0}^{t} R^\eps_s(y)ds} \, \nabla a(y).$$
Let us resume the main reasoning.

\medskip

Taking into account the domain restrictions of $R^\eps$ (see \eqref{eq:r_nice_form}), for $\pm t < 0$ one must have satisfied the inequalities $\pm y < 0$ and $t^2 \pm 2\, \eta_\eps \, t \pm 2\sqrt{\eps}y < 0$ in order not to have $R^\eps$ null, which means that, for fixed $y$, $t$ is comprised in $\left[ -\varsigma(y) , \varsigma(y) \right]$, where 
\begin{equation}\label{eq:key_inequality}
\varsigma(y) = \eta_\eps + \sqrt{\eta_\eps^2 + 2\sqrt{\eps}|y|} > 0
\end{equation}
is one of the roots of $\varsigma^2 - 2\, \eta_\eps \, \varsigma -2\sqrt{\eps}y = 0$. Further, remark that $R^\eps_{\pm \varsigma(y)} \left(y\right) = 0$; as a consequence,
\begin{equation}\label{eq:def_i}
I^\eps(t,y) = \left\lbrace
\begin{array}{lll}
\frac{1}{\eps}\int_0^t R^\eps_s\left(y\right) ds & \quad {\rm if} & |t| < \varsigma(y) \\
\frac{1}{\eps}\int_0^{\pm \varsigma(y)} R^\eps_s\left(y\right) ds & \quad {\rm if} & \pm t \geqslant \varsigma(y),
\end{array}\right.
\end{equation}
which implies
\begin{equation}\label{eq:derivative_integral}
\partial_y I^\eps(t,y) = \left\lbrace
\begin{array}{lll}
\frac{2}{\sqrt{\eps}} t & {\rm if} & |t| < \varsigma(y) \\
\frac{2}{\sqrt{\eps}}\varsigma(y) & {\rm if} & t \geqslant \varsigma(y) \\
-\frac{2}{\sqrt{\eps}}\varsigma(y) & {\rm if} & t \leqslant - \varsigma(y).
\end{array}\right.
\end{equation}

Now, considering that $2\beta < \frac{1}{2}$, that $\eta_\eps < 0$ and that there is $K > 0$ such that $|y| < K$, since within $\nabla \tilde{v}^\eps$ we still have multiplying factors $a$ and $\nabla a$ that are compactly supported in $y$, it can be made the estimative:
$$\varsigma(y) = \eta_\eps + | \eta_\eps | \left( 1 + \frac{2\sqrt{\eps} | y |}{\eta_\eps^2} \right)^{\frac{1}{2}} \leqslant \frac{\sqrt{\eps}}{| \eta_\eps |}K,$$
which in any case gives $\left| \partial_y I^\eps(t,y) \right| \lesssim \frac{C_1}{|\eta_\eps|}$ for some constant $C_1 > 0$, hence
$$\left\| \, \nabla \tilde{v}^\eps_t \, \right\|_{L^2(\R)} \lesssim \frac{C_1}{|\eta_\eps|} \left\| \, a \, \right\|_{L^2(\R)} = \frac{C_1}{|\eta_\eps|}.$$

Finally, this results is a superior bound for $\left\| u^\eps_t - \tilde{v}^\eps_t \right\|_{L^2\left(\R\right)}$ in $\left[-\tau_\eps,\tau_\eps\right]$; so, for a constant $C_2 > 0$:
\begin{equation}\label{eq:superior_bound_tau}
\left\| \, z^\eps \, \right\|_{L^\infty\left(\left[-\tau_\eps,\tau_\eps\right],L^2(\R)\right)}^2 \lesssim C_2 \frac{\tau_\eps}{|\eta_\eps|} + C_1 \frac{\tau_\eps^2}{\sqrt{\eps} |\eta_\eps|}
\end{equation}
As $\alpha >  \beta$ by assumption, the only additional constraint we need in order to have the bound above small when $\eps \longrightarrow 0$ is: 
\begin{equation}\label{eq:constraint_1}
2\alpha - \beta -\frac{1}{2} > 0.
\end{equation}

\item $t \in \Upsilon^\eps = \left[-T,-\tau_\eps\right] \cup \left[\tau_\eps,T\right]$:

Hereafter denote $z^\eps = u^\eps - v^\eps$. Now $z^\eps$ obeys to the equation
$$i\partial_t z^\eps_t(y) + \left(\frac{1}{2}\Delta - \frac{1}{\eps}R^\eps_t \left(y\right)\right) z^\eps_t(y) = \frac{1}{\eps} R^\eps_t \left(y\right) v^\eps_t(y)$$
and, therefore, 
$$\left\| \, z^\eps_t \, \right\|_{L^2(\R)} \leqslant \left\| \, z^\eps_{\pm \tau_\eps} \, \right\|_{L^2(\R)} \pm \int_{\pm \tau_\eps}^t \left\| \, \frac{1}{\eps}R^\eps_s \, v^\eps_s \, \right\|_{L^2(\R)} ds$$
according to $t$ being positive or negative.

Recalling the trajectory defined in \eqref{eq:trajectory_wave_packet}, the estimation in \eqref{eq:r_nice_estimation} and the fact that $\eta_\eps < 0$, one has that $R^\eps$ is non-zero only in the region $|y| > \frac{|\eta_\eps t|}{\sqrt{\eps}} \geqslant \frac{|\eta_\eps|\tau_\eps}{\sqrt{\eps}}$, so $\frac{1}{|y|} < \frac{\sqrt{\eps}}{|\eta_\eps| \tau_\eps}$; this gives
\begin{eqnarray*}
\left\| \, \frac{1}{\eps} R^\eps_t \, v^\eps_t \, \right\|_{L^2(\R)} & \leqslant & \frac{2}{\sqrt{\eps}} \left\| \, y \, v^\eps_t \, \right\|_{L^2(\R)} \\
& \leqslant & \frac{2}{|\eta_\eps| \tau_\eps} \left(\frac{\sqrt{\eps}}{|\eta_\eps| \tau_\eps}\right)^k \left\| \, y^{k+2} \, v^\eps_t \, \right\|_{L^2(\R)}.
\end{eqnarray*}

\begin{lemma}\label{lem:norms}
For $t \in \Upsilon^\eps$, $n,m \in \mathbb{N}_0$ and $\beta < \frac{1}{4}$, there exists $K_{n+m} > 0$ constant such that $\left\| y^n \nabla^m v^\eps_t \right\|_{L^2(\mathbb{\R})} \leqslant \frac{K_{n+m}}{|\eta_\eps|^{n+m}}$.
\end{lemma}
\noindent (The proof is postponed.)

\medskip

As a conclusion, for $\eps$ small enough we have
$$\left\| \, \frac{1}{\eps} R^\eps_t \, v^\eps_t \, \right\|_{L^2(\R)} \leqslant \frac{2K_n}{|\eta_\eps|^3 \tau_\eps} \left(\frac{\sqrt{\eps}}{|\eta_\eps|^2 \tau_\eps}\right)^k,$$ which carries the new constraint:
\begin{equation}\label{eq:constraint_2}
\frac{k}{2} - (k+1) \alpha - (2k+3) \beta > 0
\end{equation}
for some $k \in \mathbb{N}$.
\end{enumerate}

The proposition is proven once we remark that for any $0 \leqslant \beta < \frac{1}{10}$, one can find a positive integer $k$ such that both \eqref{eq:constraint_1} and \eqref{eq:constraint_2} will be satisfied for $\alpha > \beta$.
\end{proof}

\begin{proof}[Proof of Lemma \ref{lem:norms}]
To evaluate $\left\| y^{n} v^\eps_t \right\|_{L^2(\R)}$, observe that by recurrence one can show that, for $n \in \mathbb{N}_0$,
$$i\partial_t \left( y^n v^\eps \right) + \frac{1}{2}\Delta \left( y^n v^\eps \right) = \frac{1}{2}n(n-1) y^{n-2} \, v^\eps + n \, y^{n-1} \, \nabla v^\eps$$
and, since $\nabla^m v^\eps$ satisfies the same equation \eqref{eq:v_afterwards} as $v^\eps$,
\begin{equation}\label{eq:v_dv_afterwards}
i\partial_t \left( y^n \nabla^m v^\eps \right) + \frac{1}{2}\Delta \left( y^n \nabla^m v^\eps \right) = \frac{1}{2}n(n-1) y^{n-2} \, \nabla^m v^\eps + n \, y^{n-1} \, \nabla^{m+1} v^\eps,
\end{equation}
from where we have the estimation:
{\footnotesize
\begin{eqnarray*}
\left\| y^n \nabla^m v^\eps_t \right\|_{L^2(\R)} \leqslant \left\| y^n \nabla^m v^\eps_{\pm \tau_\eps} \right\|_{L^2(\R)} + \frac{1}{2}n(n-1) T \left\| y^{n-2} \nabla^m v^\eps \right\|_{L^\infty\left(\Upsilon^\eps,L^2(\R)\right)} \\ + \, nT \left\| y^{n-1} \nabla^{m+1} v^\eps \right\|_{L^\infty\left(\Upsilon^\eps,L^2(\R)\right)}.
\end{eqnarray*}}

The trick will be to transform the $L^\infty\left(\Upsilon^\eps,L^2(\R)\right)$ norms of the terms with $\nabla^m v^\eps$ into $L^2(\R)$ ones, so observe that we have
{\footnotesize
\begin{eqnarray}\label{eq:chega}
\left\| y^{n-2} \nabla^m v^\eps_t \right\|_{L^2(\R)} \leqslant \left\| y^{n-2} \nabla^m v^\eps_{\pm \tau_\eps} \right\|_{L^2(\R)} + \frac{1}{2}(n-2)(n-3) T \left\| y^{n-4} \nabla^m v^\eps \right\|_{L^\infty\left(\Upsilon^\eps,L^2(\R)\right)} \nonumber \\ + \, (n-2) T \left\| y^{n-3} \nabla^{m+1} v^\eps \right\|_{L^\infty\left(\Upsilon^\eps,L^2(\R)\right)}
\end{eqnarray}}
and, of course, that the right-hand side above also bounds $\left\| y^{n-2} \nabla^m v^\eps \right\|_{L^\infty\left(\Upsilon^\eps,L^2(\R)\right)}$. 

Repeating the steps above for the term $\left\| y^{n-4} \nabla^m v^\eps \right\|_{L^\infty\left(\Upsilon^\eps,L^2(\R)\right)}$ that appears in \eqref{eq:chega}, we will obtain an expression with the $L^2(\R)$ norm $\left\| y^{n-4} \nabla^m v^\eps_{\pm \tau_\eps} \right\|_{L^2(\R)}$ (as wished) and, additionally, the terms $\left\| y^{n-6} \nabla^m v^\eps \right\|_{L^\infty\left(\Upsilon^\eps,L^2(\R)\right)}$ and $\left\| y^{n-5} \nabla^{m+1} v^\eps \right\|_{L^\infty\left(\Upsilon^\eps,L^2(\R)\right)}$. Well, then we just repeat the same procedure for $\left\| y^{n-6} \nabla^m v^\eps \right\|_{L^\infty\left(\Upsilon^\eps,L^2(\R)\right)}$, then for the term $\left\| y^{n-8} \nabla^m v^\eps \right\|_{L^\infty\left(\Upsilon^\eps,L^2(\R)\right)}$ that will appear, etc... and what we get is essentially
{\tiny
\begin{equation}\label{eq:estimate_vixi}
\left\| y^n \nabla^m v^\eps_t \right\|_{L^2(\R)} \leqslant \sum_{j =0}^{\left\lfloor \frac{n}{2} \right\rfloor} \left( c^{(1)}_{n,j} \left\| y^{n-2j} \nabla^m v^\eps_{\pm \tau_\eps} \right\|_{L^2(\R)} + c^{(2)}_{n,j} \left\| y^{n-1-2j} \nabla^{m+1} v^\eps \right\|_{L^\infty\left(\Upsilon^\eps,L^2(\R)\right)} \right),
\end{equation}}
with $c^{(1)}_{n,j}$ and $c^{(2)}_{n,j}$ appropriate coefficients. 

Two things are remarkable in this formula. The first one is that all terms $y^{n-2j} \nabla^m v^\eps_{\pm \tau_\eps}$ have the same support (recall their definition, in \eqref{eq:v_tilde} and \eqref{eq:v_afterwards}), which is the compact support of $a$. This bounds $| y |$ uniformly with respect to $n$, $m$, $j$ and $\eps$, implying that
$$\sum_{j =0}^{\left\lfloor \frac{n}{2} \right\rfloor} c^{(1)}_{n,j} \left\| y^{n-2j} \nabla^m v^\eps_{\pm \tau_\eps} \right\|_{L^2(\R)} = d_{n,m} \left\| \nabla^m v^\eps_{\tau_\eps} \right\|_{L^2(\R)},$$
where, again, $d_{n,m}$ is a suitable coefficient not depending on $\eps$.

The second remarkable thing is that among the terms within the $L^\infty\left(\Upsilon^\eps,L^2(\R)\right)$ norms, the highest power of $y$ that we find is $n-1$, and no more $n$, as in the beginning. This suggests that we may do the very same analysis for estimating each term $\left\| y^{n-1-2j} \nabla^{m+1} \right\|_{L^\infty\left(\Upsilon^\eps,L^2(\R)\right)}$ in \eqref{eq:estimate_vixi} and obtain estimates like
{\footnotesize
\begin{eqnarray*}
\left\| y^{n-1-2j} \nabla^{m+1} v^\eps \right\|_{L^\infty\left(\Upsilon^\eps,L^2(\R)\right)} & \leqslant & d_{n-1-2j,m+1} \left\| \nabla^{m+1} v^\eps_{\pm \tau_\eps} \right\|_{L^2(\R)} \\
& & + \sum_{l = 0}^{\left\lfloor \frac{1}{2}\left(n-1-2j\right) \right\rfloor} \left\| y^{n-2-2(j+l)} \nabla^{m+2} v^\eps \right\|_{L^\infty\left(\Upsilon^\eps,L^2(\R)\right)};
\end{eqnarray*}}
again, the maximal power of $y$ to appear inside the $L^\infty\left(\Upsilon^\eps,L^2(\R)\right)$ norms has been reduced by $1$ with respect to the norm being estimated in the left-hand side. Whence, running recursively until we bring the maximal exponent down to $0$, we will end up with: 
$$\left\| y^n \nabla^m v^\eps_t \right\|_{L^2(\R)} \leqslant \sum_{j = 0}^{n-1} \tilde{d}_j \left\| \nabla^{m+j} v^\eps_{\pm \tau_\eps} \right\|_{L^2(\R)} + \tilde{d}_n \left\| \nabla^{m+n} v^\eps \right\|_{L^\infty\left(\Upsilon^\eps,L^2(\R)\right)},$$
with $\eps$-independent coefficients $\tilde{d}_j$. Finally, from equation \ref{eq:v_dv_afterwards}, one knows that the norm $\left\| \nabla^{n+m} v^\eps_t \right\|_{L^2(\R)}$ is constant in time, so we can simplify even more the last estimation and have got:
\begin{equation}\label{eq:another_inequality}
\left\| y^n \nabla^m v^\eps_t \right\|_{L^2(\R)} \leqslant \sum_{j = 0}^{n} \tilde{d}_{j} \left\| \nabla^{m+j} v^\eps_{\pm \tau_\eps} \right\|_{L^2(\R)}.
\end{equation}

Making use of \eqref{eq:v_tilde} and the initial condition \eqref{eq:v_afterwards}, let us calculate the remaining quantities:
{\footnotesize
$$\nabla^{m} v^\eps_{\pm \tau_\eps} (y) = e^{-\frac{i}{\eps}\int_0^{\pm \tau_\eps} R^\eps_s (y) ds} \, \sum_{l = 0}^{m} \left(
\begin{array}{c}
m \\ l 
\end{array}\right)
\nabla^{m - l} a(y) \sum_{\underset{\sum_{s=1}^{m} s \sigma_s = l}{\sigma \in \mathbb{N}^m_0}} \left( c^\sigma \prod_{j = 1}^{m} \left( \partial_y^{j} I^\eps(\pm \tau_\eps,y) \right)^{\sigma_j}\right),$$}
where $c^\sigma$ are complex coefficients.

The way for calculating the expression above is the following: if condition \eqref{eq:constraint_2} is fulfilled, then we have $\alpha < \frac{1}{2} - \beta$, which causes $\tau_\eps$ to be always greater than $|\varsigma(y) | \sim \frac{\sqrt{\eps}}{|\eta_\eps|}$. Then, using \eqref{eq:derivative_integral}, we get $\partial_y^{j+1} I^\eps(\pm \tau_\eps,y) = \pm \frac{2}{\sqrt{\eps}} \partial_y^{j} \varsigma(y)$ for $j \in \mathbb{N}_0$, and, using \eqref{eq:key_inequality} and being $\alpha > 2\beta$ (from \eqref{eq:constraint_1} and the fact that $0 \leqslant \beta < \frac{1}{10}$):
$$\frac{1}{\sqrt{\eps}} \partial_y^{j+1} \varsigma(y) \sim \left( \frac{\sqrt{\eps}}{|\eta_\eps|} \right)^{j} \frac{1}{|\eta_\eps|^{j+1}};$$
thus, if we do the brutal majoration, $\frac{\sqrt{\eps}}{|\eta_\eps|} \lesssim 1$, one gets $\left| \partial_y^{j+2} I^\eps(\pm \tau_\eps,y) \right| \lesssim \frac{1}{|\eta_\eps|^j}$ for all $j \in \N_0$, and also $I^\eps(\pm \tau_\eps,y) \sim \varsigma(y) \lesssim 1$. Additionally, we already had $\partial_y I^\eps(\pm \tau_\eps,y) \sim \frac{1}{|\eta_\eps|}$, so even in the worst case one can always have the estimate
$$\partial_y^j I(\pm\tau^\eps,a) \lesssim \frac{1}{|\eta_\eps|^{j}},$$
which is, of course, far from optimal if $j \geqslant 2$ and just bad if $j = 0$, but fits in our purposes.

It follows that, for $\sigma$ such that $\sum_{s = 1}^{m} j\sigma_j = l$ and conveniently chosen constants $K_j$, 
$$\prod_{j=1}^{m} \left( \partial_y^j I(\pm \tau_\eps,y) \right)^{\sigma_j} \leqslant \frac{1}{|\eta_\eps|^l} \prod_{j = 1}^{m}K_j,$$
so $\left\| \nabla^m v^\eps_{\pm \tau_\eps} \right\|_{L^2(\R)}$ will be dominated by a term of order $\frac{1}{|\eta_\eps|^m}$ and, finally, inequality \eqref{eq:another_inequality} will be bounded by a term of order $\frac{1}{|\eta_\eps|^{n+m}}$, what we wanted to show.
\end{proof}
This completes the proposition's proof.

\begin{lemma}\label{lem:psi_follows_the_path}
With $\eta_\eps= \eta\eps^\beta$, $\eta < 0$, the semiclassical measure associated with the family $\left(\varphi^{\eps,\eta_\eps}\right)_{\eps > 0}$ is transported by a trajectory of shape \eqref{eq:trajectory_wave_packet}: $\left(x^0(t),\xi^0(t)\right)$ if $0 < \beta < \frac{1}{10}$, and $\left(x^\eta(t),\xi^\eta(t)\right)$ if $\beta = 0$.
\end{lemma}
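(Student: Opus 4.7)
The plan is to mimic the computation in Lemma \ref{lem:path_wave_packet} but with the $\eps$-dependent profile $w^{\eps,\eta_\eps}_t$ (which equals $\tilde v^{\eps,\eta_\eps}_t$ for $|t|\leq\tau_\eps$ and $v^{\eps,\eta_\eps}_t$ for $t\in\Upsilon^\eps$), and then to track the dependence on $\eps$ carefully. Note that in both time regimes one has $\|w^{\eps,\eta_\eps}_t\|_{L^2(\R)}=1$: indeed $|\tilde v^{\eps,\eta_\eps}_t|=|a|$ by \eqref{eq:v_tilde}, and on $\Upsilon^\eps$ the function $v^{\eps,\eta_\eps}_t$ solves the free Schrödinger equation \eqref{eq:v_afterwards}, which preserves the $L^2$ norm. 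Fixing $t\in[-T,T]$ and $a\in C_0^\infty(\R^2)$, the same computation that proves Lemma \ref{lem:path_wave_packet} yields
$$\left\langle \op_\eps(a)\,\varphi^{\eps,\eta_\eps}_t,\,\varphi^{\eps,\eta_\eps}_t\right\rangle \;=\; \int_{\R} a\bigl(\sqrt{\eps}\,x+x^{\eta_\eps}(t),\,\xi^{\eta_\eps}(t)\bigr)\,\bigl|w^{\eps,\eta_\eps}_t(x)\bigr|^2 dx \;+\; R^\eps,$$
with $R^\eps=\mathcal{O}(\eps)$ (the estimate on $R^\eps$ uses only $\|w^{\eps,\eta_\eps}_t\|_{L^2}=1$ and the $C_0^\infty$ bounds of $\partial_x\partial_\xi a$, exactly as in the proof of Lemma \ref{lem:path_wave_packet}).

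Next I will Taylor-expand the symbol in its first argument, writing
$$a\bigl(\sqrt{\eps}\,x+x^{\eta_\eps}(t),\xi^{\eta_\eps}(t)\bigr) \;=\; a\bigl(x^{\eta_\eps}(t),\xi^{\eta_\eps}(t)\bigr) \;+\; \sqrt{\eps}\,x\int_0^1 \partial_x a\bigl(s\sqrt{\eps}\,x+x^{\eta_\eps}(t),\xi^{\eta_\eps}(t)\bigr)\,ds.$$
Since $\|w^{\eps,\eta_\eps}_t\|_{L^2}^2=1$, the main term integrates to $a(x^{\eta_\eps}(t),\xi^{\eta_\eps}(t))$, which converges to $a(x^0(t),\xi^0(t))$ when $\eta_\eps=\eta\eps^\beta\to 0$ (case $0<\beta<\tfrac{1}{10}$) and equals $a(x^\eta(t),\xi^\eta(t))$ when $\beta=0$, by continuity of $a$ and of the trajectories \eqref{eq:trajectory_wave_packet} in $\eta$. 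So it only remains to show that the remainder
$$\sqrt{\eps}\int_{\R} x\,\Bigl(\textstyle\int_0^1\partial_x a(\cdots)\,ds\Bigr)\,|w^{\eps,\eta_\eps}_t(x)|^2\,dx$$
tends to zero; by Cauchy--Schwarz it is bounded by $\sqrt{\eps}\,\|\partial_x a\|_\infty\,\|x\,w^{\eps,\eta_\eps}_t\|_{L^2(\R)}$.

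The main obstacle is therefore to control $\|x\,w^{\eps,\eta_\eps}_t\|_{L^2(\R)}$ uniformly well. For $|t|\leq\tau_\eps$ this is immediate: since $|\tilde v^{\eps,\eta_\eps}_t(x)|=|a(x)|$ by \eqref{eq:v_tilde}, and $a\in C_0^\infty(\R)$, one has $\|x\,\tilde v^{\eps,\eta_\eps}_t\|_{L^2}\leq\|x\,a\|_{L^2}<\infty$ independently of $\eps$, so the remainder is $O(\sqrt{\eps})$. For $t\in\Upsilon^\eps$ we invoke Lemma \ref{lem:norms} with $n=1,\,m=0$ (valid because $\beta<\tfrac{1}{10}<\tfrac{1}{4}$), which gives
$$\|y\,v^{\eps,\eta_\eps}_t\|_{L^2(\R)} \;\leq\; \frac{K_1}{|\eta_\eps|} \;=\; \frac{K_1}{|\eta|\,\eps^\beta},$$
whence $\sqrt{\eps}\,\|y\,v^{\eps,\eta_\eps}_t\|_{L^2}=\mathcal{O}(\eps^{1/2-\beta})\to 0$, using $\beta<1/2$.

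Combining the two regimes, we conclude that
$$\lim_{\eps\to 0}\left\langle \op_\eps(a)\,\varphi^{\eps,\eta_\eps}_t,\,\varphi^{\eps,\eta_\eps}_t\right\rangle \;=\; \begin{cases} a\bigl(x^0(t),\xi^0(t)\bigr) & \text{if }0<\beta<\tfrac{1}{10},\\[2pt] a\bigl(x^\eta(t),\xi^\eta(t)\bigr) & \text{if }\beta=0,\end{cases}$$
which identifies the Wigner measure as the Dirac mass $\delta(x-x^0(t))\otimes\delta(\xi-\xi^0(t))$, respectively $\delta(x-x^\eta(t))\otimes\delta(\xi-\xi^\eta(t))$, as claimed.
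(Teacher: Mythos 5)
Your argument takes a genuinely different route from the paper's: you compute the Wigner expectation directly as in Lemma \ref{lem:path_wave_packet}, whereas the paper goes through the auxiliary family $\hat v^\eps_\pm$ (free semiclassical evolution started from $\tilde v^{\eps,\eta_\eps}_{\pm\tau_\eps}$), showing first that $\hat v^\eps_{\pm}$ concentrates to a Dirac at the origin and then transferring this to $\varphi^{\eps,\eta_\eps}$ by phase-space translation plus a $\tau_\eps$-time-shift error controlled via Lemma \ref{lem:norms}. Your route is more direct, but there is a gap in the step where you dismiss the remainder $R^\eps$.

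You claim that ``the estimate on $R^\eps$ uses only $\|w^{\eps,\eta_\eps}_t\|_{L^2}=1$ and the $C_0^\infty$ bounds of $\partial_x\partial_\xi a$, exactly as in the proof of Lemma~\ref{lem:path_wave_packet}.'' That is not quite what that proof uses. After rescaling, $R^\eps$ is $\sqrt\eps$ times the expectation in $w^{\eps,\eta_\eps}_t$ of $\op_1$ applied to a symbol of the form $\xi\cdot\partial_\xi a(\sqrt\eps\cdot+x(t),\sqrt\eps s\cdot+\xi(t))$. The factor $\xi$ prevents a uniform Calder\'on--Vaillancourt (or Schur) bound: the symbol's $\xi$-support has diameter of order $(\sqrt\eps s)^{-1}$, so $\sup_\xi|\xi\cdot\partial_\xi a(\cdots)|\sim(\sqrt\eps s)^{-1}$ and the $L^1_\xi$ seminorms in \eqref{eq:estimation_xi} diverge as $s\to 0$. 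The standard way out is the Weyl identity $\op_1(\xi_j c)=D_{x_j}\op_1(c)-\tfrac{1}{2i}\op_1(\partial_{x_j}c)$, which lands a derivative on the profile and yields
$$|R^\eps|\;\lesssim\;\sqrt\eps\,\bigl\|\nabla w^{\eps,\eta_\eps}_t\bigr\|_{L^2(\R)}\;+\;\eps.$$
In Lemma~\ref{lem:path_wave_packet} the profile $v_t$ is $\eps$-independent, so $\|\nabla v_t\|_{L^2}$ is uniformly bounded and this term is invisible. Here $\|\nabla w^{\eps,\eta_\eps}_t\|_{L^2}$ is \emph{not} $\mathcal{O}(1)$: from \eqref{eq:v_tilde} and $|\partial_y I^\eps(t,y)|\lesssim|\eta_\eps|^{-1}$ (established inside the proof of Lemma~\ref{lem:psi_follows_varphi}) one has $\|\nabla\tilde v^{\eps,\eta_\eps}_t\|_{L^2}\lesssim|\eta_\eps|^{-1}$ for $|t|\leq\tau_\eps$, and Lemma~\ref{lem:norms} with $n=0$, $m=1$ gives $\|\nabla v^{\eps,\eta_\eps}_t\|_{L^2}\leq K_1/|\eta_\eps|$ for $t\in\Upsilon^\eps$. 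So $\|\nabla w^{\eps,\eta_\eps}_t\|_{L^2}\sim|\eta|^{-1}\eps^{-\beta}$, which is genuinely $\eps$-dependent.

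The gap is fixable in exactly the same spirit as your treatment of $\|x\,w^{\eps,\eta_\eps}_t\|_{L^2}$: with the bound above, $|R^\eps|\lesssim\eps^{1/2-\beta}\to 0$ since $\beta<\tfrac1{10}<\tfrac12$, and the conclusion goes through. But as written your proof silently imports an implicit hypothesis ($\|\nabla v_t\|_{L^2}=\mathcal{O}(1)$) that holds in Lemma~\ref{lem:path_wave_packet} and fails here; you need to invoke Lemma~\ref{lem:norms} for $m=1$ as well as $n=1$, precisely because the profile is $\eps$-dependent. This is the same structural reason the paper reaches instead for the indirect argument via $\hat v^\eps_\pm$ and standard Liouville transport, which sidesteps the direct $R^\eps$ estimate entirely.
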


\begin{proof}
The fact that $\varphi^{\eps,\eta_\eps}$ concentrates to a measure that follows the aimed path is not guaranteed  by Lemma \ref{lem:path_wave_packet} since the initial data we inserted in the wave packet equation \eqref{eq:v_afterwards} is not $\eps$-independent, as we required in Section \ref{sec:the_wave_packets}. Let us then calculate the concentration of $\varphi^{\eps,\eta_\eps}$ indirectly.

To begin with, if conditions \eqref{eq:constraint_1} and \eqref{eq:constraint_2} are fulfilled, then $\tau_\eps > \varsigma(y)$ and consequently, from \eqref{eq:v_tilde}, \eqref{eq:key_inequality} and \eqref{eq:def_i}: 
$$\tilde{v}^{\eps,\eta_\eps}_{\pm \tau_\eps}\left(y\right) = e^{-\frac{i}{\eps}\int_0^{\pm \varsigma(y)} R^\eps_s (y) ds}a(y) = e^{-i\sqrt{\eps}\left|\frac{y}{\eta_\eps}\right|^3} e^{-i\left( \left(\frac{y}{\eta_\eps}\right)^{2} \mp \left| \frac{y}{\eta_\eps} \right|  \right)}a(y),$$
thus, setting $\hat{v}^\eps_{\pm,0} (x) = \frac{1}{\eps^{\frac{1}{4}}}\tilde{v}^{\eps,\eta_\eps}_{\pm \tau_\eps}\left(\frac{x}{\sqrt{\eps}}\right)$, one has
$$\hat{v}^\eps_{\pm, 0} (x) = \frac{1}{\eps^{\frac{1}{4}}}a\left(\frac{x}{\sqrt{\eps}}\right) f_1\left(\frac{x}{\eta_\eps \sqrt{\eps}}\right) f_2\left(\frac{\eps^\frac{1}{6}x}{\eta_\eps \sqrt{\eps}}\right),$$
where $|f_1| = |f_2| = 1$ and $\nabla f_1$ and $\nabla f_2$ exist and are locally bounded almost everywhere in $\R$. These facts and standard symbolic calculus allow a straightforward calculation showing that $\hat{v}^\eps_{\pm,0}$ concentrates to the measure $\mu_0^{\hat{v}^\eps_{\pm}}(x,\xi) = \delta\left(x \right) \otimes \delta\left(\xi \right)$.

Now, define $\hat{v}^{\eps}_\pm$ as the functions in $L^\infty(\R,L^2(\R))$ that satisfy the systems
\begin{equation*}
\left\lbrace
\begin{array}{l}
i\eps \partial_t \hat{v}^\eps_{\pm,t} (x) = -\frac{\eps^2}{2} \Delta \hat{v}^\eps_{\pm,t} (x) \\
\hat{v}^\eps_{\pm,0} (x) = \frac{1}{\eps^{\frac{1}{4}}}\tilde{v}^{\eps,\eta_\eps}_{\pm \tau_\eps}\left(\frac{x}{\sqrt{\eps}}\right);
\end{array}\right.
\end{equation*}
it is possible to affirm that the semiclassical measures of $\hat{v}^\eps_\pm$ will by carried by the flow $\Phi_t\left(x,\xi\right) = \left(x + t\, \xi,\xi\right)$, since by standard results (see the Introduction) they should obey to the usual Liouville equation \eqref{eq:liouville_t_by_t} with a null potential. But because initially they are concentrated to the point $(0,0)$ in the phase space, we get $\mu^{\hat{v}^\eps_\pm}_t (x,\xi) = \delta(x)\otimes \delta(\xi)$ for all $t\in \R$.

Well, for $\pm t \geqslant 0$, $\hat{v}^\eps_{\pm,t} (x) = \frac{1}{\eps^{\frac{1}{4}}}v^{\eps,\eta_\eps}_{t \pm \tau_\eps}\left(\frac{x}{\sqrt{\eps}}\right)$, so observe that, for $\pm t \in [\tau_\eps,T]$,
$$\varphi^{\eps,\eta_\eps}_t\left(x\right) = \hat{v}^\eps_{\pm, t\mp \tau_\eps}\left(x-x^{\eta_\eps}(t)\right)e^{\frac{i}{\eps}\left[\xi^{\eta_\eps}(t) \centerdot \left(x-x^{\eta_\eps}(t)\right) + S^{\eta_\eps}(t) \right]};$$
consequently, by picking up a $b \in C^\infty_0 \left(\R^2\right)$, one gets
{\small
\begin{eqnarray*}
\left< \op_\eps(b)\, \varphi^{\eps,\eta_\eps}_t , \varphi^{\eps,\eta_\eps}_t \right>_{L^2(\R)} & = & \left< \op_\eps\left(b\left(x+x^{\eta_\eps}(t),\xi+\xi^{\eta_\eps}(t)\right)\right) \hat{v}^\eps_{\pm, t \mp \tau_\eps} , \hat{v}^\eps_{\pm, t\mp\tau_\eps} \right>_{L^2(\R)} \\
& = & \left< \op_\eps\left(b\left(x+x^{\eta_\eps}(t),\xi+\xi^{\eta_\eps}(t)\right)\right) \hat{v}^\eps_{\pm,t} , \hat{v}^\eps_{\pm,t} \right>_{L^2(\R)} + \mathcal{O}\left(\eps^{\alpha-2\beta}\right),
\end{eqnarray*}}
with the error coming from
$$\left\| \hat{v}^\eps_{\pm,t} - \hat{v}^\eps_{\pm,t\mp\tau_\eps}\right\|_{L^2(\R)} \leqslant \frac{\tau_\eps}{2} \left\| \Delta v^\eps_{\pm\tau_\eps} \right\|_{L^2(\R)} \leqslant K \frac{\tau_\eps}{|\eta_\eps|^2} = K\eps^{\alpha-2\beta},$$
where $K>0$ is constant and we used Lemma \ref{lem:norms}. If $\beta = 0$, then $\eta_\eps = \eta$ is constant and we get, for $t \in  [-T,T] \setminus \lbrace 0 \rbrace$, 
$${\rm sc}\lim \left< \op_\eps(b)\, \varphi^{\eps,\eta}_t , \varphi^{\eps,\eta}_t \right>_{L^2(\R)} = \left< \delta(x) \otimes \delta(\xi) , b(x+x^\eta(t),\xi+\xi^\eta(t)) \right>_{\R^2},$$
which also holds for $t=0$ due to the initial condition for $\varphi^{\eps,\eta}$, implying that
$$\mu^{\varphi^{\eps,\eta}}_t(x,\xi) = \delta(x-x^\eta (t)) \otimes \delta(\xi-\xi^\eta (t))$$
for all $t\in[-T,T]$.  

If $0 < \beta < \frac{1}{10}$, we can still have
{\small
\begin{equation*}
\left< \op_\eps(b)\, \varphi^\eps_t , \varphi^\eps_t \right>_{L^2(\R)} = \left< \op_\eps\left(b\left(x+x^{0}(t),\xi+\xi^{0}(t)\right)\right) \hat{v}^\eps_{\pm,t} , \hat{v}^\eps_{\pm,t} \right>_{L^2(\R)} + o\left(1\right) + \mathcal{O}\left(\eps^{\alpha-2\beta}\right),
\end{equation*}}
and now the error $o(1)$ comes from the difference between calculating $b$ with the trajectories $(x^{\eta_\eps}(t),\xi^{\eta_\eps}(t))$ or the with ``limit'' path $(x^0(t),\xi^0(t))$, which must be negligible in compact times for $\eps$ small enough, given that $b$ is smooth and the flow that defines the trajectories is stable in the region where we are. Since it is possible to choose $\alpha > 2\beta$ within conditions \eqref{eq:constraint_1} and \eqref{eq:constraint_2}, an argument similar to the previous one gives 
$$\mu^{\varphi^{\eps,\eta_\eps}}_t(x,\xi) = \delta(x-x^0 (t)) \otimes \delta(\xi-\xi^0 (t))$$
for all $t\in[-T,T]$ when $0 < \beta < \frac{1}{10}$.
\end{proof}

\begin{remark}\label{rem:eta_positive}
All results in this section also work taking $\eta > 0$ and swapping $t$ negative for positive and conversely in the definition of the trajectories \eqref{eq:trajectory_wave_packet}. 
\end{remark}
Hence, we have found that it is possible that a particle arrive into the singularity from the up left or from the down right and that it continue to the other side down or up, as partially indicated in Figure \ref{fig:paths}(b). Moreover, we also proved that the wave packet approximation is valid for the non-smooth trajectories indicated in Figure \ref{fig:crossing_particle} (and for the reverse ones not indicated in the picture).

\section{Establishing the Liouville equation}\label{sec:establishing_liouville}
In view of the developments in Section \ref{sec:symbolic_calculus}, from equation \eqref{eq:d_op} we are left with the analysis of the commutator
{\small
$$\frac{i}{\eps}\left[ \hat{H}^{\eps} \, , \op_{\eps}\left(a\right) \right] = \frac{i}{\eps}\left[ -\frac{\eps^{2}}{2}\Delta \, , \op_{\eps}\left(a\right) \right] + \frac{i}{\eps}\left[ \, V_S \, , \op_{\eps}\left(a\right) \right] + \frac{i}{\eps}\left[ \, \|g(x)\|F(x) \, , \op_{\eps}\left(a\right) \right].$$}
We will look separately into each of these terms. The first one is kinetic, the other two dynamical. The first and the second are regular enough so one can use standard symbolic calculus; this presents no difficulties and we will treat them explicitly in Sections \ref{sec:kinetic_term} and \ref{sec:dynamical_smooth} only for a matter of completeness.

The third term is complicate because of the conical singularities it presents, which will require us to employ the two-microlocal analysis in Section \ref{sec:two_microlocal_analysis}. This strategy was followed in \cite{FGL}, but here we will describe the two-microlocal measures in more details. Prior to proceeding to this kind of analysis, however, we will need to restrict ourselves to the case where $g(x) = x'$, with $x = (x',x'')$, $x' \in \R^p$ and $1 \leqslant p \leqslant d$, in other words, to the case where the manifold $\Lambda$ formed by the singularities is actually a subspace $\R^p$.

It is in this context that we will be able to prove Proposition \ref{prop:main_result}, which is a particular version of Theorem \ref{th:main_result_1} for $\Lambda = \R^p$. Reducing the general case to this one is the subject of next section.

\subsection{Reducing $\Lambda$ to a subspace $\R^{d-p}$}\label{sec:reducing}
For a general conical potential, thanks to $\nabla g(x)$ having maximal rank we can define a local change of coordinates $\phi$ in neighbourhoods of $\R^d$ where 
\begin{eqnarray*}
z & = & \phi\left(x\right) \\ \left(
\begin{array}{c}
z'\\ z''
\end{array} \right) & = & \left(
\begin{array}{c}
g(x) \\ f(x)
\end{array} \right)
\end{eqnarray*}
for some function $f: \R^d \longrightarrow \R^{d-p}$ locally depending on $g$ in such a manner that $\nabla f(x)$ has maximal rank and, if $x \in \Lambda$, then $\ker \nabla f(x)$ is orthogonal to $\ker \nabla g(x)$\footnote{Such $f$ may be constructed as follows: let be $\mathcal{A}$ an open neighbourhood of $\Lambda$; choose $\kappa : \mathcal{A} \longrightarrow \mathbb{R}^{d-p}$ a local diffeomorphism; take $\tilde{U} \subset \mathbb{R}^d$ a cylindrical neighbourhood of $\Lambda$ such that $\tilde{U} \cap \Lambda \subset \mathcal{A}$. Pick up an open $U \subset \tilde{U}$, so $x \in U$ is given in geodesic coordinates by $x=(\tilde{\sigma},\eta)$ for some $\sigma \in \mathcal{A}$ with coordinates $\tilde{\sigma}$ and $\eta \in N_\sigma\Lambda$. Define $f : U \longrightarrow \mathbb{R}^{d-p}$ as $f(x) = \kappa(\sigma)$. It follows that $\nabla f(x)$ is diffeomorphic over $T_\sigma \Lambda$ and null over $N_\sigma\Lambda$, but since $T_\sigma\Lambda = \ker \nabla g((\tilde{\sigma},0))$ (see Remark \ref{rem:cotangent}), $\ker \nabla f((\tilde{\sigma},0)) \perp \ker \nabla g((\tilde{\sigma},0))$ and we are done.}. 

Now, for the sake of clarity let us consider the coordinate change in tangent space induced by $\phi$:
\begin{eqnarray*}
\tilde{\zeta} & = & \nabla \phi\left(x\right) \xi \\ 
\left(
\begin{array}{c}
\tilde{\zeta}' \\ \tilde{\zeta}''
\end{array} \right) & = & \left[
\begin{array}{c}
\nabla g(x) \\ \nabla f(x)
\end{array} \right] \xi.
\end{eqnarray*}

Writing $\R^d = \ker \nabla g(x) \oplus \ker \nabla f(x)$, we have the decomposition
$$\xi = \pi_{g}(x) \, \xi + \pi_{f}(x) \, \xi,$$
where $\pi_g(x)$ and $\pi_f(x)$ are suitable projectors inside $\R^d$ over the kernels of $\nabla g(x)$ and $\nabla f(x)$; if $x \in \Lambda$, they are orthogonal. Realize that $\left. \nabla f(x) \right|_{\ker \nabla g(x)}$ and $\left. \nabla g(x) \right|_{\ker \nabla f(x)}$ are invertible (due to the maximality of their ranks); let us denote their inverses simply by $\nabla g(x)^{-1}$ and $\nabla f(x)^{-1}$.

Thus one has:
\begin{equation}\label{eq:relations}
\begin{array}{l}
\nabla g(x)^{-1}\nabla g(x) = \pi_f(x) \\ \nabla f(x)^{-1}\nabla f(x) = \pi_g(x)
\end{array}
\quad {\rm and} \quad
\begin{array}{l}
\nabla g(x) \nabla g(x)^{-1} = 1\!\!1_{p \times p} \\ \nabla f(x) \nabla f(x)^{-1} = 1\!\!1_{d-p \times d-p}.
\end{array}
\end{equation}
It follows that $\nabla \phi(x)$ can be inverted in terms of $\nabla g(x)^{-1}$ and $\nabla f(x)^{-1}$; its inverse is
$$\nabla \phi(x)^{-1} = \left[ \: \nabla g(x)^{-1} \;\; \nabla f(x)^{-1} \: \right].$$

Analogously, $^t \nabla g(x)$ and $^t \nabla f(x)$ are invertible as soon as their counter-domains are restricted to $(\ker \nabla g(x))^\perp$ and $(\ker \nabla f(x))^\perp$ (in which case the transpose of the relations in \eqref{eq:relations} hold), allowing us to write the coordinate transformation in cotangent space as:
\begin{eqnarray}\label{eq:cotangent_transform}
\zeta & = & ^t \nabla \phi\left(x\right)^{-1} \xi \nonumber \\
\left( 
\begin{array}{c}
\zeta' \\ \zeta''
\end{array} \right) & = & \left[
\begin{array}{c}
^t\nabla g(x)^{-1} \, ^t\pi_{f} (x) \\ ^t \nabla f(x)^{-1} \, ^t\pi_{g} (x)
\end{array} \right] \xi.
\end{eqnarray}

Geometrically, let be the manifold $\Lambda = \left\lbrace x \in \R^d : g(x) = 0 \right\rbrace$, parametrized locally by the variable $z'' \in \R^{d-p}$. For a $x \in \Lambda$, the cotangent space can be described by $T_x^* \Lambda = \ker \nabla g(x)$ (see Remark \ref{rem:cotangent}) and its elements are parametrized by $\zeta''$. The variables $z'$ and $\zeta'$ in $\R^p$ will be associated with the normal and conormal spaces $N_x \Lambda = \faktor{\R^{d}}{T_x \Lambda}$ and $N_x^* \Lambda = \faktor{\R^{d}}{T_x^* \Lambda}$. We will also be using variables $\omega = \frac{z'}{\|z'\|}$ for $z' \neq 0$, which will be identified as elements of the normal space in sphere of $\Lambda$, $S_x \Lambda = \faktor{N_x \Lambda}{\R^+_*}$. 

\begin{remark}\label{rem:coordinate_global}
Due to the fact that the coordinate $z'$ is defined equally in any local charts by $z' = g(x)$, one can define functions on $\R^p$, more specifically on $N_x \Lambda$, simply by its explicit formulation in $z'$; further, $\Lambda$ not being empty allows us to always calculate a function at $z' = 0$. These facts are implicitly used in the calculations to come.
\end{remark}

At this point we shall state a central result in semiclassical analysis (see for instance Proposition 5.1 of \cite{x_ups} and its proof):
\begin{proposition}\label{prop:change_variables}
Let be $\phi$ a diffeomorphism of $\R^d$ in the sense of manifolds, and $\tilde{\phi}$ the correspondent cotangent bundle transformation, $\tilde{\phi}(x,\xi) = \left(\phi(x),^t \nabla \phi(x)^{-1}\xi\right)$. Let  $T_\phi \in \mathcal{L}\left(L^2(\R^d)\right)$ be the operator such that $T_\phi f = \left( J_\phi \circ \phi^{-1} \right)^{-\frac{1}{2}} f \circ \phi^{-1}$, where $J_\phi$ is the Jacobian of $\phi$. Then, $T_\phi^* T_\phi = 1\!\!1$ and
$$\left< \, \op_\eps(a) \, \Psi^\eps_t , \Psi^\eps_t \, \right> = \left< \, \op_\eps(a\circ \tilde{\phi}^{-1}) \, T_{\phi}\Psi^\eps_t \, , \, T_{\phi} \Psi^\eps_t \,\right> + \eps \: \sup_{\underset{|\alpha|+|\beta| = 1}{\alpha,\beta \in \N_0^d}} N_{d+1}\left(\partial_x^\alpha\partial_\xi^\beta a\right),$$
where $N_{d+1}(a)$ is the upper bound in \eqref{eq:estimation_xi}.
 
Besides, denoting $\psi^\eps_t = T_\phi \Psi^\eps_t$ and $\tilde{V} = V\circ \phi^{-1}$, the local expression for $\psi^\eps_t$ satisfies the equation 
\begin{equation}\label{eq:schroedinger_changed}
i\eps \partial_t \psi^\eps_t (x) = -\frac{\eps^2}{2} \, T_\phi \Delta T_\phi^* \, \psi^\eps_t (x) + \tilde{V}(x) \, \psi^\eps_t (x). 
\end{equation}
\end{proposition}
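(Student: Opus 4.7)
\medskip

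\noindent\textbf{Proof plan.} The proposition has three independent assertions which I would treat separately. First, to show $T_\phi^* T_\phi = 1\!\!1$, I would write out $\langle T_\phi f, T_\phi g\rangle_{L^2(\R^d)}$ and perform the change of variables $x = \phi(y)$; the Jacobian $J_\phi(y)\,dy$ produced by $dx$ exactly cancels the weight $(J_\phi \circ \phi^{-1})^{-1}$ coming from the pre-factor in the definition of $T_\phi$, leaving $\langle f,g\rangle_{L^2(\R^d)}$. This is a routine computation.

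For the symbolic identity, the goal is to establish
$$T_\phi^* \op_\eps(b) T_\phi = \op_\eps(b \circ \tilde{\phi}) + \eps R^\eps_b \quad \text{in}\;\mathcal{L}\bigl(L^2(\R^d)\bigr),$$
with an explicit bound on $\|R^\eps_b\|$ by the seminorms of $\partial_x^\alpha \partial_\xi^\beta b$; applying this with $b = a\circ\tilde{\phi}^{-1}$ gives the stated formula. I would start from the definition \eqref{eq:pseudor} of $\op_\eps(b)$, substitute the explicit expressions for $T_\phi$ and $T_\phi^*$, and then change variables in the configuration integrals replacing $y$ by $\phi^{-1}(\tilde y)$, $x$ by $\phi^{-1}(\tilde x)$. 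The crucial step is a Taylor expansion of $\phi^{-1}(\tilde x) - \phi^{-1}(\tilde y)$ around the midpoint, writing it as $\nabla\phi\bigl(\phi^{-1}(\tfrac{\tilde x+\tilde y}{2})\bigr)^{-1} (\tilde x - \tilde y) + O(|\tilde x-\tilde y|^3)$, which is then inserted in the oscillatory phase $e^{\frac{i}{\eps}\xi\cdot(\cdot)}$. A change of variables $\xi \mapsto {}^t\nabla\phi(\cdot)\,\xi$ in the momentum integral produces the correct symbol $b\circ\tilde\phi$ at leading order; the cubic remainder, after an integration by parts in $\xi$, produces one extra factor of $\eps$ together with derivatives of $b$, which is exactly the bound announced. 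The main technical burden here is to track the dependence of the remainder on the seminorms of $b$ so as to obtain the Schur-type estimate from \eqref{eq:estimation_xi}; this is the step I expect to be most delicate.

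For the transformed Schrödinger equation \eqref{eq:schroedinger_changed}, I would simply apply $T_\phi$ to both sides of \eqref{eq:schrodinger}. Since $T_\phi$ does not depend on time, $T_\phi(i\eps\partial_t \Psi^\eps_t) = i\eps\partial_t \psi^\eps_t$. The potential term is straightforward: writing $T_\phi(V\Psi^\eps_t)(x) = (J_\phi\circ\phi^{-1})^{-1/2}(x)\,V(\phi^{-1}(x))\,\Psi^\eps_t(\phi^{-1}(x)) = \tilde V(x)\,\psi^\eps_t(x)$, since $V = \tilde V\circ\phi$. For the kinetic term, insert $T_\phi^* T_\phi = 1\!\!1$ between $\Delta$ and $\Psi^\eps_t$ to get $T_\phi \Delta \Psi^\eps_t = (T_\phi \Delta T_\phi^*)\psi^\eps_t$. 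Combining these three observations immediately yields \eqref{eq:schroedinger_changed}. No further semiclassical analysis is needed for this part; everything is an exact algebraic identity.

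Overall, the only real work is the symbolic calculus lemma in step two: everything else reduces to a change-of-variables computation or to inserting the identity $T_\phi^* T_\phi = 1\!\!1$.
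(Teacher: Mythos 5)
The paper does not actually give a proof of this proposition: immediately before the statement it says ``At this point we shall state a central result in semiclassical analysis (see for instance Proposition 5.1 of \cite{x_ups} and its proof),'' so the result is cited rather than proved. Your sketch can therefore only be compared with what the cited literature does, and in that sense your plan is the standard one.

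Parts one and three of your plan are exact and complete as written. The change of variables $x = \phi(y)$ in $\langle T_\phi f, T_\phi g\rangle$ immediately gives $T_\phi^*T_\phi = 1\!\!1$, and the transformed Schr\"odinger equation indeed follows purely algebraically by applying $T_\phi$ to \eqref{eq:schrodinger}, inserting $T_\phi^*T_\phi$ in the kinetic term, and using $V = \tilde V \circ \phi$ for the potential term. No semiclassical input is needed there.

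For the symbolic identity, the strategy (write out $T_\phi^* \op_\eps(b)T_\phi$, change configuration variables $\tilde y = \phi(y)$, Taylor-expand the phase difference around the midpoint, change the momentum variable by $^t\nabla\phi$) is the right one and matches the reference. But the error bookkeeping as you describe it is too optimistic and would need to be fixed. After the configuration change of variables, the phase is $\xi\centerdot(\phi(x)-\phi(y))$ and the amplitude carries the factor $\sqrt{J_\phi(x)J_\phi(y)}$, while the target expression $\op_\eps(b\circ\tilde\phi)$ (after the momentum change $\xi={}^t\nabla\phi(\frac{x+y}{2})\zeta$) carries the factor $J_\phi(\frac{x+y}{2})$ and the symbol evaluated at $\phi(\frac{x+y}{2})$ rather than at $\frac{\phi(x)+\phi(y)}{2}$. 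So there are in fact three remainder sources: (i) a phase remainder $r(x,y)=O(|x-y|^3)$ from the midpoint Taylor expansion, (ii) a symbol-argument discrepancy $\phi(\frac{x+y}{2})-\frac{\phi(x)+\phi(y)}{2}=O(|x-y|^2)$ that you should mention explicitly (it costs one $x$-derivative of $b$), and (iii) a Jacobian discrepancy $J_\phi(\frac{x+y}{2}) - \sqrt{J_\phi(x)J_\phi(y)}=O(|x-y|^2)$ that your sketch omits entirely. Moreover, ``one integration by parts'' is not enough: each integration by parts in the momentum variable converts one factor $(x-y)$ into $\eps\partial_\xi$, so from the $O(|x-y|^2)$ remainders you need two integrations by parts to extract even a single power of $\eps$, and from the $\frac{1}{\eps}\xi\centerdot r$ piece in the expansion $e^{i\xi\centerdot r/\eps}=1+\dots$ you need three. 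These integrations by parts, combined with the $d+1$ derivatives already hidden in the Schur estimate \eqref{eq:estimation_xi}, are what produce the semi-norm $N_{d+1}(\partial^\alpha_x\partial^\beta_\xi a)$ appearing in the statement. The plan is therefore correct in outline, but the phrase ``after an integration by parts [...] which is exactly the bound announced'' glides over precisely the part you yourself flag as the delicate one.
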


As a consequence, if $\tilde{\mu}$ is the measure associated to the family $\left(\psi^\eps\right)_{\eps > 0}$, for any symbol $a \in C_0^\infty(\R^{2d})$ and $\Xi \in C_0^\infty (\R)$, one has
$$\left< \, \mu \, , \, \Xi \, a \, \right>_{\R\times \R^{2d}} = \left< \tilde{\mu} \, , \, \Xi \, a \circ \tilde{\phi}^{-1} \, \right>_{\R\times \R^{2d}},$$
\emph{i.e}, any result got for the new measure $\tilde{\mu}$ can be immediately transferred to the original $\mu$ by simply changing the coordinate system. Therefore, Theorem \ref{th:main_result_1} will follow directly from Proposition \ref{prop:main_result} 


\begin{remark}
Since in the rest of this work the variables that we will write are going to be dummy, we will not care about marking the differences between $(x,\xi)$ and $(z,\zeta)$, nor about keeping the notations $\psi$, $\tilde{\mu}$ and $\tilde{V}$ in contrast to $\Psi$, $\mu$ and $V$.
\end{remark}

In short, now we can fairly relay on the study of the concentration of a family $\Psi^\eps$ satisfying equation \eqref{eq:schroedinger_changed} with a potential
$$ V(x) = V_{S}(x) + \| x' \| F(x),$$
with $x' \in \R^p$, \emph{i.e.}, satisfying the Schrödinger equation with a modified Hamiltonian operator
$$\hat{H}^\eps = -\frac{\eps^2}{2} T_\phi \Delta T_{\phi}^* + V,$$
so as that we become interested in the commutators
{\footnotesize
\begin{equation}\label{eq:commutators_new}
\frac{i}{\eps}\left[ \hat{H}^{\eps} \, , \op_{\eps}\left(a\right) \right] = \frac{i}{\eps}\left[ -\frac{\eps^{2}}{2}T_\phi\Delta T_\phi^* \, , \op_{\eps}\left(a\right) \right] + \frac{i}{\eps}\left[ \, V_S \, , \op_{\eps}\left(a\right) \right] + \frac{i}{\eps}\left[ \, \|x'\|F(x) \, , \op_{\eps}\left(a\right) \right].
\end{equation}}
In next sections we will analyse separately each one of these pieces.

\subsection{The kinetic term}\label{sec:kinetic_term}
Let us start the computation of the first term in the right-hand side of \eqref{eq:commutators_new} by the following exact calculation, with arbitrary $\Psi \in H^2(\R^d)$:
$$\frac{i}{\eps} \left[ -\frac{\eps^2}{2}\Delta \, , \op_\eps (a) \right] \Psi(x) =  \op_\eps \left(\xi \centerdot \partial_x a(x,\xi)\right) \Psi(x).$$

Observe that $\xi \centerdot \partial_x a \in C_0^\infty(\R^{2d})$, thus the pseudodifferential operator above can be extended to $L^2(\R^d)$, where it will be uniformly bounded with respect to $\eps$. Moreover, using last identity,  
$$\frac{i}{\eps} \left[ -\frac{\eps^2}{2}\, T_\phi \, \Delta \, T_\phi^* \, , \, \op_\eps(a) \, \right] = \op_\eps\left(D(x)\xi \centerdot \partial_x a \right) + O\left(\eps\right),$$
where $D(x) = \nabla \phi\left(\phi^{-1}(x)\right) ^t\nabla \phi\left(\phi^{-1}(x)\right)$ reads:
\begin{equation*}
D(x) = \left[
\begin{array}{c}
\nabla g\left(\phi^{-1}(x)\right) \\ \nabla f\left(\phi^{-1}(x)\right)
\end{array}\right] \left[ \: ^t\nabla g\left(\phi^{-1}(x)\right) \;\; ^t\nabla f\left(\phi^{-1}(x)\right) \: \right].
\end{equation*}
The result is:
\begin{eqnarray}
\int_{\R} \Xi(t) \left< \frac{i}{\eps} \left[ -\frac{\eps^2}{2}\Delta \, , \op_\eps (a) \right] \Psi^\eps_t \, , \Psi^\eps_t \right> dt \nonumber \hspace{-2cm} && \\
& \Tend{\eps}{0} & - \left< D(x) \xi \centerdot \partial_x \mu(t,x,\xi) \, , \Xi(t) \, a(x,\xi) \, \right>_{\R\times\R^{2d}}.
\label{eq:kinetic_part}
\end{eqnarray}

\begin{remark}\label{rem:convenience}
Observe that $\left(D(x)\xi\right)' = \nabla g\left(\phi^{-1}(x)\right) ^t\nabla \phi\left(\phi^{-1}(x)\right) \xi$ and $\left(D(x)\xi\right)'' = \nabla f\left(\phi^{-1}(x)\right) ^t\nabla \phi\left(\phi^{-1}(x)\right) \xi$. Back to the original coordinates, this gives $\left(D(z)\zeta\right)' = \nabla g(x) \xi$ and $\left(D(z)\zeta\right)'' = \nabla f(x) \xi$. Besides, from \eqref{eq:cotangent_transform} we have $\zeta' = \, ^t\nabla g(x)^{-1} \, ^t\pi_f(x) \xi$ and $\zeta'' = \, ^t\nabla f(x)^{-1} \, ^t\pi_g(x) \xi$, which implies 
$$\nabla g(x) ^t\nabla f(x) \zeta'' = \nabla g(x) \, ^t\pi_g(x) \xi \quad {\rm and} \quad \nabla f(x) ^t\nabla g(x) \zeta' = \nabla f(x) ^t\pi_f(x) \xi;$$
as we chose $f$ so as to have $\pi_g(x)$ and $\pi_f(x)$ orthogonal when $x \in \Lambda$, we are left with $D(0,z'')\zeta = \left( D_g(0,z'')\zeta' \, , \, D_f(0,z'')\zeta'' \right)$, where $D_g(z) = \nabla g\left(\phi^{-1}(z)\right) ^t \nabla g\left(\phi^{-1}(z)\right)$ is invertible for $z=(0,z'')$ and the same for $D_f(z)$ analogously defined.
\end{remark}

\subsection{The dynamical term -- smooth part}\label{sec:dynamical_smooth}
Consider the Taylor developments
{\small
$$V_S (x) = V_S\left(\frac{x+y}{2}\right) + \frac{1}{2}\int_0^1 \nabla V_S \left(\frac{x+y}{2} + s \frac{x-y}{2}\right) \centerdot (x-y) ds$$}
and
{\small
$$\nabla V_S \left(\frac{x+y}{2} + s\frac{x-y}{2}\right) = \nabla V_S \left(\frac{x+y}{2}\right) + \frac{1}{2}\int_0^1 \nabla^2 V_S \left(\frac{x+y}{2} + s's\frac{x-y}{2}\right) \centerdot (x-y) ds';$$}
plugging the latter inside the former\footnote{This kind of procedure will be largely used in the following pages, but we will not repeat the calculations textually everytime; exposing the kernels issued from the second order terms will be sufficient for our analyses.} and subtracting the resulting formula from the development one would obtain doing the same for $V_S(y)$ centred around $\frac{x+y}{2}$, we get:
{\small
\begin{eqnarray}\label{eq:taylor_smooth}
V_S \left( x \right) - V_S \left( y \right) = \nabla V_S \left(\frac{x+y}{2}\right) \centerdot \left( x - y \right)  + \frac{1}{4}\int_0^1\int_{0}^{1} s \left( \nabla^2 V_S \left( \frac{x+y}{2} +s's\frac{x-y}{2} \right) \right. \nonumber \\
\left. - \nabla^2 V_S \left( \frac{x+y}{2} - s's\frac{x-y}{2} \right) \right)\left(x-y\right)^{(2)} ds'ds. 
\end{eqnarray}}

Now, consider also the fact that $\frac{i}{\eps}\left[ \, V_S \, , \op_\eps (a) \, \right]$ has kernel
\begin{equation}\label{eq:before_plancherel}
k(x,y) = \frac{i}{\eps^{d+1}} \mathcal{F}_{\xi}^{-1} a\left(\frac{x+y}{2},\frac{x-y}{\eps}\right) \left(V_S(x)-V_S(y)\right).
\end{equation} 

Since so far we are still dealing with smooth symbols, as in standard symbolic calculus we use the formula $x\mathcal{F}_\xi^{-1}a = i\mathcal{F}_\xi^{-1}(\partial_\xi a)$ to exchange the factors $(x-y)$ in \eqref{eq:taylor_smooth} by factors $\eps \partial_\xi a$ in \eqref{eq:before_plancherel}. Because both $\nabla V_S$ and $\nabla^2 V_S$ do not grow faster than some polynomial and $a$ is compactly supported, it is a direct computation to get 
$$\frac{i}{\eps}\left[ \, V_S \, , \op_\eps (a) \, \right] = \op_\eps \left( -\nabla V_S \centerdot \partial_\xi a \right) + \mathcal{O}\left(\eps\right),$$
where $\mathcal{O}(\eps)$ tends to $0$ in $\mathcal{L}\left(L^2(\R^d)\right)$ and $\op_\eps (-\nabla V_S \centerdot \partial_\xi a )$ is uniformly bounded with respect to $\eps$. This naturally gives
\begin{eqnarray}\label{eq:smooth_piece}
\int_{\R} \Xi(t) \left< \frac{i}{\eps}\left[ \, V_S \, , \op_\eps (a) \, \right] \Psi^\eps_t \, , \Psi^\eps_t \right> dt \nonumber \hspace{-1cm} && \\
& \Tend{\eps}{0} & \left< \, \nabla V_S (x) \centerdot \partial_\xi \mu (t,x,\xi) \, , \Xi(t) \, a(x,\xi) \, \right>_{\R \times \R^{2d}}.
\end{eqnarray}

\subsection{The dynamical term -- singular part}
In order to analyse the commutator with $\| x' \|F$ in \eqref{eq:commutators_new}, we will introduce a cut-off $\chi \in C_{0}^\infty(\R^p)$, $0 \leqslant \chi \leqslant 1$ , $\chi(x') = 0$ for $\|x'\| \geqslant 1$ and $\chi(x') = 1$ for $\|x'\| \leqslant \frac{1}{2}$. Let us cut the symbol $a$ into three parts using parameters $R > 0$ and $\delta > \eps R$, as follows:
{\small
\begin{equation}\label{eq:symbol_decomposition}
a(x,\xi) = a(x,\xi)\, \chi\left(\frac{x'}{\eps R}\right) + a(x,\xi)\left(1-\chi\left(\frac{x'}{\eps R}\right)\right)\chi\left(\frac{x'}{\delta}\right) + a (x,\xi)\left(1-\chi\left(\frac{x'}{\delta}\right)\right).
\end{equation}}

In the context of two-microlocal analysis, each of these pieces is related to a different two-microlocal measure, and that is what we will be talking about in the next sections.

\subsubsection{The inner part}\label{sec:the_inner_part}
Defining $\tilde{\Psi}^\eps(x) = \eps^{\frac{p}{2}} \Psi^\eps (\eps x',x'')$, one calculates:
{\small
\begin{eqnarray}\label{eq:calcul_m_grand}
&& \left< \frac{i}{\eps}\left[ \| x' \| F (x) \, , \op_\eps \left(a(x,\xi)\chi\left(\frac{x'}{\eps R}\right)\right) \right] \Psi_t^\eps \, , \Psi_t^\eps \right> \nonumber \\
& = & \frac{i}{\eps\left(2\pi\eps \right)^{d}} \int_{\R^{3d}} e^{\frac{i}{\eps}\xi \centerdot \left( x - y \right)} \, a\left(\frac{x+y}{2},\xi\right) \chi\left(\frac{x'+y'}{2\eps R}\right) \left( \|x'\| F(x) - \|y'\| F(y) \right) \nonumber \\ && \hspace{9cm} \Psi_t^\eps\left(y\right) \overline{\Psi_t^\eps\left(x\right)} \; dxd\xi dy \nonumber \\
& = & \int_{\R^{3d}} \frac{ie^{i \xi' \centerdot \left( x' - y' \right)}e^{\frac{i}{\eps} \xi'' \centerdot \left( x'' - y'' \right)}}{\left(2\pi \right)^{d} \eps^{d-p}} \, a\left(\eps\frac{x'+y'}{2},\frac{x''+y''}{2},\xi',\xi''\right) \chi\left(\frac{x'+y'}{2R}\right) \nonumber \\ && \hspace{4.2cm} \left( \|x'\| F(\eps x',x'')  - \|y'\| F(\eps y', y'') \right) \tilde{\Psi}_t^\eps\left(y\right) \overline{\tilde{\Psi}_t^\eps\left(x\right)} \; dxd\xi dy \nonumber \\ 
& = & \int_{\R^{3d}} \frac{ie^{i \xi' \centerdot \left( x' - y' \right)}e^{\frac{i}{\eps} \xi'' \centerdot \left( x'' - y'' \right)}}{\left(2\pi \right)^{d} \eps^{d-p}} \, a\left(0,\frac{x''+y''}{2},\xi',\xi''\right) \nonumber \\ && \hspace*{2cm} \chi\left(\frac{x'+y'}{2R}\right) F\left(0,\frac{x''+y''}{2}\right) \left( \|x'\| - \|y'\| \right) \tilde{\Psi}_t^\eps\left(y\right) \overline{\tilde{\Psi}_t^\eps\left(x\right)} \; dxd\xi dy + R^\eps,
\end{eqnarray}}
with $R^\eps$ an error of order $\eps$ in $\R$ whose analysis will be postponed.

Now, for each $x'',\xi'' \in \R^{d-p}$, denote by 
{\small
$$kA^R_{\left(x'',\xi''\right)} \left(x',y'\right) = \frac{i}{\left(2\pi\right)^p} \int_{\R^p} e^{i \xi' \centerdot \left(x'-y'\right)} a\left(0,x'',\xi',\xi''\right) \, \chi\left(\frac{x'+y'}{2R}\right) \left( \|x'\| - \|y'\| \right) F\left(0,x''\right) d\xi'$$}
the integral kernel of the $L^2(\R^p_y)$ operator $A^R_{\left(x'',\xi''\right)} = i\left[ \, \|y\| F(0,x'') \, , a_R^{w}(0,x'',\partial_y,\xi'',y) \, \right]$, where $a_R^w(0,x'',\partial_y,\xi'',y)$ is the Weyl quantization of the symbol $(y,\zeta) \longmapsto a(0,x'',\zeta,\xi'') \, \chi\left(\frac{y}{R}\right)$, and, as in \eqref{eq:k_u}, by $kU_{\left(t,\,x'',\xi''\right)}^\eps$ the kernel of the correspondent bounded $L^2(\R^p)$ operator $U^\eps_{\left(t,\, x'',\xi''\right)}$ (which is an operator-valued generalization of the Wigner transform $W^\eps\Psi^\eps$) introduced in Proposition \ref{prop:two_microlocal_measures}. Then, the object in the previous calculation reads
$$\int_{\R^{2d}} kA^R_{\left(x'',\xi''\right)} \left(x',y'\right) kU^\eps_{\left(t,\, x'',\xi''\right)}\left(y',x'\right) dy'dx' dx''d\xi'' + R^\eps,$$
which gives
{\small
$$\left< \frac{i}{\eps}\left[ \| x' \| F (x) \, , \op_\eps \left(a(x,\xi)\chi\left(\frac{x'}{\eps R}\right)\right) \right] \Psi_t^\eps \, , \Psi_t^\eps \right> = {\rm tr} \int_{\R^{2(d-p)}} A^R_{\left(x'',\xi''\right)} U^\eps_{\left(t,\, x'',\xi''\right)}dx''d\xi'' + R^\eps.$$}

Regarding the error:
$$R^\eps = i\eps \left< \left( B^\eps + C^\eps \right) \tilde{\Psi}_t^\eps \, , \tilde{\Psi}_t^\eps \right>,$$
where $B^\eps$ and $C^\eps$ are the integral operators with the respective kernels:
{\small
$$b^\eps \left(x,y\right) = \frac{1}{\eps^{(d-p)}}\tilde{b}^\eps \left(\frac{x+y}{2},x'-y',\frac{x''-y''}{\eps}\right)$$}
and
{\small
$$c^\eps \left(x,y\right) = \frac{1}{\eps^{(d-p)}}\tilde{c}^\eps\left(\frac{x+y}{2},x'-y',\frac{x'-y'}{\eps}\right),$$}
where one has
{\footnotesize
\begin{align*}
\tilde{b}^\eps (x',x'',y'&,y'') = \int_{0}^1 \partial_{x'} \mathcal{F}_{\xi}^{-1} a\left( \eps s x', x'',y',y''\right) \centerdot x' \, \chi\left( \frac{x'}{R} \right) \\
& \left( \left\| x' + \frac{y'}{2} \right\| F\left(\eps \left(x'+\frac{y'}{2}\right),x''+\frac{\eps y''}{2}\right) - \left\| x'- \frac{y'}{2} \right\| F \left(\eps \left(x'-\frac{y'}{2}\right), x''-\frac{\eps y''}{2} \right) \right) ds
\end{align*}}
and
{\footnotesize
\begin{align*}
\tilde{c}^\eps \left(x',x'',y'.y''\right) = \int_{0}^1 \mathcal{F}_{\xi}^{-1} & a\left(0, x'',y',y''\right) \chi\left( \frac{x'}{R} \right) \\
& \left[ \left\| x' + \frac{y'}{2} \right\| \nabla F \left( \eps s \left( x'+ \frac{y'}{2} \right), x'' + s\frac{\eps y''}{2} \right) \centerdot \left( x'+\frac{y'}{2}, \frac{y''}{2} \right) \right. \\
& \hspace{0.7cm} \left. + \left\| x'-\frac{y'}{2} \right\| \nabla F \left(\eps s \left(x'-\frac{y'}{2}\right), x'' - s\frac{\eps y''}{2} \right) \centerdot \left( x'-\frac{y'}{2}, \frac{y''}{2} \right) \right] ds.
\end{align*}}

\begin{lemma}\label{lem:b_c_bounded}
The operators $B^\eps$ and $C^\eps$ are uniformly bounded with respect to $\eps$.
\end{lemma}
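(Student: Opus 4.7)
The plan is to apply Schur's test to both $B^\eps$ and $C^\eps$: it suffices to bound $\sup_x\int|b^\eps(x,y)|\,dy$ and $\sup_y\int|b^\eps(x,y)|\,dx$ uniformly in $\eps$, and likewise for $c^\eps$. A natural first move is the change of variables $u'=x'-y'$, $u''=(x''-y'')/\eps$, which absorbs the prefactor $\eps^{-(d-p)}$ and converts $\int|b^\eps(x,y)|\,dy$ into $\int\bigl|\tilde b^\eps\bigl(x'-u'/2,\,x''-\eps u''/2,\,u',\,u''\bigr)\bigr|\,du'du''$.

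The decisive ingredient is then the uniform Schwartz decay of $\mathcal F_\xi^{-1}a$ in its Fourier dual slots, which follows from $a\in C_0^\infty(\R^{2d})$: for every $N$, $|\partial_{x'}\mathcal F_\xi^{-1}a(z,u',u'')|\le C_N(1+|u'|+|u''|)^{-N}$ uniformly in $z$. Inside $\tilde b^\eps$ the factor $x'\chi(x'/R)$ confines the midpoint coordinate $z':=x'-u'/2$ to the ball $\{|z'|\le R\}$, the compact support of $a$ in its $x$-slot confines $z'':=x''-\eps u''/2$ to a bounded set, and the Lipschitz bound $\bigl|\|z'+u'/2\|-\|z'-u'/2\|\bigr|\le|u'|$, together with the polynomial control $|F|,|\nabla F|\le\mathfrak p$ and a one-variable Taylor step for $F$ in the $\eps u''$ direction, shows that the norm-times-$F$ difference grows at most polynomially in $(u',u'')$ with $\eps$-independent constants. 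The Schwartz tail then absorbs this polynomial growth and produces an integrable majorant. The sup over $x$ is handled by the translation $v'=u'-2x'$, after which the cutoff becomes $|v'|\le 2R$ and the decay $(1+|v'+2x'|+|u''|)^{-N}$ is integrable uniformly in $x'$ once $N$ exceeds the relevant polynomial degree by more than $d-p$. The companion integral $\int|b^\eps(x,y)|\,dx$ is symmetric, and $\tilde c^\eps$ is treated identically, its additional $\nabla F$ and algebraic factors $(z'\pm u'/2,u''/2)$ only raising the polynomial exponent.

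The main obstacle to verify is uniformity in $\eps$: one must track that all algebraic factors produced by the Taylor remainders and by the $\eps$-dependent shifts (notably the $\eps u''$ inside $F$ and $\nabla F$) yield only polynomial growth in $(u',u'')$ with constants independent of $\eps$, so that the Schwartz decay of $\mathcal F_\xi^{-1}a$ dominates uniformly. Once this bookkeeping is done, Schur's test (in the form recalled in Remark \ref{rem:schur_upper_bound}) gives $\|B^\eps\|_{\mathcal L(L^2(\R^d))}+\|C^\eps\|_{\mathcal L(L^2(\R^d))}\le K$ with $K$ independent of $\eps$.
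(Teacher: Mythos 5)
Your argument is correct and follows essentially the paper's own route: after the change of variables absorbing $\eps^{-(d-p)}$, the cutoff $\chi(\cdot/R)$ and the compact support of $a$ confine the midpoint slot, the Schwartz decay of $\mathcal{F}_\xi^{-1}a$ in the Fourier-dual slots dominates the polynomial growth coming from the $F$ factors (controlled via $\mathfrak{p}$ and its sub-additivity), and Schur's test concludes. The paper's bookkeeping is slightly more direct---it bounds $\max_{x,y}\left|\tilde b^\eps(x,y)\langle y\rangle^{2d}\right|$ outright and estimates the two terms of the $\|\cdot\|F$ difference separately rather than via a Lipschitz/Taylor decomposition and a recentering translation $v'=u'-2x'$---but these are cosmetic differences and your plan is sound.
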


\begin{proof}
Let us prove the lemma for $B^\eps$ by using the conventional Schur test and the fact that $\mathcal{F}^{-1}_\xi a$ is a rapidly decreasing function bounded by the polynomial $\mathfrak{p}$ given in the beginning of Section \ref{sec:statement}).

In fact, noting $\mathfrak{P} = \max_{\|x'\| \leqslant R} \mathfrak{p}(x',0)$ and recalling the sub-additivity of $\mathfrak{p}$, we have some $K \geqslant 1$ such that
{\footnotesize
$$\left| \tilde{b}^\eps(x,y) \right| \; \leqslant \; 1\!\!1_{\lbrace \| x' \| \leqslant R \rbrace} \left(R + \frac{\|y'\|}{2} \right) K \left( \mathfrak{P} + \mathfrak{p}(0,x'') + \mathfrak{p}(\eps y) \right) \max_{\underset{\|z\| \leqslant R}{z \in \R^p}} \left\| \partial_{x'} \mathcal{F}_\xi^{-1} a(z,x'',y) \right\|_{\mathcal{L}(\R^p)},$$}
which shows that $\tilde{b}^\eps$ is also a Schwartz function, implying:
\begin{eqnarray*}
\sup_{x \in \R^d} \int_{\R^d} \left| b^\eps(x,y) \right| dy & \leqslant & \sup_{x \in \R^d} \frac{1}{\eps^{(d-p)}}\int_{\R^d} \left| \tilde{b}^\eps \left(\frac{x+y}{2},x'-y',\frac{x''- y''}{\eps}\right) \right| dy \\
& = & \sup_{x \in \R^d} \int_{\R^d} \left| \tilde{b}^\eps \left(x'-\frac{y'}{2},x''-\frac{\eps y''}{2},y',y''\right) \right| dy \\
& = & \sup_{x \in \R^d} \int_{\R^d} \left| \tilde{b}^\eps \left(x'-\frac{y'}{2},x''-\frac{\eps y''}{2},y\right) \left\langle y \right\rangle^{2d} \right| \left\langle y \right\rangle^{-2d} dy \\
& \leqslant & \max_{x,y \in \R^d} \left| \tilde{b}^\eps \left(x,y\right) \left\langle y \right\rangle^{2d} \right| \int_{\R^d} \left\langle y \right\rangle^{-2d} dy \\
& < & \infty.
\end{eqnarray*}
The estimate $\sup_{y \in \R^d} \int_{\R^d} \left| b^\eps(x,y) \right| < \infty$ is found by following the very same steps above, so Schur's lemma allows us to conclude that $\| B^\eps \|_{\mathcal{L}\left(L^2(\R^d)\right)} < \infty$ uniformly with respect to $\eps$.

Regarding $C^\eps$, the proof is, \emph{mutatis mutandis}, exactly as we have done for $B^\eps$ and will be omitted.
\end{proof}

Now we only need to focus on the lasting term; from what we have seen in Section \ref{sec:two_microlocal_analysis}, in the limit where $\eps \longrightarrow 0$ and then $R \longrightarrow \infty$, it gives
{\footnotesize
\begin{eqnarray}
\int_\R \Xi(t) \left< \frac{i}{\eps}\left[ \| x' \| F (x) \, , \op_\eps \left(a(x,\xi)\chi\left(\frac{x'}{\eps R}\right)\right) \right] \Psi_t^\eps \, , \Psi_t^\eps \right>dt \nonumber \hspace{-7.5cm} && \\ 
&& \longrightarrow \limsup_{R \longrightarrow \infty} \, {\rm tr} \left< \, M\left(t,x'',\xi''\right) \, , \, i \left[ \, \|y\|F\left(0,x''\right) , \Xi(t) \, a^{w}\left(0,x'',\partial_y,\xi''\right) \chi\left(\frac{y}{R}\right) \, \right] \, \right>_{\R \times \R^{2(d-p)}}, \label{eq:inner_part}
\end{eqnarray}}
where $M$ is the two-microlocal operator-valued measure in Proposition \ref{prop:two_microlocal_measures} and which is finite because of the lemma below.

\begin{lemma}\label{lem:estimate_for_M}
One has got the estimate
\begin{align}\label{eq:estimate_for_M}
\limsup_{R \longrightarrow \infty} \, {\rm tr} \left< \, M\left(t,x'',\xi''\right) \right. & \left. \, , \, i \left[ \, \|y\|F\left(0,x''\right) , \Xi(t) \, a^{w}\left(0,x'',\partial_y,\xi''\right) \chi\left(\frac{y}{R}\right) \, \right] \, \right>_{\R \times \R^{2(d-p)}} \hspace{2cm} \nonumber \\
& \leqslant TK \sup_{\underset{|\alpha| \leqslant d+1}{\alpha\in \mathbb{N}_0^d}} \, \sup_{x'' \in \R^{d-p}} \int_{\R^d} \left\| \partial_\xi^\alpha \partial_{\xi'} a(0,x'',\xi) F(0,x'') \right\|_{\R^p} d\xi,
\end{align}
where $T,K > 0$ are constants, $\Xi \in C_0^\infty([-T,T])$ and $a \in C_0^\infty(\R^{2d})$.
\end{lemma}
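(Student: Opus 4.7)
My plan is to estimate the $\varepsilon$-dependent principal term on the right-hand side of \eqref{eq:calcul_m_grand} uniformly in $\varepsilon>0$ and $R>0$ by a Schur-type bound of the form \eqref{eq:estimation_xi}, and then pass to the two-microlocal limits $\varepsilon\to 0$ and $R\to\infty$ prescribed by Proposition \ref{prop:two_microlocal_measures}; the remainder $R^\varepsilon$ appearing in \eqref{eq:calcul_m_grand} vanishes thanks to Lemma \ref{lem:b_c_bounded}, so it does not contribute in the limit. The principal term itself is a quadratic form $\int\Xi(t)\langle\mathcal O^\varepsilon\tilde\Psi_t^\varepsilon,\tilde\Psi_t^\varepsilon\rangle\,dt$ on $L^2(\R^d)$, whose operator $\mathcal O^\varepsilon$ has an explicit oscillatory-integral kernel carrying the non-smooth factor $\|x'\|-\|y'\|$.

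To handle the singularity at $x'=y'=0$, I use the $1$-Lipschitz property of $\|\cdot\|$ through the integral representation
\[
\|x'\|-\|y'\|\;=\;s\cdot G(x',y'),\qquad s=x'-y',\;\; r=\tfrac{x'+y'}{2},\;\; G(x',y')=\int_{-1/2}^{1/2}\frac{r+\tau s}{\|r+\tau s\|}\,d\tau,
\]
which holds almost everywhere and satisfies $\|G(x',y')\|_{\R^p}\le 1$. Plugging this into the kernel of $\mathcal O^\varepsilon$ and exploiting the standard identity $s_j e^{i\xi'\cdot s}=-i\,\partial_{\xi'_j}e^{i\xi'\cdot s}$, I integrate by parts once in $\xi'$ inside the oscillatory integral; the factor $s$ is thereby exchanged for a factor $\partial_{\xi'}a(0,\cdot,\xi',\xi'')$, while $G(x',y')$ (which depends only on $(x,y)$) is pulled outside as a bounded multiplier.

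After this manipulation, and since $|G|\le 1$ and $|\chi|\le 1$, the resulting kernel is dominated in absolute value by that of an $\varepsilon$-pseudodifferential operator on $L^2(\R^d)$ with Weyl symbol $F(0,x'')\,\partial_{\xi'}a(0,x'',\xi)$. Since the Schur integrals appearing in \eqref{eq:estimation_xi} only depend on kernels through their absolute values, this multiplicative $G$ does no harm, and \eqref{eq:estimation_xi} yields
\[
\|\mathcal O^\varepsilon\|_{\mathcal L(L^2(\R^d))}\;\le\;K\,\sup_{\substack{\alpha\in\N_0^d\\|\alpha|\le d+1}}\;\sup_{x''\in\R^{d-p}}\;\int_{\R^d}\bigl\|\partial_\xi^{\alpha}\partial_{\xi'}a(0,x'',\xi)\,F(0,x'')\bigr\|_{\R^p}\,d\xi
\]
uniformly in $\varepsilon,R$, where the $\sup_x$ of \eqref{eq:estimation_xi} collapses to $\sup_{x''}$ because the dominating symbol has no genuine $x'$-dependence past the normalized cut-off $\chi(x'/R)$. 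Combining this with $\|\tilde\Psi_t^\varepsilon\|_{L^2(\R^d)}=1$, $|\Xi(t)|\le\|\Xi\|_\infty$, and $\mathrm{supp}\,\Xi\subset[-T,T]$ (whence $\int|\Xi|dt\le T\|\Xi\|_\infty$) produces the desired bound at the $\varepsilon$-level; taking $\limsup_{\varepsilon\to 0}$ followed by $\limsup_{R\to\infty}$ transfers it to the trace pairing with $M$, as required.

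The main obstacle is the non-smoothness of $x'\mapsto\|x'\|$ at the origin, which prevents a direct use of the smooth symbolic calculus of Section \ref{sec:symbolic_calculus}; the Lipschitz integral formula for $G$ is what resolves it, by replacing the formal (and singular) ``gradient'' $x'/\|x'\|$ with a uniformly bounded vector-valued multiplier compatible with the integration-by-parts manoeuvre that underlies Schur's estimate.
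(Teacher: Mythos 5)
Your proof is correct and follows essentially the same route as the paper: write $\|x'\|-\|y'\|$ as a uniformly bounded vector multiplier dotted with $x'-y'$, integrate by parts once in $\xi'$ to trade that factor for $\partial_{\xi'}a$, and then invoke the Schur bound \eqref{eq:estimation_xi}, which only sees the kernel through its absolute value, so the bounded multiplier and the cut-off $\chi$ are harmless; the remainder $R^\eps$ is indeed disposed of by Lemma \ref{lem:b_c_bounded}. The only difference is cosmetic: where you use the mean-value integral $G(x',y')=\int_{-1/2}^{1/2}\frac{r+\tau s}{\|r+\tau s\|}\,d\tau$, the paper uses the shorter algebraic identity $\|x'\|-\|y'\|=\frac{x'+y'}{\|x'\|+\|y'\|}\centerdot(x'-y')$, which gives the same bounded multiplier with less work.
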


\begin{proof}
From a calculation similar to that we made in \eqref{eq:calcul_m_grand} and similar estimates, it follows that
$$\left< \frac{i}{\eps}\left[ \| x' \| F (x) \, , \op_\eps \left(a(x,\xi)\chi\left(\frac{x'}{\eps R}\right)\right) \right] \Psi_t^\eps \, , \Psi_t^\eps \right> = \left< P^\eps \, \tilde{\Psi}^\eps_t \, , \, \tilde{\Psi}^\eps_t \right> + \mathcal{O}(\eps),$$
where $P^\eps$ is the operator with kernel
{\footnotesize
\begin{eqnarray*}
p^\eps(x,y) & = & \int_{\R^{d}} \frac{ie^{i \xi \centerdot \left(x-y\right)}}{\left(2\pi\right)^{d}} a\left(0,\frac{x''+y''}{2},\xi',\eps \xi''\right) \chi\left(\frac{x'+y'}{2R}\right) F(0,x'') \left(\|x'\| - \|y'\|\right) d\xi \\
& = &  -\int_{\R^{d}} \frac{e^{i \xi \centerdot \left(x-y\right)}}{\left(2\pi\right)^{d}} \frac{x'+y'}{\|x'\| + \|y'\|} \centerdot \partial_{\xi'} a\left(0,\frac{x''+y''}{2},\xi',\eps \xi'' \right) \chi\left(\frac{x'+y'}{2R}\right) F(0,x'') d\xi
\end{eqnarray*}}
(recall: $\|x'\| - \|y'\| = \frac{x'+y'}{\|x'\| + \|y'\|} \centerdot \left(x'-y'\right)$). Further, let be $\tilde{b}$ the kernel of the operator $\op_{1,\eps}\left(b\right) = \op_1\left(b(x,\xi',\eps\xi'')\right)$, with 
$$b(x,\xi) = -\mathbf{1}_p \centerdot \partial_{\xi} a(0,x'',\xi) \, \chi\left(\frac{x'}{R}\right) F(0,x''),$$
where $\mathbf{1}_p = (1,...\,,1) \oplus (0,...\,,0) \in \R^p \times \R^{d-p}$. 

Observe that $| p^\eps (x,y) | \leqslant |\tilde{b}(x,y)|$, which causes the Schur estimate for the norm of $\op_{1,\eps}\left(b\right)$ to be greater than that for $P^\eps$. Besides, the Schur estimate for $\op_{1,\eps}\left(b\right)$ is upper bounded by an estimate of type \eqref{eq:estimation_xi} (see Remark \ref{rem:schur_upper_bound}), which must, consequently, be an upper bound for the norm of $P^\eps$ as well. This estimate is majorated by the one we stated in the lemma, which is independent of $R$.
\end{proof}

\begin{remark}
In \cite{FGL}, an estimate that turns up to be equivalent to last lemma was obtained by noticing directly that $\frac{i}{\eps}\left[ \|x'\|F(x) , \op_\eps(a) \right]$ is bounded uniformly with respect to $\eps$, which was used in proving that $\frac{i}{\eps}\left[ \hat{H}^\eps, \op_\eps(a) \right]$ is itself bounded.
\end{remark}

In Section \ref{sec:establishing_the_equation}, this $\limsup$ will be shown to be zero.

\subsubsection{The outer part}\label{sec:the_outer_part}
We start by proving with standard symbolic calculus the technical result below:
\begin{lemma}\label{lem:better_commutator}
For $\delta > 0$, $\delta \geqslant \eps R$, one has the following estimation in $\mathcal{L}\left(L^2(\R^d)\right)$:
{\footnotesize
\begin{eqnarray*}
\frac{i}{\eps}\left[ \| x' \|F(x) \, , \, \op_\eps \left( a(x,\xi)\left(1-\chi\left(\frac{x'}{\delta}\right)\right) \right) \right] = \frac{i}{\eps}\left[ \| x' \| F(x) \left(1-\chi\left(\frac{x'}{\delta}\right)\right) \, , \op_\eps \left( a(x,\xi) \right) \right] \\ - \op_\eps \left(\frac{1}{\delta} \|x'\|F(x) \, \chi' \left( \frac{x'}{\delta} \right)\centerdot\partial_{\xi'}a(x,\xi)\right) + \mathcal{O}\left(\frac{\eps}{\delta}\right).
\end{eqnarray*}}
\end{lemma}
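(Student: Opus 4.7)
My approach is to apply standard Moyal/Weyl symbolic calculus while recognising that the non-smoothness of $\|x'\|F$ at $\{x'=0\}$ never obstructs the argument: in every term that appears, $\|x'\|F$ is paired with a cutoff ($b_\delta:=1-\chi(x'/\delta)$ itself or one of its derivatives, all supported on $\|x'\|\gtrsim \delta$) that keeps the calculation away from the conical locus. Throughout, I would exploit that the multiplication operators $\|x'\|F$ and $b_\delta$ commute.

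The first move is to symmetrise the Weyl quantisation of the product $ab_\delta$. Because $b_\delta$ depends only on $x$ and the commutator part of the Moyal product contains only odd powers of $\eps$, one has
\begin{equation*}
\op_\eps(ab_\delta) = \tfrac{1}{2}\bigl(\op_\eps(a)\,b_\delta + b_\delta\,\op_\eps(a)\bigr) + R_1^\eps, \qquad R_1^\eps = \mathcal{O}(\eps^2/\delta^2),
\end{equation*}
in $\mathcal{L}(L^2(\R^d))$, the remainder controlled by estimates of type \eqref{eq:estimation_xi} since $\partial_x^{2k} b_\delta = \mathcal{O}(\delta^{-2k})$ is supported on a compact annulus and $\partial_\xi^{2k}a$ has compact support. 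Then, using $[\|x'\|F,b_\delta]=0$, I would rewrite
\begin{equation*}
[\|x'\|F,\op_\eps(ab_\delta)] = \tfrac{1}{2}\bigl\{[\|x'\|F,\op_\eps(a)], b_\delta\bigr\}_+ + [\|x'\|F,R_1^\eps],
\end{equation*}
and apply the purely algebraic identity
\begin{equation*}
\tfrac{1}{2}\bigl\{[\|x'\|F,\op_\eps(a)], b_\delta\bigr\}_+ = [\|x'\|F\,b_\delta,\op_\eps(a)] - \tfrac{1}{2}\bigl\{\|x'\|F, [b_\delta,\op_\eps(a)]\bigr\}_+
\end{equation*}
in order to produce on the right-hand side the smooth-symbol commutator wanted in the lemma.

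What remains is to compute $[b_\delta,\op_\eps(a)]$ by standard symbolic calculus. Since $b_\delta$ is smooth and $a \in C_0^\infty(\R^{2d})$,
\begin{equation*}
[b_\delta,\op_\eps(a)] = -\tfrac{i\eps}{\delta}\op_\eps\bigl(\chi'(x'/\delta)\centerdot \partial_{\xi'} a\bigr) + \mathcal{O}\bigl((\eps/\delta)^3\bigr),
\end{equation*}
with the remainder coming from the third-order Moyal term (even orders cancel in the commutator). The symbol $c:=\chi'(x'/\delta)\centerdot\partial_{\xi'}a$ is supported in the annulus $\delta/2\leqslant \|x'\|\leqslant \delta$, on which $\|x'\|F$ is $C^\infty$; the symmetric counterpart of the same expansion then gives $\tfrac{1}{2}\{\|x'\|F,\op_\eps(c)\}_+ = \op_\eps(\|x'\|F\cdot c) + \mathcal{O}(\eps^2)$, whence
\begin{equation*}
\tfrac{1}{2}\bigl\{\|x'\|F,[b_\delta,\op_\eps(a)]\bigr\}_+ = -\tfrac{i\eps}{\delta}\op_\eps\bigl(\|x'\|F\,\chi'(x'/\delta)\centerdot \partial_{\xi'} a\bigr) + \mathcal{O}(\eps^3/\delta).
\end{equation*}

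Putting the pieces together and multiplying by $i/\eps$ yields the announced identity, with the residual errors---$[\|x'\|F,R_1^\eps]/\eps$, the higher Moyal corrections, and the symmetrisation errors---collecting to $\mathcal{O}(\eps/\delta)$ in $\mathcal{L}(L^2(\R^d))$ once $\delta \geqslant \eps R$ is invoked to dominate every $\eps^k/\delta^j$-type term by $\eps/\delta$ (with constants possibly depending on $R$). The anticipated main obstacle, the singularity of $\|x'\|F$ at $x'=0$, is entirely bypassed by the cutoffs present in every remainder, so that the standard Calderón--Vaillancourt/Schur estimates recalled in Section \ref{sec:symbolic_calculus} need only be applied to smooth symbols.
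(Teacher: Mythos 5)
Your proposal is correct in structure and in fact algebraically equivalent to the paper's proof, but it takes a genuinely different route at the level of technique, and it elides one point that the paper handles explicitly.

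On the algebraic side, your decomposition
$$[\,\|x'\|F,\op_\eps(ab_\delta)\,] = [\,\|x'\|F\,b_\delta,\op_\eps(a)\,]-\tfrac{1}{2}\bigl\{\|x'\|F,[b_\delta,\op_\eps(a)]\bigr\}_+ + [\,\|x'\|F,R_1^\eps\,]$$
is exactly the paper's $R_{\eps,\delta}=\frac{i}{\eps}\bigl(\|x'\|F\,R^l_{\eps,\delta}-R^r_{\eps,\delta}\,\|x'\|F\bigr)$ rewritten using $R_1^\eps=\tfrac12(R^l_{\eps,\delta}+R^r_{\eps,\delta})$ and $R^l_{\eps,\delta}-R^r_{\eps,\delta}=-[b_\delta,\op_\eps(a)]$. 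The difference is that the paper then computes the kernel of $R_{\eps,\delta}$ directly, Taylor-expands $\chi$ and $\|\cdot\|F$ around the midpoint, reads off the leading $\op_\eps(\cdot)$ term, and Schur-tests the rest, whereas you split $R_{\eps,\delta}$ into a commutator and an anticommutator and invoke Moyal expansions on each piece. Your route is cleaner to state and makes the symmetric structure transparent; the paper's route is blunter but self-contained.

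The gap is in the sentence claiming that ``the standard Calderón--Vaillancourt/Schur estimates need only be applied to smooth symbols'' because ``the singularity of $\|x'\|F$ is entirely bypassed by the cutoffs.'' The factor $\|x'\|F$ never leaves the picture as a non-smooth, non-compactly-supported multiplication operator: in the terms $\tfrac12\{\|x'\|F,\op_\eps(c)\}_+$ (with $c$ supported on the annulus $\delta/2\le\|x'\|\le\delta$) and $\frac{1}{\eps}[\,\|x'\|F,R_1^\eps\,]$, the Weyl kernel of $\op_\eps(c)$ is localised in $\frac{x+y}{2}$, not in $x$ and $y$ separately, so the resulting operator kernels \emph{do} see the set $\{x'=0\}$ and $\{y'=0\}$ and \emph{do} see the polynomial growth of $F$ at infinity. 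To turn your Moyal expansion of $\tfrac12\{\|x'\|F,\op_\eps(c)\}_+$ into a rigorous statement you still need to (i) replace $\|x'\|F$ by a smooth compactly supported truncation coinciding with it on a slightly larger annulus, apply the Moyal calculus to that, and (ii) show the remainder operator (product of $\op_\eps(c)$ with a multiplication vanishing on the support of $c$) is negligible via rapid decay of the kernel in $(x-y)/\eps$ and a Schur bound that absorbs the polynomial growth of $F$. Step (ii) is precisely the content of the paper's explicit kernel computation and Schur test, so it cannot be dispensed with as ``standard'' when $\|x'\|F$ is one of the factors; the same caveat applies to $[\,\|x'\|F,R_1^\eps\,]$. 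If you spell out this truncation-plus-rapid-decay argument, your proof closes; as written, it would not be acceptable.

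Two small remarks on the bookkeeping. First, the Moyal remainders scale as powers of $\eps/\delta$ (not of $\eps$ alone) because each $x$-derivative of $b_\delta$ costs $\delta^{-1}$; all of them are indeed $\mathcal{O}(\eps/\delta)$ since $\eps/\delta\le 1$ under the hypothesis $\delta\ge\eps R$, so no $R$-dependence of the constant is needed. Second, the hypothesis $\delta\ge\eps R$ is used only to guarantee $\eps/\delta\le 1$; your phrase ``constants possibly depending on $R$'' suggests a stronger use that is neither needed nor compatible with the fact that the lemma's error $\mathcal{O}(\eps/\delta)$ must be uniform in $R$ to be useful in the later limit $R\to\infty$.
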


\begin{proof}
In view of the identities
{\small
\begin{eqnarray*}
\op_\eps \left( a(x,\xi) \left(1-\chi\left(\frac{x'}{\delta}\right)\right) \right) & = & \left(1-\chi\left(\frac{x'}{\delta}\right)\right) \op_\eps \left( a(x,\xi) \right) + R_{\eps,\delta}^l \\
& = & \op_\eps \left( a(x,\xi) \right) \left(1-\chi\left(\frac{x'}{\delta}\right)\right) + R_{\eps,\delta}^r, 
\end{eqnarray*}}
where $R_{\eps,\delta}^l$ and $R_{\eps,\delta}^r$ have integral kernels
{\small
$$r_{\eps,\delta}^l (x,y) = \frac{i}{\eps^d} \mathcal{F}_{\xi} a\left(\frac{x+y}{2},\frac{x-y}{\eps}\right) \left( \chi\left(\frac{x'}{\delta}\right) - \chi\left(\frac{x'+y'}{2\delta}\right)\right)$$}
and
{\small
$$r_{\eps,\delta}^r (x,y) = \frac{i}{\eps^d} \mathcal{F}_{\xi} a\left(\frac{x+y}{2},\frac{x-y}{\eps}\right) \left( \chi\left(\frac{y'}{\delta}\right) - \chi\left(\frac{x'+y'}{2\delta}\right)\right),$$}
we have 
{\footnotesize
\begin{eqnarray*}
\frac{i}{\eps}\left[ \| x' \|F(x) \, , \, \op_\eps \left( a(x,\xi)\left(1-\chi\left(\frac{x'}{\delta}\right)\right) \right) \right] = \frac{i}{\eps}\left[ \| x' \| F(x) \left(1-\chi\left(\frac{x'}{\delta}\right)\right) \, , \op_\eps \left( a(x,\xi) \right) \right] \\ + \underbrace{\frac{i}{\eps}\left( \|x'\| F(x) R_{\eps,\delta}^l - R_{\eps,\delta}^r \|x'\| F(x) \right)}_{R_{\eps,\delta}},
\end{eqnarray*}}
where $R_{\eps,\delta}$ has kernel
{\footnotesize
\begin{eqnarray*}
r_{\eps,\delta}(x,y) = \frac{i}{\eps^{d+1}} \mathcal{F}_\xi a\left(\frac{x+y}{2},\frac{x-y}{\eps}\right) \left( \chi\left(\frac{x'}{\delta}\right)\|x'\| F(x) - \chi\left(\frac{y'}{\delta}\right)\|y'\| F(y) - \chi\left(\frac{x'+y'}{2\delta}\right)\right. \\
\left.\left(\|x'\| F(x) - \|y'\|F(y)\right)\right).
\end{eqnarray*}}
Using Taylor developments for $\chi$ (where $\chi'$ and $\chi''$ will stand for the first and the second derivatives of $\chi$), this kernel can be written in the form
{\footnotesize
\begin{eqnarray*}
r_{\eps,\delta}(x,y) = \frac{i}{\eps^d} \mathcal{F}_\xi a\left(\frac{x+y}{2},\frac{x-y}{\eps}\right) \left[ \frac{1}{2\delta} \left(\|x'\|F(x) + \|y'\|F(y) \right) \chi'\left(\frac{x'+y'}{2\delta} \right) \centerdot \left( \frac{x'-y'}{\eps} \right) \right. \\
+ \frac{\eps}{4\delta^2}\int_0^1 \int_0^1 s\left( \chi''\left(\frac{x'+y'}{2\delta} + s's\frac{x'-y'}{2\delta}\right)\|x'\|F(x) - \chi''\left(\frac{x'+y'}{2\delta} - s's\frac{x'-y'}{2\delta}\right)\|y'\|F(y) \right) \\ \left. \left( \frac{x'-y'}{\eps}\right)^{(2)} ds'ds \right],
\end{eqnarray*}}
and still, with developments for $\| \cdot \|F$,
{\footnotesize
\begin{eqnarray*}
r_{\eps,\delta}(x,y) = \frac{i}{\eps^d} \mathcal{F}_\xi a\left(\frac{x+y}{2},\frac{x-y}{\eps}\right) \left[ \frac{1}{\delta} \left\| \frac{x'+y'}{2} \right\| F\left(\frac{x+y}{2}\right) \chi'\left(\frac{x'+y'}{2\delta} \right) \centerdot \left( \frac{x'-y'}{\eps} \right) \right. \\
\left. + \frac{\eps}{\delta} \left(\frac{x-y}{\eps}\right) {^tA(x,y)} \left(\frac{x'-y'}{\eps}\right) + \frac{\eps}{4\delta^2}\int_0^1 \int_0^1 s \left( B(x,y) \left\| \frac{x'+y'}{2} \right\|F\left(\frac{x+y}{2}\right) \right.\right. \\
\left.\left. + \eps \left( \frac{x-y}{\eps} \right) {^t C(x,y)} \right) \left( \frac{x'-y'}{\eps}\right)^{(2)} ds'ds \right],
\end{eqnarray*}}
where 
{\footnotesize
$$A(x,y) = \frac{1}{2} \, \chi'\left(\frac{x'+y'}{2\delta}\right) \otimes \sum_{j = 1,2} \int_{0}^{1} (-1)^j \, \nabla \left( \| \cdot \|F \right) \left(\frac{x+y}{2} + (-1)^j \, s'' \frac{x-y}{2}\right) ds'',$$}
{\footnotesize
$$B(x,y) = \sum_{j=1,2} (-1)^j \, \chi''\left(\frac{x'+y'}{2\delta} + (-1)^j \, s's\frac{x'-y'}{2\delta}\right)$$}
and
{\footnotesize
$$C(x,y) = \frac{1}{2} \sum_{j = 1,2} \int_{0}^{1} \chi''\left(\frac{x'+y'}{2\delta}+(-1)^j\, s's\frac{x'-y'}{2\delta}\right) \otimes \nabla \left( \| \cdot \|F \right) \left(\frac{x+y}{2} + (-1)^j \, s'' \frac{x-y}{2}\right) ds''.$$}
Observe now that $A$, $B$ and $C$ are bounded and, furthermore, $B$ is supported on $\left\| \frac{x'+y'}{2} \right\| \leqslant \delta + \frac{\eps}{2}\left\| \frac{x'-y'}{\eps} \right\|$ (given that $\chi''(x')$ is null for $\| x' \| > \delta$). This, along with the identity $i\mathcal{F}_{\xi}^{-1}a(x,y) \, y' = -\mathcal{F}_{\xi}^{-1}(\partial_{\xi'}a)(x,y)$, allows us to write $R_{\eps,\delta}$ as
$$R_{\eps,\delta} = -\op_\eps\left(\frac{1}{\delta}\|x'\|F(x) \, \chi'\left(\frac{x'}{\delta}\right) \centerdot \partial_{\xi'} a(x,\xi) \right) + \tilde{R}_{\eps,\delta},$$
where $\tilde{R}_{\eps,\delta}$ has an integral kernel such that
{\footnotesize
\begin{eqnarray*}
\left| \tilde{r}_{\eps,\delta}(x,y) \right| \leqslant \frac{Q}{\eps^d} \left| \mathcal{F}_\xi a\left(\frac{x+y}{2},\frac{x-y}{\eps}\right) \right| \left| \frac{\eps}{\delta} \left\| \frac{x-y}{\eps} \right\|^2 + \frac{\eps}{\delta^2} \left( \delta + \eps \left\| \frac{x-y}{\eps} \right\| \right) \left\| \frac{x-y}{\eps} \right\|^2 \right|
\end{eqnarray*}}
where $Q > 0$ is some constant big enough. The lemma will follow as one uses the estimation above for the Schur test to show that $\| \tilde{R}_{\eps,\delta} \|_{\mathcal{L}(L^2(\R^d))} \leqslant \frac{\eps}{\delta}$. 
\end{proof}

\begin{remark}\label{rem:useful_in_middle_part}
Replacing $\delta$ by $\eps R$ and $a(x,\xi)$ by $a(x,\xi) \, \chi\left(\frac{x'}{\delta}\right)$, Lemma \ref{lem:better_commutator} gives
{\tiny
\begin{eqnarray*}
\frac{i}{\eps}\left[ \| x' \|F(x) \, , \, \op_\eps \left( a(x,\xi)\left(1-\chi\left(\frac{x'}{\eps R}\right)\right)\, \chi\left(\frac{x'}{\delta}\right) \right) \right] = \frac{i}{\eps}\left[ \| x' \| F(x) \left(1-\chi\left(\frac{x'}{\eps R}\right)\right) \, , \op_\eps \left( a(x,\xi)\, \chi\left(\frac{x'}{\delta}\right) \right) \right] \\ - \op_\eps \left(\frac{1}{\eps R} \|x'\|F(x) \, \chi' \left( \frac{x}{\eps R} \right)\centerdot\partial_{\xi'}a(x,\xi)\, \chi\left(\frac{x'}{\delta}\right)\right) + \mathcal{O}\left(\frac{1}{R}\right),
\end{eqnarray*}}
which is going to be remarkably useful in Section \ref{sec:the_middle_part}.
\end{remark}

\begin{lemma}\label{lem:commutator_smooth}
For $\delta > 0$, $\delta \geqslant \eps R$, one has
{\footnotesize
\begin{eqnarray*}
\frac{i}{\eps} \left[ \|x'\| F(x) \left(1 - \chi\left( \frac{x'}{\delta} \right) \right) \, , \, \op_\eps(a(x,\xi)) \right] = \op_\eps \left( - \nabla\left(\|x'\|F(x)\right) \centerdot \partial_{\xi} a(x,\xi) \left(1-\chi\left(\frac{x'}{\delta}\right) \right) \right) \\ + \op_\eps \left( \frac{1}{\delta} \|x'\|F(x) \, \chi' \left( \frac{x'}{\delta} \right)\centerdot\partial_{\xi'}a(x,\xi) \right) + \mathcal{O}\left(\eps\right) + \mathcal{O}\left(\frac{\eps}{\delta}\right)
\end{eqnarray*}}
in $\mathcal{L}\left(L^2(\R^d)\right)$.
\end{lemma}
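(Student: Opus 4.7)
The plan is to apply standard symbolic calculus to the commutator by treating
$V_\delta(x) := \|x'\| F(x)\bigl(1-\chi(x'/\delta)\bigr)$
as a bona fide smooth function on $\R^d$: since $(1-\chi(x'/\delta))$ vanishes identically on $\{\|x'\|\leq \delta/2\}$, the singularity of $\|x'\|$ at $x'=0$ is harmless and $V_\delta \in C^\infty(\R^d)$, although its derivatives of order $k\geq 2$ grow like $\delta^{1-k}$ on the support of the cut-off. This is the exact same calculation as in Section \ref{sec:dynamical_smooth} for $V_S$, but with extra bookkeeping in $\delta$.

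First, I would write the kernel of $\frac{i}{\eps}\bigl[V_\delta, \op_\eps(a)\bigr]$ as
$\frac{i}{\eps}\,k_a(x,y)\,\bigl(V_\delta(x)-V_\delta(y)\bigr)$,
with $k_a$ the Weyl kernel of $\op_\eps(a)$, and Taylor-expand $V_\delta(x)-V_\delta(y)$ around the midpoint $\frac{x+y}{2}$ exactly as in \eqref{eq:taylor_smooth}. This isolates a first-order term proportional to $\nabla V_\delta(\frac{x+y}{2})\cdot(x-y)$ plus a remainder controlled by a symmetric difference of $\nabla^2 V_\delta$ multiplied by $(x-y)^{(2)}$. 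For the principal part, the standard identity $(x_j-y_j) k_a(x,y) = i\eps\, k_{\partial_{\xi_j}a}(x,y)$ (obtained by integration by parts in $\xi_j$ in the defining integral of $k_a$) converts it into the kernel of $\op_\eps(-\nabla V_\delta\cdot \partial_\xi a)$. By the Leibniz rule, and since $\chi(x'/\delta)$ depends only on $x'$,
\begin{equation*}
\nabla V_\delta(x) \;=\; \nabla\bigl(\|x'\|F(x)\bigr)\bigl(1-\chi(x'/\delta)\bigr) \;-\; \frac{1}{\delta}\|x'\|F(x)\,\chi'(x'/\delta),
\end{equation*}
the last term having components only in the $x'$-directions; distributing the inner product with $\partial_\xi a$ produces exactly the two quantised symbols appearing in the statement.

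For the remainder I would invoke the Schur-type bounds \eqref{eq:estimation_x}--\eqref{eq:estimation_xi}: using $(x_j-y_j)(x_k-y_k)k_a = -\eps^2 k_{\partial_{\xi_j}\partial_{\xi_k}a}$, the remainder is up to sign the quantisation of $\partial_\xi^2 a$ multiplied by $\eps$ and by a symmetric average of $\nabla^2 V_\delta$. On the support of $(1-\chi(x'/\delta))$ one has $\|x'\|\geq \delta/2$, so $\nabla^2\|x'\| = O(1/\delta)$; on the support of $\chi'(x'/\delta)$ and $\chi''(x'/\delta)$ one has $\|x'\|\sim\delta$, so each of the three Leibniz contributions $\nabla^2(\|x'\|F)(1-\chi)$, $\nabla(\|x'\|F)\chi'/\delta$ and $\|x'\|F\,\chi''/\delta^2$ is $O(1/\delta)$ uniformly on the compact support of $a$. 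After multiplication by $\eps$, the remainder is therefore an operator of norm $O(\eps/\delta)$. The separate $O(\eps)$ in the statement absorbs the smooth contribution away from the cut-off where $\nabla^2(\|x'\|F)$ is uniformly bounded, and would be redundant once combined (it is kept for later bookkeeping).

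The main obstacle is to ensure that only finitely many seminorms of symbols of controlled $\delta$-scaling enter the operator-norm bound on the remainder: this is why I stop the Taylor expansion at first order, so that only $\nabla^2 V_\delta$ intervenes — pushing further would drag in $\nabla^3 V_\delta \sim 1/\delta^2$ and risk spoiling the estimate. The standard pseudodifferential estimates \eqref{eq:estimation_x}--\eqref{eq:estimation_xi} depend on only finitely many derivatives of the symbol, and combined with the compact support of $a$ in $(x,\xi)$ and the polynomial growth of $F$ and $\nabla F$, they yield the claimed $\mathcal{O}(\eps/\delta) + \mathcal{O}(\eps)$ bound in $\mathcal{L}(L^2(\R^d))$.
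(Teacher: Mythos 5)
Your proposal is correct and follows essentially the same route as the paper: treat $V_\delta = \|x'\|F(x)(1-\chi(x'/\delta))$ as a genuine smooth function, Taylor-expand the difference $V_\delta(x)-V_\delta(y)$ to first order around the midpoint (exactly as in \eqref{eq:taylor_smooth}), convert $(x-y)$ factors into $\eps\partial_\xi$ derivatives of the symbol, bound the second-order remainder by a Schur test exploiting the fact that $\nabla^2 V_\delta = \mathcal{O}(1+\delta^{-1})$ on the compact $x$-support of $a$, and finally split $\nabla V_\delta\centerdot\partial_\xi a$ by Leibniz into the two pieces appearing in the statement. The one small imprecision is that the second-order remainder is not literally the $\eps$-quantization of a symbol (the $\nabla^2 V_\delta$ factor is evaluated at $s$-averaged off-diagonal points inside the $(s,s')$ integrals), but the Schur test applies directly to its explicit kernel, which is precisely what the paper does; your observation that one must stop at first order to avoid $\nabla^3 V_\delta\sim\delta^{-2}$ correctly identifies why this truncation is the right one.
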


\begin{proof}
Because $\|x'\| F(x) \left(1 - \chi\left( \frac{x'}{\delta} \right) \right)$ is everywhere smooth, we can apply usual symbolic calculus as in \ref{eq:taylor_smooth} and \ref{eq:smooth_piece} to get
{\scriptsize
$$\frac{i}{\eps} \left[ \|x'\| F(x) \left(1 - \chi\left( \frac{x'}{\delta} \right) \right) \, , \, \op_\eps(a(x,\xi)) \right] = \op_\eps \left( - \partial_x \left( \|x'\|F(x) \right) \centerdot \partial_{\xi} a(x,\xi) \left(1-\chi\left(\frac{x'}{\delta} \right) \right) \right) + R^{\eps,\delta}$$}
where $R^{\eps,\delta}$ has kernel
{\small
$$r^{\eps,\delta}(x,y) = \frac{i\eps}{4\eps^d}\int_0^1 \int_0^1 \mathcal{F}_{\xi}^{-1}a\left(\frac{x+y}{2},\frac{x-y}{\eps}\right) B\left(x,y\right) \left(\frac{x-y}{\eps}\right)^{(2)} ds'ds,$$}
with $B$ the matrix
{\small
$$B(x,y) = \sum_{j = 1,2} \left(-1\right)^{-1} \nabla^2 \left( \|x'\|F(x)\left(1-\chi\left(\frac{x'}{\delta}\right)\right) \right) \left( \frac{x+y}{2} + (-1)^j s's \frac{x-y}{2} \right).$$}
From the growth properties of $F$, it is easy to see that there exists some $K \geqslant 1$ such that $\| B (x,y) \|_{\mathcal{L}(\R^d)} \leqslant K \left(\mathfrak{p}(x)+\mathfrak{p}(y)\right) \left( 1 + \frac{1}{\delta}\right)$ and, therefore,
{\small
$$\sup_{x \in \R^d} \int_{\R^d} \left| r(x,y) \right| dy \; \leqslant \; \eps K \left(1+\frac{1}{\delta}\right) \max_{x,y \in \R^d} \left| \left(\mathfrak{p}(x)+\mathfrak{p}(y)\right) \mathcal{F}_{\xi}^{-1}a(x,y) \, y^2 \left\langle y \right\rangle^{2d} \right| \int_{\R^d} \left\langle y \right\rangle^{-2d} dy;$$}
the same estimation holding for $\sup_{y \in \R^d} \int_{\R^d} \left| r(x,y) \right| dx$, it follows by the Schur test that $\left\| R^{\eps,\delta} \right\|_{\mathcal{L}\left(L^2(\R^d)\right)} \lesssim \eps + \frac{\eps}{\delta}$.

The rest of the proof consists on the basic derivation
\begin{eqnarray*}
\partial_x \left( \|x'\|F(x) \left(1-\chi\left(\frac{x'}{\delta} \right) \right)\right) \centerdot \partial_{\xi} a(x,\xi) = \left(1-\chi\left(\frac{x'}{\delta} \right) \right) \partial_x \left( \|x'\|F(x) \right) \centerdot \partial_{\xi} a(x,\xi) \\ - \frac{1}{\delta} \|x'\|F(x) \, \chi'\left(\frac{x'}{\delta}\right) \centerdot \partial_{\xi'} a(x,\xi)
\end{eqnarray*}
and on the linearity of the pseudodifferential operators with respect to their symbols.
\end{proof}

\begin{remark}\label{rem:useful_in_middle_part_2}
Analogously to Remark \ref{rem:useful_in_middle_part}, in Section \ref{sec:the_middle_part} we will use that
{\tiny
\begin{eqnarray*}
\frac{i}{\eps} \left[ \|x'\| F(x) \left(1 - \chi\left( \frac{x'}{\eps R} \right) \right) \, , \, \op_\eps\left( a(x,\xi) \, \chi\left(\frac{x'}{\delta}\right) \right) \right] = \op_\eps \left( - \nabla\left(\|x'\|F(x)\right) \centerdot \partial_{\xi} a(x,\xi) \left(1-\chi\left(\frac{x'}{\eps R}\right) \right) \chi\left(\frac{x'}{\delta}\right) \right) \\ + \op_\eps \left( \frac{1}{\eps R} \|x'\|F(x) \, \chi' \left( \frac{x'}{\eps R} \right)\centerdot \partial_{\xi'}a(x,\xi) \chi\left(\frac{x'}{\delta}\right) \right) + \mathcal{O}\left(\eps\right) + \mathcal{O}\left(\frac{1}{R}\right).
\end{eqnarray*}}
There, however, there will be a tiny question about the limit in $\eps$.
\end{remark}

Combining Lemmata \ref{lem:better_commutator} and \ref{lem:commutator_smooth}, we obtain
{\footnotesize
\begin{eqnarray*}
\frac{i}{\eps}\left[ \| x' \|F(x) \, , \, \op_\eps \left( a(x,\xi)\left(1-\chi\left(\frac{x'}{\delta}\right)\right) \right) \right] = \op_\eps \left( - \nabla\left(\|x'\|F(x)\right) \centerdot \partial_{\xi} a(x,\xi) \left(1-\chi\left(\frac{x'}{\delta}\right) \right) \right) \\ + \mathcal{O}(\eps) + \mathcal{O}\left(\frac{\eps}{\delta}\right),
\end{eqnarray*}}
which gives, in the limit where $\eps \longrightarrow 0$ (using Proposition \ref{prop:two_microlocal_measures}, or just the standard semiclassical measure) and then $\delta \longrightarrow 0$ (using dominated convergence),
\begin{eqnarray}
\int_\R \Xi(t) \left< \frac{i}{\eps} \left[ \|x'\|F(x) \, , \, \op_\eps \left( a(x,\xi)\left( 1-\chi\left( \frac{x'}{\delta} \right) \right) \right) \right] \Psi^\eps_t \, , \Psi^\eps_t \right> dt \nonumber \hspace{-7cm} &&\\ 
& \longrightarrow & \left< \, \nabla\left( \|x'\|F(x) \right) \centerdot \partial_\xi \mu(t,x,\xi) 1\!\!1_{\left\lbrace x' \neq 0 \right\rbrace} \, , \Xi(t) \, a(x,\xi) \, \right>_{\R\times\R^{2d}}.
\label{eq:outer_part}
\end{eqnarray}

\subsubsection{The middle part}\label{sec:the_middle_part}
As seen in Remarks \ref{rem:useful_in_middle_part} and \ref{rem:useful_in_middle_part_2}, the calculations in Section \ref{sec:the_inner_part} lead to
{\footnotesize
\begin{eqnarray*}
\frac{i}{\eps} \left[ \|x'\| F(x) \, , \, \op_\eps\left( a(x,\xi) \left(1 - \chi\left( \frac{x'}{\eps R} \right) \right) \chi\left(\frac{x'}{\delta}\right) \right) \right] \hspace{-5cm}  && \\
& = & \op_\eps \left( - \nabla\left(\|x'\|F(x)\right) \centerdot \partial_{\xi} a(x,\xi) \left(1-\chi\left(\frac{x'}{\eps R}\right) \right) \chi\left(\frac{x'}{\delta}\right) \right) + \mathcal{O}\left(\eps\right) + \mathcal{O}\left(\frac{1}{R}\right).
\end{eqnarray*}}
Observing that
\begin{align*}
\nabla (\|x'\|F(x)) \centerdot \partial_\xi a(x,\xi) \left(1-\chi\left(\frac{x'}{\eps R}\right)\right) & = F(x) \frac{x'}{\|x'\|} \centerdot \partial_\xi a(x,\xi) \left(1-\chi\left(\frac{x'}{\eps R}\right)\right) \\
& + \|x'\|\nabla F(x) \centerdot \partial_\xi a(x,\xi) \left(1-\chi\left(\frac{x'}{\eps R}\right)\right),
\end{align*}
and that the term in the first line is the two-scale microlocalization of the symbol
$$\hspace{1.7cm} \frac{\eta}{\|\eta\|}F(x) \centerdot \partial_\xi a(x,\xi)  \left(1-\chi\left(\frac{\eta}{R}\right)\right),$$
whereas the one in the second line, of the symbol
$$\hspace{1.7cm} \eps \|\eta\|\nabla F(x) \centerdot \partial_\xi a(x,\xi)  \left(1-\chi\left(\frac{\eta}{R}\right)\right),$$
it is implied by Proposition \ref{prop:two_microlocal_measures} that: 
\begin{eqnarray}
\int_\R \Xi(t) \left< \frac{i}{\eps} \left[ \|x'\|F(x) \, , \, \op_\eps \left( a(x,\xi)\left( 1-\chi\left( \frac{x'}{\eps R} \right) \right) \chi\left( \frac{x'}{\delta} \right) \right) \right] \Psi^\eps_t \, , \Psi^\eps_t \right> dt \nonumber \hspace{-9.5cm} && \\ 
& \longrightarrow & \left< \delta(x') \otimes \int_{\mathcal{S}^{p-1}} F(x) \, \omega \centerdot \partial_{\xi'} \nu_\infty(t,x'',\xi,d\omega) \, , \Xi(t) \, a(x,\xi) \right>_{\R\times\R^{2d}},
\label{eq:middle_part}
\end{eqnarray}
where $\nu_\infty$ is the two-microlocal measure on sphere introduced in Proposition \ref{prop:two_microlocal_measures}, and $\delta$ is the usual counting measure (or Dirac mass).

\subsection{Establishing the equation}\label{sec:establishing_the_equation}
From equations \eqref{eq:d_mu}, \eqref{eq:d_op} and \eqref{eq:commutators_result}, more the results in the last sections, equations \eqref{eq:kinetic_part}, \eqref{eq:smooth_piece}, \eqref{eq:inner_part}, \eqref{eq:outer_part} and \eqref{eq:middle_part}, we obtain the equation
{\footnotesize
\begin{align}\label{eq:liouville_full_inhomogeneous}
\left< \, \partial_t \mu + D(x)\xi \right. & \left. \centerdot \partial_x \mu - \nabla V_S (x) \centerdot \partial_\xi \mu - \nabla \left( \| x' \| F(x) \right) \centerdot \partial_{\xi} \mu 1\!\!1_{\left\lbrace x' \neq 0 \right\rbrace} \, , \, a(t,x,\xi) \, \right>_{\R\times\R^{2d}} \nonumber \\
& = \limsup_{R\longrightarrow\infty} \, {\rm tr} \left< \, \left[ \|y\| F(0,x'') \, , M(t,x'',\xi'') \right] , a^w(t,0,x'',\partial_y,\xi'') \chi\left(\frac{y}{R}\right) \, \right>_{\R\times\R^{2(d-p)}} \nonumber \\
& \hspace{2.3cm} + \left< \delta(x') \otimes \int_{\mathcal{S}^{p-1}} F(x) \, \omega \centerdot \partial_{\xi'} \nu_\infty(t,x'',\xi,d\omega) \, , \, a(t,x,\xi) \right>_{\R\times\R^{2d}}
\end{align}}
for the full Wigner measure $\mu$ tested against a function $a \in C_0^\infty(\R\times\R^{2d})$. We will now work out this expression until we prove Proposition\ref{prop:main_result} ahead.

\begin{lemma}\label{lem:m_is_zero_and_etc}
In the two-microlocal decomposition given in Lemma \ref{lem:two_microlocal_decomposition}, the operator valued measure $M$ is zero and $\nu_\infty(t,x'',\xi,\omega) = \delta(\xi') \otimes \nu(t,x'',\xi'',\omega)$ for some measure $\nu$ on $\R\times\R^{2(d-p)}\times \mathcal{S}^{p-1}$. Consequently, equation \eqref{eq:liouville_full_inhomogeneous} reads
{\footnotesize
\begin{eqnarray}\label{eq:liouville_full_tested}
\left< \delta(x')\otimes \delta(\xi') \otimes \nu(t,x'',\xi'',\omega) \, , \left( \partial_t + D_f(x)\xi'' \centerdot \partial_{x''} - \nabla V_S(x) \centerdot \partial_\xi - F(x) \, \omega \centerdot \partial_{\xi'} \right) a \right>_{\R\times\R^{2d}\times\mathcal{S}^{p-1}} \nonumber \\ 
+ \left< \mu(t,x,\xi) 1\!\!1_{\left\lbrace x' \neq 0 \right\rbrace} \, , \left( \partial_t + D(x)\xi \centerdot \partial_x - \nabla V(x) \centerdot \partial_\xi \right) a \right>_{\R\times\R^{2d}} \nonumber \\
= \quad 0
\end{eqnarray}}
for all test functions $a \in C_0^\infty(\R\times\R^{2d})$.
\end{lemma}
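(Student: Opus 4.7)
The plan is to use equation \eqref{eq:liouville_full_inhomogeneous} together with Lemma \ref{lem:two_microlocal_decomposition} and carefully selected test symbols to identify the unknown pieces $M$ and $\nu_\infty$, and then substitute back and simplify to reach \eqref{eq:liouville_full_tested}. By Lemma \ref{lem:m_0_gives_m_0} it suffices to prove $\mathfrak{m}=0$ for the first assertion, and this will be the technical core of the argument.

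To establish $\mathfrak{m}\equiv 0$, I would refine the commutator computation of Section \ref{sec:the_inner_part} by allowing two-microlocal symbols $a(x,\xi,\rho)\in S(p)$ that depend genuinely on the blown-up variable $\rho=x'/\eps$, rather than only on $(x,\xi)$. Time-differentiating $\mathrm{tr}\bigl(A^R_{(x'',\xi'')}U^\eps_{(t,x'',\xi'')}\bigr)$ and rewriting the Schrödinger equation at the blown-up scale, where the conical piece $\|x'\|F(x)=\eps\|\rho\|F(\eps\rho,x'')$ of the potential becomes an $\mathcal{O}(\eps)$ perturbation of a smooth symbol, the limit $\eps\to 0$ yields a source-free transport-type equation in $(\rho,\xi')$ satisfied by $\mathfrak{m}$ at the trace level. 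Combining this with Lemma \ref{lem:m_absolutely_continuous}, which grants absolute continuity of $\mathfrak{m}$ with respect to $d\xi'\,d\rho$, together with the positivity of $M$, I would argue that this transport relation forces compactly supported test pairings to escape to infinity in $\rho$ and hence vanish, implying $\mathfrak{m}\equiv 0$ and thus $M=0$ by Lemma \ref{lem:m_0_gives_m_0}.

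For the disintegration of $\nu_\infty$, I would test \eqref{eq:liouville_full_inhomogeneous} against symbols of the form $a(x,\xi)=\phi(\xi')\,b(t,x,\xi'')$ with $\phi\in C_0^\infty(\R^p\setminus\{0\})$. Restricting \eqref{eq:liouville_full_inhomogeneous} to the open set $\{x'\neq 0\}$ recovers the usual Liouville equation with $V$ for $\mu\,1_{\{x'\neq 0\}}$ (as already noted in Remark \ref{rem:remark_in_proof}); subtracting this regular contribution from the pairing leaves only singular terms supported on $\{x'=0\}$. Since the trace term has been eliminated by the previous step and $\mathfrak{m}\equiv 0$, the residual distributional identity becomes a statement about $\phi(\xi')\int\nu_\infty\,d\omega$ alone, in which no other contribution can balance a generic $\phi$ supported away from $\xi'=0$. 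Positivity of $\nu_\infty$ then forces $\nu_\infty(t,x'',\xi,\omega)=0$ whenever $\xi'\neq 0$, yielding the factorisation $\nu_\infty=\delta(\xi')\otimes\nu$ for some positive measure $\nu$ on $\R\times\R^{2(d-p)}\times\mathcal{S}^{p-1}$.

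Finally, substituting $M=0$ and $\nu_\infty=\delta(\xi')\otimes\nu$ back into \eqref{eq:liouville_full_inhomogeneous} kills the trace term, while the sphere contribution \eqref{eq:middle_part}, after integrating by parts in $\xi'$ against the resulting $\delta(\xi')$, becomes a pairing of $\delta(x')\otimes\delta(\xi')\otimes\int F\omega\,\nu\,d\omega$ with $-\partial_{\xi'}a$, which matches the $-F\omega\cdot\partial_{\xi'}a$ entry of \eqref{eq:liouville_full_tested}. By Remark \ref{rem:convenience}, on the support of $\delta(x')\delta(\xi')$ the kinetic operator reduces to $D_f(x)\xi''\cdot\partial_{x''}$, since the $D_g\xi'$ part is annihilated by $\delta(\xi')$; and on $\{x'\neq 0\}$ the identity $\nabla V_S+\nabla(\|x'\|F)=\nabla V$ consolidates the two $\partial_\xi$ contributions into a single $\nabla V\cdot\partial_\xi a$ term. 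Collecting all pieces yields exactly \eqref{eq:liouville_full_tested}. The principal obstacle is the first step: carrying out the two-microlocal commutator analysis rigorously while controlling the interaction between the blown-up kinetic operator and the subleading-but-unbounded-support perturbation $\eps\|\rho\|F$, and then upgrading the resulting source-free transport relation on an absolutely continuous measure into a genuine vanishing statement.
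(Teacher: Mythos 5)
Your proposal and the paper's actual proof diverge fundamentally at the central step, and the gap in your version is real.

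Your plan to show $\mathfrak{m}\equiv 0$ by re-opening the two-microlocal commutator calculation at the blown-up scale, obtaining a "source-free transport equation in $(\rho,\xi')$," and arguing that mass escapes to infinity, is not executed and rests on a shaky premise. You claim the conical piece $\|x'\|F=\eps\|\rho\|F(\eps\rho,x'')$ becomes a negligible $\mathcal{O}(\eps)$ perturbation at the blown-up scale, but this overlooks that the relevant object is $\frac{i}{\eps}[\hat H^\eps,\op_\eps(a)]$: the $\frac{1}{\eps}$ cancels the $\eps$ and the conical contribution at the blown-up scale is $O(1)$, precisely the term $i[\|y\|F(0,x''),a^w]$ that Section \ref{sec:the_inner_part} already isolated as the non-trivial trace contribution. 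If it were genuinely $O(\eps)$, the trace term would have dropped out in Section \ref{sec:the_inner_part} and there would be nothing left to prove. So the mechanism you propose for $\mathfrak{m}=0$ is not merely "the technical core you haven't filled in" — it runs contrary to what the analysis already produced.

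The paper's argument is far more elementary and bypasses any new blow-up PDE entirely. One tests \eqref{eq:liouville_full_inhomogeneous} with the scaled symbols $a_\delta(t,x,\xi)=\delta\,\theta(t)\,a_1(x'/\delta)\,a_2(x'')\,b(\xi)$; the global $\delta$ prefactor kills every term as $\delta\to 0$ except the one where $\partial_{x'}$ hits $a_1(x'/\delta)$ and produces the compensating $1/\delta$. The surviving identity forces $\mu 1\!\!1_{\{x'=0\}}$, and by positivity both $\nu_\infty$ and $\mathfrak{m}$, to be supported on $\{\xi'=0\}$. At that point $\mathfrak{m}=0$ falls out for free from Lemma \ref{lem:m_absolutely_continuous} (a positive measure absolutely continuous with respect to $d\xi'$ cannot live on the Lebesgue-null set $\{\xi'=0\}$), and $\nu_\infty=\delta(\xi')\otimes\nu$ follows because a positive measure supported on $\{\xi'=0\}$ must be a Dirac mass in $\xi'$. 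This is the key structural insight your proposal misses: a \emph{single} scaled-test-function argument handles $\mathfrak{m}$ and $\nu_\infty$ simultaneously, and the role of Lemma \ref{lem:m_absolutely_continuous} is exactly to convert the support constraint into vanishing — not to control some escape-to-infinity phenomenon. Your alternative route for $\nu_\infty$ (testing against $\phi(\xi')b$ with $\phi$ supported away from $\xi'=0$) also presupposes $\mathfrak{m}=0$ and lacks the scaling that makes only one term survive, so it is not obviously closable either. The final back-substitution step in your write-up, by contrast, is fine and matches the paper.
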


\begin{remark}
The matrix $D_f$ above was defined in Remark \ref{rem:convenience} in analogy to $D_g$, also defined there. In that remark we also depicted some characteristics of this matrices that help understand the calculations ahead.
\end{remark}

\begin{proof}
To begin with, re-write equation \eqref{eq:liouville_full_inhomogeneous} as
\begin{eqnarray*}
\left< \mu(t,x,\xi) 1\!\!1_{\left\lbrace x' = 0 \right\rbrace} \, , \, \left( \partial_t + D(x)\xi \centerdot \partial_x - \nabla V_S (x) \centerdot \partial_\xi \right) a(t,x,\xi) \, \right> \hspace{3cm} \nonumber \\
+ \left< \mu(t,x,\xi) 1\!\!1_{\left\lbrace x' \neq 0 \right\rbrace} \, , \, \left( \partial_t + D(x)\xi \centerdot \partial_x - \nabla V (x) \centerdot \partial_\xi \right) a(t,x,\xi) \, \right> \hspace{2cm} \nonumber \\
\hspace{1cm} = - \limsup_{R \longrightarrow \infty} \, {\rm tr} \left< \, \left[ \|y\| F(0,x'') \, , M(t,x'',\xi'') \right] , a^w(t,0,x'',\partial_y,\xi'') \chi\left(\frac{y}{R}\right) \, \right> \nonumber \\
\quad\quad + \left< \delta(x') \otimes \int_{\mathcal{S}^{p-1}} \omega \, \nu_\infty(t,x'',\xi,d\omega) \, , \, F(x) \, \partial_{\xi}a(t,x,\xi) \right>.
\end{eqnarray*}
Now, recall that the term in the trace obeys to estimate \eqref{eq:estimate_for_M} given in Lemma \ref{lem:estimate_for_M}; besides, since $\mu$ is a measure in $\mathcal{D'}(\R\times\R^{2d})$, we have 
\begin{equation}\label{eq:estimation_for_mu}
\left< \, \mu(t,x,\xi) \, , a(t,x,\xi) \, \right> \leqslant \max_{(t,x,\xi) \in \R\times\R^{2d}}\left| a(t,x,\xi) \right| \mu\left({\rm supp}(a)\right),
\end{equation}
where $\mu\left({\rm supp}(a)\right) < \infty$ since $\mu$ is finite and ${\rm supp}(a)$, the support of $a$, is compact. Obviously the very same estimate (with $\mu\left({\rm supp}(a)\right)$ in the right-hand side!) is valid for $\mu(t,x,\xi) 1\!\!1_{\left\lbrace x' \neq 0 \right\rbrace}$ and for $\delta(x')\otimes \int_{\mathcal{S}^{p-1}}\omega \, \nu_\infty(t,x'',\xi,d\omega)$. So, for test functions of the form $a_\delta(t,x,\xi) = \delta \, \theta\left(t\right) a_1 \left(\frac{x'}{\delta}\right) a_2 \left(x''\right) b \left(\xi\right)$, equation \eqref{eq:liouville_full_inhomogeneous} gives (recalling also Remark \ref{rem:convenience}):
{\footnotesize
\begin{eqnarray*}
\left< \mu(t,x,\xi) 1\!\!1_{\left\lbrace x' = \,0 \right\rbrace} \, , D_g(x)\xi' \centerdot \partial_{x'} a_1\left(\frac{x'}{\delta}\right) \theta\left(t\right) a_2 \left(x''\right) b \left(\xi\right) \right> \hspace{5cm} \\  
+ \left< \mu(t,x,\xi) 1\!\!1_{\left\lbrace x' \neq 0 \right\rbrace} \, , \left(D(x)\xi\right)' \centerdot \partial_{x'} a_1\left(\frac{x'}{\delta}\right) \theta\left(t\right) a_2 \left(x''\right) b \left(\xi\right) \right> 
+ \mathcal{O}(\delta) = 0,
\end{eqnarray*}}
that at the limit where $\delta \longrightarrow 0$ results in
$$\partial_{x'} a_1 (0) \centerdot \left< \mu(t,x,\xi) 1\!\!1_{\left\lbrace x' =\, 0 \right\rbrace} \, , D_g(0,x'')\xi' \, \theta(t) \, a_2(x'') \, b(\xi) \right> = 0,$$
which means that $\mu(t,x,\xi) 1\!\!1_{\left\lbrace x' =\, 0 \right\rbrace}$ is supported on $\R\times\left\lbrace \xi' = 0 \right\rbrace\times\R^{2d-p}_{x,\xi''}$, and, by positivity, $\nu_{\infty}$ is necessarily supported on $\R\times\left\lbrace \xi' = 0 \right\rbrace\times\R^{2(d-p)}_{x'',\xi''}\times\mathcal{S}^{p-1}$ and the measure $\mathfrak{m}$ introduced in Remark \ref{rem:definition_m} in $\R\times \left\lbrace \xi' = 0 \right\rbrace\times\R^{2d-p}_{x'',\xi'',\rho}$ .

Regarding $\nu_\infty$, any measure supported on $\xi' = 0$ can only be a Dirac mass thereon\footnote{More generally, a distribution supported on such a set can be developed as $\sum_{n \in \mathbb{N}_0} c_n \delta^{(n)}(\xi')$, where $c_n \in \mathbb{C}$ and $\delta^{(n)}$ is the $n$-th distribution derivative of the Dirac $\delta$. As our distribution must be a positive measure as well, the only allowed term in this development is the one with $n = 0$.}, whence there must be $\nu$ as stated in the Lemma. However, for $\mathfrak{m}$ we already knew that it was absolutely continuous with respect to $d\xi'$ (Lemma \ref{lem:m_absolutely_continuous}), so being zero almost everywhere in $\xi'$ is, in other words, saying that $\mathfrak{m} = 0$. From Lemma \ref{lem:m_0_gives_m_0}, this implies that $M=0$ and, finally, that so is the term in the trace in equation \eqref{eq:liouville_full_inhomogeneous}. 

To conclude, just re-write \eqref{eq:liouville_full_inhomogeneous} attaching all the information we have just got and verify that it simplifies to \eqref{eq:liouville_full_tested}.
\end{proof}

\begin{remark}\label{rem:measure_over_the_singularity}
As a scholium of the last proof, one has that $\mu$ is not supported on the region of the $\xi'$ axis away from the origin, $\left\{ (0,\xi') \in \R^{2p} : \xi' \neq 0 \right\}$. This implies that $\mu(t,x,\xi) 1\!\!1_{\left\lbrace x' \neq 0 \right\rbrace} = \mu(t,x,\xi) 1\!\!1_{\left\lbrace (x',\xi') \neq (0,0) \right\rbrace}$, which is a result that \cite{FGL} had already obtained with a similar argument.
\end{remark}

\begin{lemma}\label{lem:technical}
One has the identity $\nabla_{x'} V(x) \, 1\!\!1_{\left\lbrace x' \neq 0 \right\rbrace} 1\!\!1_{\left\lbrace \xi' = 0 \right\rbrace} \mu(t,x,\xi) = 0$.
\end{lemma}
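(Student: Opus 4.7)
The plan is to exploit equation \eqref{eq:liouville_full_tested} in the regular region $\{x'\neq 0\}$ by testing against symbols that vanish to first order at $\xi'=0$ and concentrate there. Specifically, I will take test functions of the form
$$a_\delta(t,x,\xi) = \theta(t)\,\alpha(x)\,\xi'_i\,\chi\!\left(\tfrac{\xi'}{\delta}\right) b''(\xi''),$$
with $\alpha\in C_0^\infty(\R^d\setminus\{x'=0\})$, $\theta\in C_0^\infty(\R)$, $b''\in C_0^\infty(\R^{d-p})$, and $\chi\in C_0^\infty(\R^p)$ a bump with $\chi(0)=1$. Since $\alpha$ vanishes near $x'=0$, the singular line in \eqref{eq:liouville_full_tested} (the one supported on $\delta(x')\otimes\delta(\xi')\otimes\nu$) drops out, and we are left with
$$\int \bigl(\partial_t a_\delta + D(x)\xi\centerdot\partial_x a_\delta - \nabla V(x)\centerdot\partial_\xi a_\delta\bigr)\,1\!\!1_{\{x'\neq 0\}}\,d\mu = 0,$$
a genuine Liouville identity on which $V$ is $C^\infty$ on the support of $\alpha$.

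Next, I would expand the derivatives and pass to the limit $\delta\to 0$ term by term, using that $\mu$ is a finite Radon measure and that $a_\delta$ is uniformly bounded with support contained in a fixed compact set of $(t,x,\xi'')$ times $\{|\xi'|\leqslant\delta\sup|{\rm supp}\,\chi|\}$. Pointwise, $\chi(\xi'/\delta)\to 1\!\!1_{\{\xi'=0\}}$, so any term still carrying the prefactor $\xi'_i$ vanishes in the limit by dominated convergence (because $\xi'_i\cdot 1\!\!1_{\{\xi'=0\}}\equiv 0$). This kills the contributions of $\partial_t a_\delta$, of $D(x)\xi\centerdot\partial_x a_\delta$, of $\nabla_{x''}V\centerdot\partial_{\xi''}a_\delta$, and of the non-singular part of $\nabla_{x'}V\centerdot\partial_{\xi'}a_\delta$ except for the one coming from differentiating the factor $\xi'_i$ itself, which converges to $-\int\theta\,\alpha\,\partial_{x'_i}V\cdot 1\!\!1_{\{\xi'=0\}}\,b''\,d\mu$.

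The step I expect to be the real obstacle is the nominally singular piece
$$-\int \theta\,\alpha\,(\xi'/\delta)_i\,\nabla_{x'}V\centerdot\nabla\chi(\xi'/\delta)\,b''\,d\mu,$$
which arises from $\partial_{\xi'}\chi(\xi'/\delta)=\delta^{-1}\nabla\chi(\xi'/\delta)$ and formally carries a $1/\delta$. The crux is that the map $y\mapsto y_i\,\nabla\chi(y)$ is smooth and compactly supported on $\R^p$, hence the integrand is uniformly bounded in $\delta$; moreover it converges pointwise to $0$ (at $\xi'=0$ the factor $(\xi'/\delta)_i$ vanishes, while at any $\xi'\neq 0$ one has $\xi'/\delta$ eventually outside ${\rm supp}\,\nabla\chi$). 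Dominated convergence then forces this term to $0$ as well. Inserting this into the identity above yields $\int\theta\,\alpha\,\partial_{x'_i}V\cdot 1\!\!1_{\{\xi'=0\}}\,b''\,d\mu = 0$ for all admissible $\theta,\alpha,b''$, giving $\partial_{x'_i}V\cdot 1\!\!1_{\{x'\neq 0\}}1\!\!1_{\{\xi'=0\}}\,\mu=0$ for every $i=1,\dots,p$, which is the lemma.
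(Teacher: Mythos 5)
Your proposal is correct and essentially reproduces the paper's own proof. The paper tests \eqref{eq:liouville_full_tested} against $a_\delta(t,x,\xi)=\delta\,\theta(t)\,(x')^2\,a(x)\,b_1(\xi'/\delta)\,b_2(\xi)$ and sends $\delta\to0$; your test function $\theta(t)\,\alpha(x)\,\xi'_i\,\chi(\xi'/\delta)\,b''(\xi'')$ is the special case $b_1(y)=y_i\,\chi(y)$ of the paper's $\delta\, b_1(\xi'/\delta)$ factor, so the mechanism (the $\partial_{\xi'}$ derivative eats the $\delta$-scale and localizes onto $\{\xi'=0\}$, everything else vanishes by dominated convergence) is the same. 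The one genuine, if small, divergence is how the $\nu$-line is discarded: the paper keeps $a\in C_0^\infty(\R^d)$ and inserts $(x')^2$, hence must finish by arguing that a signed measure annihilated by $(x')^2$ and already carried by $\{x'\neq0\}$ is null, whereas you take $\alpha\in C_0^\infty(\R^d\setminus\{x'=0\})$, which kills the singular line immediately and gives the conclusion with no extra step; your variant is marginally cleaner but mathematically equivalent.
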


\begin{proof}
Recall estimate \eqref{eq:estimation_for_mu}, which holds for $\mu(t,x,\xi) 1\!\!1_{\left\lbrace x' \neq 0 \right\rbrace}$ and for $\delta(x')\otimes \nu_\infty(t,x'',\xi,\omega)$ as well. Thus, for test functions of the form $a_\delta(t,x,\xi) = \delta \, \theta\left(t\right) \left(x'\right)^2 a \left(x\right) b_1 \left(\frac{\xi'}{\delta}\right) b_2 \left(\xi\right)$ and proceeding in the same manner as in the proof of Lemma \ref{lem:m_is_zero_and_etc}, equation \eqref{eq:liouville_full_tested} gives, in the limit where $\delta \longrightarrow 0$,
$$\partial_{\xi'} b_1(0) \centerdot \left< \, \nabla_{x'}V(x) \, 1\!\!1_{\left\lbrace x' \neq 0 \right\rbrace} 1\!\!1_{\left\lbrace \xi' = 0 \right\rbrace} \mu(t,x,\xi) \, , (x')^2 \, \theta(t) \, a(x) \, b_2(\xi) \, \right> = 0,$$
which means that the distribution $\nabla_{x'}V(x) \, 1\!\!1_{\left\lbrace x' \neq 0 \right\rbrace} 1\!\!1_{\left\lbrace \xi' = 0 \right\rbrace} \mu(t,x,\xi)$ is supported on $\R\times\left\lbrace x' = 0 \right\rbrace\times\R^{2d-p}_{x'',\xi}$. But of course this carries that it is null.
\end{proof}

\begin{lemma}\label{lem:symmetry_condition}
The measure $\nu$ introduced in Lemma \ref{lem:m_is_zero_and_etc} obeys to the following identity in the sense of the distributions on $\R\times\R^{2(d-p)}_{x'',\xi''}$:
$$\int_{\mathcal{S}^{p-1}} \left( \nabla_{x'}V_S(0,x'') + F(0,x'') \, \omega \right) \nu(t,x'',\xi'',d\omega) = 0,$$
where $V_S$ and $F$ are as in \eqref{eq:conical_potential}.
\end{lemma}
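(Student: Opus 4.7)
The plan is to revisit equation \eqref{eq:liouville_full_tested} and test it against a carefully scaled family of functions designed to extract only the $\partial_{\xi'}$ contribution at $(x',\xi')=(0,0)$. Concretely, I would use
$$a_\delta(t,x,\xi) = \theta(t)\,\alpha(x'',\xi'')\,\psi(x')\,\phi_\delta(\xi'), \qquad \phi_\delta(\xi')=\delta\,b(\xi'/\delta),$$
with $\theta\in C_0^\infty(\R)$, $\alpha\in C_0^\infty(\R^{2(d-p)})$, $\psi\in C_0^\infty(\R^p)$ satisfying $\psi(0)=1$, and $b\in C_0^\infty(\R^p)$ subject to the key constraints $b(0)=0$ and $\nabla b(0)=v$, with $v\in\R^p$ arbitrary. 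Note $\phi_\delta(0)=0$ while $\nabla\phi_\delta(0)=\nabla b(0)=v$ uniformly in $\delta$, and $\|\phi_\delta\|_\infty=O(\delta)$.

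First I would compute the $\delta(x')\otimes\delta(\xi')\otimes\nu$ contribution. Since every term of $\bigl(\partial_t+D_f(x)\xi''\cdot\partial_{x''}-\nabla V_S\cdot\partial_\xi-F\omega\cdot\partial_{\xi'}\bigr)a_\delta$ that does not differentiate $\phi_\delta$ carries a factor $\phi_\delta(0)=0$, the only surviving terms come from $\partial_{\xi'}$ hitting $\phi_\delta$, and these are $\delta$-independent. The first integral reduces exactly to
$$-\int \theta(t)\,\alpha(x'',\xi'')\,\bigl(\nabla_{x'}V_S(0,x'')+F(0,x'')\,\omega\bigr)\cdot v\;\nu(dt,dx'',d\xi'',d\omega).$$

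Second, I would show the $\mu\,1\!\!1_{\{x'\ne 0\}}$ piece of \eqref{eq:liouville_full_tested} vanishes in the limit $\delta\to 0$. All terms in which $\phi_\delta$ itself appears (namely $\partial_t a_\delta$, $D(x)\xi\cdot\partial_x a_\delta$ where the derivative falls on $\alpha$ or $\psi$, and $\nabla_{x''}V\cdot\partial_{\xi''}a_\delta$) are uniformly $O(\delta)$ on the compact support of $a_\delta$ and tend to zero by dominated convergence, since $\mu$ is finite on compact sets. The only nontrivial term is
$$-\!\int\! \theta(t)\,\alpha(x'',\xi'')\,\psi(x')\,\nabla_{x'}V(x)\cdot \nabla b(\xi'/\delta)\;d\bigl(\mu\,1\!\!1_{\{x'\ne 0\}}\bigr).$$
Here $|\nabla b(\xi'/\delta)|\le\|\nabla b\|_\infty$ and $\nabla_{x'}V=\nabla_{x'}V_S+(x'/\|x'\|)F+\|x'\|\nabla_{x'}F$ is bounded on $\mathrm{supp}(\psi)\cap\{x'\ne 0\}$, so an integrable dominating function exists. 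Pointwise, $\nabla b(\xi'/\delta)\to \nabla b(0)\,1\!\!1_{\{\xi'=0\}}$ as $\delta\to 0$ (since $b$ has compact support, $\nabla b(\xi'/\delta)=0$ for $\xi'\ne 0$ and $\delta$ small). Dominated convergence then gives the limit
$$-v\cdot\!\int \theta(t)\,\alpha\,\psi(x')\,\nabla_{x'}V(x)\,1\!\!1_{\{\xi'=0\}}\;d\bigl(\mu\,1\!\!1_{\{x'\ne 0\}}\bigr),$$
which is $0$ by Lemma \ref{lem:technical}.

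Combining both, the limit of \eqref{eq:liouville_full_tested} along $a_\delta$ is
$$\int \theta(t)\,\alpha(x'',\xi'')\,\bigl(\nabla_{x'}V_S(0,x'')+F(0,x'')\,\omega\bigr)\cdot v\;\nu(dt,dx'',d\xi'',d\omega)=0.$$
Varying $\theta\in C_0^\infty(\R)$, $\alpha\in C_0^\infty(\R^{2(d-p)})$ and letting $v$ range over a basis of $\R^p$ yields the stated vector-valued identity in $\mathcal{D}'(\R\times\R^{2(d-p)}_{x'',\xi''})$. The main obstacle is the careful handling of the second step: one must ensure that the apparently dangerous factor $\nabla_{x'}V$, which is singular at $\{x'=0\}$, is nevertheless locally bounded away from $\Lambda$ (because $x'/\|x'\|$ is), so that dominated convergence applies and Lemma \ref{lem:technical} can be invoked to kill the remaining integral.
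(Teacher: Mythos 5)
Your proof is correct and follows essentially the same route as the paper: testing \eqref{eq:liouville_full_tested} against $\delta$-scaled functions of the form $\delta\,\theta(t)\,a(x)\,b_1(\xi'/\delta)\,b_2(\xi'')$, using estimate \eqref{eq:estimation_for_mu} to kill the $O(\delta)$ terms, and invoking Lemma \ref{lem:technical} to eliminate the surviving $\nabla_{x'}V\cdot\partial_{\xi'}$ contribution from the $\mu\,1\!\!1_{\{x'\ne 0\}}$ piece. The extra normalizations you impose ($b(0)=0$, $\psi(0)=1$) are harmless and not strictly necessary, but the substance matches the paper's (much terser) argument.
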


\begin{proof}
For test functions of the form $a_\delta(t,x,\xi) = \delta \, \theta\left(t\right) a \left(x\right) b_1 \left(\frac{\xi'}{\delta}\right) b_2 \left(\xi''\right)$, using estimate \eqref{eq:estimation_for_mu} and Lemma \ref{lem:technical}, the present lemma follows from \eqref{eq:liouville_full_tested} at the limit $\delta \longrightarrow 0$.
\end{proof}

To finish establishing a Liouville equation for $\mu$, let us put all Lemmata \ref{lem:m_is_zero_and_etc}, \ref{lem:technical}, \ref{lem:symmetry_condition} and Remark \ref{rem:convenience} together and write equation \eqref{eq:liouville_full_tested} in a distributional and clearer way: 
{\small
\begin{eqnarray*}
\delta(x')\otimes\delta(\xi')\otimes \int_{\mathcal{S}^{p-1}} \left(\partial_t + D_f(x)\xi'' \centerdot \partial_{x''} - \nabla_{x''} V_S(0,x'') \centerdot \partial_{\xi''} \right) \nu (t,x'',\xi'',d\omega) && \\ + \left(\partial_t + D(x)\xi \centerdot \partial_{x} - \nabla V(x) \centerdot \partial_\xi \right) \mu(t,x,\xi) 1\!\!1_{\left\lbrace x' \neq 0 \right\rbrace} & = & 0,
\end{eqnarray*}} 
or, more explicitly, in view of Remark \ref{rem:measure_over_the_singularity}:
\begin{proposition}\label{prop:main_result}
Let be $V(x) = V_S(x) + \| x' \| F(x)$ a conical potential with $x' \in \R^p$, $1 \leqslant p \leqslant d$, $x=(x',x'')$. Let be $\Psi^{\eps}$ the solution to the Schrödinger equation \eqref{eq:schrodinger} with potential $V$. Then, the Wigner measure associated to the concentration of $\Psi^\eps$ in the limit $\eps \longrightarrow 0$ can be decomposed as
$$\mu(t,x,\xi) = 1\!\!1_{\left\lbrace (x',\xi') \neq (0,0) \right\rbrace} \mu(t,x,\xi) + \delta(x')\otimes \delta(\xi') \otimes \int_{\mathcal{S}^{p-1}} \nu(t,x'',\xi'',d\omega),$$
where the measure $\nu$ satisfies the asymmetry condition
$$\int_{\mathcal{S}^{p-1}} \left( \nabla_{x'} V_S(0,x'') + F(0,x'') \, \omega \right) \nu(t,x'',\xi'',d\omega) = 0.$$
Besides, it obeys to the following distributional equation in $\mathcal{D}'(\R\times\R^{2d})$:
\begin{eqnarray*}
\left(\partial_t + (D(x) \xi)'' \centerdot \partial_{x''} - \nabla_{x''} V_S(0,x'') \centerdot \partial_{\xi''} \right) \left( \mu(t,x,\xi) 1\!\!1_{\left\lbrace (x',\xi') = (0,0) \right\rbrace} \right) && \\ + \left(\partial_t + D(x) \xi \centerdot \partial_{x} - \nabla V(x) \centerdot \partial_\xi \right) \left( \mu(t,x,\xi) 1\!\!1_{\left\lbrace (x',\xi') \neq (0,0) \right\rbrace} \right) & = & 0,
\end{eqnarray*}
where $D(x) = \nabla \phi\left(\phi^{-1}(x)\right) ^t \nabla \phi\left(\phi^{-1}(x)\right)$.
\end{proposition}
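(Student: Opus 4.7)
The plan is to package together the structural results on $\mu$ already established in Lemmata \ref{lem:m_is_zero_and_etc}, \ref{lem:technical} and \ref{lem:symmetry_condition} into the three statements of the proposition. All the analytic machinery---the two-microlocal commutator calculus in Sections \ref{sec:the_inner_part}--\ref{sec:the_middle_part}, the upper bound from Lemma \ref{lem:estimate_for_M}, and the vanishing of the operator-valued measure $M$ via Lemmata \ref{lem:m_0_gives_m_0}--\ref{lem:m_absolutely_continuous}---is already in place, so what remains is essentially bookkeeping.

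First I would extract the decomposition. Lemma \ref{lem:two_microlocal_decomposition} writes $\mu$ as the sum of a bulk piece $1\!\!1_{\{x' \neq 0\}}\mu$, a sphere-at-infinity piece coming from $\nu_\infty$, and a piece coming from the operator-valued measure $M$. Lemma \ref{lem:m_is_zero_and_etc} annihilates the $M$-piece and forces $\nu_\infty = \delta(\xi') \otimes \nu$, so the sphere-at-infinity term collapses to $\delta(x') \otimes \delta(\xi') \otimes \int_{\mathcal{S}^{p-1}} \nu(t,x'',\xi'',d\omega)$. Remark \ref{rem:measure_over_the_singularity} then upgrades the bulk indicator $1\!\!1_{\{x' \neq 0\}}$ to $1\!\!1_{\{(x',\xi') \neq (0,0)\}}$, since $\mu$ does not charge the set $\{x'=0,\,\xi' \neq 0\}$; this yields the decomposition formula. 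The asymmetry condition is then Lemma \ref{lem:symmetry_condition}, to be quoted verbatim.

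Next I would derive the announced distributional equation starting from the tested identity \eqref{eq:liouville_full_tested}. The contribution of the bulk piece there already matches the second line of the proposition, the potential being smooth away from $\{x'=0\}$. For the singular piece $\delta(x') \otimes \delta(\xi') \otimes \nu$, the partial derivative $\partial_{\xi'}$ of the test function is multiplied by $-\nabla_{x'} V_S(0,x'') - F(0,x'')\,\omega$; integrating against $\nu(t,x'',\xi'',d\omega)$, this combination vanishes exactly by the asymmetry condition of Lemma \ref{lem:symmetry_condition}, so only $\partial_t + D_f(0,x'')\xi'' \cdot \partial_{x''} - \nabla_{x''} V_S(0,x'') \cdot \partial_{\xi''}$ survives. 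Remark \ref{rem:convenience} then identifies $D_f(0,x'')\xi''$ with $(D(0,x'')\xi)''$ once $\xi' = 0$. Since the identity holds for every $a \in C_0^\infty(\R \times \R^{2d})$, this is the distributional equation of the proposition.

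No step presents a genuine obstacle at this point; the only piece that required real work was the vanishing of the operator-valued measure $M$, which was handled earlier, together with the two-microlocal symbolic calculus underlying \eqref{eq:liouville_full_tested}. The role of the asymmetry condition as the mechanism by which normal derivatives $\partial_{\xi'}$ are eliminated from the transport equation on $T^*\Lambda$ is perhaps the cleanest payoff of the whole construction, and it is what makes the final equation decouple into a pure inside-$\Omega$ flow plus a pure outside-$\Omega$ flow.
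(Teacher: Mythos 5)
Your proof is correct and follows exactly the same route as the paper: Lemma \ref{lem:m_is_zero_and_etc} and Remark \ref{rem:measure_over_the_singularity} give the decomposition, Lemma \ref{lem:symmetry_condition} gives the asymmetry condition (with Lemma \ref{lem:technical} absorbed into its proof), and the asymmetry condition together with Remark \ref{rem:convenience} reduces the tested identity \eqref{eq:liouville_full_tested} to the stated distributional equation. No gaps.
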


\subsection{Continuity in $t$ and absolute continuity with respect to $dt$}{\label{sec:absolute_continuity}}
From equation \eqref{eq:mu_abs_continuous}, it is obvious that $\mu$ is absolutely continuous with respect to the Lebesgue measure $dt$, which implies the existence of a function $\R \ni t \longmapsto \mu_t$ taking values in the set of positive measures on $\R^{2d}$ such that $\mu(t,x,\xi) = \mu_t(x,\xi)dt$. The same holds for the two-microlocal measure $\nu$, since it is positive and absolutely continuous with respect to $\mu$, carrying the existence of a function $t \longmapsto \nu_t$ such that $\nu(t,x'',\xi'',\omega) = \nu_t(x'',\xi'',\omega)dt$.

Nevertheless, the same is not true for continuity. In fact, in \cite{FGL} it was shown that $t \longmapsto \mu_t$ is continuous inside a compact $[-T,T]$ by verifying that, for $a \in C_0^\infty(\R^{2d})$, the commutator $\frac{i}{\eps}\left[ \hat{H}^\eps , \op_\eps(a) \right]$ is uniformly bounded with respect to $\eps$ (what we have indirectly obtained during the calculations in this section), so, from \eqref{eq:d_op}, one sees that the family $t \longmapsto \left< \op_{\eps_k} (a) \Psi^{\eps_k} , \Psi^{\eps_k} \right>$ is equicontinuous in $[-T,T]$ in addition to being equibounded, which implies the continuity of $[-T,T] \ni t \longmapsto \left< \mu_t , a \right>_{\R^{2d}}$ by the Ascoli-Arzelà theorem. 

This is not true for $\nu_t$ in general, as the examples in Theorem \ref{th:main_result_3} show. In both cases there, we have $\int_{\mathcal{S}^0}\nu_t(d\omega) = 0$ for any $t \neq 0$ and $\int_{\mathcal{S}^{0}}\nu_0(d\omega) = 1$, so $\nu_t \neq 0$ if and only if $t = 0$, which is a lack of continuity for $\nu_t$ in spite of $\mu_t$. 

On the other hand, one could use an argument of continuity for $\mu_t$ to see, in the one-dimensional case $V(x) = \left\| x \right\|$, that a family $(\Psi^\eps_t)_{\eps >0}$ whose initial data concentrate to $\mu_0(x,\xi) = \delta(x) \otimes \delta(\xi)$ will remain concentrating to this same point, $\mu_t = \delta(x) \otimes \delta(\xi)$. In this case, condition \eqref{eq:asymmetry_formula} allows a complete description of $\nu$: 
$$\nu(t,\omega) = \frac{1}{2}\left(\delta(\omega -1) + \delta(\omega + 1)\right) \otimes dt.$$

\section{An application of the asymmetry condition and examples}
\subsection{The classical flow and the concentration of $\nu$}\label{sec:asymmetry_condition_in_use}
In last section we presented a trivial application of the asymmetry condition \eqref{eq:asymmetry_formula}. In this section we will apply it to obtain Theorem \ref{th:main_result_4}. Moreover, we will prove Theorems \ref{th:accessory_result_1} and \ref{th:accessory_result_2}. In next section we will give examples of applications of these results.  

So, to start with:
\begin{proof}[Proof of Theorem \ref{th:main_result_4}]
Since $V_S$ and $F \, ^t\nabla g$ are continuous functions all over $\R^d$, there exists a neighbourhood $\Gamma$ of $\sigma$ such that the inequality
$$\left\| F(x) ^t\nabla g(x) \right\|_{\mathcal{L}(N_x\Lambda)} < \left\| \partial_\rho V_S(x) \right\|_{N_x\Lambda}$$
holds strictly for every $x \in \Gamma$. Let be $a \in C_0^\infty(\R\times T^*\Gamma)$, and let us test expression \eqref{eq:asymmetry_formula} against this function:
$$\left< \, \nu(t,\sigma,\zeta,\omega) \, , \, a(t,\sigma,\zeta) \left( \partial_\rho V_S(\sigma) + F(\sigma) ^t\nabla g(\sigma) \omega \right) \, \right>_{\R\times ES\Lambda} = 0.$$
Further, realize that the testing term is always non-zero within the support of $a$:
{\footnotesize
\begin{eqnarray*}
\left\| a(t,\sigma,\zeta) \left( \partial_\rho V_S(\sigma) + F(\sigma) ^t\nabla g(\sigma) \omega \right) \right\|_{N_\sigma \Lambda} & \geqslant & \left| a(t,\sigma,\zeta) \right| \left( \left\| \partial_\rho V_S(\sigma) \right\|_{N_\sigma \Lambda} - \left\| F(\sigma) ^t\nabla g(\sigma) \omega \right\|_{N_\sigma\Lambda} \right) \\
& > & 0;
\end{eqnarray*}}
as a consequence, since $\nu$ is always positive, one must have $\nu = 0$ over the support of $a$, more precisely, over $ES\Gamma$.
\end{proof}

Now, in order to study the classical flow in more details, remark that in the transformed coordinates introduced in Section \ref{sec:reducing}, the equation of motion for the component of $x$ in $\Lambda$, $x'$, reads:
\begin{equation*}
\left\lbrace
\begin{array}{l}
\dot{x}'(t) = D_g(x(t)) \xi'(t) \\
\dot{\xi}'(t) = -\partial_{x'} V_S(x(t)) - \| x'(t) \| \partial_{x'}F(x(t)) - F(x(t)) \frac{x'(t)}{\| x'(t) \|}.
\end{array}\right.
\end{equation*} 

Let us suppose that $(x(t),\xi(t))$ is a trajectory that reaches the phase space singular set $\Omega$ on the point $\sigma \in \Lambda$ at a time $t = 0$, so we have $x'(0) = 0$ and $\xi'(0) = 0$. Calculate:
$$x'(t) = t \int_0^1 \dot{x}'(st) ds \qquad {\rm and} \qquad \dot{x}'(st) = \dot{x}'(0) + t\int_0^s \ddot{x}'(rt)dr,$$
where $\dot{x}'(0) = D_g(x(0)) \, \xi'(0) = 0$ and 
$$\ddot{x}'(t) = \left( \nabla_x D_g(x(t)) \, \xi(t) \right) \xi'(t) + D_g(x(t)) \dot{\xi'}(t).$$
Then, for $t \neq 0$, we can define a $\theta(t) = \frac{2}{t^2}x'(t)$ that reads:
\begin{eqnarray}\label{eq:theta_definition}
\theta(t) = 2\int_0^1 \int_0^s \left( \left(\nabla_x D_g(x(rt)) \, \xi(rt) \right) \xi'(rt) + D_g(x(rt))\right. \nonumber  \hspace{3.5cm} \\
\left. \left( -\partial_{x'} V_S(x(rt)) - \| x'(rt) \| \partial_{x'}F(x(rt)) - F(x(rt)) \frac{x'(rt)}{\| x'(rt) \|} \right) \right) drds,
\end{eqnarray}
where $D_g$ was defined in Remark \ref{rem:convenience}; for the reader's convenience, recall that
$$D_g(x) = \nabla g \left( \phi^{-1}(x) \right)^t \nabla g \left( \phi^{-1}(x) \right)$$
is an invertible matrix for $x = (0,x'')$. 

Also, let be $(t_n)_{n \in \mathbb{N}}$ such that $t_n \longrightarrow 0$. Necessarily the sequence $\frac{x'(t_n)}{\| x'(t_n) \|}$ will have convergent subsequences. If all possible sequences $\frac{x'(t_n)}{\| x'(t_n) \|}$ with $t_n > 0$ tending to $0$ converge to the same limit $\frac{\theta_0^+}{\|\theta_0^+\|}$, we call it the \emph{positive lateral limit} of the function $\frac{x'(t)}{\| x'(t) \|}$ and denote $\frac{\theta_0^+}{\|\theta_0^+\|} = \lim_{t \rightarrow 0^+} \frac{x'(t)}{\| x'(t) \|}$; in the same way, one can talk about \emph{negative lateral limits}. Finally, we define the lateral limits of $\theta(t) = \frac{1}{t^2}x'(t)$ in an analogous manner, when they exist.

\begin{lemma}\label{lem:conditions_on_the_flow}
Fix a vector $\theta_0 \in \R^p$, $\theta_0 \neq 0$. If $\theta_0$ is a lateral limit of $\theta(t)$, then $\frac{\theta_0}{\| \theta_0 \|}$ is a lateral limit of $\frac{x'(t)}{\| x'(t) \|}$, either both positive, or both negative. Conversely, if $\frac{\theta_0}{\| \theta_0 \|}$ is a lateral limit of $\frac{x'(t)}{\| x'(t) \|}$, then there exists $\lambda \geqslant 0$ such that $\lambda \theta_0$ is a lateral limit of $\theta(t)$. 

Besides, $\theta_0$ is a solution to the equation
\begin{equation}\label{eq:condition_simple}
\lambda D_g^{-1}(x(0))\theta_0 = -\partial_{x'} V_S(x(0)) - F(x(0)) \frac{\theta_0}{\| \theta_0 \|};
\end{equation}
geometrically, this equation reads (remembering that $\partial_\rho$ is the derivative normal to $\Lambda$):
\begin{equation}\label{eq:condition_geometric}
\lambda \rho_0 = -\partial_{\rho} V_S(\sigma) - F(\sigma) ^t\nabla g (\sigma) \frac{\nabla g(\sigma) \rho_0}{\| \nabla g(\sigma) \rho_0 \|},
\end{equation}
where $\rho_0$ is a lateral limit of the $N_\sigma \Lambda$ vector function $\frac{1}{t^2}\nabla g(x(t))^{-1} g(x(t))$. 

Finally, if $\| F(\sigma) ^t\nabla g (\sigma) \|_{\mathcal{L}(N_\sigma\Lambda)} < \| \partial_\rho V_S(\sigma) \|_{N_\sigma\Lambda}$ and $\frac{x'(t)}{\|x'(t)\|}$ converges laterally, then any lateral limit $\lambda \theta_0$ of $\theta(t)$ is non-zero and, therefore, satisfies the above equations with $\lambda \neq 0$.
\end{lemma}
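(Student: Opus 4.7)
The strategy is to pass to the limit $t \to 0^\pm$ in the integral representation \eqref{eq:theta_definition} of $\theta(t)$, exploiting the hypothesis on $\frac{x'(t)}{\|x'(t)\|}$. The \emph{forward direction} is immediate: if $\theta_0 \neq 0$ is the positive (resp.~negative) lateral limit of $\theta(t)$, then $x'(t) = \frac{t^2}{2}\theta(t)$ with $t^2 > 0$ gives $\frac{x'(t)}{\|x'(t)\|} = \frac{\theta(t)}{\|\theta(t)\|} \longrightarrow \frac{\theta_0}{\|\theta_0\|}$ from the same side as $t \to 0^+$ (resp.~$t \to 0^-$).

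For the \emph{converse}, assume $\frac{\theta_0}{\|\theta_0\|}$ is, say, the positive lateral limit of $\frac{x'(t)}{\|x'(t)\|}$. For each fixed $r \in (0,1]$, one has $rt \to 0^+$ as $t \to 0^+$, and by continuity of $(x,\xi)$: $\xi'(rt) \to \xi'(0) = 0$ (killing the $(\nabla_x D_g\cdot \xi)\xi'$ term), $\|x'(rt)\| \to 0$ (killing the $\|x'(rt)\|\partial_{x'}F$ term), and $\frac{x'(rt)}{\|x'(rt)\|} \to \frac{\theta_0}{\|\theta_0\|}$. Since $F$ is bounded near $\sigma$ and the unit vector $\frac{x'(rt)}{\|x'(rt)\|}$ has norm $1$, the integrand in \eqref{eq:theta_definition} is uniformly controlled, and dominated convergence (with $2\int_0^1\int_0^s dr\,ds = 1$) yields
$$\lim_{t \to 0^+}\theta(t) = D_g(x(0))\Big(-\partial_{x'}V_S(x(0)) - F(x(0))\tfrac{\theta_0}{\|\theta_0\|}\Big) =: \theta_0^\star.$$
If $\theta_0^\star = 0$, set $\lambda = 0$; otherwise, the forward direction forces $\theta_0^\star/\|\theta_0^\star\| = \theta_0/\|\theta_0\|$, i.e.~$\theta_0^\star = \lambda\theta_0$ with $\lambda := \|\theta_0^\star\|/\|\theta_0\| > 0$. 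Applying $D_g(x(0))^{-1}$ to $\lim \theta(t) = \lambda\theta_0$ then produces equation \eqref{eq:condition_simple}. The geometric form \eqref{eq:condition_geometric} follows from Section \ref{sec:reducing}: writing $\rho_0 := \nabla g(\sigma)^{-1}\theta_0 \in N_\sigma\Lambda$, so that $\nabla g(\sigma)\rho_0 = \theta_0$ and $D_g(\sigma) = \nabla g(\sigma) \, ^t\nabla g(\sigma)$, the substitution translates $\partial_{x'}V_S$ and $F$ into $\partial_\rho V_S$ and $F\,^t\nabla g$ respectively.

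For the last assertion, suppose $\|F(\sigma)\, ^t\nabla g(\sigma)\|_{\mathcal{L}(N_\sigma\Lambda)} < \|\partial_\rho V_S(\sigma)\|_{N_\sigma\Lambda}$ and $\lambda\theta_0 = 0$. Then \eqref{eq:condition_geometric} forces $\partial_\rho V_S(\sigma) = -F(\sigma)\, ^t\nabla g(\sigma)\omega_0$ with $\omega_0 := \frac{\nabla g(\sigma)\rho_0}{\|\nabla g(\sigma)\rho_0\|}$ of unit norm; taking norms yields $\|\partial_\rho V_S(\sigma)\| \leq \|F(\sigma)\, ^t\nabla g(\sigma)\|_{\mathcal{L}(N_\sigma\Lambda)}$, contradicting the hypothesis. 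Hence $\lambda\theta_0 \neq 0$. The \emph{main obstacle} is justifying the interchange of limit and integral in \eqref{eq:theta_definition}: one must check that the implicit content of ``$\frac{x'(t)}{\|x'(t)\|}$ converges laterally'' forces $x'(t) \neq 0$ for small $|t|$ of the correct sign, so that $x'(rt) \neq 0$ for every $r \in (0,1]$ and the pointwise limit of the integrand is well-defined a.e.~in $r$, after which the uniform bound on $F\cdot \frac{x'(rt)}{\|x'(rt)\|}$ near $\sigma$ closes the argument.
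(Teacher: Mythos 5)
Your proposal is correct and follows the same route as the paper's proof: the forward direction via the identity $\frac{x'(t)}{\|x'(t)\|} = \frac{\theta(t)}{\|\theta(t)\|}$, the converse by passing to the limit in the integral representation \eqref{eq:theta_definition}, the geometric form by translating to $\rho_0 = \nabla g(\sigma)^{-1}\theta_0$, and the non-vanishing of $\lambda\theta_0$ via the norm inequality $\|\partial_\rho V_S(\sigma)\| > \|F(\sigma)\,{}^t\nabla g(\sigma)\|$. You are somewhat more explicit than the paper in spelling out the dominated-convergence step and the $r$-pointwise limits, but this is a refinement of detail, not a different argument.
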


\begin{proof}
Observe that $\frac{x'(t)}{\| x'(t) \|} = \frac{\theta(t)}{\| \theta(t) \|}$ for any $t \neq 0$, so if $\theta_0$ is non-zero and a lateral limit of $\theta(t)$, then necessarily $\frac{\theta_0}{\| \theta_0 \|}$ is a lateral limit of $\frac{x'(t)}{\| x'(t) \|}$. The converse comes from the definition of $\theta(t)$ in \eqref{eq:theta_definition}: since at the limit $t \longrightarrow 0$ one has the full limits $\xi'(t) \longrightarrow 0$, $x'(t) \longrightarrow 0$, $V_S(x(t)) \longrightarrow V_S(\sigma)$ and $F(x(t)) \longrightarrow F(\sigma)$, then whenever $\frac{x'(t)}{\| x'(t) \|}$ converges laterally to some limit $\frac{\theta_0}{\| \theta_0 \|}$, $\theta(t)$ also converges laterally to a well-defined vector $\tilde{\theta}$. If it is non-zero, by the previous identity $\frac{x'(t)}{\| x'(t) \|} = \frac{\theta(t)}{\| \theta(t) \|}$ one must have $\frac{\tilde{\theta}}{\| \tilde{\theta} \|} = \frac{\theta_0}{\| \theta_0 \|}$, so $\tilde{\theta} = \lambda \theta_0$ with $\lambda > 0$; if it is zero, then $\tilde{\theta} = \lambda \theta_0$ for $\lambda = 0$, and we get the lemma's first paragraph either way.   

\medskip

Equation \eqref{eq:condition_simple} comes from taking lateral limits in equation \eqref{eq:theta_definition} when $\frac{\theta_0}{\|\theta_0\|}$ is a lateral limit of $\frac{x'(t)}{\|x'(t)\|}$ and $\lambda \theta_0$ of $\theta(t)$. Multiplying its both sides by $^t \nabla g(\sigma)$, one gets
\begin{equation}\label{eq:condition_middle_of_the_way}
\lambda\nabla g(\sigma)^{-1}\theta_0 = -\partial_{\rho} V_S(\sigma) - F(\sigma) ^t\nabla g (\sigma) \frac{\theta_0}{\| \theta_0 \|}.
\end{equation}

Two things remain before completing the proof: recognizing equation \eqref{eq:condition_geometric} from \eqref{eq:condition_middle_of_the_way} and, if $\frac{x'(t)}{\| x'(t) \|}$ has lateral limits, proving that any lateral limit $\theta(t)$ is non-zero under the additional hypothesis that 
$$\| F(\sigma) ^t\nabla g (\sigma) \|_{\mathcal{L}(N_\sigma\Lambda)} < \| \partial_\rho V_S(\sigma) \|_{N_\sigma\Lambda}.$$

The latter is done by remarking that, if $\| F(\sigma) ^t\nabla g(\sigma) \| < \| \partial_\rho V_S(\sigma) \|$, surely
$$\lambda \nabla g(\sigma)^{-1} \theta_0 \neq 0$$
by the same arguments we saw in the proof of Theorem \ref{th:main_result_4}), so $\lambda \neq 0$.

\medskip 

Finally, recall that $x'$ is the coordinate of the variable in the normal bundle $N_\sigma \Lambda$; calling it $\rho$ in the original coordinates, we have $x' = \nabla g(\sigma) \rho$, so in \eqref{eq:condition_middle_of_the_way} we have just a lateral limit of $\frac{2}{t^2}\rho(t) = \frac{2}{t^2}\nabla g(\sigma)^{-1} x'(t)$: $\theta_0 = \nabla g(\sigma) \rho_0$. 
\end{proof}

As we have seen in the lemma above, for any trajectory arriving on $\Omega$ within a well-defined direction $\theta_0$, \emph{i.e.}, such that $\lim_{t \rightarrow 0^-} \frac{x'(t)}{\| x'(t) \|} = \frac{\theta_0}{\|\theta_0\|}$, if
$$\| F(\sigma) ^t\nabla g(\sigma)\|_{\mathcal{L}(N_\sigma\Lambda)} < \| \partial_\rho V_S(\sigma) \|_{N_\sigma\Lambda},$$
then this direction is submitted to satisfy equation \eqref{eq:condition_simple} with $\lambda \neq 0$, which implies, in particular, that if this equation has no non-zero roots, then by absurd no trajectory at all can reach $\Omega$.

Regarding the inverse affirmation, Lemma \ref{lem:conditions_on_the_flow} does not say whether there are actual trajectories approaching $\Omega$ in all possible directions satisfying \eqref{eq:condition_simple}. Below we will verify that indeed any $\theta_0$ satisfying \eqref{eq:condition_simple} is realized as an approaching direction by some Hamiltonian trajectory.

\begin{lemma}\label{lem:part_proof_2}
If $\theta_0^+, \theta_0^- \neq 0$ satisfy \eqref{eq:condition_simple}, then there exists a unique trajectory $(x(t),\xi(t))$ reaching $\Omega$ at $t = 0$ in a point $\sigma \in \Lambda$ such that $\lim_{t \rightarrow 0^\pm} \frac{x'(t)}{\|x'(t)\|} = \frac{\theta_0^\pm}{\| \theta_0^\pm \|}$.
\end{lemma}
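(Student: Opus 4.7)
The approach is to recast Hamilton's equations near $t = 0$ as a Banach fixed-point problem for a rescaled variable, which neutralizes the apparent $1/\|x'\|$ singularity in the vector field. I construct the forward branch $(t > 0$, direction $\theta_0^+)$; the backward branch is obtained symmetrically with $\theta_0^-$, and the two pieces are joined at $t = 0$ by the common terminal value $(x, \xi)(0) = (\sigma, \xi_0)$, where $\xi_0 = (0, \xi_0'')$ with $\xi_0'' \in T^*_\sigma \Lambda$ parametrizing the tangential momentum at $\sigma$.

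Place oneself in the coordinates of Section \ref{sec:reducing}, so that $\sigma = (0, \sigma'')$. Since the trajectory must satisfy $x'(0) = 0 = \xi'(0)$, a double integration of Hamilton's equation for $x'$ forces $x'(t) = \frac{t^2}{2}\eta(t)$ for some continuous vector function $\eta$ on $[0,\tau]$. Take $(\eta, x'', \xi'')$ as the unknown, with initial conditions $x''(0) = \sigma''$ and $\xi''(0) = \xi_0''$. The derivation that produced \eqref{eq:theta_definition} then reformulates as an integral identity
\begin{equation*}
\eta(t) \,=\, -D_g(\sigma)\, \partial_{x'} V_S(\sigma) - D_g(\sigma)\, F(\sigma)\, \frac{\eta(t)}{\|\eta(t)\|} + \mathcal{R}_\tau(\eta, x'', \xi'')(t),
\end{equation*}
coupled with the usual integral form of Hamilton's equations for $(x'', \xi'')$, where $\mathcal{R}_\tau$ collects contributions that vanish uniformly on $[0,\tau]$ as $\tau \to 0^+$. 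Multiplying \eqref{eq:condition_simple} by $D_g(\sigma)$ and writing $\eta_0 := \lambda^+ \theta_0^+$ (with $\lambda^+ > 0$ the constant determined by \eqref{eq:condition_simple}), the constant profile $(\eta_0, \sigma'', \xi_0'')$ exactly solves this identity in the limit $\tau \to 0^+$ and serves as the centre of the contraction.

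Apply Banach's fixed-point theorem to the induced nonlinear operator on a small closed ball around $(\eta_0, \sigma'', \xi_0'')$ in $C([0, \tau]; \R^{p + 2(d-p)})$. The required Lipschitz bounds follow from the smoothness of $V_S$, $F$, $g$ and $f$, together with the crucial fact that the map $\eta \mapsto \eta/\|\eta\|$ is smooth in a neighbourhood of $\eta_0 \ne 0$; all remainder contributions carry a positive power of $\tau$, so the operator becomes contractive for $\tau$ small. Bootstrapping from the integral equation upgrades the fixed point to $C^2$ on $(0, \tau]$, and setting $x'(t) := \frac{t^2}{2}\eta(t)$, $\xi'(t) := D_g^{-1}(x(t))\dot{x}'(t)$ yields a genuine Hamiltonian trajectory with
\begin{equation*}
\lim_{t \to 0^+} \frac{x'(t)}{\|x'(t)\|} \,=\, \frac{\eta_0}{\|\eta_0\|} \,=\, \frac{\theta_0^+}{\|\theta_0^+\|}.
\end{equation*}
Uniqueness is built in: any other candidate trajectory with the prescribed asymptotics produces, via $\eta = (2/t^2)x'$, a fixed point of the same contraction. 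Repeating the argument on $[-\tau, 0]$ with $\theta_0^-$ closes the proof.

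The main obstacle is the singularity at $x' = 0$ of the Hamilton vector field (through $F(x)\, x'/\|x'\|$), which rules out a direct Picard--Lindelöf argument and is precisely the mechanism behind the generic non-uniqueness of the classical flow on $\Omega$. The rescaling $\eta = (2/t^2)x'$ resolves it: along the trajectory one has $x'/\|x'\| = \eta/\|\eta\|$, and $\eta$ remains near the nonzero $\eta_0$, where that map is smooth and the contraction estimates become available.
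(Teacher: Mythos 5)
Your argument follows the same route as the paper's proof: both recast the problem as a Banach fixed-point problem for a continuous triple containing the rescaled direction vector $\eta(t) = 2x'(t)/t^2$ (denoted $\vartheta$ in the paper, where it carries an extra scaling factor $\lambda$), both exploit the fact that $\eta/\|\eta\|$ is smooth near the nonzero centre $\eta_0$ to obtain contraction estimates on a small time interval, and both glue the two half-trajectories at $t=0$ when $\theta_0^+\neq\theta_0^-$ via the common data $(\sigma,\xi_0)$. The only presentational differences are that you handle the two half-intervals separately from the outset (the paper first treats $\theta_0^+=\theta_0^-$ and then splits) and that you make explicit that the centre of the ball is $\lambda^+\theta_0^+$ with $\lambda^+$ being the constant determined by \eqref{eq:condition_simple}; the paper introduces a separate free scaling parameter $\lambda$ and is a bit less explicit about how the constant profile solves the limiting equation. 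Your version is, if anything, slightly cleaner on that last point; no genuine gap.
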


\begin{proof}
First, let us choose $\theta_0^+ = \theta_0^- = \theta_0$. For $x_0$ and $\xi_0$ such that $x_0' = \xi_0' = 0$ and so that $x_0$ is the coordinate of $\sigma$, take $\tau > 0$, $\lambda > 0$ and $0 < \delta < \| \theta_0 \|$ sufficiently small so as the set
{\small
\begin{eqnarray*}
\mathcal{B}^{[-\tau,\tau]} = \left\lbrace (x,\xi,\vartheta) \in \left(C^0 \left([-\tau,\tau]\right)\right)^3 : (x,\xi,\vartheta)(0) = \left(x_0,\xi_0,\lambda \theta_0\right) \; {\rm and} \hspace{3cm} \right. \\
\left. \sup_{t \in [-\tau,\tau]} \left( \| x(t) - x_0 \| + \| \xi(t) - \xi_0 \| + \| \vartheta(t) - \lambda \theta_0 \| \right) \leqslant \delta \right\rbrace
\end{eqnarray*}}
fits in a proper definition of the application $\mathfrak{F} : \mathcal{B}^{[-\tau,\tau]} \longrightarrow \mathcal{B}^{[-\tau,\tau]}$ given by: 
{\small
$$\mathfrak{F}_x (x,\xi,\vartheta)(t) = x_0 + \int_0^t D(x(s)) \, \xi(s)ds,$$}
{\small
$$\mathfrak{F}_\xi (x,\xi,\vartheta)(t) = \xi_0 + \int_0^t \left( -\nabla V_S(x(s)) - \| x'(s) \| \nabla F(x(s)) - F(x(s)) \frac{(\vartheta(s),\mathbf{0}_{\R^{d-p}})}{\| \vartheta(s) \|} \right)ds$$}
and last
{\small
\begin{eqnarray*}
\mathfrak{F}_\vartheta(x,\xi,\vartheta)(t) = 2 \lambda \int_0^1 \int_0^s \left( \left( \nabla_x D_g(x(rt)) \, \xi(rt) \right) \xi'(rt) + D_g(x(rt))\right. \nonumber \hspace{3cm} \\
\left. \left( -\partial_{x'} V_S(x(rt)) - \| x'(rt) \| \partial_{x'}F(x(rt)) - F(x(rt)) \frac{\vartheta(rt)}{\| \vartheta(rt) \|} \right) \right) drds.
\end{eqnarray*}}
Observe that $\mathcal{B}^{[-\tau,\tau]}$ is a set which is complete with respect to its natural supremum norm employed in its definition. Observe further that, by taking $\lambda$ and $\tau$ as small as necessary, $\mathfrak{F}$ becomes a contraction on $\mathcal{B}^{[-\tau,\tau]}$ equipped with its topology, \emph{i.e.}, there exists $0 < K < 1$ such that
{
\begin{eqnarray*}
\sup_{t \in [-\tau,\tau]} \left( \| \mathfrak{F}_x(x,\xi,\vartheta)(t) - x_0 \| + \| \mathfrak{F}_\xi(x,\xi,\vartheta)(t) - \xi_0 \| + \| \mathfrak{F}_{\vartheta}(x,\xi,\vartheta)(t) - \lambda \theta_0 \| \right) \\ \leqslant K \sup_{t \in [-\tau,\tau]} \left( \| x(t) - x_0 \| + \| \xi(t) - \xi_0 \| + \| \vartheta(t) - \lambda \theta_0 \| \right).
\end{eqnarray*}}

Consequently, by Banach's fixed point theorem, there exists a unique triple $\left(x,\xi,\vartheta\right) \in \mathcal{B}^{[-\tau,\tau]}$ such that $\left(x,\xi,\vartheta\right) = \mathfrak{F}\left(x,\xi,\vartheta\right)$; this is equivalent to saying that the system
{\small
\begin{equation*}
\left\lbrace
\begin{array}{l}
\dot{x}(t) = D(x(t))\xi(t) \\
\dot{\xi}(t) = -\nabla V_S(x(t)) - \| x'(t) \| \nabla F(x(t)) - F(x(t)) \frac{(\vartheta(t),\mathbf{0}_{\R^{d-p}})}{\| \vartheta(t) \|}\\
\frac{d}{dt}\left(\frac{t^2}{2} \vartheta(t)\right) = \lambda \dot{x}'(t)
\end{array}\right.
\end{equation*}}
with initial data $\left(x_0,\xi_0,\lambda \theta_0\right)$ admits a unique solution, which must be such that $\frac{\vartheta(t)}{\| \vartheta(t) \|} = \frac{x'(t)}{x'(t)}$ $\forall t \neq 0$. Therefore, $(x(t),\xi(t))$ must be a trajectory of our conical problem with the properties listed in the lemma for $\theta_0 = \theta_0^+ = \theta_0^-$.

For $\theta_0^+ \neq \theta_0^-$, define the sets $\mathcal{B}^{[0,\tau]}$ and $\mathcal{B}^{[-\tau,0]}$ and proceed as above in order to show the existence for $\pm t \in [0,\tau]$ of trajectories $(x_{\pm}(t),\xi_{\pm}(t))$ such that $\lim_{t \longrightarrow 0^\pm} \frac{x'_\pm(t)}{\|x'_\pm(t)\|} = \frac{\theta_0^\pm}{\|\theta_0^\pm\|}$. Besides, as $\lim_{t \longrightarrow 0^\pm} x_{\pm}(t) = x_0$ and $\lim_{t \longrightarrow 0^\pm} \xi_\pm (t) = \xi_0$, we can build a continuous path $(x(t),\xi(t))$ by setting $x(t) = x_\pm(t)$ and $\xi(t) = \xi_\pm(t)$ for $\pm t \geqslant 0$, and this new trajectory will be Hamiltonian and the unique one to meet the properties stated in the lemma.
\end{proof}
Bringing together Lemmata \ref{lem:conditions_on_the_flow} and \ref{lem:part_proof_2}, one proves Theorem \ref{th:accessory_result_1}. In order to obtain Theorem \ref{th:accessory_result_2}, we observe the following facts:

\medskip

\begin{enumerate}
\item The hypothesis $\| F(\sigma) ^t\nabla g(\sigma) \| < \| \partial_\rho V_S(\sigma) \|$ played no role in the demonstration of Lemma \ref{lem:part_proof_2}, nor in the proof of Lemma \ref{lem:conditions_on_the_flow}, where we could have $\theta_0 \neq 0$ and $\tilde{\theta} = 0$. In any case, if some trajectory is to arrive onto $\Omega$ in $\sigma$ with some well-defined direction, then either \eqref{eq:aiai} or \eqref{eq:zero_roots} must be satisfied.
\item If $\theta_0$ is a ``zero root'' of \eqref{eq:condition_simple} (otherwise said, $\tilde{\theta} = \lambda \theta_0$ with $\lambda = 0$), then we will see in Example \ref{ex:equal_no} that it may happen that no trajectory following the direction $\theta_0$ ever touches $\Omega$ in $\sigma$. On the other hand, in Example \ref{ex:equal_yes} we will see a case where there is a trajectory approaching the singularity, but only asymptotically.
\end{enumerate}

\medskip

The proof of Theorem \ref{th:accessory_result_2} will be complete after showing that any such trajectories only reach $\Omega$ after an infinite time. Letting be $(x(t),\xi(t))$ a trajectory that is not on $\Omega$ at $t = 0$,  define
$$\Upsilon = \left\lbrace t \in \R : \forall s \in [0,t], (x(s),\xi(s)) \notin \Omega \right\rbrace$$
and $\Gamma = \left\lbrace (x(t),\xi(t)) : t \in \Upsilon \right\rbrace$. 

\begin{lemma}
If $\theta_0 \neq 0$ obeys to \eqref{eq:condition_simple} with $\lambda = 0$ and we suppose that there is a trajectory $(x(t),\xi(t))$ such that $(x(0),\xi(0)) \notin \Omega$, that for any $\eps > 0$ there is $t_\eps > 0$ for which $\| (x'(t_\eps),\xi'(t_\eps)) \| < \eps$ and such that $\lim_{\eps \rightarrow 0} \frac{1}{\| x'(t_\eps) \|}x'(t_\eps) = \frac{\theta_0}{\|\theta_0\|}$, then $\sup \Upsilon = \infty$.
\end{lemma}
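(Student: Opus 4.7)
The plan is to argue by contradiction. Assume $T := \sup\Upsilon < \infty$. By continuity of the Hamiltonian flow on $[0,T]$ and the definition of $\Upsilon$, one has $(x(T),\xi(T)) \in \Omega$, so $x'(T) = \xi'(T) = 0$ and $\sigma := x(T) \in \Lambda$. Since $\|(x'(t_\eps),\xi'(t_\eps))\| \to 0$, continuity of the trajectory on the compact interval $[0,T]$ forces every cluster point of $(t_\eps)$ to be a hitting time of $\Omega$, hence up to extraction we may assume $t_\eps \nearrow T$.

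First I would apply Lemma~\ref{lem:conditions_on_the_flow} with the origin of time shifted to $T$. Along $(t_\eps)$, the function $\theta(t) := \frac{2}{(t-T)^2}x'(t)$ converges to a vector $\tilde\theta = \lambda\theta_0$ with $\lambda \geq 0$, and the pair $(\lambda,\theta_0)$ satisfies equation \eqref{eq:condition_simple} at $\sigma$. Since $\theta_0$ is by hypothesis a zero root, the right-hand side of \eqref{eq:condition_simple} vanishes at $\theta_0/\|\theta_0\|$, which forces $\lambda D_g^{-1}(\sigma)\theta_0 = 0$; the invertibility of $D_g^{-1}(\sigma)$ on its relevant subspace (Remark~\ref{rem:convenience}) combined with $\theta_0 \neq 0$ then yields $\lambda = 0$, i.e.\ $\|x'(t_\eps)\| = o((T-t_\eps)^2)$.

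Second, setting $u(\tau) := \|x'(T-\tau)\| + \|\xi'(T-\tau)\|$ (so $u(0) = 0$) and using the Hamilton equations together with the zero-root identity $\partial_{x'}V_S(\sigma) + F(\sigma)\theta_0/\|\theta_0\| = 0$, I would derive a Grönwall-type integral inequality
\[
u(\tau) \;\leq\; C\int_0^\tau u(s)\,ds + C\int_0^\tau \left( s + \left\| \omega(T-s) - \frac{\theta_0}{\|\theta_0\|} \right\| \right) ds,
\]
where $\omega(t) := x'(t)/\|x'(t)\|$ and the extra $s$-term comes from Taylor-expanding $\partial_{x'}V_S$ and $F$ around $\sigma$.

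The hard part will be showing that $\omega(T-s) \to \theta_0/\|\theta_0\|$ uniformly as $s \to 0^+$, since the hypothesis only guarantees convergence along the sequence $(t_\eps)$. My plan is to bootstrap: every cluster direction of $\omega(t)$ as $t \to T^-$ must, by Lemma~\ref{lem:conditions_on_the_flow} applied to a suitable subsequence, itself satisfy \eqref{eq:condition_simple}; the decay $\|x'(t_\eps)\| = o((T-t_\eps)^2)$ together with the local geometry at $\sigma$ then force every such cluster direction to coincide with $\theta_0/\|\theta_0\|$, yielding the full limit. Once the integrand in the Grönwall bound is uniformly small, a standard iteration gives $u \equiv 0$ on $[0,\delta]$ for some $\delta > 0$; the trajectory would then lie on $\Omega$ throughout $[T-\delta,T]$, contradicting the definition of $T = \sup\Upsilon$.
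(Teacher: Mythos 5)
Your route is genuinely different from the paper's, and unfortunately it has two gaps that I don't see how to close. The paper's proof is much shorter and sidesteps both problems: observing that $\partial_{x'}V(x(t_\eps))\to 0$ (a direct consequence of the zero-root identity $\partial_{x'}V_S(\sigma)+F(\sigma)\theta_0/\|\theta_0\|=0$ together with $x'(t_\eps)\to 0$), it constructs a \emph{smooth} potential $\tilde V$ agreeing with $V$ on the closure of $\Gamma$ and satisfying $\partial_{x'}\tilde V(\sigma)=0$. The original trajectory is then a trajectory of the smooth Hamiltonian system for $\tilde V$, and $(x',\xi')=(0,0)$ is a rest point of that system near $\sigma$; reaching a rest point of a smooth ODE in finite time contradicts Picard--Lindelöf uniqueness, which gives $\sup\Upsilon=\infty$ immediately. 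No Gr\"onwall, no control of $\omega(t)$ is needed.

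The first gap in your proposal is the ``bootstrap'' step. You want to upgrade convergence of $\omega(t_\eps)=x'(t_\eps)/\|x'(t_\eps)\|$ \emph{along the sequence} to a full lateral limit $\omega(T-s)\to\theta_0/\|\theta_0\|$. Lemma~\ref{lem:conditions_on_the_flow} tells you that any sub\-sequential limit direction must satisfy \eqref{eq:condition_simple} with some $\lambda\ge 0$, but \eqref{eq:condition_simple} (and \eqref{eq:zero_roots}) may admit several solutions -- the paper's Example~\ref{ex:last} shows a continuum of roots -- so there is nothing preventing $\omega(t)$ from oscillating between distinct admissible directions as $t\to T^-$. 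The decay $\|x'(t_\eps)\|=o((T-t_\eps)^2)$ constrains the magnitude, not the direction, so ``local geometry'' alone does not force uniqueness of the cluster direction. The second gap is that even \emph{granting} uniform convergence of $\omega$, the Gr\"onwall inequality you write, $u(\tau)\le C\int_0^\tau u(s)\,ds + C\int_0^\tau\bigl(s+\|\omega(T-s)-\theta_0/\|\theta_0\|\|\bigr)ds$, has a non-trivial forcing: the term $\int_0^\tau s\,ds=\tau^2/2$ alone (coming from $\|x''(T-s)-\sigma''\|=O(s)$ in your Taylor remainder) already makes the right-hand side strictly positive for $\tau>0$, so the standard iteration yields only $u(\tau)=O(\tau^2)$, \emph{not} $u\equiv 0$. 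The reason the paper's argument does give $u\equiv 0$ is precisely that, after passing to the smooth extension $\tilde V$ with $\partial_{x'}\tilde V(0,\cdot)=0$ near $\sigma''$, the right-hand side of the $\xi'$-equation becomes linear in $(x',\xi')$ with no forcing, and then Gr\"onwall (equivalently, uniqueness of the constant solution) applies cleanly. Without that change of potential the $O(s)$ forcing is unavoidable, and your contradiction does not close.
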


\begin{proof}
From the hypotheses $\theta_0 \neq 0$ and $\lambda = 0$ in \eqref{eq:condition_simple}, it follows that $\lim_{\eps \rightarrow 0} \partial_{x'} V(x(t_\eps)) = 0$; as a consequence, it becomes possible to find a smooth extension $\tilde{V}$ of $V$ outside the closure of $\Gamma$ such that $\partial_{x'} \tilde{V}(\sigma) = 0$ and that $\tilde{V}(x(t)) = V(x(t))$ for any $t \in \Upsilon$. 

Then $(x'(t),\xi'(t))$ is at the same time a Hamiltonian trajectory of a conical potential and of a standard problem with smooth potential, for which case it is widely known that no trajectory can arrive at an extremum of $\tilde{V}$ with null speed $\xi'$ within a finite time (since this would break the unicity of the constant solution $(x'(t),\xi'(t)) = (0,0)$).
\end{proof}
\clearpage
\subsection{Examples of classification for the semiclassical transport}\label{sec:examples}
In this section we will give examples of how Theorems \ref{th:main_result_4}, \ref{th:accessory_result_1} and \ref{th:accessory_result_2} can be used in order to classify the trajectories that arrive on a conical singularity and, sometimes, to completely describe the transport phenomenon to which the semiclassical measures are submitted. In particular, Examples \ref{ex:equal_no} and \ref{ex:equal_yes} are part of the reasoning that led to obtaining the second assertive in Theorem \ref{th:accessory_result_2}.

To begin with, let us consider $d = 1$ and $g(x) = x$, so $\nabla g(x) = 1$.

\begin{example}[$| \nabla V_S(\sigma) | < | F(\sigma) |$ with no roots]\label{ex:1} Take $V_S(x) = \frac{1}{2}x$ and $F(x) = 1$.

\begin{figure}[htpb!]\center\footnotesize
\begin{multicols}{2}
\includegraphics[width=0.45\textwidth]{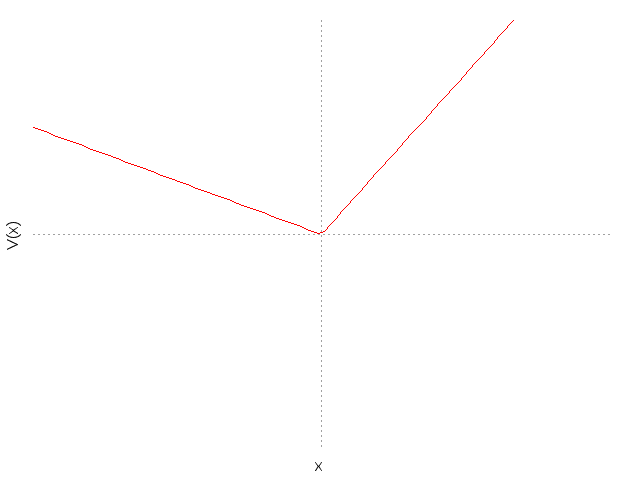} \text{$V(x) = \frac{1}{2}x + |x|$} \\
\includegraphics[width=0.45\textwidth]{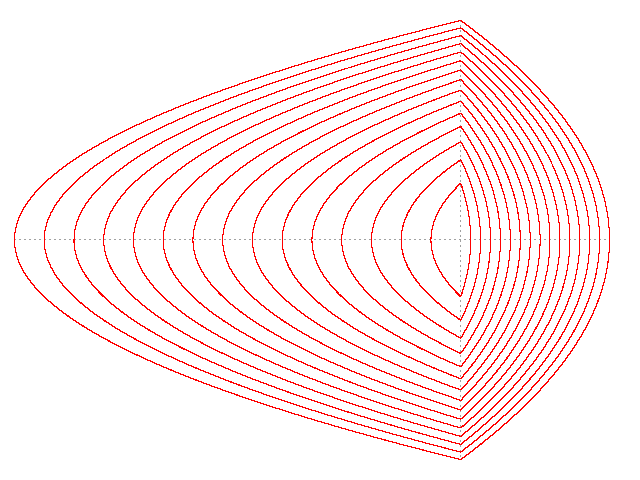} \text{$\Phi$, example \ref{ex:1}} \\
\end{multicols}
\end{figure}

Then \eqref{eq:aiai} admits no non-zero solutions, which is consistent with the fact that the classical flow $\Phi$ presents no trajectories hitting the singularity.

In this case, the asymmetry condition \eqref{eq:asymmetry_formula} gives that
$$\nu(t,\omega) = p \left( \frac{1}{4} \delta(\omega -1) + \frac{3}{4} \delta(\omega + 1) \right) \otimes dt,$$
with $0 \leqslant p \leqslant 1$ the total mass over the singularity and does not depend on $t$. The semiclassical transport here is thus completely solvable starting from some initial measure.
\end{example} 

\begin{example}[$| \nabla V_S(\sigma) | < | F(\sigma) |$ with non-zero roots]\label{ex:2} Consider $V_S(x) = \frac{1}{2}x$ and take $F(x) = -1$.

\begin{figure}[htpb!]\center\footnotesize
\begin{multicols}{2}
\includegraphics[width=0.45\textwidth]{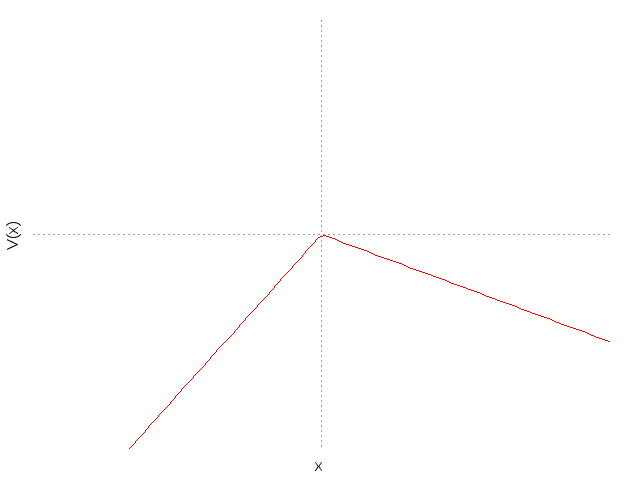} \text{$V(x) = \frac{1}{2}x - |x|$} \\
\includegraphics[width=0.45\textwidth]{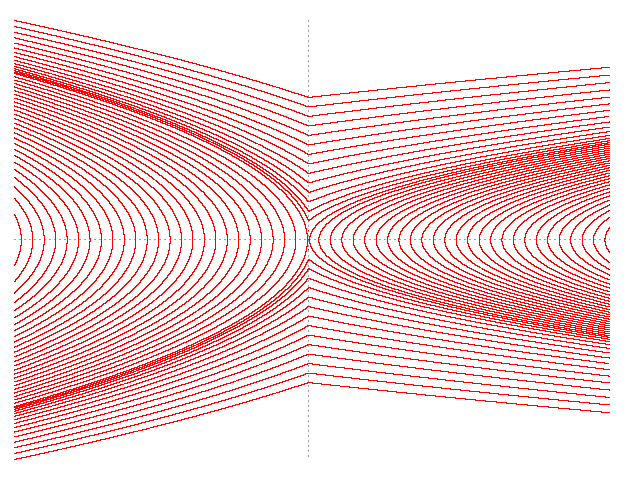} \text{$\Phi$, example \ref{ex:2}} \\
\end{multicols}
\end{figure}

Then \eqref{eq:aiai} admits two solutions: $\rho_0 = \frac{1}{2}$ and $\rho_0 = -\frac{3}{2}$, which is consistent with the fact that the classical flow $\Phi$ does present trajectories hitting the singularity from both directions $x > 0$ and $x < 0$.

In this case, we will have
$$\nu(t,\omega) = p(t) \left( \frac{3}{4} \delta(\omega -1) + \frac{1}{4} \delta(\omega + 1) \right) \otimes dt,$$
with $0 \leqslant p(t) \leqslant 1$ the total mass over the singularity, which may vary with $t$.
\end{example} 

\begin{example}[$| \nabla V_S(\sigma) | = | F(\sigma) |$, with zero-roots without trajectories]\label{ex:equal_no} Take $V_S(x) = x$ and $F(x) = 1$. 

\begin{figure}[htpb!]\center\footnotesize
\begin{multicols}{2}
\includegraphics[width=0.45\textwidth]{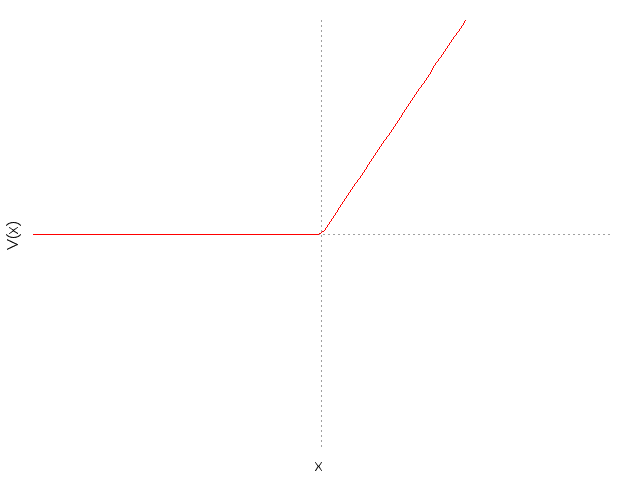} \text{$V(x) = x + |x|$} \\
\includegraphics[width=0.45\textwidth]{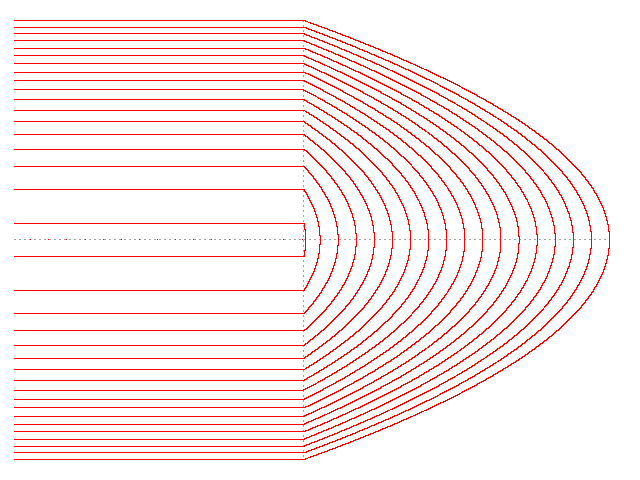} \text{$\Phi$, example \ref{ex:equal_no}} \\
\end{multicols}
\end{figure}

Then equation \eqref{eq:aiai} has no non-zero roots, but equation \eqref{eq:zero_roots} admits any solution $\rho_0 < 0$. In this case, there are no trajectories hitting the singularity from $x< 0$. The semiclassical measure in sphere will be
$$\nu(t,\omega) = p \, \delta(\omega + 1) \otimes dt,$$
where the total mass over the singularity $0 \leqslant p \leqslant 1$ is constant. Again, this is a completely solvable case.
\end{example}

\begin{example}[$| \nabla V_S(\sigma) | = | F(\sigma) |$ with non-zero and zero roots with trajectories]\label{ex:equal_yes} Pick up $V_S(x) = x$ and $F(x) = -(1+x)$.

\begin{figure}[htpb!]\center\footnotesize
\begin{multicols}{2}
\includegraphics[width=0.45\textwidth]{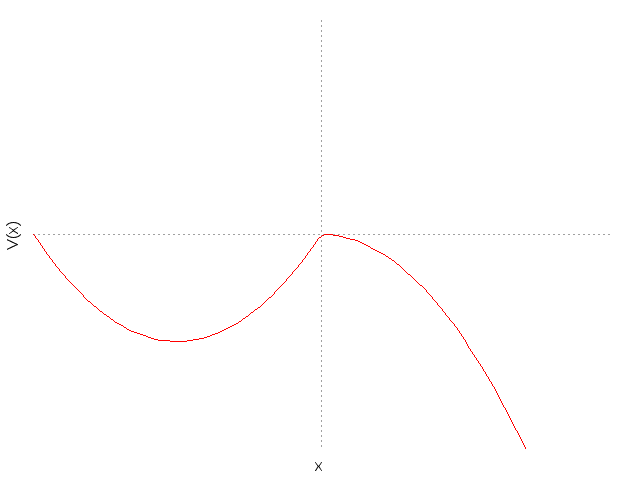} \text{$V(x) = x - (1+x)|x|$} \\
\includegraphics[width=0.45\textwidth]{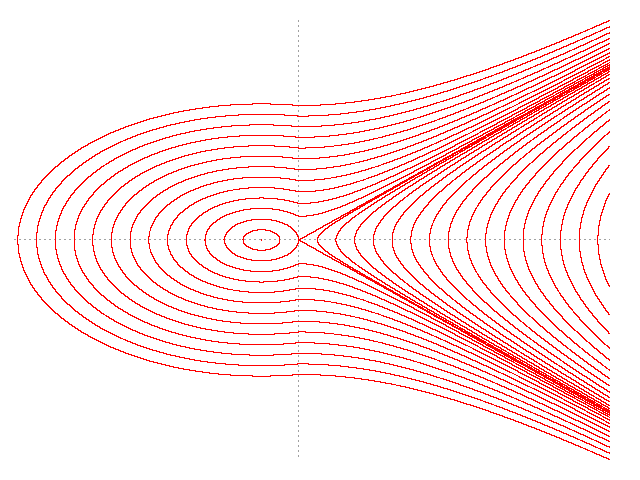} \text{$\Phi$, example \ref{ex:equal_yes}} \\
\end{multicols}
\end{figure}

Then equation \eqref{eq:aiai} admits one solution: $\rho_0 = -2$, and equation \eqref{eq:zero_roots} also admits solutions: any $\rho_0 > 0$. This is consistent with the fact that the classical flow $\Phi$ presents a trajectory hitting the singularity from the direction $x < 0$, and in this case also from the direction $x > 0$. However, the trajectory from the positive side takes an infinitely long time to get close to the singularity.

One has:
$$\nu(t,\omega) = p(t) \, \delta(\omega + 1) \otimes dt,$$
the total mass on the singularity possibly changing with time.
\end{example}

\begin{example}[$| \nabla V_S(\sigma) | > | F(\sigma) |$ with a unique trajectory]\label{ex:5} Take $V_S(x) = 2x$ and choose $F(x) = 1$.

\begin{figure}[htpb!]\center\footnotesize
\begin{multicols}{2}
\includegraphics[width=0.45\textwidth]{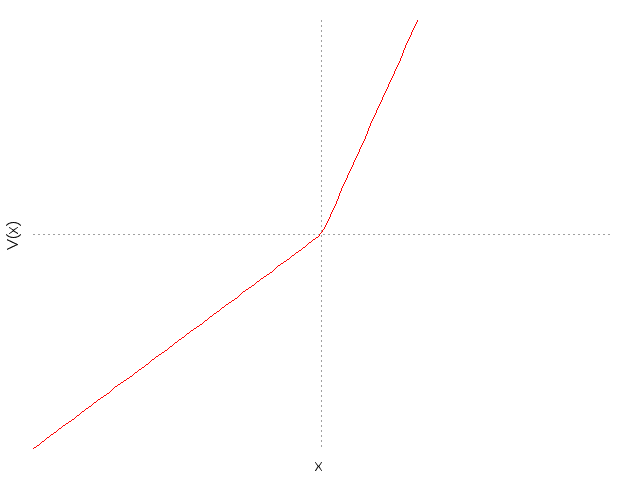} \text{$V(x) = 2x + |x|$} \\
\includegraphics[width=0.45\textwidth]{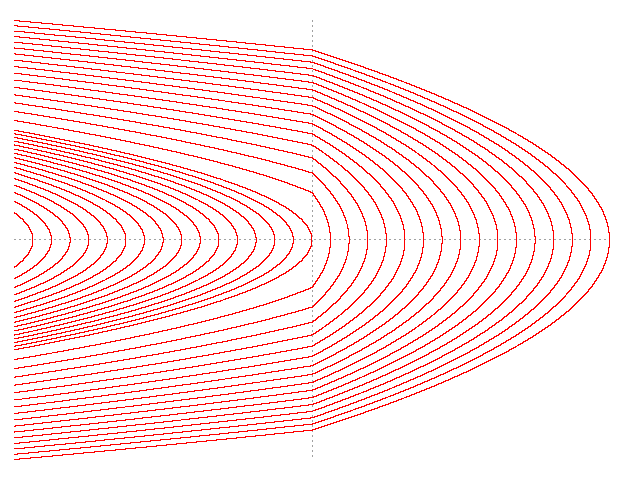} \text{$\Phi$, example \ref{ex:5}} \\
\end{multicols}
\end{figure}

Then equation\eqref{eq:aiai} admits a unique solution: $\rho_0 = -3$, which is consistent with the fact that the classical flow $\Phi$ only presents a trajectory hitting the singularity from the direction $x < 0$.

In this case, since $| \nabla V_S(0) | > | F(0) |$, we will have $\nu = 0$, so the semiclassical measure will necessarily follow the exterior flow with the singularity being part of the parabola passing through the origin from $x < 0$. The problem is hence completely solvable, even though there are trajectories leading to the singular set.
\end{example}

Now let be $p = d =3$ and denote $x=(x_1,x_2,x_3)$.

\begin{example}[$\| \nabla V_S(\sigma) \| > \| F(\sigma) ^t \nabla g(\sigma) \|$ with no trajectories]
Choose an exterior potential $V_S(x) = -2x_1$, $F(x) = -1$ and $g(x) = \left( \frac{1}{2}x_1,x_2,x_3 \right)$. Then 
$$\| \nabla V_S (0) \| = 2 > 1 = \| F(0) ^t \nabla g(0) \|$$
and no $\rho_0$ satisfies equation \eqref{eq:aiai}. As a conclusion, no trajectory in this case can hit the singularity at $x = 0$.
\end{example}

\begin{example}[$\| \nabla V_S(\sigma) \| > \| F(\sigma) ^t\nabla g(\sigma) \| $ with many trajectories]\label{ex:last}
Last, we will take the same $V_S(x) = -2x_1$, $F(x) = -1$, but $g(x) = \left( \frac{1}{3}x_1,x_2,x_3 \right)$. Then 
$$\| \nabla V_S (0) \| = 2 > 1 = \| F(0) ^t \nabla g(0) \|,$$
but now equation \eqref{eq:aiai} admits any solution $\rho_0 = \left( \frac{9}{4}, \rho_2,\rho_3 \right)$ with $\rho_2^2 + \rho_3^2 = \frac{7}{16}$. This is a case where the exterior force polarizes the flow in its direction, but leaves it free to spin around a circle of radius $\frac{\sqrt{7}}{4}$ in the orthogonal plane.
\end{example}

\section*{\small Acknowledgements}
\small
We thank Dr. Clotilde Fermanian for her helpful support and fruitful advises all over the writing of this paper. We also thank Dr. Fabricio Macià for the help in Sections \ref{sec:the_outer_part} and \ref{sec:establishing_the_equation}.

\vspace{-0.29cm}

\end{document}